\title[Chain conditions and descriptive set theory]{Chain conditions, elementary amenable groups, and descriptive set theory}
\author{Phillip Wesolek}
\address{  Universit\'{e} catholique de Louvain,
   Institut de Recherche en Math\'{e}matiques et Physique (IRMP),
   Chemin du Cyclotron 2, box L7.01.02,
   1348 Louvain-la-Neuve, Belgium}
\email{phillip.wesolek@uclouvain.be}
\author{Jay Williams}
\address{ Department of Mathematics,
	California Institute of Technology,
	Pasadena, CA 91125}
\email{jaywill@caltech.edu}
\date{February 2015}
\keywords{Chain conditions, elementary amenable groups, descriptive set theory}
\subjclass[2010]{Primary 20E34, Secondary 54H05}
\newtheorem{thm}{Theorem}[section] 
\newtheorem{obs}[thm]{Observation}
\newtheorem{prop}[thm]{Proposition}
\newtheorem{lem}[thm]{Lemma}
\newtheorem{cor}[thm]{Corollary}
\theoremstyle{definition}
\newtheorem{defn}[thm]{Definition}
\newtheorem{quest}[thm]{Question}
\newtheorem{rmk}[thm]{Remark}
\newtheorem*{claim*}{Claim}
\newcommand{\Zb}{\mathbb{Z}}
\newcommand{\Nb}{\mathbb{N}}
\newcommand{\mc}[1]{\mathcal{#1}}
\newcommand{\ms}[1]{\mathscr{#1}} 
\newcommand{\bfs}[1]{\boldsymbol{#1}}
\newcommand{\acts}{\curvearrowright}
\newcommand{\wbaire}{\mathbb{N}^{<\mathbb{N}}}
\newcommand{\conc}{^{\smallfrown}}
\newcommand{\Fw}{\mathbb{F}_{\omega}}
\newcommand{\Gw}{\mathscr{G}}
\newcommand{\from}{\colon}
\renewcommand{\to}{\rightarrow}
\newcommand{\surject}{\twoheadrightarrow}
\newcommand{\<}{\langle}
\renewcommand{\>}{\rangle}
\newcommand{\ngrp}[1]{\langle\langle #1 \rangle\rangle}
\newcommand{\normal}{\trianglelefteq}
\newcommand{\grp}[1]{\langle #1 \rangle}
\newcommand{\ol}[1]{\overline{#1}}
\newcommand{\rk}{\mathop{\rm rk}}
\newcommand{\EA}{\mathop{\rm EG} \nolimits}
\newcommand{\AG}{\mathop{\rm AG} \nolimits}
\begin{document}

\begin{abstract}
We first consider three well-known chain conditions in the space of marked groups: the minimal condition on centralizers, the maximal condition on subgroups, and the maximal condition on normal subgroups. For each condition, we produce a characterization in terms of well-founded descriptive-set-theoretic trees. Using these characterizations, we demonstrate that the sets given by these conditions are co-analytic and not Borel in the space of marked groups. We then adapt our techniques to show elementary amenable marked groups may be characterized by well-founded descriptive-set-theoretic trees, and therefore, elementary amenability is equivalent to a chain condition. Our characterization again implies the set of elementary amenable groups is co-analytic and non-Borel. As corollary, we obtain a new, non-constructive, proof of the existence of finitely generated amenable groups that are not elementary amenable.
\end{abstract}

\maketitle

\section{Introduction}

Chain conditions appear frequently in the study of countable groups. These are finiteness conditions that forbid certain infinite sequences of subgroups. An elementary but interesting example of such a condition is the property of being polycyclic. From a geometric group theory perspective, these finiteness conditions ought to restrict the complexity of the groups, as in the case of polycyclic groups. From a descriptive set theory perspective, however, the chain conditions are non-Borel co-analytic statements and, therefore, either admit ``nice" non-chain-condition characterizations - e.g.\ polycyclic groups are soluble with each term of the derived series finitely generated - or describe large and wild classes. In this work, we explore this tension in four chain conditions in the space of marked groups.\par

\indent In the space of marked groups, denoted $\Gw$, we first consider three well-known chain conditions: the minimal condition on centralizers, the maximal condition on subgroups, and the maximal condition on normal subgroups. We characterize each of these in terms of well-founded descriptive-set-theoretic trees. This characterization implies the classes in question are large and wild, whereby they do not admit ``nice" characterizations.

\begin{thm}
Each of the subsets of $\Gw$ defined by the minimal condition on centralizers, the maximal condition on subgroups, and the maximal condition on normal subgroups are co-analytic and not Borel. This remains true when restricting to finitely generated groups.
\end{thm}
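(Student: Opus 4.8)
The plan is to obtain co-analyticity from a characterization of each condition by well-founded trees on $\Nb$, and non-Borelness by Borel-reducing the set of well-founded trees to each of the three subsets. For the first part, fix a Borel-measurable way to enumerate the elements of each marked group $G$, so that finite tuples from $G$, the subgroups and normal subgroups they generate, and centralizers of finite tuples are all coded by natural numbers in a Borel-in-$G$ fashion. To $G$ associate three trees on a fixed countable set: $\mathcal{T}_1(G)$, consisting of the finite sequences $(g_1,\dots,g_n)$ from $G$ with $C_G(g_1)\supsetneq C_G(g_1,g_2)\supsetneq\dots\supsetneq C_G(g_1,\dots,g_n)$; $\mathcal{T}_2(G)$, consisting of the finite sequences $(\bar g_1,\dots,\bar g_n)$ of finite tuples from $G$ with $\grp{\bar g_1}\subsetneq\grp{\bar g_1,\bar g_2}\subsetneq\dots\subsetneq\grp{\bar g_1,\dots,\bar g_n}$; and $\mathcal{T}_3(G)$, the same as $\mathcal{T}_2(G)$ but with normal closures $\ngrp{\cdot}$ in place of $\grp{\cdot}$. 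Each $G\mapsto\mathcal{T}_i(G)$ is Borel, and since a strictly monotone chain of the relevant subgroups yields, after thinning, a branch of the corresponding tree and conversely, $G$ satisfies the minimal condition on centralizers, the maximal condition on subgroups, or the maximal condition on normal subgroups if and only if $\mathcal{T}_1(G)$, $\mathcal{T}_2(G)$, or $\mathcal{T}_3(G)$ is well-founded, respectively. As the set $\mathrm{WF}$ of well-founded trees on $\Nb$ is co-analytic and Borel preimages of co-analytic sets are co-analytic, all three subsets are co-analytic, and the same applies inside the Borel subspace of finitely generated groups.

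For non-Borelness, recall that $\mathrm{WF}$ is $\Pi^1_1$-complete, hence not Borel, so it suffices, for each of the three conditions $P$, to build a Borel (indeed continuous) map $T\mapsto G_T$ from trees on $\Nb$ to finitely generated marked groups with $G_T\models P$ if and only if $T$ is well-founded; then $\mathrm{WF}$ is the preimage of $\{G\in\Gw:G\models P\}$, so that set is not Borel, and since every $G_T$ is finitely generated this persists on the subspace of finitely generated groups. The target in each case is a finitely generated group whose bad chains — strictly descending chains of centralizers, or strictly ascending chains of subgroups, or of normal subgroups — correspond to the \emph{chains} of $T$, that is, to the finite initial segments of branches, so that an infinite branch of $T$ gives an infinite bad chain while, when $T$ is well-founded, an induction on the ordinal rank of $T$ (using a structural description of the centralizers and of the lattices of subgroups and normal subgroups of $G_T$) forces every bad chain to terminate. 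Concretely, one uses a fixed family of finite groups — nonabelian for the centralizer condition, nonabelian simple for the two maximal conditions — to build one piece $K_s$ with a distinguished element $a_s$ for each node $s\in T$, together with a single finitely generated group $\Lambda$ acting on an enumeration of $\wbaire$ compatibly with the tree order, and takes $G_T$ to be a two-generator subgroup of the restricted product $\bigl(\bigoplus_{s\in T}K_s\bigr)\rtimes\Lambda$, in the style of B.\,H.\ Neumann's continuum of $2$-generated groups, arranged so that $K_s$ affects $C_{G_T}(a_s)$, or a suitable subgroup, or a suitable normal subgroup, according exactly to the branch of $T$ from the root to $s$, while incomparable nodes interact strongly enough that no bad chain arises except from a branch.

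The whole weight of the argument sits in the construction of $G_T$, and two points are genuinely delicate. First, finite generation: a direct sum or free product indexed by $T$ is not finitely generated, so the tree-indexed pieces must be amalgamated into a two-generator group while still retaining precise control of its centralizers and of its lattices of subgroups and normal subgroups. Second, and harder, the encoding must detect \emph{ill-foundedness} of $T$ rather than mere infinitude: if the bad chains of $G_T$ were indexed by arbitrary subsets or subtrees of $T$, then $P$ would fail for every infinite $T$ and $T\mapsto G_T$ would not be a reduction, so the relations presenting $G_T$ must be engineered so that its bad chains are exactly the chains of $T$. Once that is done, the well-founded direction is a rank induction as indicated above, and Borel measurability — in fact continuity — of $T\mapsto G_T$ is immediate from the explicit presentations.
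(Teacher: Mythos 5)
Your first half is fine and matches the paper: for each condition one builds a Borel map $G\mapsto T_G$ into the space of trees with $T_G$ well-founded exactly when $G$ satisfies the condition (strict chains thin out to branches and conversely), and co-analyticity follows since $WF$ is $\Pi^1_1$ and Borel preimages of $\Pi^1_1$ sets are $\Pi^1_1$. The problem is your second half. To get non-Borelness you propose to Borel-reduce $WF$ \emph{to} each of the three classes, i.e.\ to build $T\mapsto G_T$ with $G_T$ satisfying the condition iff $T$ is well-founded. That would prove the much stronger statement that these classes are $\Pi^1_1$-complete --- which is posed in this very paper as an open question, precisely because nobody knows how to carry out the construction you sketch. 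And you do not carry it out either: the entire reduction is described in aspirational terms (``arranged so that $K_s$ affects $C_{G_T}(a_s)$ \dots according exactly to the branch of $T$''), and you yourself identify the two fatal obstructions --- amalgamating tree-indexed pieces into a finitely generated group while keeping exact control of the centralizer/subgroup/normal-subgroup lattices, and ensuring that bad chains come only from branches of $T$ rather than from arbitrary infinite antichains or subtrees --- without resolving them. The second obstruction is exactly why the naive Neumann-style encoding fails: an infinite well-founded tree already produces infinitely many interacting pieces, and nothing in your sketch prevents these from assembling into an infinite strictly monotone chain.

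The paper avoids this entirely by reducing in the \emph{other} direction only, and then invoking the Boundedness Theorem for $\Pi^1_1$-ranks: since $G\mapsto\rho(T_G)$ is a $\Pi^1_1$-rank on each co-analytic class, the class is Borel if and only if this rank is bounded below $\omega_1$. Non-Borelness thus reduces to a purely group-theoretic task: exhibit, for every $\alpha<\omega_1$, a group in the class whose tree has rank at least $\alpha$. This is done by transfinite induction --- successor steps via direct products with a nonabelian group (for $\mc M_C$), with $\Zb$ (for $\mc M_{\max{}}$), or with an infinite simple group (for $\mc M_n$); limit steps via free products (using the structure of centralizers in free products), Olshanskii's embedding of countably many max groups into a $2$-generated max group, or iterated wreath products of perfect max-n groups without central factors using Hall's theorem; and finitely generated examples via the Karrass--Solitar and Hall/Neumann--Neumann embedding theorems together with the monotonicity of the rank under embeddings (or its anti-monotonicity under quotients in the max-n case). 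If you want to complete your proof along your own lines, you must either actually produce the reduction $T\mapsto G_T$ --- thereby settling an open problem --- or switch to the boundedness argument.
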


Our techniques additionally give new ordinal-valued isomorphism invariants unbounded below the first uncountable ordinal in the cases of the minimal condition on centralizers and the maximal condition on subgroups. The ordinal-valued isomorphism invariant we obtain in the case of the maximal condition on normal subgroups is not new and has been considered in the literature; cf. \cite{C11}. However, our approach is new, and we show that this invariant is unbounded below the first uncountable ordinal.\par

\indent We next consider the set of elementary amenable marked groups. We likewise characterize these in terms of descriptive-set-theoretic trees. It follows that elementary amenability is indeed a chain condition.

\begin{thm}
A countable group $G$ is elementary amenable if and only if there is no infinite descending sequence of the form 
$$G=G_0\geq G_1\geq\ldots\geq G_n \geq \ldots $$
such that for all $n\geq 0$, $G_n\neq\{e\}$ and there is a finitely generated subgroup $K_n\leq G_n$ with $G_{n+1}= [K_n,K_n]\cap H_n$, where $H_n$ is the intersection of the index-$(\leq(n+1))$ normal subgroups of $K_n$.
\end{thm}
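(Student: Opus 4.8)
The plan is to prove the two implications separately. I would establish $(\Rightarrow)$ by induction on Chou's rank for elementary amenable groups, and $(\Leftarrow)$ by a well-founded-rank induction on the tree of candidate sequences. First I record the elementary facts about the operation in the statement. For a finitely generated group $K$ and an integer $m\geq 1$, write $R_m(K)$ for the intersection of all normal subgroups of $K$ of index at most $m$; since a finitely generated group has only finitely many subgroups of each finite index, $R_m(K)$ is a characteristic subgroup of finite index. Hence $[K,K]\cap R_m(K)$ is characteristic in $K$, and $K/\bigl([K,K]\cap R_m(K)\bigr)$ embeds into $(K/[K,K])\times(K/R_m(K))$, a product of a finitely generated abelian group and a finite group. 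Therefore $K/\bigl([K,K]\cap R_m(K)\bigr)$ is finitely generated and virtually abelian, in particular elementary amenable.

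For $(\Leftarrow)$: for a group $H$ and $k\geq 0$ let $T_k(H)$ be the tree of finite sequences $(K_0,G_1,\dots,K_{n-1},G_n)$ with $G_0=H$, each $K_i\leq G_i$ finitely generated, $G_{i+1}=[K_i,K_i]\cap R_{k+1+i}(K_i)$, and $G_j\neq\{e\}$ for all $j<n$, ordered by end-extension, and let $\rho_k(H)$ be its ordinal rank, with $\rho_k(H)=\infty$ if $T_k(H)$ is ill-founded. The hypothesis on $G$ is exactly that $T_0(G)$ has no infinite branch, so $\rho_0(G)<\infty$. I would show, by induction on $\rho_k(H)$ over all pairs $(H,k)$ for which it is finite, that $H$ is elementary amenable. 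If $\rho_k(H)=0$, the root of $T_k(H)$ has no successor, and since $K_0=\{e\}$ is an admissible first move whenever $H\neq\{e\}$, this forces $H=\{e\}$. If $\rho_k(H)>0$, then for each finitely generated $K\leq H$ the subtree of $T_k(H)$ below the node $\bigl(K,\,[K,K]\cap R_{k+1}(K)\bigr)$ is order-isomorphic to $T_{k+1}\bigl([K,K]\cap R_{k+1}(K)\bigr)$, so $\rho_{k+1}\bigl([K,K]\cap R_{k+1}(K)\bigr)<\rho_k(H)$; by the inductive hypothesis $[K,K]\cap R_{k+1}(K)$ is elementary amenable, hence so is $K$, since $K/\bigl([K,K]\cap R_{k+1}(K)\bigr)$ is elementary amenable by the previous paragraph and the class is closed under extensions. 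Finally $H$ is the directed union of its finitely generated subgroups, so $H$ is elementary amenable by closure under directed unions.

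For $(\Rightarrow)$ I would induct on the Chou rank $\rk(G)$, proving the stronger statement that for every $k\geq 0$ there is no infinite sequence of the form described in the theorem but with the index bound at step $n$ replaced by $k+1+n$. Suppose such a sequence $G=G_0\geq G_1\geq\dots$ existed. Since subgroups of elementary amenable groups are elementary amenable of no larger Chou rank, $(\rk(G_n))_n$ is nonincreasing, hence eventually equal to some $\alpha\leq\rk(G)$; if $\alpha<\rk(G)$, then a tail of the sequence (starting from some $G_n$ of rank $\alpha$, with correspondingly shifted bounds) contradicts the inductive hypothesis, so I may assume $\rk(G_n)=\rk(G)=\alpha$ for all $n$. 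If $\alpha=0$, then $G$ is finite or abelian: in the abelian case $G_1=[K_0,K_0]=\{e\}$, contradicting $G_1\neq\{e\}$; in the finite case the orders $|G_n|$ are nonincreasing and so stabilize, giving $G_{n+1}=G_n$ for large $n$, whence $K_n=G_n$ and $[K_n,K_n]=R_{k+1+n}(K_n)=K_n$, which is impossible once $k+1+n\geq|K_n|$, since then the trivial subgroup forces $R_{k+1+n}(K_n)=\{e\}$ and hence $G_n=\{e\}$. If $\alpha>0$, apply the description of $\mathcal{X}_\alpha$ in Chou's hierarchy to $G$. If $G$ is a directed union of subgroups of rank $<\alpha$, then the finitely generated $K_0$ lies in one of them, so $\rk(G_1)\leq\rk(K_0)<\alpha$, a contradiction. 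Otherwise $G$ has a normal subgroup $N$ of rank $<\alpha$ with $G/N$ finite or abelian. If $G/N$ is abelian then $[K_0,K_0]\leq N$, so $\rk(G_1)<\alpha$, a contradiction. If $G/N$ is finite---which forces $\alpha$ to be a successor, since a finite extension of a group of rank $<\lambda$ has rank $<\lambda$ for limit $\lambda$---then for any $n$ with $k+1+n\geq|G/N|$ the subgroup $K_n\cap N$ is normal in $K_n$ of index at most $|G/N|$, hence contains $R_{k+1+n}(K_n)$; therefore $G_{n+1}=[K_n,K_n]\cap R_{k+1+n}(K_n)\leq N$ and $\rk(G_{n+1})\leq\rk(N)<\alpha$, the final contradiction.

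I expect the crux to be this last case of $(\Rightarrow)$. Here $G$ is $(\mathrm{rank}<\alpha)$-by-finite with no a priori bound on $|G/N|$, so no fixed index bound would suffice: a perfect finite group such as $A_5$ has no proper normal subgroup of small index and would spawn a constant sequence of exactly the forbidden shape. The growing bound $n+1$ in the statement is precisely what defeats this---one waits until $n+1$ exceeds $|G/N|$---and it is likewise what makes the finite $\alpha=0$ case work. Granting Chou's structure theory for elementary amenable groups (closure under subgroups, quotients, extensions, and directed unions; the hierarchy $\mathcal{X}_\alpha$; monotonicity of the rank under subgroups), the remaining verifications are routine.
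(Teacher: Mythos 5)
Your proposal is correct, and one half of it genuinely diverges from the paper's argument. For the direction ``no infinite sequence $\Rightarrow$ elementary amenable,'' your rank induction on the tree of candidate sequences is essentially the paper's proof that well-foundedness of the decomposition tree implies elementary amenability: the key points in both are that $K/\bigl([K,K]\cap R_m(K)\bigr)$ is finitely generated and virtually abelian (the paper phrases this as $R_i(G)/G_i$ being finite-by-abelian), plus closure of the class under extensions and increasing unions; your offset parameter $k$ plays exactly the role of the paper's offset $l$ in $T^l(G)$. The converse is where you part ways. The paper proves that the class $\mathrm{W}$ of groups with well-founded decomposition tree contains finite and abelian groups and is closed under subgroups, unions, extensions, and quotients --- the extension and quotient lemmas being the most delicate steps --- and then invokes minimality of $\EA$. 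You instead induct directly on the Chou/Osin construction rank and show a hypothetical infinite sequence must eventually drop rank: the growing index bound eventually exceeds $|G/N|$ in the finite-quotient case, finite generation of $K_0$ localizes it inside one term of an increasing union, and $[K_0,K_0]\leq N$ handles the abelian-quotient case. This avoids proving closure of $\mathrm{W}$ under extensions and quotients entirely, at the cost of granting Chou's hierarchy and the monotonicity of the construction rank under subgroups (facts the paper also imports from Chou and Osin, so this is a fair exchange). What the paper's longer route buys is precisely the extra structure needed elsewhere: the Borel maps $\Phi^l$, the decomposition rank $\xi$ as a $\Pi^1_1$-rank, and the comparison between $\xi$ and $\rk$, none of which your streamlined argument produces; for the equivalence as stated, your proof suffices. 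Two small points worth tightening in a writeup: justify that the construction rank of a nonzero group is a successor ordinal (immediate from $\EA_\lambda=\bigcup_{\beta<\lambda}\EA_\beta$ at limits) before invoking the extension/union dichotomy, and note explicitly that your bound $k+1+n$ at $k=0$ matches the statement's bound $n+1$.
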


Our characterization gives two new invariants of elementary amenable groups: the decomposition rank and decomposition degree. We further obtain
\begin{thm} 
The sets of elementary amenable groups and finitely generated elementary amenable groups are co-analytic and non-Borel in the space of marked groups.
\end{thm}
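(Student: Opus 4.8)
The plan is to prove both statements simultaneously by exhibiting the relevant sets as co-analytic via the tree characterization from the previous theorem and then establishing non-Borelness via a Borel reduction from a known $\Pi^1_1$-complete set. For the co-analytic upper bound, I would argue directly from Theorem 1.2: the condition ``$G$ is elementary amenable'' is equivalent to the \emph{non-existence} of an infinite descending sequence $G = G_0 \geq G_1 \geq \cdots$ of the prescribed form. The key point is that, fixing an enumeration of a marked group $G$, one can code a candidate for such a sequence (together with the witnessing finitely generated subgroups $K_n$) by an element of Baire space, and the relation ``this real codes a legitimate descending sequence for the marked group $G$'' is Borel in the pair. Hence ``$G$ is not elementary amenable'' is $\Sigma^1_1$, so elementary amenability is $\Pi^1_1$, i.e.\ co-analytic; the same argument relativizes verbatim to the closed (hence standard Borel) subspace of finitely generated marked groups.

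For non-Borelness, I would build a Borel map from a $\Pi^1_1$-complete set into $\Gw$ whose image meets the elementary amenable groups exactly on that set. The natural source is the set of well-founded trees on $\Nb$ (equivalently $\wbaire$), which is $\Pi^1_1$-complete; so the target is a Borel assignment $T \mapsto G_T$ of a marked group to each tree $T \subseteq \wbaire$ such that $G_T$ is elementary amenable precisely when $T$ is well-founded. The construction should mirror the standard ``tree of iterated wreath products'' or ``tree of commutator/finite-index operations'' that realizes exactly the descending sequences permitted by Theorem 1.2: one wants $G_T$ to admit an infinite descending sequence of the prescribed form exactly when $T$ has an infinite branch. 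Concretely, I expect to assign to each node $\sigma \in T$ a building block (a finitely generated group, e.g.\ a suitable metabelian or wreath-product piece) and to amalgamate these along $T$ so that passing to $[K_n, K_n] \cap H_n$ corresponds to moving one level deeper in $T$; a branch through $T$ then yields a genuine infinite descending sequence contradicting elementary amenability, while well-foundedness of $T$ forces every such attempted sequence to terminate, and one separately checks that the resulting $G_T$ is then elementary amenable (which should follow because the decomposition rank, read off from the tree, stays countable). Ensuring the marked-group structure of $G_T$ depends in a Borel way on $T$ is routine once a uniform presentation is fixed; restricting to finitely generated groups requires the blocks to be finitely generated and the amalgam to remain finitely generated, or else a standard trick to replace $G_T$ by a finitely generated group encoding it.

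The main obstacle is the forward construction: designing the family $\{G_T\}$ so that the \emph{only} descending sequences of the Theorem~1.2 form available in $G_T$ are those coming from branches of $T$ — one must prevent ``spurious'' descending sequences that could make $G_T$ non-elementary-amenable even when $T$ is well-founded, and conversely prevent the decomposition process from stalling prematurely when $T$ is ill-founded. This is a rigidity requirement on the building blocks: the commutator-and-finite-index operation applied to any finitely generated subgroup must be forced to track the tree. I anticipate this is where the bulk of the work lies, and it likely reuses whatever group-theoretic gadget was used to prove the non-Borel half of the first theorem (the maximal condition on subgroups or on normal subgroups), suitably adapted so that the iterated operation ``commutator subgroup of a finitely generated subgroup, intersected with the intersection of small-index normal subgroups'' behaves predictably. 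Once that family is in hand, the reduction and the conclusion that the set is $\Pi^1_1$ but not Borel — in both the general and finitely generated cases — follow formally from Theorem~1.2 and the completeness of well-foundedness.
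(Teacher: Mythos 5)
The co-analytic half of your plan is sound and is essentially what the paper does: the chain-condition characterization is converted into a Borel map $\Phi^1\from\Gw\to Tr$ with $(\Phi^1)^{-1}(WF)=\EA$, so $\EA$ is $\Pi^1_1$ because $WF$ is. (Two small cautions: the Borelness of the relation ``$\gamma$ lies in the intersection of the index-$\leq(n+1)$ normal subgroups of $K_n$'' does need the kind of explicit verification carried out in Section \ref{sec:Borel}, and $\Gw_{fg}$ is $F_\sigma$ rather than closed in $\Gw$ --- still a standard Borel space, so your conclusion survives.)

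The non-Borel half is where there is a genuine gap, and it is a gap of strategy, not of detail. You propose to exhibit a Borel reduction \emph{from} $WF$ \emph{into} $\EA$, i.e.\ to build a group $G_T$ from each tree $T$ so that $G_T$ is elementary amenable exactly when $T$ is well-founded. That would prove $\Pi^1_1$-hardness, hence non-Borelness --- but it is strictly stronger than what is needed, and it is precisely the open problem the paper records in its final section: no such construction is known for $\EA$ (nor for the other chain conditions studied here). The ``main obstacle'' you flag --- preventing spurious descending sequences when $T$ is well-founded while guaranteeing genuine ones when $T$ is ill-founded --- is not a technical wrinkle to be ironed out by choosing the right building blocks; it is the entire unsolved difficulty. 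The paper avoids it by using the Boundedness Theorem for $\Pi^1_1$-ranks instead: the reduction you already have in the easy direction, $\EA\to WF$, equips $\EA$ with the $\Pi^1_1$-rank $\rho\circ\Phi^1$, and a co-analytic set carrying a $\Pi^1_1$-rank is Borel if and only if that rank is bounded below $\omega_1$. So the group theory one actually has to do is to produce, for every $\alpha<\omega_1$, a finitely generated elementary amenable group of decomposition rank at least $\alpha$. This is done by transfinite recursion: the successor step embeds a given $K\in\EA$ into the commutator subgroup of a group $H$ sitting inside a $2$-generated $G\in\EA$ (the Hall--Neumann--Neumann embedding) and passes to $G\wr\Zb$, forcing the rank up by at least one; the limit step takes direct sums; and the finitely generated case reuses the same embedding. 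If you replace your reduction-from-$WF$ plan with this boundedness argument, the rest of your outline goes through.
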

It is well-known that the set of amenable groups is Borel in the space of marked groups.  Our theorem thus gives a non-constructive answer to an old question of M. Day \cite{D57}, which was open until R. I. Grigorchuk \cite{G84} constructed groups of intermediate growth: \textit{Are all finitely generated amenable groups elementary amenable?} 
\begin{cor}
There is a finitely generated amenable group that is not elementary amenable.
\end{cor}

The paper is organized as follows.  In Section \ref{sec:Prelim}, we discuss the basic properties of $\Gw$ and introduce concepts from descriptive set theory.  In Sections \ref{sec:MinCent},\ref{sec:Max}, and \ref{sec:MaxN}, we analyze sets of groups satisfying various chain conditions.  This introduces our use of descriptive-set-theoretic trees to study the structure of groups as well as the ordinal-valued invariants arising from those trees.  In Section \ref{sec:EAGroups}, we use those same techniques to analyze elementary amenable groups. In Section \ref{sec:Borel}, we prove the maps used throughout the paper are indeed Borel.  Those who are content to believe that our constructions are Borel can safely skip this section without missing any group-theoretic content.  Finally, Section \ref{sec:Remarks} discusses some questions arising from this paper not touched upon in earlier sections.

\section{Preliminaries}\label{sec:Prelim}

\subsection{The space of marked groups}
In order to apply the techniques of descriptive set theory to groups, we need an appropriate space of groups.  Let $\Fw$ be the free group on the letters $\{a_i\}_{i\in\Nb}$; so $\Fw$ is a free group on countably many generators with a distinguished set of generators. The power set of $\Fw$ may be naturally identified with the Cantor space $\{0,1\}^{\Fw}=:2^{\Fw}$. It is easy to check the collection of normal subgroups of $\Fw$, denoted $\Gw$, is a closed subset of $2^{\Fw}$ and, hence, a compact Polish space.  Each $N\in\Gw$ is identified with a \textbf{marked group}. That is the group $G=\Fw/N$ along with a distinguished generating set $\{f_N(a_i)\}_{i\in \Nb}$ where $f_N:\Fw\rightarrow G$ is the usual projection; we always denote this projection by $f_N$. For a marked group $G$, we abuse notation and say $G\in \Gw$; of course, we formally mean $G=\Fw/N$ for some $N\in \Gw$. Since every countable group is a quotient of $\Fw$, $\Gw$ gives a compact Polish space of all countable groups. A sub-basis for this topology is given by sets of the form
$$ O_{\gamma} := \left\{N\in\Gw \mid \gamma\in N\right\}, $$
where $\gamma\in\Fw$ along with their complements.

\indent Similar reasoning leads us to define the space of \textbf{$m$-generated marked groups} as
\[
\Gw_m := \bigcap_{i\geq m}\{N \normal \Fw \mid a_i\in N\}.
\]
This is a closed subset of $\Gw$ and so is a compact Polish space in its own right. We further let $\Gw_{fg}:=\cup_{m\geq 1} \Gw_m$ be the space of finitely generated marked groups.  As this is an $F_\sigma$ subset of $\Gw$, it is a standard Borel space, with Borel sets precisely those sets of the form $\Gw_{fg}\cap B$ with $B$ Borel in $\Gw$; a \textbf{standard Borel space} is a Borel space which admits a Polish topology that induces the Borel structure.  We can thus also talk about Borel functions with domain $\Gw_{fg}$.\par

\indent It is convenient to give the marked groups $G=\Fw/N$ a preferred enumeration. To this end, we fix an enumeration $\bfs{\gamma}:=(\gamma_i)_{i\in \Nb}$ of $\Fw$. Each $G$ is thus taken to come with an enumeration $f_N(\bfs{\gamma}):=(f_N(\gamma_i))_{i\in \Nb}$; note the enumeration of $G$ may have many repetitions.  When we write $G$ as $G=\{g_0,g_1,\ldots\}$, we will always mean this enumeration.  Later in the paper we will work with $\wbaire$, i.e. the set of finite sequences of natural numbers.  If $(s_0,\ldots,s_n)=:s\in\wbaire$, we will write $\{g_s\}$ for the set $\{g_{s_0},\ldots,g_{s_n}\}$.  Note that this set may have fewer than $n+1$ elements, e.g.\ if $s_0=s_1=\ldots=s_n$, or even if the $s_i$ are distinct but enumerate the same element.\par

\indent We will often discuss quotients of groups or particular subgroups of groups, and of course we wish to view these as elements of $\Gw$.  A quotient of a marked group is obviously again a marked group. However, subgroups of marked groups do not have an obvious marking.  The enumeration gives us a preferred way to select markings for subgroups.  If $H\leq \Fw/N=G\in \Gw$, let $\pi_H\from\Fw\to\Fw$ be induced by mapping the generators $(a_i)_{i\in \Nb}$ of $\Fw$ as follows:
\[
\pi_H(a_j):=
\begin{cases}
\gamma_j, & \text{ if }f_N(\gamma_j)\in H \\
e, & \text{ else.}
\end{cases}
\]
We then identify $H$ with $\Fw/\ker (f_N\circ\pi_H)$. In the case $H$ has a distinguished finite generating set $\{g_{i_0},\dots,g_{i_n}\}$, we instead define $\pi_H(a_{i_j})=\gamma_{i_j}$ and $\pi_H(a_j)=e$ for $j\neq i_k$; this streamlines our proofs later. We often appeal to this convention implicitly.\par 

\indent We will consider maps from and on $\Gw$. A slogan from descriptive set theory is ``Borel = explicit'' meaning if you describe a map ``explicitly'', i.e. without an appeal to something like the axiom of choice, it should be Borel.  All of the maps we discuss in the next few sections will be ``explicit'' in this sense, so we will not prove they are Borel when we define them, in order to keep the focus on the group-theoretic aspects of our constructions.  We will often use enumerations of groups in our constructions, but this will not require choice since every marked group comes with a preferred enumeration.  For those who are interested in the details, we discuss the descriptive-set-theoretic aspects of our constructions in Section \ref{sec:Borel}.

\subsection{Descriptive set theory}
\indent We are interested in certain types of non-Borel subsets of $\Gw$.  The following definitions and theorems are all fundamental in descriptive set theory; a standard reference is \cite{K95}.

\begin{defn}
Let $X,Y$ be uncountable standard Borel spaces.  Then $A\subseteq Y$ is \textbf{analytic} (denoted $\Sigma^1_1$) if there is a Borel set $B \subseteq X\times Y$ such that $\operatorname{proj}_Y(B)=A$. A set $C\subseteq Y$ is \textbf{co-analytic} (denoted $\Pi^1_1$) if $Y\setminus C$ is analytic. 
\end{defn}

Every Borel set is analytic, but any uncountable standard Borel space contains non-Borel analytic sets.  It follows that there are non-Borel co-analytic sets.  The collection of analytic sets is closed under countable unions, countable intersections, and Borel preimages.  It follows the collection of co-analytic sets is closed under countable unions, countable intersections, and Borel preimages.  We remark that sets defined using a single existential quantifier which ranges over an uncountable standard Borel space are often analytic as such quantification can typically be rewritten as a projection of a Borel set.  Thus sets defined by using a universal quantifier over an uncountable set are often co-analytic.

\begin{defn}
Let $X,Y$ be standard Borel spaces, and $A\subseteq X$, $B\subseteq Y$.  We say that $A$ \textbf{Borel reduces} to $B$ if there is a Borel map $f\from X\to Y$ such that $f^{-1}(B)=A$.
\end{defn}

If $A$ Borel reduces to $B$ and $B$ is Borel, analytic, or co-analytic, then so is $A$.  This gives us a method for proving that sets are, for example, co-analytic simply by showing they Borel reduce to a co-analytic set.  One important example comes from the space of (descriptive-set-theoretic) trees.

\begin{defn}
A set $T\subseteq\wbaire$ of finite sequences of natural numbers is a \textbf{tree} if it is closed under initial segments.  A sequence $x\in\Nb^\Nb$ is a branch of $T$ if for all $n\in\Nb$, $x\restriction n\in T$. For $s\in T$, $T_s:=\{r\in \wbaire\mid s\conc r\in T\}$ where ``$\conc$" indicates concatenation of finite sequences.
\end{defn}

As with groups, we may identify $X\subseteq\wbaire$ with an element $f_X\in 2^{\wbaire}$.  We define
\[
Tr := \{ x\in 2^{\wbaire} \mid x \text{ is a tree }\}. 
\]
The set $Tr$ is a closed subset of $2^{\wbaire}$ and so is a compact Polish space.  A sub-basis for the topology on $Tr$ is given by sets of the form
$$ O_t := \left\{T\in Tr \mid t\in T\right\}, $$
where $t\in\wbaire$ along with their complements.

\indent There are two subsets of $Tr$ of particular interest to us: 
\[
IF := \{ T\in Tr \mid T \text{ has a branch } \}
\] 
and $WF := Tr\setminus IF$. We call $WF$ the set of \textbf{well-founded} trees and $IF$ the set of \textbf{ill-founded} trees. One can check that $IF$ is analytic, so $WF$ is co-analytic.  The importance of these sets comes from the following fact.

\begin{thm}\cite[Theorem 27.1]{K95}\label{thm:WFComplete}
Every analytic set Borel reduces to $IF$. Therefore, every co-analytic set Borel reduces to $WF$.
\end{thm}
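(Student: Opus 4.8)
The plan is to reduce an arbitrary analytic set to the projection of a \emph{closed} set and then invoke the classical dictionary between closed subsets of $\baire$ and trees on $\Nb$: for a tree $S$ on $\Nb$ the set of branches of $S$ is a closed subset of $\baire$, and every closed subset of $\baire$ arises in this way. Throughout, for finite sequences, or a finite sequence together with an element of $\baire$, write $u\subseteq v$ to mean that $v$ extends $u$.

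Fix an uncountable standard Borel space $X$, which we may topologize as a Polish space, and an analytic set $A\subseteq X$. The first step is to realize $A$ as the projection of a closed set. If $A=\emptyset$, the map sending every point of $X$ to the empty tree is a Borel reduction of $A$ to $IF$, so assume $A\neq\emptyset$. Then $A$ is a nonempty analytic subset of the Polish space $X$, hence a continuous image of $\baire$: there is a continuous $g\from\baire\to X$ with $g(\baire)=A$. Consequently the ``flipped graph''
\[
F:=\{(g(z),z)\mid z\in\baire\}
\]
is closed in $X\times\baire$ and satisfies $\operatorname{proj}_X(F)=A$; write $F_x:=\{z\in\baire\mid(x,z)\in F\}$ for its sections.

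The second step is to pass to trees fibrewise. Write the open set $(X\times\baire)\setminus F$ as $\bigcup_{n}V_n\times N_{t_n}$, where each $V_n\subseteq X$ is open, $t_n\in\wbaire$, and $N_{t_n}=\{z\in\baire\mid t_n\subseteq z\}$. For $x\in X$ define
\[
f(x):=T_x:=\{\,s\in\wbaire\mid t_n\not\subseteq s\text{ for every }n\text{ with }x\in V_n\,\}.
\]
A routine verification shows each $T_x$ is a tree whose set of branches is exactly $F_x$; hence $x\in A\iff F_x\neq\emptyset\iff T_x$ has a branch $\iff T_x\in IF$, so $f^{-1}(IF)=A$. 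To see that $f\from X\to Tr$ is Borel it suffices, since the sets $O_t$ generate the Borel $\sigma$-algebra of $Tr$, to check that each $f^{-1}(O_s)=\{x\mid s\in T_x\}$ is Borel; and the displayed formula gives $\{x\mid s\in T_x\}=\bigcap_{n\,:\,t_n\subseteq s}(X\setminus V_n)$, a countable intersection of closed sets. This establishes the first assertion. The second follows immediately: if $C$ is co-analytic then $X\setminus C$ is analytic, so there is a Borel $f$ with $f^{-1}(IF)=X\setminus C$, whence $f^{-1}(WF)=f^{-1}(Tr\setminus IF)=C$.

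The only genuine obstacle is the first step — realizing a given analytic set as the projection of a \emph{closed} set — which amounts to the structural fact that a nonempty analytic set in a Polish space is a continuous image of $\baire$. Granting that, the tree/closed-set correspondence and the coordinatewise Borelness check are both routine, and the remainder of the argument is bookkeeping.
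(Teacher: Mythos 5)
The paper does not prove this statement; it simply cites \cite[Theorem 27.1]{K95}, and your argument is precisely the standard proof given there: realize the analytic set as the projection of a closed subset of $X\times\baire$ via a continuous surjection from $\baire$, convert the closed sections fibrewise into trees whose branches are exactly those sections, and verify Borelness on the sub-basic sets $O_s$. All the steps check out (including the empty-set case and the deduction for $WF$), so your proposal is correct and takes essentially the same route as the cited source.
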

\noindent Thus a set $A$ is co-analytic if and only if it Borel reduces to $WF$.\par

\indent We are interested in $WF$ for a second reason.  Let $ORD$ denote the class of ordinals.  For any $T\in WF$, we can define a function $\rho_T\from T\to ORD$ inductively as follows: If $t\in T$ has no extensions in $T$, let $\rho_T(t)=0$.  Otherwise let $\rho_T(t) = \sup\{\rho_T(s)+1 \mid t\subsetneq s \}$. We may then define a rank function $\rho\from Tr\to ORD$ by 
\[
\rho(T) =
\begin{cases}
\rho_T(\emptyset)+1, & \text{if }T\in WF\\
\omega_1, & \text{else.}
\end{cases}
\]
For $T=\emptyset$, we define $\rho(T)=0$. The function $\rho$ is bounded above by $\omega_1$, the first uncountable ordinal. Furthermore, this rank function has a special property:

\begin{defn}\label{def:PiRank}
Let $X$ be a standard Borel space and $A\subseteq X$.  A function $\phi\from A\to ORD$ is a \textbf{$\Pi^1_1$-rank} if there are relations $\leq_\phi^\Pi$, $\leq_\phi^\Sigma\subseteq X\times X$ such that $\leq_\phi^\Pi$ is co-analytic, $\leq_\phi^\Sigma$ is analytic, and for all $y\in A$,
\begin{align*}
x\in A \wedge \phi(x)\leq \phi(y) &\Leftrightarrow x \leq_\phi^\Sigma y \\
 &\Leftrightarrow x \leq_\phi^\Pi y.
\end{align*}
\end{defn}

Given any rank function on $A$, one may use it to define an order $\leq_\phi$ on $A$.  The idea of the above definition is that if $\phi$ is a $\Pi^1_1$-rank, then the initial segments of $\leq_\phi$ are Borel, and this is witnessed in a uniform way.

\begin{thm}\cite[Exercise 34.6]{K95}
The function $\rho\from WF\to ORD$ is a $\Pi_1^1$-rank.
\end{thm}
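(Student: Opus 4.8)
The plan is to verify directly from Definition \ref{def:PiRank} that $\rho$ is a $\Pi^1_1$-rank on $WF$, by exhibiting the two required relations $\leq^\Pi_\rho$ and $\leq^\Sigma_\rho$ on $Tr\times Tr$ and checking that they agree with the order induced by $\rho$ on all elements $S$ with $S\in WF$. The natural candidates come from the standard theory of comparison maps between trees: a Lipschitz, order-preserving map $\phi\from T\to S$ (i.e.\ $\phi$ maps $T$ into $S$, is monotone with respect to $\subseteq$, and does not decrease length) witnesses $\rho_T(\emptyset)\leq \rho_S(\emptyset)$ when $S$ is well-founded, and conversely such a map can always be built by recursion along the rank when $\rho(T)\leq\rho(S)$. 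So I would set
\[
T\leq^\Sigma_\rho S \iff \exists\, \phi\in (\wbaire)^{\wbaire}\ \big(\phi \text{ is a monotone length-preserving map with } \phi(T)\subseteq S\big),
\]
which is analytic since it is an existential quantifier over the Polish space $(\wbaire)^{\wbaire}$ of a Borel (indeed closed) condition on $(\phi, T, S)$. For the co-analytic side I would use the dual description in terms of the canonical ``rank comparison'' game, or more simply the Kunen--Martin style formulation: $T\leq^\Pi_\rho S$ should say that there is \emph{no} monotone length-preserving map from $S$ into $T\setminus\{\emptyset\}$ realized strictly — concretely, that player II wins the game in which the two players build branches through $T$ and $S$ respectively and II must stay in $S$ at least as long as I stays in $T$; the losing condition for I is the existence of a branch, hence the whole statement has the form ``$\forall$ strategy/branch $\ldots$'', making it $\Pi^1_1$.

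The key steps, in order, are: (1) record the elementary fact that if $S\in WF$ and there is a monotone length-preserving $\phi\from T\to S$, then $\rho_T(t)\leq\rho_S(\phi(t))$ for all $t\in T$ by induction on $\rho_T(t)$, hence $\rho(T)\leq\rho(S)$ and in particular $T\in WF$; (2) prove the converse: if $T,S\in WF$ and $\rho(T)\leq\rho(S)$, construct $\phi$ by recursion on the length of $t\in T$, using at each stage that $\rho_T(t)\leq\rho_S(\phi(t))$ to find an extension of $\phi(t)$ in $S$ with rank at least $\rho_T(t^\frown i)$ for each child $t^\frown i$ of $t$ — this is where the inequality $\rho_T(t)=\sup\{\rho_T(t^\frown i)+1\}\leq \rho_S(\phi(t))$ is used; (3) verify that $\leq^\Sigma_\rho$ is analytic and $\leq^\Pi_\rho$ is co-analytic by inspecting the quantifier complexity, noting $(\wbaire)^{\wbaire}$ is Polish and the matrix conditions are Borel; (4) check the two equivalences of Definition \ref{def:PiRank}: for $S\in WF$, we have $T\in WF\wedge\rho(T)\leq\rho(S)\iff T\leq^\Sigma_\rho S\iff T\leq^\Pi_\rho S$. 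Step (1) gives $\Leftarrow$ for the $\Sigma$ version and step (2) gives $\Rightarrow$; the $\Pi$ version is handled symmetrically via the game characterization, or by observing $\leq^\Pi_\rho$ and $\leq^\Sigma_\rho$ define the same relation on $WF\times WF$ while $\leq^\Pi_\rho$ additionally ``fails safely'' when $T\notin WF$.

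I expect the main obstacle to be step (2) together with getting the \emph{exact} form of the two relations so that they literally satisfy Definition \ref{def:PiRank} rather than just morally doing so — in particular handling the off-diagonal behavior (what happens when $S\notin WF$, or when $T\in WF$ but $\rho(T)>\rho(S)$) so that both relations restrict correctly on the set $\{(T,S): S\in WF\}$, which is all the definition requires. The recursion in (2) needs a little care: one builds $\phi$ level by level, and at a node $t$ with $\phi(t)=s$ already chosen satisfying $\rho_T(t)\le\rho_S(s)$, for each immediate successor $t^\frown i\in T$ one has $\rho_T(t^\frown i)+1\le\rho_T(t)\le\rho_S(s)$, so $\rho_T(t^\frown i)<\rho_S(s)=\sup\{\rho_S(s^\frown j)+1\}$, which yields some $j$ with $\rho_T(t^\frown i)\le\rho_S(s^\frown j)$; set $\phi(t^\frown i)=s^\frown j$. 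Since this is standard material (\cite[Exercise 34.6]{K95}), I would present (1) and (2) concisely and spend most of the words making the quantifier-counting in (3) explicit, as that is what actually certifies the $\Pi^1_1$-rank property.
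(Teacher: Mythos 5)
The paper offers no proof of this statement; it is quoted verbatim from \cite[Exercise 34.6]{K95}, so there is no in-paper argument to compare yours against. That said, your outline is the standard solution to that exercise and is essentially sound: the analytic relation ``there exists a monotone, length-preserving $\phi$ with $\phi(T)\subseteq S$'' is closed in $(\wbaire)^{\wbaire}\times Tr\times Tr$ as you say, the forward implication is exactly the paper's Lemma \ref{lem:TrRkMonotone}, and your level-by-level recursion in step (2) (using $\rho_T(t\conc i)+1\leq\rho_T(t)\leq\rho_S(\phi(t))=\sup_j\{\rho_S(\phi(t)\conc j)+1\}$ to pick a child of $\phi(t)$) correctly produces the witness when $T\in WF$ and $\rho(T)\leq\rho(S)$. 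Two points need tightening before this is a proof rather than a plan. First, the co-analytic relation is the actual content of the exercise and should be written out rather than gestured at via ``the canonical game'': the clean choice is $T\leq^\Pi_\rho S$ iff there is \emph{no} monotone $\psi$ with $\psi(S)\subseteq T$ and $\emptyset\subsetneq\psi(\emptyset)$, and one must verify the required equivalence for $S\in WF$ in all three cases --- $T$ ill-founded (then some $T_s$ with $|s|=1$ is ill-founded and every tree maps monotonically into it along a branch, so the relation fails, as it should), $T$ well-founded with $\rho(T)\leq\rho(S)$ (such a $\psi$ would force $\rho_S(\emptyset)\leq\rho_T(\psi(\emptyset))<\rho_T(\emptyset)$, a contradiction), and $T$ well-founded with $\rho(T)>\rho(S)$ (then $\rho_S(\emptyset)\leq\rho_T(s)$ for some $s$ of length $1$ and the recursion of step (2) builds $\psi$). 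Second, the degenerate case $S=\emptyset$, where $\rho(S)=0$ and the clause ``$\psi(\emptyset)\supsetneq\emptyset$'' is vacuous because $\emptyset\notin S$, breaks the $\Pi$ equivalence as literally stated; it must be patched by hand on the Borel set of pairs $(T,\emptyset)$ or excluded by convention. With those repairs your quantifier count in step (3) is correct and the argument goes through.
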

\noindent We may use this fact to create other $\Pi_1^1$-ranks in an easy way:  Let $X$ be a standard Borel space.  If $A\subseteq X$ Borel reduces to $WF$ via $f$, then the map $x \mapsto \rho(f(x))$ is a $\Pi_1^1$-rank. \par

\indent The most important fact about $\Pi_1^1$-ranks for this paper is the following (\cite[Theorem 35.23]{K95}):

\begin{thm}[The Boundedness Theorem for $\Pi_1^1$-ranks]\label{thm:BddnessThm}
Let $X$ be a standard Borel space, $A\subseteq X$ co-analytic, and $\phi\from A\to\omega_1$ a $\Pi_1^1$-rank.  Then
$$A \text{ is Borel} \;\Longleftrightarrow\; \sup \{ \phi(x) \mid x\in A \} < \omega_1.$$
\end{thm}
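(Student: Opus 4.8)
The plan is to treat the two implications separately. The implication ``$\sup\{\phi(x)\mid x\in A\}<\omega_1\implies A$ is Borel'' is soft. Set $\xi_0:=\sup\{\phi(x)\mid x\in A\}$; since $\xi_0<\omega_1$, the image $\phi(A)$ is a countable set of ordinals, so we may pick $y_0,y_1,\dots\in A$ with the ordinals $\phi(y_n)$ cofinal in $\phi(A)$. For each $n$ the initial segment $A_n:=\{x\in A\mid\phi(x)\le\phi(y_n)\}$ equals both $\{x\in X\mid x\leq_\phi^\Sigma y_n\}$ and $\{x\in X\mid x\leq_\phi^\Pi y_n\}$, hence is simultaneously analytic and co-analytic, hence Borel. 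By the choice of the $y_n$ we get $A=\bigcup_n A_n$, a countable union of Borel sets, so $A$ is Borel.

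For the converse the key input is a $\Sigma^1_1$-boundedness statement for the canonical rank $\rho$: \emph{every analytic $B\subseteq WF$ satisfies $\sup\{\rho(T)\mid T\in B\}<\omega_1$}. Granting this, I finish as follows. The $\Pi^1_1$-ranks constructed in this paper are all of the form $\phi=\rho\circ f$ for a Borel reduction $f\from X\to Tr$ of $A$ to $WF$ (see the remark after Definition~\ref{def:PiRank}); for such $\phi$ the converse is immediate, since if $A$ is Borel then $f(A)$ is an analytic subset of $WF$ and $\sup\{\phi(x)\mid x\in A\}=\sup\{\rho(T)\mid T\in f(A)\}<\omega_1$. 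For a general $\Pi^1_1$-rank $\phi$ on $A$ one first replaces $f$ by a ``regularization'' of $\phi$: a Borel map $g\from X\to Tr$ with $g^{-1}(WF)=A$ and $\rho(g(x))\ge\phi(x)$ for all $x\in A$, obtained by unravelling the analytic relation $\leq_\phi^\Sigma$ into a tree of finite approximations, and then argues as before with $g$ in place of $f$.

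To prove the $\Sigma^1_1$-boundedness statement, for trees $S,T$ declare $S\preceq T$ if there is $h\from\wbaire\to\wbaire$ with $h(s)\in T$ for all $s\in S$ and $h(s)\subsetneq h(s')$ whenever $s\subsetneq s'$ lie in $S$; this relation is analytic, being the projection along the Polish space of functions $\wbaire\to\wbaire$ of a Borel (in fact closed) set. A transfinite induction shows that for $T\in WF$ and any tree $S$ one has $S\preceq T$ iff $S\in WF$ and $\rho(S)\le\rho(T)$: forward, $h$ carries any branch of $S$ to a branch of $T$ and one checks $\rho_S(s)\le\rho_T(h(s))$; backward, one builds $h$ by recursion along $\rho_S$, choosing at each node an extension in $T$ of sufficiently large $\rho_T$-value. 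Now if $B\subseteq WF$ were analytic with $\sup\{\rho(T)\mid T\in B\}=\omega_1$, then the ordinals $\rho(T)$, $T\in B$, are cofinal in $\omega_1$, so for every $S\in Tr$ we have $S\in WF\iff\exists T\,(T\in B\wedge S\preceq T)$: if $S\in WF$, cofinality supplies $T\in B$ with $\rho(T)\ge\rho(S)$, hence $S\preceq T$, while conversely $S\preceq T$ with $T\in B\subseteq WF$ forces $S\in WF$. The right side is the projection of the intersection of two analytic sets, hence analytic, so $WF$ would be analytic. But a non-Borel analytic set Borel reduces to $IF=Tr\setminus WF$ by Theorem~\ref{thm:WFComplete}, so $IF$, and hence $WF$, is non-Borel, while a set both analytic and co-analytic is Borel --- a contradiction.

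The one genuinely non-routine step is the regularization used for the fully general statement: building, from an abstract $\Pi^1_1$-rank, a Borel tree-valued map dominating it in $\rho$-rank. Everything else rests only on facts already in hand --- that initial segments of a $\Pi^1_1$-rank are Borel, and that $WF$ is not analytic.
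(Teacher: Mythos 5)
First, note that the paper does not prove this theorem at all: it is quoted verbatim from Kechris \cite[Theorem 35.23]{K95}, so there is no internal argument to compare against and any correct proof is "a different route" by default. Your easy direction is correct and complete: the initial segments $A_n=\{x\mid x\leq_\phi^\Sigma y_n\}=\{x\mid x\leq_\phi^\Pi y_n\}$ are simultaneously analytic and co-analytic, hence Borel by Suslin's theorem, and $A$ is their countable union. Your proof of $\Sigma^1_1$-boundedness for $\rho$ on $WF$ via the analytic embeddability relation $\preceq$ is also correct, and it is the standard engine behind the theorem.

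The genuine gap is the "regularization" step, which you rightly flag as the non-routine part but then assert in a form that the sketched construction cannot deliver. Unravelling $\leq_\phi^\Sigma$ (or rather its strict variant $<_\phi^\Sigma$, which you must first extract from the given data) into a tree of finite approximations produces, for $x\in A$, the tree of witnessed $<_\phi^\Sigma$-descending chains below $x$; this tree is well-founded, but its rank is controlled by the \emph{order type} of $\{\phi(y)\mid y\in A,\ \phi(y)<\phi(x)\}$, not by the ordinal $\phi(x)$ itself. If $\phi$ skips ordinals (e.g.\ a constant $\Pi^1_1$-rank taking a single large countable value), the unravelled tree has tiny rank while $\phi(x)$ is huge, so the claimed pointwise inequality $\rho(g(x))\geq\phi(x)$ fails for this $g$; and establishing the existence of \emph{some} Borel $g$ with that pointwise property is, for Borel $A$, essentially equivalent to the boundedness conclusion you are trying to prove. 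The fix is to weaken the claim to what the construction actually gives: if $A$ is Borel then $g(A)$ is an analytic subset of $WF$, so $\sup_{x\in A}\rho(g(x))=\alpha<\omega_1$ by your $WF$-boundedness lemma, whence the order type of $\phi(A)$ is at most roughly $\alpha$; since a subset of $\omega_1$ is unbounded if and only if it is uncountable, this bounds $\sup\phi(A)$ below $\omega_1$. With that substitution (and the routine but nontrivial verification that $g$ is Borel, which requires coding witnesses from Polish spaces by shrinking basic open sets), your argument becomes a correct proof.
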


We will use the Boundedness Theorem to show that certain $\Pi_1^1$ sets are not Borel, by showing that they come with $\Pi^1_1$-ranks with images unbounded below $\omega_1$.  To this end, we will often use the following fact about the ranks of trees, which follows immediately from the definition.

\begin{lem}\label{lem:TrRkMonotone}
Suppose $S,T$ are trees and $\phi\from S\to T$ is a map such that $s\subsetneq t \Rightarrow \phi(s) \subsetneq \phi(t)$.  (We call such a map \textbf{monotone}.)  Then $\rho_S(s)\leq \rho_T(\phi(s))$ for all $s\in S$.  In particular $\rho(S)\leq\rho(T)$.
\end{lem}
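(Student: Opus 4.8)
The plan is to prove the pointwise inequality $\rho_S(s)\le\rho_T(\phi(s))$ by transfinite induction on the ordinal $\rho_S(s)$, and then read off $\rho(S)\le\rho(T)$ by specializing to $s=\emptyset$. Before starting the induction I would dispose of the degenerate cases. If $S=\emptyset$ there is nothing to prove, since $\rho(S)=0$. If $T$ is ill-founded then $\rho(T)=\omega_1$ and the inequality $\rho(S)\le\rho(T)$ holds automatically, so I may assume $T\in WF$, in which case $\rho_T$ is genuinely defined on all of $T$. Finally, if $S$ is ill-founded I would observe that monotonicity forces $T$ to be ill-founded as well: given a branch $x$ of $S$, the sequences $\phi(x\restriction n)$ form a strictly $\subsetneq$-increasing chain in $T$, so their lengths are unbounded and $\bigcup_n\phi(x\restriction n)$ is a branch of $T$, contradicting $T\in WF$. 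Hence for the rest of the argument I may assume both $S,T\in WF$, so that $\rho_S$ and $\rho_T$ are defined.

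For the main induction, fix $s\in S$ and assume the claim for every $s'\in S$ with $\rho_S(s')<\rho_S(s)$. If $s$ has no proper extension in $S$, then $\rho_S(s)=0\le\rho_T(\phi(s))$ trivially. Otherwise, for each $t\in S$ with $s\subsetneq t$ we have $\rho_S(t)<\rho_S(s)$, so the inductive hypothesis gives $\rho_S(t)\le\rho_T(\phi(t))$. Since $\phi$ is monotone, $\phi(s)\subsetneq\phi(t)$, and as $\phi(s)$ therefore has a proper extension in $T$, the defining supremum for $\rho_T$ yields $\rho_T(\phi(t))+1\le\rho_T(\phi(s))$. Combining, $\rho_S(t)+1\le\rho_T(\phi(s))$ for every such $t$, and taking the supremum over $t$ gives $\rho_S(s)=\sup\{\rho_S(t)+1\mid s\subsetneq t\}\le\rho_T(\phi(s))$, completing the induction.

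To conclude, I apply the pointwise inequality at $s=\emptyset\in S$, noting $\phi(\emptyset)\in T$. Since $\emptyset\subseteq\phi(\emptyset)$, the same supremum argument (or a trivial equality when $\phi(\emptyset)=\emptyset$) gives $\rho_T(\phi(\emptyset))\le\rho_T(\emptyset)$, so $\rho(S)=\rho_S(\emptyset)+1\le\rho_T(\phi(\emptyset))+1\le\rho_T(\emptyset)+1=\rho(T)$. Together with the observation above that $S\notin WF$ implies $T\notin WF$, this establishes $\rho(S)\le\rho(T)$ in all cases. I do not anticipate a genuine obstacle: the argument is a routine transfinite induction, and the only points requiring care are the bookkeeping in the ill-founded cases and the repeated use of the identity $\rho_T(u)+1\le\rho_T(v)$ whenever $v\subsetneq u$ in a well-founded tree, which is immediate from the definition of $\rho_T$.
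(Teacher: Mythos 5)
Your proof is correct, and since the paper omits the argument entirely (stating only that the lemma ``follows immediately from the definition''), your transfinite induction on $\rho_S(s)$, together with the observation that monotonicity sends branches of $S$ to branches of $T$ in the ill-founded case, is exactly the standard argument the authors intend. No gaps.
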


\section{The minimal condition on centralizers}\label{sec:MinCent}

We wish to show certain chain conditions give rise to sets of marked groups which are $\Pi^1_1$ and not Borel in $\Gw$.  We begin by looking at the following chain condition.

\begin{defn}
A subgroup $H$ of $G$ is a \textbf{centralizer} in $G$ if $H=C_G(A)$ for some $A\subseteq G$.
\end{defn}

\begin{defn}
A group $G$ satisfies the \textbf{minimal condition on centralizers} if there is no strictly decreasing infinite chain $C_0 > C_1 > \ldots$ of centralizers in $G$.  We denote the class of countable groups satisfying the minimal condition on centralizers by $\mc M_C$.
\end{defn}

The class $\mc M_C$ is large, containing abelian groups, linear groups, and finitely generated abelian-by-nilpotent groups; see \cite{Br79} for further discussion. It is not hard to check that a group $G$ satisfies the minimal condition on centralizers if and only if it satisfies the maximal condition on centralizers, but our analysis is easier if we think about the minimal version of the chain condition.

Given a group $G\in\Gw$, we construct a tree $T_G\subseteq\wbaire$ and associated groups $G_s\in \Gw$ for each $s\in T_G$.  Each $G_s$ will be a centralizer in $G$.

\begin{enumerate}[$\bullet$]
\item Put $\emptyset\in T_G$ and let $G_\emptyset:=G=C_G(\emptyset)$.
\item Suppose that $s\in T_G$ and $G_s=C_G(\{g_s\})$ has already been defined.  If $C_G(\{g_s\}\cup\{g_i\})\neq C_G(\{g_s\})$, then put $s\conc i\in T_G$ and $G_{s\conc i}:=C_G(\{g_s\}\cup\{g_i\})$.
\end{enumerate}

\begin{lem}\label{lem:CentMapBorel}
The map $\Phi_C\from\Gw\to Tr$ given by $G\mapsto T_G$ is Borel.
\end{lem}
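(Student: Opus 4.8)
The plan is to show that $\Phi_C$ is Borel by showing that for each basic open set $O_t \subseteq Tr$, the preimage $\Phi_C^{-1}(O_t) = \{G \in \Gw \mid t \in T_G\}$ is Borel in $\Gw$. Since the sets $O_t$ together with their complements form a sub-basis for the topology on $Tr$, and Borel sets are closed under complementation and countable Boolean operations, this suffices.

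First I would fix $t = (t_0, \ldots, t_{n-1}) \in \wbaire$ and analyze the membership condition $t \in T_G$ by unwinding the recursive definition of $T_G$. By construction, $t \in T_G$ if and only if every proper initial segment $t \restriction k$ already lies in $T_G$ and, at each stage, the relevant centralizer strictly shrinks; more precisely, $t \in T_G$ exactly when for each $k < n$ we have $C_G(\{g_{t \restriction (k+1)}\}) \neq C_G(\{g_{t \restriction k}\})$, using the convention that $g_{t \restriction k} = \{g_{t_0}, \ldots, g_{t_{k-1}}\}$. Since $n$ is fixed and finite, this is a finite conjunction, so it is enough to show that each condition $C_G(\{g_s\} \cup \{g_i\}) \neq C_G(\{g_s\})$ defines a Borel (indeed relatively clopen, after a countable union) subset of $\Gw$ as $G$ ranges over $\Gw$, with $s$ and $i$ fixed.

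The key computation is that membership of an element $f_N(\gamma_j)$ in a centralizer $C_G(\{g_s\})$ is a Borel condition in $N$. Indeed, writing $g_s = \{f_N(\gamma_{s_0}), \ldots\}$ and a candidate element as $f_N(\gamma_j)$, the statement $f_N(\gamma_j) \in C_G(\{g_s\})$ says that $[\gamma_j, \gamma_{s_\ell}] \in N$ for each relevant index $\ell$, which is a finite intersection of sub-basic clopen sets $O_{[\gamma_j, \gamma_{s_\ell}]}$. Now $C_G(\{g_s\}) \neq C_G(\{g_s\} \cup \{g_i\})$ holds precisely when there is some $j$ with $f_N(\gamma_j) \in C_G(\{g_s\})$ but $f_N(\gamma_j) \notin C_G(\{g_s\} \cup \{g_i\})$ — equivalently $[\gamma_j, \gamma_{s_\ell}] \in N$ for all $\ell$ but $[\gamma_j, \gamma_i] \notin N$ — since $\Fw$ surjects onto $G$ and so every element of $G$ is $f_N(\gamma_j)$ for some $j$. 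This is a countable union over $j \in \Nb$ of clopen sets, hence Borel (in fact $F_\sigma$, even open). Intersecting over $k < n$ gives that $\Phi_C^{-1}(O_t)$ is Borel, completing the argument.

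I do not expect a serious obstacle here; the content is entirely bookkeeping, the main point being the observation that a single existential quantifier over the enumeration $(\gamma_j)$ of $\Fw$ — which replaces quantification over elements of $G$ without any appeal to choice — turns the a priori $\Pi^0_2$-looking inequality of centralizers into a countable union of clopen conditions. The only mild care needed is to keep track of the enumeration conventions for the sets $\{g_s\}$ (which may have repetitions and fewer than $|s|$ elements) and to note that this does not affect Borelness, since a finite conjunction or disjunction indexed by such a set is still a finite Boolean combination of clopen sets.
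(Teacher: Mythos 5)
Your proposal is correct. The paper organizes the proof differently: it first constructs, for each $s\in\wbaire\setminus\{\emptyset\}$, a Borel map $C_s\from\Gw\to\Gw$ sending $N$ to a marking of the centralizer $C_G(\{g_s\})$ (namely $\ker(f_N\circ\pi_N)$ for a substitution $\pi_N$ on the generators of $\Fw$ determined by centralizer membership), and then deduces Borelness of $\Phi_C$ from the Borelness of these maps. You bypass that intermediate construction and compute $\Phi_C^{-1}(O_t)$ directly as a finite intersection, over the levels $k<|t|$, of countable unions of clopen sets. The computational core is the same in both arguments: $f_N(\gamma_j)\in C_G(\{g_s\})$ is the clopen condition that $[\gamma_j,\gamma_{s_\ell}]\in N$ for each $\ell$, and the existential quantifier witnessing $C_G(\{g_s\})\neq C_G(\{g_s\}\cup\{g_i\})$ ranges over the fixed enumeration $(\gamma_j)$ of $\Fw$ (using that the smaller centralizer is contained in the larger, so inequality is witnessed by a single element), contributing only a countable union. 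Your version is more self-contained for this particular lemma, and you are right to insist on the strict-shrinking condition at \emph{every} level $k<|t|$ rather than only the last one, since that is exactly what membership in the recursively defined tree requires; the paper's detour through the maps $C_s$ buys uniformity with its treatment of the other three constructions but is not needed here.
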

\noindent Intuitively, Lemma~\rm\ref{lem:CentMapBorel} holds since our construction is explicit; we delay a rigorous proof until Section \ref{sec:Borel}.

\begin{lem}\label{lem:CentMapReduction}
$T_G$ is well-founded if and only if $G\in\mc M_C$.
\end{lem}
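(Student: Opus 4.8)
The plan is to prove both directions by relating branches of $T_G$ to strictly decreasing chains of centralizers. The key observation is that the construction is set up so that moving from a node $s$ to a child $s\conc i$ always strictly decreases the centralizer: by construction $G_{s\conc i} = C_G(\{g_s\}\cup\{g_i\}) \subsetneq C_G(\{g_s\}) = G_s$, and moreover $G_{s\conc i} \leq G_s$ always holds since we are centralizing a larger set. So along any chain $\emptyset \subsetneq s_1 \subsetneq s_2 \subsetneq \cdots$ in $T_G$ we get $G = G_\emptyset \gneq G_{s_1} \gneq G_{s_2} \gneq \cdots$, a strictly decreasing chain of centralizers.

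For the contrapositive of the forward direction, suppose $G \notin \mc M_C$; I want to show $T_G$ is ill-founded. Fix a strictly decreasing infinite chain of centralizers $C_0 \gneq C_1 \gneq C_2 \gneq \cdots$ in $G$. Each $C_n$ has the form $C_G(A_n)$ for some $A_n \subseteq G$, and without loss of generality we may take $A_0 \subseteq A_1 \subseteq \cdots$ (replace $A_n$ by $A_0 \cup \cdots \cup A_n$; this only shrinks $C_n$ to $\bigcap_{k\leq n} C_G(A_k) = C_G(\bigcup_{k\leq n} A_k)$, and since the chain was already strictly decreasing it stays so after passing to a subsequence if necessary). The main technical step is to produce a branch of $T_G$, i.e. a nested sequence of nodes, witnessing this. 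I would build the branch greedily: having reached a node $s$ with $G_s = C_G(\{g_s\})$ strictly containing some $C_n$ from the chain, I need to find an index $i$ with $C_G(\{g_s\}\cup\{g_i\}) \subsetneq C_G(\{g_s\})$ and such that the new centralizer still strictly contains some later $C_m$; concretely, since $G_s \supsetneq C_n = C_G(A_n)$ for some $n$, there is an element $g_i \in A_n$ (equivalently some element of $G$ in the enumeration) not centralized by all of $G_s$, so $C_G(\{g_s\}\cup\{g_i\}) \subsetneq G_s$, and this centralizer still contains $C_n$. Iterating, the nodes never terminate, so $T_G$ has a branch.

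For the reverse direction, suppose $G \in \mc M_C$ and aim to show $T_G \in WF$. If $T_G$ were ill-founded it would have a branch $x \in \baire$, giving by the key observation above a strictly decreasing infinite chain $G = G_{x\restriction 0} \gneq G_{x\restriction 1} \gneq \cdots$ of centralizers in $G$, contradicting $G \in \mc M_C$. The one point requiring a little care throughout is bookkeeping around the enumeration of $G$: the nodes of $T_G$ are labelled by natural numbers indexing the fixed enumeration $\{g_0, g_1, \ldots\}$, which may repeat elements, but this causes no trouble since we only ever need \emph{some} index $i$ producing a strictly smaller centralizer, and every element of $G$ appears in the enumeration. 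The main obstacle is thus the forward direction's greedy construction of a branch, specifically arranging that the chain $A_0 \subseteq A_1 \subseteq \cdots$ can be taken increasing and that at each stage a suitable next generator exists; once the setup is arranged correctly this is routine.
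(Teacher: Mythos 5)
Your proof is correct and follows essentially the same approach as the paper: the backward direction is immediate since a branch of $T_G$ yields a strictly decreasing chain of centralizers, and the forward direction builds a branch from a witnessing infinite descending chain by repeatedly selecting elements that strictly shrink the centralizer. The only difference is organizational --- the paper first extracts a single infinite set $A$ with $C_G(B)\neq C_G(A)$ for every finite $B\subseteq A$ and then passes to a subsequence, whereas you run a greedy induction maintaining the invariant that $G_s$ properly contains some $C_n$ --- but both amount to the same construction.
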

\begin{proof}
If $G\in\mc M_C$, then $T_G$ contains no infinite branches by definition. If $G\notin\mc M_C$, then there is some infinite $A\subseteq G$ such that for all finite $B\subseteq A$, $C_G(A)\neq C_G(B)$.  Let $a_0<a_1<a_2<\ldots$ be such that $A=\{g_{a_0},g_{a_1},\ldots\}$.  By moving to a subsequence if necessary, we may assume that $C_G(\{g_{a_0},\ldots,g_{a_n}\}) \gneq C_G(\{g_{a_0},\ldots, g_{a_{n+1}}\})$ for all $n\in\Nb$.  Then $(a_0,\ldots,a_n)\in T_G$ for all $n\in\Nb$, so $T_G$ has an infinite branch.
\end{proof}

\begin{lem}\label{lem:CentSubRank}
Let $H,G\in\Gw$.  If $H\hookrightarrow G$, then $\rho(T_H)\leq\rho(T_G)$.
\end{lem}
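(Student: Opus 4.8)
The statement asserts that an embedding $H \hookrightarrow G$ forces $\rho(T_H) \le \rho(T_G)$. Given Lemma~\ref{lem:TrRkMonotone}, the natural strategy is to produce a monotone map $\psi \from T_H \to T_G$, i.e.\ one satisfying $s \subsetneq t \Rightarrow \psi(s) \subsetneq \psi(t)$; the rank inequality then follows immediately from that lemma. So the whole problem reduces to constructing $\psi$ and checking monotonicity.

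**Building the map.** Fix an embedding $\iota \from H \hookrightarrow G$. Recall $H$ carries its own preferred enumeration $H = \{h_0, h_1, \ldots\}$ and $G$ carries $G = \{g_0, g_1, \ldots\}$. The nodes of $T_H$ are finite sequences $s = (s_0,\ldots,s_n)$ such that the centralizers $C_H(\{h_{s_0}\}) > C_H(\{h_{s_0},h_{s_1}\}) > \cdots$ strictly descend at each step, and similarly for $T_G$. The first thing I would do is move the elements $\iota(h_{s_0}), \ldots, \iota(h_{s_n}) \in G$ back to indices in the enumeration of $G$: for each $i$ pick $g_{k_i}$ with $g_{k_i} = \iota(h_{s_i})$ (such an index exists since $\iota(h_{s_i}) \in G$). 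This gives a candidate sequence $(k_0,\ldots,k_n)$, but there is no reason this lies in $T_G$: the centralizers $C_G(\{g_{k_0},\ldots,g_{k_j}\})$ need not strictly decrease at every step $j$, even though the corresponding centralizers in $H$ do. The fix is standard: let $\psi(s)$ be the subsequence of $(k_0,\ldots,k_n)$ obtained by keeping $k_0$ and then, inductively, keeping $k_{j+1}$ exactly when adjoining $g_{k_{j+1}}$ strictly shrinks the centralizer of the elements kept so far. By construction $\psi(s) \in T_G$, and $\psi(\emptyset) = \emptyset$.

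**Monotonicity — the main point.** The content is that this "skipping" procedure is compatible with extension: if $s \subsetneq t$ in $T_H$ then $\psi(s) \subsetneq \psi(t)$ in $T_G$. Since $t$ extends $s$, running the procedure on $t$ reproduces $\psi(s)$ on the initial segment and then possibly appends more, so $\psi(s)$ is at least an initial segment of $\psi(t)$; the real claim is that the extension is \emph{strict}, i.e.\ at least one new index is appended when we pass from $s$ to $s \conc i$ in $T_H$. This is where the embedding is used essentially. The defining property of $s \conc i \in T_H$ is $C_H(\{h_s\} \cup \{h_i\}) \subsetneq C_H(\{h_s\})$; concretely there is some $h \in H$ centralizing every $h_{s_j}$ but not $h_i$. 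Applying $\iota$, the element $\iota(h) \in G$ centralizes every $\iota(h_{s_j})$ (hence centralizes the kept subcollection forming $\psi(s)$, since that is a sub-multiset) but does \emph{not} centralize $\iota(h_i) = g_{k_{n+1}}$, because $\iota$ is injective and a homomorphism, so $[\iota(h),\iota(h_i)] = \iota([h,h_i]) \ne e$. Therefore $C_G(\psi(s) \cup \{g_{k_{n+1}}\}) \subsetneq C_G(\psi(s))$, which means the procedure does append $k_{n+1}$ (or some earlier index it had skipped), making $\psi(s \conc i) \supsetneq \psi(s)$. The one thing to be careful about is that "$\psi(s)$ indexes a sub-multiset of $\{\iota(h_{s_j})\}$" combined with the monotonicity of $A \mapsto C_G(A)$ under inclusion gives $C_G(\{g_{\psi(s)}\}) = C_G(\{\iota(h_{s_j})\}_j)$ — equality, not just inclusion — since skipped generators are by definition already centralized; this is exactly what makes the element $\iota(h)$ land in $C_G(\{g_{\psi(s)}\})$.

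**Conclusion.** Once $\psi \from T_H \to T_G$ is shown monotone, Lemma~\ref{lem:TrRkMonotone} yields $\rho(T_H) \le \rho(T_G)$, completing the proof. I expect the only genuinely delicate step to be the bookkeeping around the skipping procedure and the multiset subtleties in the enumerations (the paper explicitly warns that $\{g_s\}$ may have repetitions); the group theory itself — pushing a witnessing centralizing element through an injective homomorphism — is immediate.
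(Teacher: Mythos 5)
Your proof is correct and takes essentially the same route as the paper: translate indices through the embedding and invoke Lemma~\ref{lem:TrRkMonotone} on the resulting monotone map. The only difference is that your skipping procedure is vacuous --- the very argument you give for monotonicity (pushing the witnessing non-centralizing element $h$ through the injective homomorphism) already shows that the directly translated sequence $(k_0,\ldots,k_n)$ strictly descends at every step and hence lies in $T_G$, which is exactly the paper's shorter proof.
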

\begin{proof}
Let $\alpha\from H\hookrightarrow G$ and let $\psi\from\Nb\to\Nb$ be such that $\alpha(h_k)=g_{\psi(k)}$.  We now define a map $\phi:T_H\rightarrow \wbaire$:  Let $\phi(\emptyset)=\emptyset$.  If $s\in T_H$ and $s=(s_0,\ldots,s_n)$, let $\phi(s)=(\psi(s_0),\ldots,\psi(s_n))$.  Clearly $\phi$ is monotone.  Further, if $s\in T_H$, then $H_s = C_H(\{h_s\}) \hookrightarrow C_G(\{g_{\phi(s)}\})$.  Since $C_G(\{g_{\phi(s)}\})\cap \alpha(H) \cong C_H(\{h_s\})$, we have that $C_G(\{g_{\phi(s\restriction k)}\}) \neq C_G(\{g_{\phi(s\restriction (k+1))}\})$ for all $k<|s|$.  Thus $\phi(s)\in T_G$.  It follows $\phi(T_H)\subseteq T_G$, and by Lemma \ref{lem:TrRkMonotone}, $\rho(T_H)\leq\rho(T_G)$.
\end{proof}

\begin{cor}\label{cor:CentIsoInv}
If $G,G'\in\Gw$ and $G\cong G'$, then $\rho(T_G)=\rho(T_{G'})$.
\end{cor}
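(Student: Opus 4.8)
The plan is to deduce Corollary~\ref{cor:CentIsoInv} directly from Lemma~\ref{lem:CentSubRank} by applying it in both directions. The key observation is that an isomorphism $G \cong G'$ furnishes, in particular, an injective homomorphism $G \hookrightarrow G'$, and likewise its inverse furnishes an injective homomorphism $G' \hookrightarrow G$. Since $\rho(T_H) \leq \rho(T_G)$ whenever $H \hookrightarrow G$, applying the lemma to each of these two embeddings yields $\rho(T_G) \leq \rho(T_{G'})$ and $\rho(T_{G'}) \leq \rho(T_G)$ simultaneously, hence equality.

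Concretely, I would write: suppose $\beta \from G \to G'$ is an isomorphism. Then $\beta$ is in particular an injective homomorphism $G \hookrightarrow G'$, so Lemma~\ref{lem:CentSubRank} gives $\rho(T_G) \leq \rho(T_{G'})$. Symmetrically, $\beta^{-1} \from G' \to G$ is an injective homomorphism $G' \hookrightarrow G$, so the same lemma gives $\rho(T_{G'}) \leq \rho(T_G)$. Combining the two inequalities yields $\rho(T_G) = \rho(T_{G'})$.

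There is no real obstacle here; the corollary is essentially a formal consequence of the lemma once one notes that isomorphisms are embeddings in both directions. The only minor point worth being careful about is that Lemma~\ref{lem:CentSubRank} is stated for $H, G \in \Gw$, i.e.\ for marked groups with their preferred markings; since $G$ and $G'$ already come as elements of $\Gw$, there is nothing to check — we simply invoke the lemma with the roles of $H$ and $G$ taken by $G, G'$ and then by $G', G$. (More generally, the same argument shows $\rho(T_G)$ depends only on the isomorphism type of $G$ as an abstract group, not on its marking, since two markings of the same group are related by mutually inverse embeddings.) This invariance is what justifies calling $\rho(T_G)$ — or a suitable normalization of it — an isomorphism invariant, as claimed in the introduction.
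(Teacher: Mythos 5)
Your proof is correct and matches the paper's intent exactly: the corollary is stated without proof precisely because it follows immediately from Lemma~\ref{lem:CentSubRank} applied to the two mutually inverse embeddings furnished by an isomorphism, which is the argument you give.
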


We thus see that $\rho(T_G)$ is an isomorphism invariant, so it makes sense to talk about the rank of a group $G$ with the minimal condition on centralizers, even when not considering a specific marking.
\begin{defn} 
If $G$ has the minimal condition on centralizers, then $\rho(T_G)$ for some (any) marking of $G$ is called the \textbf{centralizer rank} of $G$.
\end{defn}

We also mention that the above results, except for Lemma \ref{lem:CentMapBorel}, work with arbitrary enumerations of the group $G$, not just those that can arise from viewing $G$ as a marked group.  Certain enumerations may be easier to use to calculate $\rho(T_G)$, and Corollary \ref{cor:CentIsoInv} assures us that using these enumerations will not affect the answer.  The same will be true of our later constructions.  Of course, in this paper Lemma \ref{lem:CentMapBorel} and analogous results are of central importance, so we will continue to work with groups as elements of $\Gw$.

We now argue the centralizer rank is unbounded below $\omega_1$.

\begin{lem}\label{lem:CentRankSucc}
For $A,B\in \mc{M}_C$ with $A$ nonabelian, $A\times B\in \mc{M}_C$ and $\rho(T_B)<\rho(T_{A\times B})$.
\end{lem}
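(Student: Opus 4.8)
The plan is to verify the three claims in turn: that $A\times B$ lies in $\mathcal M_C$, that $\rho(T_B)\le\rho(T_{A\times B})$, and that this inequality is strict. For the first claim, I would recall the standard fact that centralizers in a direct product decompose coordinatewise: if $S\subseteq A\times B$ has projections $S_A\subseteq A$ and $S_B\subseteq B$, then $C_{A\times B}(S)=C_A(S_A)\times C_B(S_B)$. Hence a strictly descending chain of centralizers in $A\times B$ projects to chains of centralizers in $A$ and in $B$ that are eventually constant (since $A,B\in\mathcal M_C$), forcing the original chain to be eventually constant as well. So $A\times B\in\mathcal M_C$.

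For the inequality $\rho(T_B)\le\rho(T_{A\times B})$, I would simply invoke Lemma~\ref{lem:CentSubRank}: the inclusion $B\hookrightarrow A\times B$ as the second factor gives $\rho(T_B)\le\rho(T_{A\times B})$ immediately. The real content is strictness. Since $A$ is nonabelian, pick $a\in A$ with $C_A(a)\ne A$, and locate an index $i$ with $g_i=(a,e)$ (or more precisely, an $i$ so that the enumeration element $g_i$ of $A\times B$ is $(a,e)$). Then $C_{A\times B}(\{g_i\})=C_A(a)\times B\ne A\times B=G_\emptyset$, so the one-term sequence $(i)$ belongs to $T_{A\times B}$, and $G_{(i)}=C_A(a)\times B$. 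The idea is now to embed a copy of $T_B$ into the subtree $(T_{A\times B})_{(i)}$ of sequences extending $(i)$, via a monotone map, which by Lemma~\ref{lem:TrRkMonotone} would give $\rho(T_B)=\rho\big((T_B)_\emptyset\big)\le\rho\big((T_{A\times B})_{(i)}\big)$, and since $(i)\ne\emptyset$ is a proper extension of $\emptyset$ in $T_{A\times B}$, we get $\rho\big((T_{A\times B})_{(i)}\big)+1\le\rho(T_{A\times B})$, hence $\rho(T_B)<\rho(T_{A\times B})$.

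The map should send $s=(s_0,\dots,s_n)\in T_B$ to $(i,\psi(s_0),\dots,\psi(s_n))$ where $\psi$ translates the enumeration of $B$ into the enumeration of $A\times B$ via $h_k\mapsto(e,h_k)=g_{\psi(k)}$; this is monotone by construction. What must be checked is that this lands in $T_{A\times B}$, i.e.\ that along this sequence the centralizers strictly decrease at each step. Using the coordinatewise formula, after prepending $(i)$ the relevant centralizer at stage $(i,\psi(s_0),\dots,\psi(s_k))$ equals $C_A(a)\times C_B(\{h_{s_0},\dots,h_{s_k}\})$ — here we use that the first coordinates of $(e,h_{s_j})$ are trivial, so they contribute nothing new to the $A$-factor — and this is strictly smaller than the previous stage precisely because $C_B(\{h_{s_0},\dots,h_{s_k}\})\subsetneq C_B(\{h_{s_0},\dots,h_{s_{k-1}}\})$, which holds since $s\in T_B$. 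One subtlety to address: the construction of $T_B$ only records $s\conc j$ when adjoining $g_j$ strictly shrinks the centralizer, and the same rule defines $T_{A\times B}$; the coordinatewise decomposition shows these two shrinking conditions match up under the translation, so $\phi(s)\in T_{A\times B}$ whenever $s\in T_B$. I expect this bookkeeping — confirming that the step-by-step shrinking condition is faithfully transported, including the initial step from $\emptyset$ to $(i)$ — to be the main (though routine) obstacle; everything else follows from Lemmas~\ref{lem:TrRkMonotone} and~\ref{lem:CentSubRank} and the direct-product centralizer formula.
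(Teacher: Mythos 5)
Your proposal is correct and follows essentially the same route as the paper: both hinge on choosing a noncentral $a\in A$ so that $C_{A\times B}(\{(a,e)\})$ is the group at some node $i\in T_{A\times B}$, and then pushing $\rho(T_B)$ into the subtree above $i$. The only difference is one of execution — the paper cites Lemma~\ref{lem:CentSubRank} for the embedding $\{e\}\times B\leq (A\times B)_i$, whereas you unwind that lemma and build the monotone map into $(T_{A\times B})_{(i)}$ explicitly via the coordinatewise centralizer formula; both are fine.
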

\begin{proof}
It is easy to see $A\times B\in \mc{M}_C$. Let $a\in A$ be noncentral.  Then 
\[
C_{A\times B}(\{(a,e)\})=(A\times B)_i
\]
for some $i\in T_{A\times B}$ since the centralizer is not all of $A\times B$.  Further,
$$B\cong\{e\}\times B\leq C_{A\times B}(\{(a,e)\}),$$
so by Lemma \ref{lem:CentSubRank}, $\rho((T_{A\times B})_i)=\rho(T_{(A\times B)_i})\geq\rho(T_B)$.  The result now follows.
\end{proof}

\begin{lem}\label{lem:CentRankLim}
Let $\{A_i\}_{i\in\Nb}$ be countable groups.  If $A_i\in\mc M_C$ for all $i\in\Nb$, then there is a group $A\in \mc M_C$ such that $\rho(T_A)\geq\rho(T_{A_i})$ for all $i\in\Nb$.
\end{lem}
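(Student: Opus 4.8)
The plan is to take $A$ to be the countable free product $A := A_0 * A_1 * A_2 * \cdots$. The naive candidate $\bigoplus_i A_i$ fails: it need not lie in $\mc M_C$ (already $\bigoplus_{\Nb}S_3$ does not, since one may shrink its centralizers forever by killing the factors one at a time), whereas passing to the free product keeps all centralizers small. Each $A_i$ embeds into $A$ as a free factor, so Lemma~\ref{lem:CentSubRank} gives $\rho(T_{A_i}) \le \rho(T_A)$ for every $i$ simultaneously; the whole burden is therefore to verify $A \in \mc M_C$. (One may discard the trivial $A_i$, and if at most one $A_i$ is nontrivial the lemma is immediate; for a uniform statement one can instead work with $S_3 * A_0 * A_1 * \cdots$. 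I assume below that $A$ has at least two nontrivial free factors, so in particular $Z(A) = \{e\}$.)

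To check $A \in \mc M_C$ I would invoke the classical description of centralizers in a free product: for $g \in A\setminus\{e\}$, either $g$ is conjugate into some factor, say $g = xg'x^{-1}$ with $g' \in A_i\setminus\{e\}$, and then $C_A(g) = x\,C_{A_i}(g')\,x^{-1}$; or $g$ is conjugate into no factor, and then $C_A(g)$ is infinite cyclic. From this I need two easy consequences: (a) if a centralizer $C_A(Y) \neq \{e\}$ lies inside a conjugate $xA_ix^{-1}$ of a factor, then in fact $Y \subseteq xA_ix^{-1}$ and $C_A(Y) = x\,C_{A_i}(x^{-1}Yx)\,x^{-1}$, a conjugate of a genuine centralizer of $A_i$; and (b) the only centralizers of $A$ contained in a given infinite cyclic subgroup $\langle a_0\rangle$ of $A$ are $\langle a_0\rangle$ itself and $\{e\}$. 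Combined with $Z(A)=\{e\}$, these show that every centralizer of $A$ is, up to conjugacy, one of: $A$, $\{e\}$, a centralizer $C_{A_i}(S)$ of a factor, or an infinite cyclic subgroup.

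Given this, I would argue by contradiction. Suppose $C^{(0)} \supsetneq C^{(1)} \supsetneq \cdots$ is an infinite strictly descending chain of centralizers of $A$. At most $C^{(0)}$ equals $A$, so from some index on every $C^{(n)}$ is contained in a conjugate of a factor, or is infinite cyclic, or is $\{e\}$. An infinite-cyclic term admits at most one further strict step (down to $\{e\}$), and $\{e\}$ admits none, each contradicting infinitude. And if some $C^{(n)}\subseteq xA_ix^{-1}$, then every later $C^{(m)}$ lies in $xA_ix^{-1}$, so by (a) the conjugated tail $(x^{-1}C^{(m)}x)_{m\ge n}$ is an infinite strictly descending chain of centralizers of $A_i$, contradicting $A_i\in\mc M_C$. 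Hence no such chain exists, so $A\in\mc M_C$, which with the embeddings $A_i \hookrightarrow A$ completes the proof.

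The only real obstacle is the bookkeeping around free products — chiefly observation (a), that once a descending chain of centralizers of $A$ enters a conjugate of a factor $A_i$ it remains inside that conjugate and restricts to an honest chain of centralizers of $A_i$, so that the hypothesis $A_i\in\mc M_C$ can be brought to bear; the degenerate cases where few of the $A_i$ are nontrivial are routine.
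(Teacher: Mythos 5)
Your proposal is correct and is essentially the paper's proof: the paper also takes $A=\ast_{i\in\Nb}A_i$, cites the classification of centralizers in free products (cyclic or conjugates of centralizers of free factors, \cite[Corollary 4.1.6]{MKS66}) to conclude $A\in\mc M_C$, and then applies Lemma~\ref{lem:CentSubRank}. The only difference is that the paper leaves the deduction of $\mc M_C$ from that classification implicit, whereas you spell out the chain-termination argument.
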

\begin{proof}
Let $A=\ast_{i\in\Nb} A_i$.  By \cite[Corollary 4.1.6]{MKS66}, which says that centralizers in free products are cyclic or centralizers of a conjugate of a free factor, we infer that $A\in\mc M_C$. Lemma \ref{lem:CentSubRank} now implies that $\rho(T_A)\geq\rho(T_{A_i})$ for all $i\in\Nb$, as desired.
\end{proof}

\begin{lem}\label{lem:CentRankUnbdd}
For all $\alpha<\omega_1$, there is $G\in\mc M_C$ such that $\rho(T_G)\geq\alpha$.
\end{lem}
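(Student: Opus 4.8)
The plan is to proceed by transfinite induction on $\alpha<\omega_1$. For the base case, note that the trivial group $G=\{e\}$ lies in $\mc M_C$ and $T_G=\{\emptyset\}$, so $\rho(T_G)=1$; this settles all $\alpha\leq 1$. For the inductive step we split into the successor and limit cases, each dispatched by one of the two preceding lemmas, which were arranged precisely for this purpose.

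Suppose first that the claim holds for $\alpha$ and consider $\alpha+1$. By the induction hypothesis there is $B\in\mc M_C$ with $\rho(T_B)\geq\alpha$. Fix any finite nonabelian group $A$, for instance $A=S_3$; finite groups trivially satisfy every chain condition, so $A\in\mc M_C$. By Lemma \ref{lem:CentRankSucc}, $A\times B\in\mc M_C$ and $\rho(T_B)<\rho(T_{A\times B})$, hence $\rho(T_{A\times B})\geq\rho(T_B)+1\geq\alpha+1$.

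Now suppose $\alpha<\omega_1$ is a limit ordinal and the claim holds for every $\beta<\alpha$. Since $\alpha$ is a countable limit ordinal it has cofinality $\omega$, so we may fix a sequence $(\alpha_i)_{i\in\Nb}$ of ordinals below $\alpha$ with $\sup_i\alpha_i=\alpha$. By the induction hypothesis, for each $i\in\Nb$ there is $A_i\in\mc M_C$ with $\rho(T_{A_i})\geq\alpha_i$. Applying Lemma \ref{lem:CentRankLim} to the family $\{A_i\}_{i\in\Nb}$ yields a group $A\in\mc M_C$ with $\rho(T_A)\geq\rho(T_{A_i})\geq\alpha_i$ for every $i\in\Nb$; as $\rho(T_A)$ is then an ordinal bounding all the $\alpha_i$, we conclude $\rho(T_A)\geq\sup_i\alpha_i=\alpha$. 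This completes the induction.

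There is no genuine obstacle here, since Lemmas \ref{lem:CentRankSucc} and \ref{lem:CentRankLim} already supply both the needed increase in rank and the requisite closure of $\mc M_C$ under direct products with nonabelian groups and under countable free products. The only points worth checking are bookkeeping: that the auxiliary group chosen at each successor stage lies in $\mc M_C$ (immediate, as finite groups do), and that a cofinal $\omega$-sequence exists at limit stages (immediate, as every countable limit ordinal has cofinality $\omega$).
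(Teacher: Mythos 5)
Your proof is correct and follows essentially the same transfinite induction as the paper: Lemma \ref{lem:CentRankSucc} with a nonabelian $A\in\mc M_C$ at successor stages and Lemma \ref{lem:CentRankLim} along a cofinal $\omega$-sequence at limit stages. The only differences are cosmetic (naming $S_3$ explicitly, and using $\rho(T_{A_i})\geq\alpha_i$ rather than the paper's strict inequality, which still yields $\rho(T_A)\geq\sup_i\alpha_i=\alpha$ since $\rho(T_A)$ is an upper bound for the $\alpha_i$).
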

\begin{proof}
We prove this inductively.  Clearly the lemma holds for $\alpha=0$.  Suppose $\alpha=\beta+1$ and the lemma holds for $\beta$.  Let $G\in\mc M_C$ be such that $\rho(T_G)\geq\beta$ and $A\in\mc M_C$ be nonabelian.  Applying Lemma~\rm\ref{lem:CentRankSucc}, we see $\rho(T_{A\times G})\geq\beta+1$.

Suppose $\alpha$ is a limit ordinal.  Since $\alpha$ is countable, there is a countable increasing sequence of $\alpha_i<\alpha$ such that $\sup_{i\in\Nb} \alpha_i =\alpha$.  Let $G_i\in\mc M_C$ be such that $\rho(T_{G_i})>\alpha_i$.  Applying Lemma~\rm\ref{lem:CentRankLim}, there is some $G\in\mc M_C$ such that $\rho(T_G)>\alpha_i$ for all $i\in\Nb$.  It now follows that $\rho(T_G)\geq\alpha$.
\end{proof}

\begin{lem}\label{lem:CentFG}
For all $\alpha<\omega_1$, there is a finitely generated $G\in\mc M_C$ such that $\rho(T_G)\geq\alpha$.
\end{lem}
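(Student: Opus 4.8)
The plan is to upgrade the groups produced by Lemma \ref{lem:CentRankUnbdd} to finitely generated ones without decreasing the centralizer rank. The key point is that a finitely generated group can contain an arbitrary countable group as a subgroup while still lying in $\mc M_C$, and then Lemma \ref{lem:CentSubRank} transfers the rank. The natural construction is an embedding of an arbitrary countable group into a finitely generated group that preserves the minimal condition on centralizers; wreath products are the standard vehicle. Specifically, for $G\in\mc M_C$ with $\rho(T_G)\geq\alpha$, I would look at a group of the form $G\wr \Zb$ (the restricted wreath product), or more precisely an HNN-type or wreath construction whose base contains a copy of $G$ and which is finitely generated whenever $G$ is countable. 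The base group $\bigoplus_{n\in\Zb}G$ contains $G$ as a retract, so Lemma \ref{lem:CentSubRank} gives $\rho(T_{G\wr\Zb})\geq\rho(T_G)\geq\alpha$, provided $G$ is taken finitely generated at each inductive stage — but we cannot assume that, which is exactly the issue.

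So instead I would redo the induction of Lemma \ref{lem:CentRankUnbdd} keeping finite generation throughout. The successor step already preserves finite generation: if $G$ is finitely generated and $A$ is a finitely generated nonabelian group in $\mc M_C$ (e.g.\ a free group of rank $2$, or any finite nonabelian group), then $A\times G$ is finitely generated and by Lemma \ref{lem:CentRankSucc} has strictly larger rank. The limit step is the obstacle: the free product $\ast_{i}A_i$ used in Lemma \ref{lem:CentRankLim} is not finitely generated when infinitely many $A_i$ are nontrivial. Here I would replace the free product by a finitely generated group that contains all the $A_i$ and remains in $\mc M_C$. One option: embed $\bigoplus_i A_i$ into a finitely generated group via a wreath-product or Higman--Neumann--Neumann embedding theorem; the difficulty is controlling centralizers in the resulting group. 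A cleaner option, since each $A_i$ is itself finitely generated by induction, is to iterate the successor construction along a well-chosen sequence: choose finitely generated $G_i\in\mc M_C$ with $\rho(T_{G_i})>\alpha_i$ and form $G_i\times G_i'$-style products, but this only yields finite iterates. To reach a genuine limit ordinal one really does need an infinite construction.

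The cleanest route, I expect, is the following. Take the groups $G_i$ finitely generated with $\rho(T_{G_i})\geq\alpha_i$ (available by induction hypothesis), and build a single finitely generated group $G$ containing each $G_i$ as a subgroup and lying in $\mc M_C$. A concrete candidate is the restricted wreath product $(\ast_i G_i)\wr \Zb$ or, better, a group assembled so that the $G_i$ sit inside disjoint free factors of a common subgroup: form $\bigoplus_{n\in\Zb}(G_n')\rtimes\Zb$ where $G_n'$ cycles through the $G_i$, which is finitely generated (generated by generators of a single copy plus the shift), contains each $G_i$, and whose centralizers can be computed from the wreath-product structure to verify it lies in $\mc M_C$. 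Then Lemma \ref{lem:CentSubRank} gives $\rho(T_G)\geq\rho(T_{G_i})>\alpha_i$ for all $i$, hence $\rho(T_G)\geq\alpha$, and finite generation is maintained. The main obstacle, and the only place requiring real work, is verifying that this finitely generated amalgamating group satisfies the minimal condition on centralizers: I would isolate that as a preliminary claim (e.g.\ ``if each $A_i\in\mc M_C$ then the wreath construction above is in $\mc M_C$''), prove it by the standard analysis of centralizers in (restricted) wreath products over $\Zb$, and then the induction goes through verbatim as in Lemma \ref{lem:CentRankUnbdd}.
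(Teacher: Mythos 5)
Your overall strategy --- take $H\in\mc M_C$ with $\rho(T_H)\geq\alpha$ from Lemma \ref{lem:CentRankUnbdd}, embed it into a finitely generated group that is still in $\mc M_C$, and invoke Lemma \ref{lem:CentSubRank} --- is exactly the paper's. The gap is in the concrete construction you propose for the embedding. Every variant you suggest (the restricted wreath product $G\wr\Zb$, the group $\bigoplus_{n\in\Zb}G_n'\rtimes\Zb$, the base group $\bigoplus_{n\in\Zb}G$) contains an infinite restricted direct sum of nonabelian groups, and such a direct sum never satisfies the minimal condition on centralizers: if $g\in G$ is noncentral and $f_n$ denotes the element of $\bigoplus_{i\in\Zb}G$ supported at coordinate $n$ with value $g$, then the centralizer of $\{f_1,\dots,f_n\}$ consists of the elements whose coordinates $1,\dots,n$ lie in $C_G(g)$, and these form a strictly decreasing infinite chain of centralizers. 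Since the minimal condition on centralizers passes to subgroups, no wreath product of a nonabelian group over an infinite group lies in $\mc M_C$; the ``preliminary claim'' you defer to a standard analysis of centralizers in wreath products is therefore false. (And the groups to be embedded here are necessarily nonabelian, since abelian groups have centralizer rank at most $2$.)

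The paper sidesteps this by citing an embedding theorem of Karrass and Solitar \cite{KS71}: every countable group with the minimal condition on centralizers embeds into a $3$-generated group with the same property, the construction being built from free products with amalgamation in the spirit of the Higman--Neumann--Neumann embedding. Free-product-type constructions are the right vehicle precisely because centralizers in free products are cyclic or conjugate into a free factor (this is also what makes Lemma \ref{lem:CentRankLim} work), whereas direct-sum-type constructions destroy the minimal condition on centralizers. If you replace your wreath product by such an amalgamated-product embedding (or cite the Karrass--Solitar result directly), the rest of your argument --- apply Lemma \ref{lem:CentSubRank} to transfer the rank --- is correct, and there is no need to redo the induction of Lemma \ref{lem:CentRankUnbdd}: one simply embeds its output.
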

\begin{proof}
Let $H\in\mc M_C$ be a group such that $\rho(T_H)\geq\alpha$.  Then \cite[Corollary on pg. 949]{KS71} implies that $H$ embeds into a 3-generated group $G\in\mc M_C$.  By Lemma \ref{lem:CentSubRank}, $\rho(T_G)\geq\rho(T_H)\geq\alpha$ verifying the lemma.
\end{proof}

We remark that the proof of the result cited in the previous uses nothing more complicated than free products with amalgamation and is similar to the classical Higman-Neumann-Neumann embedding result \cite{HNN49}.

\begin{thm}\label{thm:MCNotBorel}
$\mc M_C$ is $\Pi^1_1$ and not Borel in $\Gw$, and $\mc M_C\cap\Gw_{fg}$ is $\Pi^1_1$ and not Borel in $\Gw_{fg}$.
\end{thm}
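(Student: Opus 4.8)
The plan is to combine the three lemmas already assembled in this section --- that $\Phi_C$ is Borel (Lemma~\ref{lem:CentMapBorel}), that $\Phi_C$ reduces $\mc M_C$ to $WF$ (Lemma~\ref{lem:CentMapReduction}), and that the centralizer rank is unbounded below $\omega_1$ (Lemma~\ref{lem:CentRankUnbdd}, resp.\ Lemma~\ref{lem:CentFG} in the finitely generated case) --- with the general descriptive-set-theoretic machinery from Section~\ref{sec:Prelim}.

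First I would argue that $\mc M_C$ is $\Pi^1_1$. By Lemma~\ref{lem:CentMapReduction}, $\Phi_C^{-1}(WF)=\mc M_C$, and $\Phi_C$ is Borel by Lemma~\ref{lem:CentMapBorel}; since $WF$ is co-analytic, the preimage of a co-analytic set under a Borel map is co-analytic, so $\mc M_C$ is $\Pi^1_1$ in $\Gw$. The same computation with $\Phi_C$ restricted to $\Gw_{fg}$ (a standard Borel space) shows $\mc M_C\cap\Gw_{fg}$ is $\Pi^1_1$ in $\Gw_{fg}$.

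Next I would set up the $\Pi^1_1$-rank. Composing the Borel reduction $\Phi_C$ with the rank function $\rho\from WF\to ORD$, which is a $\Pi^1_1$-rank, yields that $G\mapsto\rho(T_G)$ is a $\Pi^1_1$-rank on $\mc M_C$ (this is exactly the remark following the theorem that $\rho$ is a $\Pi^1_1$-rank). Now invoke the Boundedness Theorem (Theorem~\ref{thm:BddnessThm}): $\mc M_C$ is Borel if and only if $\sup\{\rho(T_G)\mid G\in\mc M_C\}<\omega_1$. But Lemma~\ref{lem:CentRankUnbdd} produces, for every $\alpha<\omega_1$, a group $G\in\mc M_C$ with $\rho(T_G)\geq\alpha$, so the supremum equals $\omega_1$; hence $\mc M_C$ is not Borel. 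For the finitely generated version, restrict the $\Pi^1_1$-rank to $\mc M_C\cap\Gw_{fg}$ and use Lemma~\ref{lem:CentFG}, which supplies finitely generated witnesses of arbitrarily large rank, to conclude $\mc M_C\cap\Gw_{fg}$ is not Borel in $\Gw_{fg}$.

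There is no serious obstacle here: all the content has been pushed into the preceding lemmas, and the theorem is a bookkeeping assembly. The one point requiring a moment of care is the restriction to $\Gw_{fg}$ --- one should note that $\Gw_{fg}$ is a standard Borel space (stated in the preliminaries), that the restriction of a Borel map and of a $\Pi^1_1$-rank to a Borel subset behaves as expected, and that Lemma~\ref{lem:CentFG} gives witnesses that genuinely lie in $\Gw_{fg}$ so that the supremum of the rank over $\mc M_C\cap\Gw_{fg}$ is still $\omega_1$.
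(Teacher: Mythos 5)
Your proof is correct and follows exactly the same route as the paper: establish $\Pi^1_1$-ness via the Borel reduction $\Phi_C^{-1}(WF)=\mc M_C$, then apply the Boundedness Theorem to the $\Pi^1_1$-rank $G\mapsto\rho(\Phi_C(G))$, using Lemma~\ref{lem:CentRankUnbdd} (resp.\ Lemma~\ref{lem:CentFG}) to see the rank is unbounded below $\omega_1$. The extra care you note about $\Gw_{fg}$ being a standard Borel space and the witnesses lying in it is a reasonable (if implicit in the paper) addition.
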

\begin{proof}
Let $\Phi_C$ be the Borel map from Lemma~\rm\ref{lem:CentMapBorel}.  By Lemma \ref{lem:CentMapReduction}, $\Phi_C^{-1}(WF)=\mc M_C$, and since $\Phi_C$ is Borel, $\mc M_C$ is $\Pi^1_1$. Lemma \ref{lem:CentRankUnbdd} implies the ranks of the trees in $\Phi_C(\mc M_C)$ are unbounded below $\omega_1$, so the $\Pi_1^1$-rank on $\mc M_C$ given by $G\mapsto \rho(\Phi_C(G))$ is unbounded below $\omega_1$.  By Theorem \ref{thm:BddnessThm}, we conclude that $\mc M_C$ is not Borel. Lemma \ref{lem:CentFG} implies the ranks of the trees in $\Phi_C(\mc M_C\cap\Gw_{fg})$ are also unbounded below $\omega_1$, and by Theorem \ref{thm:BddnessThm}, we conclude that $\mc M_C\cap\Gw_{fg}$ is also not Borel.
\end{proof}

\section{The maximal condition on subgroups}\label{sec:Max}

We next consider a more basic chain condition.  Proving the analogue of Lemma \ref{lem:CentRankLim} in this context is more complicated, which is why we present it after the previous section.

\begin{defn}
A group $G$ satisfies the \textbf{maximal condition on subgroups}, abbreviated by saying a group satisfies max, if there is no strictly increasing chain $H_0 < H_1 < H_2 < \ldots$ of subgroups of $G$.  Equivalently, a group $G$ satisfies max if all of its subgroups are finitely generated.  We denote the class of groups satisfying max as $\mc M_{\max{}}$.
\end{defn}

Given a group $G\in \Gw$, we construct a tree $T_G\subseteq\wbaire$ and associated groups $G_s\in \Gw$ for each $s\in T_G$.

\begin{enumerate}[$\bullet$]
\item Put $\emptyset\in T_G$ and let $G_\emptyset:=\{e\}$.
\item Suppose that $s\in T_G$ and $G_s=\<\{g_s\}\>$ has already been defined.  If $\<\{g_s\}\cup\{g_i\}\>\neq\<\{g_s\}\>$, then put $s\conc i\in T_G$ and $G_{s\conc i}:=\<\{g_s\}\cup\{g_i\}\>$.
\end{enumerate}

\begin{lem}\label{lem:MaxMapBorel}
The map $\Phi_M\from\Gw\to Tr$ given by $G\mapsto T_G$ is Borel.
\end{lem}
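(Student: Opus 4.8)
The plan is to follow the template already established by Lemma~\ref{lem:CentMapBorel}, whose rigorous proof is deferred to Section~\ref{sec:Borel}. As with that earlier lemma, the point is that the construction of $T_G$ from $G$ is entirely \emph{explicit}: membership of a finite sequence $s\conc i$ in $T_G$ is determined by finitely much information about the group $G=\Fw/N$, namely by which words of $\Fw$ lie in $N$. So the proof here should be essentially identical in spirit to (and strictly simpler than) the argument for $\Phi_C$, since generated subgroups behave better under the sub-basic clopen sets $O_\gamma$ than do centralizers.

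First I would recall that it suffices to check $\Phi_M^{-1}(O_t)$ is Borel for each basic clopen $O_t=\{T\in Tr\mid t\in T\}$, since these sets and their complements form a sub-basis for $Tr$. So I need to show that for each fixed $t=(t_0,\dots,t_n)\in\wbaire$, the set $\{N\in\Gw\mid t\in T_N\}$ is Borel in $\Gw$. Unwinding the recursive definition, $t\in T_N$ iff for every $k<n$ we have $\langle g_{t_0},\dots,g_{t_{k+1}}\rangle\neq\langle g_{t_0},\dots,g_{t_k}\rangle$, where $g_j=f_N(\gamma_j)$. Thus it is enough to show that for any finite tuple of indices $\bar\imath=(i_0,\dots,i_m)$ and any further index $j$, the set of $N$ for which $\langle g_{i_0},\dots,g_{i_m}\rangle = \langle g_{i_0},\dots,g_{i_m},g_j\rangle$ is Borel; then $\{N\mid t\in T_N\}$ is a finite Boolean combination of such sets.

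The key observation is that $g_j=f_N(\gamma_j)$ lies in $H:=\langle g_{i_0},\dots,g_{i_m}\rangle$ if and only if there is a word $w$ in $m+1$ formal letters such that $\gamma_j\, w(\gamma_{i_0},\dots,\gamma_{i_m})^{-1}\in N$. Since there are only countably many such words $w$, and for each fixed $w$ the condition ``$\gamma_j\, w(\gamma_{i_0},\dots,\gamma_{i_m})^{-1}\in N$'' defines a sub-basic clopen set $O_{\gamma}$ with $\gamma=\gamma_j\, w(\gamma_{i_0},\dots,\gamma_{i_m})^{-1}\in\Fw$, the set $\{N\mid g_j\in H\}$ is a countable union of clopen sets, hence open, hence Borel; and $\langle g_{i_0},\dots,g_{i_m},g_j\rangle=H$ holds precisely when $g_j\in H$. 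Intersecting over $k<n$ (a finite intersection) shows $\Phi_M^{-1}(O_t)$ is Borel, which completes the argument. I do not expect any genuine obstacle here: the only mild subtlety is the bookkeeping that translates ``a sequence of subgroups stabilizes'' into ``a certain word lies in $N$'', and this is exactly the kind of routine verification that Section~\ref{sec:Borel} is reserved for; the present lemma can therefore be asserted with the same brief justification given after Lemma~\ref{lem:CentMapBorel}, namely that the construction is explicit.
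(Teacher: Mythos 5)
Your argument is correct, but it takes a more self-contained route than the paper does. The paper handles this lemma in Section~\ref{sec:Borel} by first constructing, for each fixed finite set of indices, a Borel map $\Gw\to\Gw$ sending $N$ to a marking of the subgroup $\langle g_{i_0},\dots,g_{i_m}\rangle$ of $\Fw/N$, namely $N\mapsto\ker(f_N\circ\pi)$ where $\pi$ carries designated generators of $\Fw$ to $\gamma_{i_0},\dots,\gamma_{i_m}$ and kills the rest (Lemma~\ref{lem:Rn_borel} and the remark following it); Borelness of these maps is immediate because the preimage of $O_\gamma$ is exactly $O_{\pi(\gamma)}$, and Lemma~\ref{lem:MaxMapBorel} is then deduced the way Lemma~\ref{lem:MaxNMapBorel} is deduced from the quotient maps $Q_\delta$: $\Phi_M^{-1}(O_{v\conc i})$ is the set where two such Borel maps disagree. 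You instead bypass the intermediate maps into $\Gw$ and compute $\Phi_M^{-1}(O_t)$ directly, via the observation that $\langle\{g_s\}\cup\{g_i\}\rangle=\langle\{g_s\}\rangle$ iff $g_i\in\langle\{g_s\}\rangle$ iff $\gamma_i\,w(\gamma_{s_0},\dots,\gamma_{s_m})^{-1}\in N$ for one of countably many words $w$, a countable union of sub-basic clopen sets. This is perfectly valid and even gives slightly more: each $\Phi_M^{-1}(O_t)$ comes out closed (a finite intersection of complements of open sets), not merely Borel, and you sidestep the small bookkeeping point that the marked subgroups $G_v$ and $G_{v\conc i}$, as elements of $\Gw$ under the paper's convention, carry different distinguished generating sets, so that ``equality of subgroups'' must be translated with some care into a statement about the associated kernels. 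What the paper's route buys in exchange is reusability: the maps $N\mapsto R_n(N)$ are needed again for the decomposition trees of Section~\ref{sec:EAGroups}, so the paper obtains Lemma~\ref{lem:MaxMapBorel} essentially for free from machinery it must build anyway. (One trivial slip in your write-up: the quantifier ``for every $k<n$'' omits the initial condition $\langle g_{t_0}\rangle\neq\{e\}$, i.e.\ $\gamma_{t_0}\notin N$, which is required for $(t_0)\in T_G$ since $G_\emptyset=\{e\}$; this set is clopen and is handled by the same word argument with the empty word, so nothing is lost.)
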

We will prove Lemma~\ref{lem:MaxMapBorel} in Section \ref{sec:Borel}.

\begin{lem}\label{lem:MaxMapReduction}
$T_G$ is well-founded if and only if $G\in \mc M_{\max{}}$.
\end{lem}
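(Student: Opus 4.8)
The plan is to mimic Lemma~\ref{lem:CentMapReduction}. The forward direction is immediate from the definition: if $G\in\mc M_{\max{}}$ and $T_G$ had an infinite branch $x$, then the groups $G_{x\restriction n}=\<\{g_{x\restriction n}\}\>$ would form a strictly increasing chain of subgroups of $G$ (strict because we only adjoined $x\restriction(n+1)$ to $T_G$ when $\<\{g_{x\restriction(n+1)}\}\>\neq\<\{g_{x\restriction n}\}\>$), contradicting max.

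For the converse, suppose $G\notin\mc M_{\max{}}$, so $G$ has a subgroup $K$ that is not finitely generated. I would produce an infinite branch of $T_G$ by a greedy enumeration argument. Using the preferred enumeration $G=\{g_0,g_1,\dots\}$, pick indices $a_0<a_1<\dots$ as follows: having chosen $a_0,\dots,a_n$ with $\<g_{a_0},\dots,g_{a_n}\>\subsetneq K$, the group $\<g_{a_0},\dots,g_{a_n}\>$ is a proper subgroup of $K$ (still proper since $K$ is not finitely generated), so there is some element of $K$ not in it; choosing such an element, it equals $g_m$ for some $m$, and we set $a_{n+1}:=m$ (we may also insist $a_{n+1}>a_n$ by noting each element of $G$ has infinitely many indices in the enumeration, or simply by not requiring monotonicity at all — the tree construction does not need it). Then at each stage $\<\{g_{(a_0,\dots,a_{n+1})}\}\>\neq\<\{g_{(a_0,\dots,a_n)}\}\>$, so $(a_0,\dots,a_n)\in T_G$ for every $n$, giving an infinite branch. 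Hence $T_G\notin WF$.

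I do not anticipate a serious obstacle here; the only point requiring a little care is that the indices chosen actually lie in $T_G$, which follows because the defining condition for adding $s\conc i$ to $T_G$ is exactly that $\<\{g_s\}\cup\{g_i\}\>$ properly contains $\<\{g_s\}\>$, and this is what the greedy choice guarantees. One might phrase the converse slightly differently — taking any strictly increasing chain $H_0<H_1<\dots$ witnessing the failure of max and passing to generators — but the subgroup-not-finitely-generated formulation is cleanest since it directly supplies the "new element at each step" needed to extend the branch. This is genuinely easier than the centralizer case, where one had to argue about how adjoining a generator interacts with taking centralizers; here adjoining a generator to a generating set is transparent.
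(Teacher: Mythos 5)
Your proof is correct and follows essentially the same route as the paper: the forward direction is immediate from the construction, and for the converse the paper likewise takes an infinitely generated subgroup $H\leq G$, writes $H=\<g_{a_0},g_{a_1},\ldots\>$, and passes to a subsequence so that each inclusion $\<g_{a_0},\ldots,g_{a_n}\>\lneq\<g_{a_0},\ldots,g_{a_{n+1}}\>$ is strict, which is exactly what your greedy choice accomplishes. The minor points you flag (monotonicity of the indices being unnecessary, and the chain not needing to exhaust $K$) are correctly handled and do not affect the argument.
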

\begin{proof}
If $G\in \mc M_{\max{}}$, then $T_G$ contains no infinite branches by definition.  If $G\notin \mc M_{\max{}}$, then there is some infinitely generated subgroup $H\leq G$.  There is some increasing sequence $a_0<a_1<\ldots$ of natural numbers such that $H=\<g_{a_0},g_{a_1},\ldots\>$.  We may assume that $\<g_{a_0},\ldots,g_{a_n}\> \lneq \<g_{a_0},\ldots,g_{a_{n+1}}\>$ for all $n\in\Nb$.  Then $(a_0,\ldots,a_n)\in T_G$ for all $n\in\Nb$, so $T_G$ has an infinite branch.
\end{proof}

\begin{lem}\label{lem:MaxSubRank}
Let $H,G\in\Gw$.  If $H\hookrightarrow G$, then $\rho(T_H)\leq\rho(T_G)$.
\end{lem}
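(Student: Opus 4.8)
The plan is to mirror the proof of Lemma~\ref{lem:CentSubRank} almost verbatim, since the tree $T_G$ for the maximal condition on subgroups is built in exactly the same shape as the one for centralizers, only with the operation ``take the subgroup generated by'' replacing ``take the centralizer of''. First I would fix an embedding $\alpha\from H\hookrightarrow G$ and a function $\psi\from\Nb\to\Nb$ with $\alpha(h_k)=g_{\psi(k)}$ (using the preferred enumerations of the marked groups $H$ and $G$). Then, exactly as before, I would define $\phi\from T_H\to\wbaire$ by $\phi(\emptyset)=\emptyset$ and $\phi(s_0,\ldots,s_n)=(\psi(s_0),\ldots,\psi(s_n))$, and observe that $\phi$ is monotone by construction.

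The substantive step is to check that $\phi(T_H)\subseteq T_G$, i.e.\ that whenever $s\in T_H$, each successive inclusion $\grp{\{h_{s\restriction k}\}}\lneq\grp{\{h_{s\restriction(k+1)}\}}$ in $H$ is carried by $\alpha$ to a strict inclusion $\grp{\{g_{\phi(s\restriction k)}\}}\lneq\grp{\{g_{\phi(s\restriction(k+1))}\}}$ in $G$. This is where I expect the only real (and still minor) obstacle to lie. The point is that $\alpha$ is an injective homomorphism, so it commutes with the operation of forming generated subgroups: $\alpha(\grp{\{h_t\}})=\grp{\{\alpha(h_t)\}}=\grp{\{g_{\phi(t)}\}}$ for any finite tuple $t$. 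Hence a strict inclusion of subgroups of $H$ maps to a strict inclusion of subgroups of $G$, because $\alpha$ is injective. One small wrinkle is that $T_G$ was defined using the specific enumeration $(g_i)$ of $G$ rather than the enumeration of the subgroup $\grp{\{g_{\phi(s)}\}}$; but the defining clause for membership in $T_G$ only compares $\grp{\{g_s\}}$ with $\grp{\{g_s\}\cup\{g_i\}}$ using the ambient enumeration of $G$, so the argument goes through directly, just as in Lemma~\ref{lem:CentSubRank}.

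Having established $\phi(T_H)\subseteq T_G$ with $\phi$ monotone, I would invoke Lemma~\ref{lem:TrRkMonotone} to conclude $\rho(T_H)\leq\rho(T_G)$, which is the statement of the lemma. So the proof is essentially a one-paragraph adaptation: replace ``$C_G(\cdot)$'' by ``$\grp{\cdot}$'' throughout the proof of Lemma~\ref{lem:CentSubRank}, and replace the observation ``$C_G(\{g_{\phi(s)}\})\cap\alpha(H)\cong C_H(\{h_s\})$'' by the cleaner fact that $\alpha$ restricts to an isomorphism $\grp{\{h_s\}}\xrightarrow{\ \sim\ }\grp{\{g_{\phi(s)}\}}$, which immediately gives the preservation of strict inclusions. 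I do not anticipate needing anything beyond injectivity of $\alpha$ and the functoriality of $\grp{\cdot}$ under homomorphisms.
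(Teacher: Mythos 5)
Your proposal is correct and is essentially the paper's own argument: the paper defines the same $\psi$ and monotone $\phi$, notes that $H_s\cong G_{\phi(s)}$ (which is exactly your observation that the embedding restricts to an isomorphism $\grp{\{h_s\}}\to\grp{\{g_{\phi(s)}\}}$, preserving strictness of inclusions), and concludes via Lemma~\ref{lem:TrRkMonotone}. No gaps.
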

\begin{proof}
Let $\psi\from\Nb\to\Nb$ be such that $h_k=g_{\psi(k)}$.  We now define a map $\phi:T_H\rightarrow \wbaire$:  Let $\phi(\emptyset)=\emptyset$.  If $s\in\wbaire$ and $s=(s_0,\ldots,s_n)$, let $\phi(s)=(\psi(s_0),\ldots,\psi(s_n))$.  Clearly $\phi$ is monotone.  Furthermore, if $s\in T_H$, then $H_s \cong G_{\phi(s)}$, hence $\phi(T_H)\subseteq T_G$. Lemma \ref{lem:TrRkMonotone} now implies $\rho(T_H)\leq\rho(T_G)$.
\end{proof}

The previous lemma implies $\rho(T_G)$ is a group invariant.
\begin{cor}
If $G,G'\in\Gw$ and $G\cong G'$, then $\rho(T_G)=\rho(T_{G'})$.
\end{cor}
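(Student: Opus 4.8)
The statement to prove is the corollary that $\rho(T_G)$ is an isomorphism invariant for groups satisfying max, i.e.\ $G\cong G'$ implies $\rho(T_G)=\rho(T_{G'})$.

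\medskip

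The plan is to derive this immediately from Lemma~\ref{lem:MaxSubRank}, exactly as Corollary~\ref{cor:CentIsoInv} was derived from Lemma~\ref{lem:CentSubRank} in the centralizer setting. First I would observe that an isomorphism $G\cong G'$ is in particular an embedding $G\hookrightarrow G'$, so Lemma~\ref{lem:MaxSubRank} gives $\rho(T_G)\leq\rho(T_{G'})$. Symmetrically, the inverse isomorphism is an embedding $G'\hookrightarrow G$, so the same lemma gives $\rho(T_{G'})\leq\rho(T_G)$. Combining the two inequalities yields $\rho(T_G)=\rho(T_{G'})$.

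\medskip

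There is essentially no obstacle here: the only subtlety worth a sentence is that $T_G$ depends a priori on the chosen marking of $G$ (that is, on the enumeration $(g_i)$ induced by the marking), not just on the abstract group, so the statement must be read as asserting that all markings of a given abstract group yield trees of the same rank. But this is covered by the argument above, since any two markings $G=\Fw/N$ and $G'=\Fw/N'$ of isomorphic abstract groups are related by mutually inverse embeddings, and Lemma~\ref{lem:MaxSubRank} only requires the existence of an embedding, with no compatibility demanded between the enumerations beyond what the map $\psi\from\Nb\to\Nb$ records. Hence the ranks agree, and the corollary follows.
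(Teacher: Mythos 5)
Your proof is correct and is exactly the argument the paper intends: the corollary is stated as an immediate consequence of Lemma~\ref{lem:MaxSubRank}, applied to the two mutually inverse embeddings coming from the isomorphism, just as Corollary~\ref{cor:CentIsoInv} follows from Lemma~\ref{lem:CentSubRank}. Your remark about the rank being independent of the marking is also consistent with the paper's reading of the statement.
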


\begin{defn} 
If $G$ has the maximal condition on subgroups, then $\rho(T_G)$ for some (any) marking of $G$ is called the \textbf{subgroup rank} of $G$.
\end{defn}

\begin{lem}\label{lem:MaxRankSucc}
For all groups $G\in \mc M_{\max{}}$, $G\times\Zb\in \mc M_{\max{}}$ and $\rho(T_G)<\rho(T_{G\times\Zb})$.
\end{lem}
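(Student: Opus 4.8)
The plan is to mirror the structure of Lemma~\ref{lem:CentRankSucc}. Closure of $\mc M_{\max{}}$ under $G\mapsto G\times\Zb$ is immediate: $\mc M_{\max{}}$ is closed under extensions and $\Zb\in\mc M_{\max{}}$; alternatively, any $K\leq G\times\Zb$ has $K\cap(G\times\{e\})$ finitely generated (a subgroup of $G$) and $K/(K\cap(G\times\{e\}))\hookrightarrow(G\times\Zb)/(G\times\{e\})\cong\Zb$ finitely generated, so $K$ is finitely generated. The real content is the strict inequality $\rho(T_G)<\rho(T_{G\times\Zb})$, and for this I would exhibit a monotone copy of $T_G$ sitting strictly below the root of $T_{G\times\Zb}$.

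Fix an index $j$ with $g_j=(e,t)\in G\times\Zb$ (in the enumeration of $G\times\Zb$), where $t$ generates the $\Zb$-factor, and for each $k$ fix an index $\psi(k)$ with $g_{\psi(k)}=(g_k,e)$, where $g_k$ runs over the enumeration of $G$; these exist since the enumeration of $G\times\Zb$ is onto. Define $\phi\from T_G\to\wbaire$ by $\phi(\emptyset)=(j)$ and $\phi(s_0,\dots,s_n)=(j,\psi(s_0),\dots,\psi(s_n))$. This $\phi$ is visibly monotone, so the only thing to verify is that $\phi(T_G)\subseteq T_{G\times\Zb}$.

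I would prove $\phi(T_G)\subseteq T_{G\times\Zb}$ by induction on $|s|$, the key being the identity
$$\langle (e,t),(g_{s_0},e),\dots,(g_{s_k},e)\rangle=\langle g_{s_0},\dots,g_{s_k}\rangle\times\Zb$$
of subgroups of $G\times\Zb$. Granting it, the construction of $T_{G\times\Zb}$ allows the passage from $\phi(s\restriction k)$ to $\phi(s\restriction(k+1))$ exactly when $\langle g_{s_0},\dots,g_{s_k}\rangle\times\Zb\neq\langle g_{s_0},\dots,g_{s_{k-1}}\rangle\times\Zb$, i.e.\ when $\langle g_{s_0},\dots,g_{s_k}\rangle\neq\langle g_{s_0},\dots,g_{s_{k-1}}\rangle$, which holds because $s\restriction(k+1)\in T_G$; likewise $(j)\in T_{G\times\Zb}$ because $\langle(e,t)\rangle\neq\{e\}$, and $(j,\psi(s_0))\in T_{G\times\Zb}$ because $g_{s_0}\neq e$ (as $(s_0)\in T_G$). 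Hence $\phi$ restricts to a monotone map from $T_G$ into the subtree $(T_{G\times\Zb})_{(j)}$, so Lemma~\ref{lem:TrRkMonotone} gives $\rho(T_G)\leq\rho((T_{G\times\Zb})_{(j)})$. Since $(j)$ is a node of $T_{G\times\Zb}$ of length one, $\rho(T_{G\times\Zb})\geq\rho((T_{G\times\Zb})_{(j)})+1>\rho(T_G)$, finishing the proof.

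The only non-formal point is the verification that $\phi$ lands in $T_{G\times\Zb}$, which reduces to the displayed direct-product computation together with the trivial observation that a strict inclusion of subgroups of $G$ stays strict after multiplying by the $\Zb$-factor; everything else is ordinal bookkeeping already packaged into Lemma~\ref{lem:TrRkMonotone}. So I do not expect a genuine obstacle here — the delicate step in this section is the limit-stage analogue, i.e.\ the counterpart of Lemma~\ref{lem:CentRankLim}, not this successor step.
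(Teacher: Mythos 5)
Your proof is correct and is essentially the paper's own argument: both fix an index for $(e_G,z)$ and an index map $\psi$ for $(g_m,e)$, push $T_G$ monotonically into the subtree of $T_{G\times\Zb}$ below the node corresponding to $(e_G,z)$ via the identity $\langle(e,z),(g_{s_0},e),\dots\rangle=\langle g_{s_0},\dots\rangle\times\Zb$, and then conclude by Lemma~\ref{lem:TrRkMonotone} together with the fact that a proper subtree has strictly smaller rank. The paper states this more tersely (it does not spell out the induction or the closure of $\mc M_{\max{}}$ under the extension), but there is no difference in substance.
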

\begin{proof}
It is easy to see $G\times \Zb$ satisfies max. For the latter condition, let $G=\{g_0,g_1,\ldots\}$ and $G\times\Zb=\{a_0,a_1,\ldots\}$.  There is some $k\in\Nb$ such that $a_k=(e_G,z)$ where $\Zb=\<z\>$.  Let $\psi\from\Nb\to\Nb$ be defined such that $a_{\psi(m)}=(g_m,e_\Zb)$. The map $\phi:T_G\rightarrow \wbaire$ given by $(s_0,\ldots,s_n)\mapsto(\psi(s_0),\ldots,\psi(s_n))$ is clearly monotone, and further, $\phi(T_G)\subseteq (T_{G\times\Zb})_k$.  By Lemma \ref{lem:TrRkMonotone}, $\rho(T_G) \leq \rho((T_{G\times\Zb})_k)<\rho(T_{G\times\Zb})$.
\end{proof}

\begin{lem}\label{lem:MaxRankLim}
Let $\{A_i\}_{i\in\Nb}$ be countable groups.  If $A_i\in \mc M_{\max{}}$ for each $i\in\Nb$, then there is a group $A\in \mc M_{\max{}}$ such that $\rho(T_A)\geq\rho(T_{A_i})$ for all $i\in\Nb$.
\end{lem}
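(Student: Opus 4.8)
The plan is to reduce to an ascending chain and then build, by hand, a single finitely generated group that absorbs all the ranks $\rho(T_{A_i})$. First, replacing each $A_i$ by $A_0\times A_1\times\cdots\times A_i$, we may assume $A_0\leq A_1\leq A_2\leq\cdots$ is an ascending chain of groups in $\mc{M}_{\max}$ with the ordinals $\rho(T_{A_i})$ non-decreasing: a finite direct product of groups in $\mc{M}_{\max}$ is again in $\mc{M}_{\max}$, and for $j\leq i$ the group $A_j$ embeds in $A_0\times\cdots\times A_i$, so $\rho(T_{A_j})\leq\rho(T_{A_0\times\cdots\times A_i})$ by Lemma~\ref{lem:MaxSubRank}. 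It then suffices to find one $A\in\mc{M}_{\max}$ with $\rho(T_A)\geq\rho(T_{A_i})$ for all $i$.

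The difficulty is that the obvious candidates fail. Unless the chain $A_0\leq A_1\leq\cdots$ is eventually constant --- the case in which the lemma is trivial --- the union $\bigcup_i A_i$ carries an infinite strictly increasing chain of subgroups and so is not in $\mc{M}_{\max}$; and the same obstruction shows the $A_i$ cannot be realized as an ascending chain of subgroups of \emph{any} member of $\mc{M}_{\max}$. Thus, in contrast to Lemma~\ref{lem:CentRankLim} --- where a free product works precisely because centralizers in free products are so restricted --- we cannot carry the ranks $\rho(T_{A_i})$ into $A$ by embedding the $A_i$ (which, via Lemma~\ref{lem:MaxSubRank}, is the cheap way to raise $\rho$): broadly, the constructions that most easily raise the rank of a group --- enlarging to an overgroup containing more of the $A_i$, or passing to an infinite product or a free product --- are exactly the ones that destroy the maximal condition. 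So the proof must produce directly a finitely generated group all of whose subgroups are finitely generated, yet whose subgroup-chain tree $T_A$ is tall. I would try to build $A$ from the $A_i$ as an iterated extension, using the closure of $\mc{M}_{\max}$ under extensions (if $N\normal A$ with $N,A/N\in\mc{M}_{\max}$ then $A\in\mc{M}_{\max}$, since for $H\leq A$ both $H\cap N$ and $HN/N$ are finitely generated), so arranged that $T_A$ admits a monotone map from each $T_{A_i}$ --- for instance so that each $A_i$'s subgroup structure is reflected by a finitely generated $B_i\leq A$ sitting above a distinguished cyclic subgroup, the $B_i$ being mutually compatible (in particular non-nested) enough to coexist in $\mc{M}_{\max}$. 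Pinning down this construction is, I expect, where essentially all of the work lies.

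Granting such an $A$, the rank estimate is routine and parallels the other lemmas in this section: for each $i$ one writes down an explicit monotone map $T_{A_i}\to T_A$ --- sending a sequence recording a strictly increasing chain of finitely generated subgroups of $A_i$ to the corresponding chain inside $B_i$, prefixed by the index of the cyclic marker --- and then Lemma~\ref{lem:TrRkMonotone} yields $\rho(T_{A_i})\leq\rho(T_A)$, as required.
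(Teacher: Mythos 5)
The central step---actually producing the group $A$---is missing. You sketch a strategy (``build $A$ from the $A_i$ as an iterated extension \dots'') and then concede that pinning down the construction is ``where essentially all of the work lies.'' That construction is the entire content of the lemma, so what you have is a plan, not a proof. Moreover, the plan as stated cannot work: an infinite iterated extension of nontrivial groups fails max for exactly the reason you cite against unions and infinite products, since the partial extensions form an infinite strictly increasing chain of subgroups; the closure of $\mc M_{\max{}}$ under extensions only helps for finitely many stages.

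The diagnosis that steers you away from the correct route is also mistaken. From the (true) observation that a strictly ascending chain $A_0\leq A_1\leq\cdots$ cannot sit as a \emph{nested} chain inside a max group, you conclude that one ``cannot carry the ranks $\rho(T_{A_i})$ into $A$ by embedding the $A_i$.'' But the obstruction only forbids nested images; it does not forbid embedding every $A_i$ into a single group of $\mc M_{\max{}}$ with non-nested images, and that is precisely what the paper does. By a theorem of Olshanskii \cite[Theorem 2]{Ol89} there is a $2$-generated group $A$ containing each $A_i$ such that every proper subgroup of $A$ is contained in a conjugate of some $A_i$, is infinite cyclic, or is infinite dihedral; since each $A_i$ has all subgroups finitely generated, so does $A$, whence $A\in\mc M_{\max{}}$, and Lemma~\ref{lem:MaxSubRank} immediately gives $\rho(T_{A_i})\leq\rho(T_A)$ for all $i$. (Your preliminary reduction to an ascending chain is unnecessary and is what manufactures the apparent obstruction; your closing remark about ``mutually non-nested $B_i$'' gestures at the right idea, but without an embedding theorem of this kind nothing is proved.)
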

\begin{proof}
This is a consequence of \cite[Theorem 2]{Ol89} due to A. Y. Olshanskii. This result gives a 2-generated group $A$ containing each of the $A_i$ such that every proper subgroup of $A$ is either contained in a conjugate of some $A_i$, is infinite cyclic, or is infinite dihedral.  Thus if every subgroup of each $A_i$ is finitely generated, then every subgroup of $A$ is finitely generated, and so $A\in\mc M_{\max{}}$.  Since each $A_i$ is a subgroup of $A$, Lemma \ref{lem:MaxSubRank} implies that $\rho(T_A)\geq\rho(T_{A_i})$ for all $i\in\Nb$ as desired.
\end{proof}

\begin{lem}\label{lem:MaxRankUnbdd}
For all $\alpha<\omega_1$, there is $G\in \mc M_{\max{}}$ such that $\rho(T_G)\geq\alpha$.
\end{lem}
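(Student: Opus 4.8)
The plan is to mimic exactly the proof of Lemma~\ref{lem:CentRankUnbdd}, running a transfinite induction on $\alpha<\omega_1$ in which the two nontrivial cases are handled by the successor and limit lemmas just established, namely Lemma~\ref{lem:MaxRankSucc} and Lemma~\ref{lem:MaxRankLim}. The base case $\alpha=0$ is immediate: the trivial group lies in $\mc M_{\max{}}$ and has nonnegative rank. So the content is entirely in propagating the bound through successors and limits.

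For the successor step, suppose the claim holds for $\beta$ and $\alpha=\beta+1$. Pick $G\in\mc M_{\max{}}$ with $\rho(T_G)\geq\beta$. By Lemma~\ref{lem:MaxRankSucc}, $G\times\Zb\in\mc M_{\max{}}$ and $\rho(T_{G\times\Zb})>\rho(T_G)\geq\beta$, hence $\rho(T_{G\times\Zb})\geq\beta+1=\alpha$. For the limit step, suppose $\alpha$ is a countable limit ordinal and fix an increasing sequence $(\alpha_i)_{i\in\Nb}$ of ordinals below $\alpha$ with $\sup_i\alpha_i=\alpha$. By the inductive hypothesis, for each $i$ there is $G_i\in\mc M_{\max{}}$ with $\rho(T_{G_i})\geq\alpha_i$. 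Applying Lemma~\ref{lem:MaxRankLim} to the family $\{G_i\}_{i\in\Nb}$ yields a group $A\in\mc M_{\max{}}$ with $\rho(T_A)\geq\rho(T_{G_i})\geq\alpha_i$ for all $i$, so $\rho(T_A)\geq\sup_i\alpha_i=\alpha$. This completes the induction.

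I do not expect any real obstacle here: all the difficulty has already been absorbed into Lemma~\ref{lem:MaxRankLim}, whose proof invokes Olshanskii's embedding theorem to produce a $2$-generated group containing all the $A_i$ while keeping the class $\mc M_{\max{}}$ stable, and into Lemma~\ref{lem:MaxRankSucc}. The only point worth a moment's care is purely bookkeeping: one should make sure the limit case is fed ordinals strictly below $\alpha$ so that the inductive hypothesis applies, and then observe that $\rho(T_A)\geq\alpha_i$ for every $i$ forces $\rho(T_A)\geq\alpha$ since $\alpha=\sup_i\alpha_i$ (here using that $\rho(T_A)$ is an ordinal, so it dominates a supremum of ordinals it dominates termwise). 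As in the remark following Corollary~\ref{cor:CentIsoInv}, one could equally run the argument with any convenient enumerations of the groups involved.
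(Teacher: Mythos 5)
Your proof is correct and follows exactly the paper's argument: the paper's own proof simply says it is the same as that of Lemma~\ref{lem:CentRankUnbdd} with Lemmas~\ref{lem:MaxRankSucc} and~\ref{lem:MaxRankLim} substituted at the appropriate places, which is precisely the induction you carry out. The small bookkeeping point you flag (that an ordinal dominating every $\alpha_i$ dominates their supremum) is handled correctly.
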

\begin{proof}
The proof is the same as that of Lemma \ref{lem:CentRankUnbdd}, with Lemmas \ref{lem:MaxRankSucc} and \ref{lem:MaxRankLim} referenced at the appropriate places.
\end{proof}

\begin{thm}
$\mc M_{\max{}}$ is $\Pi^1_1$ and not Borel in $\Gw$ and $\Gw_{fg}$.
\end{thm}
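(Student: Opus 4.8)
The plan is to follow verbatim the strategy used for Theorem~\ref{thm:MCNotBorel}. First I would invoke Lemma~\ref{lem:MaxMapBorel} to obtain the Borel map $\Phi_M\from\Gw\to Tr$, and Lemma~\ref{lem:MaxMapReduction} to conclude $\Phi_M^{-1}(WF)=\mc M_{\max{}}$; since $WF$ is $\Pi^1_1$ and $\Phi_M$ is Borel, this already shows $\mc M_{\max{}}$ is $\Pi^1_1$ in $\Gw$, and intersecting with the standard Borel space $\Gw_{fg}$ shows $\mc M_{\max{}}\cap\Gw_{fg}$ is $\Pi^1_1$ there. Composing $\Phi_M$ with the $\Pi^1_1$-rank $\rho$ on $WF$ then gives, by the remark following the statement that $\rho$ is a $\Pi^1_1$-rank, a $\Pi^1_1$-rank $G\mapsto\rho(\Phi_M(G))$ on $\mc M_{\max{}}$ (and likewise on $\mc M_{\max{}}\cap\Gw_{fg}$).

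To deduce non-Borelness I would apply the Boundedness Theorem (Theorem~\ref{thm:BddnessThm}): it suffices to show these ranks are unbounded below $\omega_1$. For $\mc M_{\max{}}$ this is exactly Lemma~\ref{lem:MaxRankUnbdd}. For the finitely generated case I would observe that the inductive construction proving Lemma~\ref{lem:MaxRankUnbdd} never leaves the class of finitely generated groups: the base case $\alpha=0$ is witnessed by the trivial group; the successor step replaces $G$ by $G\times\Zb$ via Lemma~\ref{lem:MaxRankSucc}, which is finitely generated whenever $G$ is; and the limit step uses Olshanskii's theorem exactly as in Lemma~\ref{lem:MaxRankLim}, which produces a $2$-generated group. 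Hence for every $\alpha<\omega_1$ there is in fact a finitely generated $G\in\mc M_{\max{}}$ with $\rho(T_G)\geq\alpha$, so the restricted $\Pi^1_1$-rank on $\mc M_{\max{}}\cap\Gw_{fg}$ is also unbounded below $\omega_1$. Applying Theorem~\ref{thm:BddnessThm} once in $\Gw$ and once in $\Gw_{fg}$ then yields that neither $\mc M_{\max{}}$ nor $\mc M_{\max{}}\cap\Gw_{fg}$ is Borel.

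I do not expect a genuine obstacle, since every ingredient has been assembled in the preceding lemmas; the only point requiring a moment's care is the finitely generated refinement. In the $\mc M_C$ setting this needed the separate embedding result Lemma~\ref{lem:CentFG}, because the limit step there passed through an infinite free product; for $\mc M_{\max{}}$ the Olshanskii amalgam is already $2$-generated, so no extra argument is needed. If one preferred not to rely on the internal form of the proof of Lemma~\ref{lem:MaxRankUnbdd}, one could instead state and prove a finitely generated analogue of that lemma explicitly, but the observation above makes this superfluous.
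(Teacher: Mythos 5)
Your proof is correct and follows essentially the same route as the paper: Borel reduction to $WF$ via $\Phi_M$, then the Boundedness Theorem applied to the $\Pi^1_1$-rank $\rho\circ\Phi_M$, with Lemma~\ref{lem:MaxRankUnbdd} supplying unboundedness. The only divergence is the finitely generated case, where the paper simply observes that every max group is by definition finitely generated (so $\mc M_{\max{}}$ already lies in $\Gw_{fg}$ up to choice of marking, which is harmless by isomorphism-invariance of the rank), making your trace through the inductive construction valid but unnecessary.
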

\begin{proof}
The proof is the same as that of Theorem \ref{thm:MCNotBorel} using Lemmas \ref{lem:MaxMapBorel} and \ref{lem:MaxRankUnbdd} where appropriate.  The statement is true for $\mc G_{fg}$ simply because $\mc M_{\max{}}\subseteq\mc G_{fg}$.
\end{proof}

\section{The maximal condition on normal subgroups}\label{sec:MaxN}

Given a group $G$ and a set $S\subseteq G$, we write $\<\<S\>\>_G$ to denote the normal closure of $S$ in $G$.  We suppress the subscript $G$ when the group is clear from context.

\begin{defn}
A group $G$ satisfies the \textbf{maximal condition on normal subgroups}, abbreviated by saying a group satisfies max-n, if there is no infinite strictly increasing chain of normal subgroups of $G$.  Equivalently, a group $G$ satisfies max-n if each of its normal subgroups is the normal closure of finitely many elements of $G$.  We denote the class of groups satisfying max-n as $\mc M_n$.
\end{defn}

Given $G\in \Gw$, we construct a tree $T_G\subseteq\wbaire$ and associated groups $G_s\in \Gw$ for each $s\in T_G$.

\begin{enumerate}[$\bullet$]
\item Put $\emptyset\in T_G$ and let $G_\emptyset:=G$.
\item Suppose that $s\in T_G$ and $G_s=G/\ngrp{\{g_s\}}$ has already been defined.  If $\ngrp{\{g_s\}\cup\{g_i\}}\neq \ngrp{\{g_s\}}$, then put $s\conc i\in T_G$ and $G_{s\conc i}:=G/\ngrp{\{g_s\}\cup\{g_i\}}$.
\end{enumerate}

\begin{lem}\label{lem:MaxNMapBorel}
The map $\Phi_{M_n}\from\Gw\to Tr$ given by $G\mapsto T_G$ is Borel.
\end{lem}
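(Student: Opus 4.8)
\textbf{Proof proposal for Lemma~\ref{lem:MaxNMapBorel}.}
The plan is to follow exactly the strategy that (implicitly) underlies Lemmas~\ref{lem:CentMapBorel} and \ref{lem:MaxMapBorel}: show that membership of a given finite sequence $t\in\wbaire$ in $T_G$ is a Borel condition on $N\in\Gw$ (where $G=\Fw/N$), uniformly in $t$. Since a sub-basis for $Tr$ is given by the sets $O_t=\{T\mid t\in T\}$ and their complements, it suffices to prove that $\Phi_{M_n}^{-1}(O_t)=\{N\in\Gw\mid t\in T_{\Fw/N}\}$ is Borel for every fixed $t$; then $\Phi_{M_n}$ is Borel because the preimage of every sub-basic open set is Borel. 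I would prove this by induction on the length of $t$. The case $t=\emptyset$ is trivial since $\emptyset\in T_G$ always. For the inductive step, writing $t=s\conc i$, the defining clause says $t\in T_G$ iff $s\in T_G$ and $\ngrp{\{g_s\}\cup\{g_i\}}\neq\ngrp{\{g_s\}}$; by induction the first conjunct is Borel in $N$, so the work is to show the second conjunct is Borel in $N$.

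The key observation is that all of the relevant objects are definable by quantifying only over the countable set $\Fw$, using the fixed enumeration $\bfs{\gamma}$. Recall $g_j=f_N(\gamma_j)$, so $g_s$ denotes the finite set $\{f_N(\gamma_{s_0}),\ldots,f_N(\gamma_{s_n})\}$, and the preimage in $\Fw$ of $\ngrp{\{g_s\}\cup\{g_i\}}$ is precisely the subgroup of $\Fw$ generated by $N$ together with all $\Fw$-conjugates of $\gamma_{s_0},\ldots,\gamma_{s_n},\gamma_i$. Thus $\ngrp{\{g_s\}\cup\{g_i\}}=\ngrp{\{g_s\}}$ in $G$ holds precisely when $\gamma_i$ lies in the normal closure in $\Fw$ of $N\cup\{\gamma_{s_0},\ldots,\gamma_{s_n}\}$, i.e.\ when there is a word expressing $\gamma_i$ as a product of $\Fw$-conjugates of finitely many elements of $N$ and of $\gamma_{s_0},\ldots,\gamma_{s_n}$. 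This is a countable union (over all such finite products, coded by elements of $\wbaire$) of conditions each of which only asserts, of finitely many specified elements $\delta_1,\ldots,\delta_m\in\Fw$, that a particular fixed word in the $\delta$'s, their inverses, and the $\gamma_{s_j}$'s equals $\gamma_i$ \emph{and} that $\delta_1,\ldots,\delta_m\in N$; the latter is the finite Boolean combination $\bigcap_{k}O_{\delta_k}$ of sub-basic clopen sets of $\Gw$, while the former is either vacuously true or vacuously false and so contributes nothing. Hence ``$\gamma_i\in\ngrp{N\cup\{\gamma_{s_0},\ldots,\gamma_{s_n}\}}$'' is an $F_\sigma$ (indeed open, being a countable union of clopen sets) condition on $N$, so its negation, the second conjunct above, is Borel. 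Combined with the inductive hypothesis, $\{N\mid t\in T_{\Fw/N}\}$ is Borel, completing the induction.

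I expect the main obstacle to be purely bookkeeping: making precise the coding of ``a product of conjugates of finitely many elements'' by elements of $\wbaire$ and verifying that the resulting set-theoretic description really is a countable union of clopen sets rather than something with an unbounded quantifier over $\Gw$ itself. The point to be careful about is that we must never quantify over subsets of $\Fw$ (which would range over the uncountable space $2^{\Fw}\supseteq\Gw$), only over finite tuples from the countable set $\Fw$; the definition of normal closure cooperates here precisely because a single element of the normal closure is witnessed by a \emph{finite} product of conjugates. Since this lemma, like Lemmas~\ref{lem:CentMapBorel} and \ref{lem:MaxMapBorel}, is exactly the kind of ``explicit construction is Borel'' statement the authors defer to Section~\ref{sec:Borel}, the cleanest presentation is to state the reduction to the clopen-building-block computation and leave the routine verification to that section; the one genuinely new ingredient compared with the subgroup case is replacing ``generated subgroup'' by ``normal closure'', which as noted above is still a countable union of clopen conditions.
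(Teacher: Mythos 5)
Your proposal is correct and takes essentially the same approach as the paper: the paper likewise reduces to showing each $\Phi_{M_n}^{-1}(O_t)$ is Borel, and the key computation is identical, namely that membership of a fixed word of $\Fw$ in $\ngrp{\gamma_{s_0},\dots,\gamma_{s_n}}N$ is a countable union over the countable group $\Fw$ of clopen conditions on $N$. The only cosmetic difference is that the paper packages this into Borel quotient maps $Q_\delta\from N\mapsto\ngrp{\delta}N$ and their compositions $Q_s$, rather than arguing directly on the membership condition as you do.
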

We prove Lemma~\ref{lem:MaxNMapBorel} in Section \ref{sec:Borel}.

\begin{lem}\label{lem:MaxNMapReduction}
$T_G$ is well-founded if and only if $G\in\mc M_n$.
\end{lem}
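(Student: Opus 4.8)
The plan is to mirror the proofs of Lemma~\ref{lem:CentMapReduction} and Lemma~\ref{lem:MaxMapReduction}, exploiting the fact that the construction of $T_G$ for max-n builds successive quotients of $G$ by normal closures of finite sets, exactly the way the max construction builds subgroups generated by finite sets. The two directions are: if $G \in \mc M_n$ then $T_G$ has no branch, and if $T_G$ has a branch then $G \notin \mc M_n$; I would also prove the contrapositive of the first direction, i.e. a branch in $T_G$ produces a genuine infinite strictly increasing chain of normal subgroups.

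First I would do the easy direction. Suppose $G \in \mc M_n$ but $T_G$ has an infinite branch $x \in \baire$. Set $s_n := x\restriction n$. By construction, for each $n$ we have $\ngrp{\{g_{s_{n+1}}\}} \neq \ngrp{\{g_{s_n}\}}$, and since $\{g_{s_n}\} \subseteq \{g_{s_{n+1}}\}$ as sets (because $s_n$ is an initial segment of $s_{n+1}$), we get $\ngrp{\{g_{s_n}\}} \subsetneq \ngrp{\{g_{s_{n+1}}\}}$. Thus $\ngrp{\{g_{s_0}\}} \subsetneq \ngrp{\{g_{s_1}\}} \subsetneq \cdots$ is a strictly increasing chain of normal subgroups of $G$, contradicting $G \in \mc M_n$. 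Hence $T_G \in WF$.

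For the converse, suppose $G \notin \mc M_n$, so there is an infinite strictly increasing chain of normal subgroups; equivalently there is a normal subgroup $M \normal G$ that is not the normal closure of finitely many of its elements. Pick an increasing sequence $a_0 < a_1 < \cdots$ of indices with $M = \ngrp{\{g_{a_0}, g_{a_1}, \ldots\}}$; this is possible since $M$ is a union of its finitely-generated-as-normal-subgroup pieces and the preferred enumeration of $G$ hits every element of $G$, hence every element of $M$. Passing to a subsequence, I can assume $\ngrp{\{g_{a_0},\ldots,g_{a_n}\}} \subsetneq \ngrp{\{g_{a_0},\ldots,g_{a_{n+1}}\}}$ for every $n$ (if the chain ever stabilized, $M$ would be the normal closure of finitely many elements). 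Writing $t_n := (a_0,\ldots,a_n)$, the defining clause of the construction then puts $t_n \in T_G$ for all $n$ by induction on $n$: the base case $\emptyset \in T_G$ is immediate, and the inductive step is exactly the condition $\ngrp{\{g_{t_n}\}} \neq \ngrp{\{g_{t_{n+1}}\}}$ we just arranged. So $(a_n)_{n\in\Nb}$ is a branch of $T_G$, i.e. $T_G \in IF$.

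I do not expect any serious obstacle here; the only point requiring a moment's care is the reindexing in the converse direction, namely checking that one really can extract a subsequence along which the normal closures strictly increase at every single step — this uses that $M$ is not finitely generated as a normal subgroup together with the elementary observation that a nondecreasing chain in $\Gw$ which is eventually constant is the normal closure of a finite set. Everything else is bookkeeping identical to the max and centralizer cases already carried out.
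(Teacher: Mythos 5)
Your proof is correct and follows essentially the same route as the paper's: the forward direction reads off a strictly increasing chain of normal closures along a branch, and the converse takes a normal subgroup that is not normally finitely generated, enumerates normal generators via the preferred enumeration, and passes to a subsequence with strict increases to produce a branch. The extra care you take with the subsequence extraction is the only elaboration beyond what the paper writes down.
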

\begin{proof}
If $G\in\mc M_n$, then $T_G$ contains no infinite branches by definition.  If $G\notin\mc M_n$, then there is a normal subgroup $N\normal G$ such that $N=\ngrp{g_{a_0},g_{a_1},\ldots}$ and
$$\ngrp{g_{a_0},\ldots,g_{a_n}} \lneq \ngrp{g_{a_0},\ldots,g_{a_{n+1}}}$$
for all $n\in\Nb$.  Thus the sequence $(a_0,\ldots, a_n)$ is an element of $T_G$ for any $n\in\Nb$, so $T_G$ has an infinite branch.
\end{proof}

\begin{lem}\label{lem:MaxNImRank}
If $G\in\mc M_n$ and $f:G\surject G'$, then  $\rho(T_{G})\geq \rho(T_{G'})$ with equality if and only if $f$ is injective.
\end{lem}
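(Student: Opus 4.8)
The plan is to exhibit an explicit monotone map between the two trees and then analyze when it can fail to be rank-increasing. Given the surjection $f\from G\surject G'$, let $\{g_0,g_1,\ldots\}$ enumerate $G$ and $\{g'_0,g'_1,\ldots\}$ enumerate $G'$. The key observation is that $f$ respects normal closures: for any finite set of indices $s=(s_0,\ldots,s_n)$, if we pick $\psi\from\Nb\to\Nb$ so that $f(g_{\psi(k)})=g'_k$ (possible by surjectivity), then $f$ induces a surjection $G/\ngrp{\{g_s\}}\surject G'/\ngrp{\{g'_s\}}$ when the $g'_{s_i}$ are pulled back through $\psi$, because $f(\ngrp{\{g_{\psi(s_i)}\}}) = \ngrp{\{g'_{s_i}\}}$. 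So I would define $\phi\from T_{G'}\to\wbaire$ by $\phi(\emptyset)=\emptyset$ and $\phi(s_0,\ldots,s_n)=(\psi(s_0),\ldots,\psi(s_n))$, check it is monotone, and verify $\phi(T_{G'})\subseteq T_G$: if $\ngrp{\{g'_{s\restriction k}\}}\neq\ngrp{\{g'_{s\restriction(k+1)}\}}$ in $G'$, then since $f$ maps the corresponding normal closures in $G$ onto these, the normal closures in $G$ must also differ. Lemma~\ref{lem:TrRkMonotone} then gives $\rho(T_{G'})\leq\rho(T_G)$.

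For the equality case, one direction is immediate: if $f$ is injective it is an isomorphism, so $T_G=T_{G'}$ up to the relabeling induced by $\psi$ (which is now a bijection on the relevant images), and the ranks agree. The substantive direction is to show that if $f$ is not injective then $\rho(T_{G'})<\rho(T_G)$ strictly. Here I would use that $\ker f$ is a nontrivial normal subgroup of $G$, and since $G\in\mc M_n$, the quotient $G'=G/\ker f$ also lies in $\mc M_n$ (a quotient of a max-n group satisfies max-n, as normal subgroups of $G/\ker f$ correspond to normal subgroups of $G$ containing $\ker f$). Because $\ker f\neq\{e\}$, there is some $g_i\in\ker f$ with $g_i\neq e$, so $\ngrp{\{g_i\}}\neq\{e\}=\ngrp{\emptyset}$ in $G$, meaning the one-element sequence $(i)\in T_G$. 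I would argue $\rho((T_G)_{(i)})\geq\rho(T_{G'})$: the group $G_{(i)}=G/\ngrp{\{g_i\}}$ still surjects onto $G'$ (since $\ngrp{\{g_i\}}\leq\ker f$), so applying the already-proven inequality to this new surjection gives $\rho(T_{G'})\leq\rho(T_{G_{(i)}})=\rho((T_G)_{(i)})$. Since $(i)\in T_G$ is a nonempty node, $\rho((T_G)_{(i)})<\rho(T_G)$ by the definition of the rank function, so $\rho(T_{G'})<\rho(T_G)$.

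The main obstacle I anticipate is bookkeeping around the enumerations: the definition of $T_G$ uses the specific preferred enumeration $(g_i)$ of $G$, and the reindexing map $\psi$ need not send the node witnessing $g_i\in\ker f$ to anything canonical, so I must be careful that the inequality $\rho(T_{G'})\leq\rho(T_{G_{(i)}})$ is applied to $G_{(i)}$ with \emph{its own} preferred enumeration (as an element of $\Gw$ via the marking conventions of Section~\ref{sec:Prelim}), not an ad hoc one. By Corollary-type isomorphism-invariance of $\rho(T_\bullet)$ — which we would state here as the analogue of Corollary~\ref{cor:CentIsoInv}, and which follows from the monotone-map argument above applied in both directions — the choice of enumeration does not matter, so this is only a presentational hazard rather than a genuine difficulty. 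The one genuinely delicate point is confirming that $G_{(i)}=G/\ngrp{\{g_i\}}$ genuinely surjects onto $G'$ compatibly enough to reapply the inequality; this is clear since $\ngrp{\{g_i\}}_G\subseteq\ker f$ implies $f$ factors through $G_{(i)}$, and the factored map is still surjective.
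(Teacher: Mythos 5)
Your argument is correct, but it is organized differently from the paper's. The paper uses the max-n hypothesis up front to write $\ker(f)=\ngrp{S}_G$ for a finite set $S=\{g_{s_0},\dots,g_{s_n}\}$ chosen minimally, so that $s=(s_0,\dots,s_n)$ is itself a node of $T_G$; it then shows the reindexing map sends $T_{G'}$ monotonically into the subtree $(T_G)_s$, using the isomorphism $G'/\ngrp{g'_{i_0},\dots,g'_{i_k}}_{G'}\cong G/\ngrp{g_{\psi(i_0)},\dots,g_{\psi(i_k)},S}_G$, and reads off $\rho(T_{G'})\leq\rho((T_G)_s)\leq\rho(T_G)$ with the right-hand inequality strict exactly when $s\neq\emptyset$. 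You instead prove the weak inequality for an arbitrary surjection first (which, as you implicitly observe, needs no max-n hypothesis at all), and obtain strictness by factoring $f$ through $G/\ngrp{g_i}$ for a single nontrivial kernel element and reapplying the weak inequality to the node $(i)$. Your route is more modular and avoids finite normal generation of the kernel entirely; the paper's buys the whole chain of inequalities from one construction and makes the role of max-n (finite normal generation) explicit. Your handling of the marking issue is also fine and in fact simpler than you fear: with the quotient marking of Section~\ref{sec:Prelim}, the $j$-th element of $G/\ngrp{g_i}$ is the image of the $j$-th element of $G$, so $T_{G_{(i)}}$ and $(T_G)_{(i)}$ coincide literally; alternatively the isomorphism-invariance you invoke is a consequence of your already-established weak inequality, so there is no circularity.

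One point you should make explicit: the step $\rho((T_G)_{(i)})<\rho(T_G)$ for a nonempty node $(i)$ is only valid because $T_G$ is well-founded; if $T_G$ were ill-founded, both ranks would equal $\omega_1$ and the inequality would fail. Well-foundedness is exactly where the hypothesis $G\in\mc M_n$ enters your proof, via Lemma~\ref{lem:MaxNMapReduction}, so cite it there. (The same caveat applies to the paper's phrase ``strict if and only if $s\neq\emptyset$.'')
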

\begin{proof}
Since $G$ is max-n, $\ker(f)=\ngrp{S}$ for some finite $S=\{g_{s_0},\ldots,g_{s_n}\}$. We may assume that $n$ is minimal, so no element of $S$ is in the normal closure of the others. Setting $(s_0,\dots,s_n)=:s\in\wbaire$, the minimality of $S$ implies $s\in T_G$; in the case $\ker(f)=\{1\}$, we take $s=\emptyset$. Let $\psi\from\Nb\to\Nb$ be a map such that $g'_k=f(g_{\psi(k)})$.  Then for all $i_0,\ldots,i_k\in\Nb$, 
\[
G'/\<\<g'_{i_0},\ldots,g'_{i_k}\>\>_{G'} \cong G/\<\<g_{\psi(i_0)},\ldots,g_{\psi(i_k)},S\>\>_G,
\]
and the monotone map $\phi:T_{G'}\rightarrow \wbaire$ given by $(r_0,\ldots,r_n)\mapsto(\psi(r_0),\ldots,\psi(r_n))$ sends $T_{G'}$ into $(T_G)_s$.  By Lemma \ref{lem:TrRkMonotone}, $\rho(T_{G'}) \leq \rho((T_G)_s)\leq\rho(T_G)$, and the rightmost inequality is strict if and only if $s\neq \emptyset$.
\end{proof}

We conclude this rank is also isomorphism invariant.

\begin{cor}\label{cor:MaxNRankIsoInv}
If $G,G'\in\Gw$ and $G\cong G'$, then $\rho(T_G)=\rho(T_{G'})$.
\end{cor}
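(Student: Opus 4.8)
The plan is to read this off Lemma~\ref{lem:MaxNImRank} after a short case split on whether $G\in\mc M_n$. Note first that $\mc M_n$ is a class closed under isomorphism, so $G\cong G'$ forces $G\in\mc M_n\Leftrightarrow G'\in\mc M_n$.

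Suppose first $G\cong G'$ with $G,G'\in\mc M_n$. Fix an isomorphism $f\colon G\to G'$. Since $f$ is in particular a surjection and $G\in\mc M_n$, Lemma~\ref{lem:MaxNImRank} yields $\rho(T_G)\geq\rho(T_{G'})$, and the same lemma says this inequality is an equality precisely because $f$ is injective. Hence $\rho(T_G)=\rho(T_{G'})$.

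In the remaining case $G\notin\mc M_n$, and then also $G'\notin\mc M_n$. By Lemma~\ref{lem:MaxNMapReduction}, both $T_G$ and $T_{G'}$ are ill-founded, so $\rho(T_G)=\omega_1=\rho(T_{G'})$ directly from the definition of $\rho$. This exhausts all cases.

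There is no genuine obstacle here; all the work sits in Lemma~\ref{lem:MaxNImRank}. The one point worth flagging explicitly is that although the tree $T_G$ is built from the preferred enumeration of a \emph{marked} group, the argument above shows $\rho(T_G)$ depends only on the isomorphism type of $G$: two markings of the same abstract group are isomorphic as groups, and the case split applies verbatim. This is exactly what licenses the subsequent definition of the rank of $G$ independent of a chosen marking.
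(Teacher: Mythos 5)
Your proof is correct and follows the same route the paper intends: the corollary is stated as an immediate consequence of Lemma~\ref{lem:MaxNImRank}, applied to an isomorphism viewed as an injective surjection. Your explicit handling of the case $G\notin\mc M_n$ (where both trees are ill-founded and $\rho$ is $\omega_1$ by definition) is a detail the paper leaves implicit but which is genuinely needed, since the lemma only covers groups in $\mc M_n$ while the corollary is stated for all of $\Gw$.
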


Recall that a group is \textbf{hopfian} if it is not isomorphic to any of its proper quotients.  The following corollary is easy enough to prove directly, but it follows immediately from Lemma \ref{lem:MaxNImRank} and Corollary~\ref{cor:MaxNRankIsoInv}.

\begin{cor}
If $G\in\mc M_n$, then $G$ is hopfian.
\end{cor}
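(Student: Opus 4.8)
The plan is a short proof by contradiction, combining the strict rank-drop in Lemma~\ref{lem:MaxNImRank} with the isomorphism-invariance of the rank in Corollary~\ref{cor:MaxNRankIsoInv}. Suppose $G\in\mc M_n$ is not hopfian. By definition there is a proper quotient $G'$ of $G$ together with an isomorphism $G\cong G'$; composing that isomorphism with the quotient map $G\surject G'$ gives a surjection $f\from G\surject G'$ which, because $G'$ is a \emph{proper} quotient, is not injective. Note $G'$, being a quotient of the marked group $G$, is itself a marked group, so $T_{G'}$ and $\rho(T_{G'})$ are defined; moreover $G'\cong G\in\mc M_n$, so $G'\in\mc M_n$ and $T_{G'}$ is well-founded.

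First I would apply Lemma~\ref{lem:MaxNImRank} to the surjection $f\from G\surject G'$: since $G\in\mc M_n$ and $f$ is not injective, the lemma yields the strict inequality $\rho(T_G)>\rho(T_{G'})$. On the other hand, since $G\cong G'$, Corollary~\ref{cor:MaxNRankIsoInv} gives $\rho(T_G)=\rho(T_{G'})$. These two statements are contradictory, so no such $G'$ can exist, i.e.\ $G$ is hopfian.

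There is essentially no obstacle here; the only point requiring a moment's care is the bookkeeping that a proper quotient of the marked group $G$ is again an element of $\Gw$ (so that both $\rho(T_G)$ and $\rho(T_{G'})$ are legitimately defined and Corollary~\ref{cor:MaxNRankIsoInv} applies), and that "not hopfian" indeed produces a \emph{non-injective} surjection from $G$ onto a group isomorphic to $G$ rather than merely an abstract witness. Once these are noted, the argument is immediate.
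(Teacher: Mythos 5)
Your proof is correct and is exactly the argument the paper intends: the strict rank drop from Lemma~\ref{lem:MaxNImRank} for a non-injective surjection contradicts the isomorphism-invariance of $\rho(T_G)$ from Corollary~\ref{cor:MaxNRankIsoInv}. The bookkeeping points you flag (the quotient being a legitimate marked group, and non-hopfian yielding a genuinely non-injective surjection) are handled correctly.
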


Unlike the previous invariants, this rank has appeared before in the literature; cf.\ \cite{C11}. 

\begin{defn} 
If $G$ has the maximal condition on normal subgroups, then $\rho(T_G)$ for some (any) marking of $G$ is called the \textbf{length} of $G$.
\end{defn}

If we were to follow our template from previous sections, we would move on to analogues of Lemmas \ref{lem:CentRankSucc} and \ref{lem:CentRankLim}.  However, we were unable to prove an analogue of Lemma \ref{lem:CentRankLim} which would take advantage of Lemma \ref{lem:MaxNImRank}.  Such a result would be a sort of dual version of the result of Olshanskii cited in the proof of Lemma \ref{lem:MaxRankLim}.  Specifically, the following question is open to the best of the authors' knowledge:

\begin{quest}
Suppose $\{A_i\}_{i\in\Nb}$ is a set of normally $k$-generated max-n groups. Is there a max-n group $A$ such that $A\twoheadrightarrow A_i$ for all $i\in\Nb$?
\end{quest}

A positive answer to this question would give us exactly the right analogue of Lemma \ref{lem:CentRankLim}.  Lacking this, we will use a construction involving (restricted) wreath products. Recall the wreath product of $H$ and $G$ is $H\wr G:=H^{<G}\rtimes G$ where $H^{<G}$ denotes the direct sum and $G\acts H^{<G}$ by shift; in the case $G\acts X$ for some set $X$, we write $H\wr_X G:=H^{<X}\rtimes G$. We will see that we can relate $\rho(T_{H\wr G})$ to both $\rho(T_H)$ and $\rho(T_G)$, while Lemma \ref{lem:MaxNImRank} alone only gives us information about how $\rho(T_{H\wr G})$ and $\rho(T_G)$ relate.

We will focus on perfect max-n groups with no central factors; let us call the set of such groups $\mc M'_n$. A group $G$ is said to have a \textbf{central factor} if there are normal subgroups $L\trianglelefteq M $ in $G$ such that $M/L$ is nontrivial and central in $G/L$. Since $\mc M'_n\subseteq \mc M_n$, it is enough for our purposes to show that $\rho$ is unbounded below $\omega_1$ on $\mc M'_n$ and $\mc M'_n\cap \Gw_{fg}$.

\begin{lem}\label{lem:MaxNRankSucc}
Let $S$ be an infinite simple group.  For all groups $G\in\mc M_n$, $G\times S\in\mc M_n$ and $\rho(T_G)<\rho(T_{G\times S})$.  If $G\in\mc M'_n$, then so is $G\times S$.
\end{lem}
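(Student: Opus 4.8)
The plan is to prove Lemma~\ref{lem:MaxNRankSucc} in three stages: first establish the closure statements ($G\times S\in\mc M_n$, and $G\in\mc M'_n\Rightarrow G\times S\in\mc M'_n$), then establish the rank strict inequality $\rho(T_G)<\rho(T_{G\times S})$ using Lemma~\ref{lem:MaxNImRank} together with Lemma~\ref{lem:TrRkMonotone}.

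For the closure statement, I would first recall that if $S$ is simple then the normal subgroups of $G\times S$ are exactly those of the form $N$ or $N\times S$ with $N\trianglelefteq G$; this is a standard fact, but it needs $S$ to have no nontrivial normal subgroups and requires a short argument that a normal subgroup of $G\times S$ projecting nontrivially to $S$ must contain $\{e\}\times S$ (using $[\{e\}\times S, N]$ and simplicity/perfectness of $S$ — note an infinite simple group is perfect, or at least one can argue directly from $[S,S]\trianglelefteq S$ being nontrivial hence all of $S$). Given this description, an ascending chain of normal subgroups of $G\times S$ can change ``$S$-coordinate'' at most once, and otherwise corresponds to an ascending chain in $G$; since $G\in\mc M_n$, the chain stabilizes, so $G\times S\in\mc M_n$. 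For the $\mc M'_n$ claim: $G\times S$ is perfect since both factors are (an infinite simple group equals its commutator subgroup). For no central factors, suppose $L\trianglelefteq M\trianglelefteq G\times S$ with $M/L$ nontrivial and central in $(G\times S)/L$; using the normal-subgroup description of $G\times S$ one reduces to the cases where the ``$S$-part'' is trivial (giving a central factor of $G$, contradiction) or full (giving a central factor of $S$, impossible since $S$ is nonabelian simple), after checking the relevant quotients split as products so that centrality in the product forces centrality in each factor. This step — getting the central-factor bookkeeping exactly right — is the one I expect to be the main nuisance, though not conceptually deep.

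For the rank inequality, apply Lemma~\ref{lem:MaxNImRank} to the projection $f\from G\times S\surject G$. Since $f$ is surjective and not injective (its kernel is $\{e\}\times S\neq\{e\}$ because $S$ is infinite, hence nontrivial), and since $G\times S\in\mc M_n$ by the first part, Lemma~\ref{lem:MaxNImRank} gives $\rho(T_{G\times S})>\rho(T_G)$ directly. That is essentially all there is to it; the hypothesis ``$S$ infinite'' is used only to guarantee $S\neq\{e\}$ (so any nontrivial simple group would do, but the paper will want $S$ infinite for later applications, e.g.\ to keep groups infinite and to ensure good properties of wreath products built from these pieces). I would also note in passing that the kernel $\{e\}\times S$ being \emph{not} finitely generated as a normal subgroup is irrelevant here — Lemma~\ref{lem:MaxNImRank} only needs $G\times S\in\mc M_n$, which we have proved.

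Thus the only real content is the structural lemma on normal subgroups of $G\times S$ for $S$ simple; everything else is a short deduction from results already in the excerpt. I would present the normal-subgroup description as a brief sublemma (or cite it as folklore), prove the two closure assertions from it, and then close with the one-line application of Lemma~\ref{lem:MaxNImRank}.
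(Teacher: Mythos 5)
Your proposal is correct and follows essentially the same route as the paper: the paper also deduces the strict rank inequality by applying Lemma~\ref{lem:MaxNImRank} to the projection $G\times S\surject G$ (which is why $G\times S\in\mc M_n$ must be established first), and handles central factors by pushing $L\trianglelefteq M$ through the projections to $G$ and to $S$; your normal-subgroup classification of $G\times S$ just fills in the ``easy to see'' max-n step more explicitly. One small slip in passing: $\{e\}\times S$ \emph{is} finitely generated as a normal subgroup of $G\times S$ (it is the normal closure of any single nontrivial $(e,s)$, by simplicity of $S$), which is exactly what the proof of Lemma~\ref{lem:MaxNImRank} uses -- but as you say, this follows anyway from $G\times S\in\mc M_n$, so the argument is unaffected.
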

\begin{proof}
It is easy to see that $G\times S\in\mc M_n$, and since $G$ is a quotient of $G\times S$, Lemma \ref{lem:MaxNImRank} implies $\rho(T_G)<\rho(T_{G\times S})$.  If $G$ is perfect, then $G\times S$ is perfect, so for the last statement we need only to check that if $G$ has no central factors, then $G\times S$ has no central factors.  Suppose that $L, M\normal G\times S$ give a central factor. Let $\pi\from G\times S \to G$ be the usual projection.  Since $G$ has no central factors, $\pi(M)=\pi(L)$.  Thus $MS=LS$, so $M=L(S\cap M)$.  Since $S$ has no central factors, $S\cap M=S\cap L$.  We conclude that $M=L$, whereby $G\times S$ has no central factors, a contradiction.
\end{proof}

Lemma~\rm\ref{lem:MaxNRankSucc} allows us to find a group in $\mc M'_n$ with rank greater than a given group in $\mc M'_n$.  However, we also need to be able to find a group in $\mc M'_n$ with rank greater than a countable family of groups from $\mc M'_n$.  We begin by looking at properties of the ranks of wreath products.

\begin{lem}\label{lem:max-n}
Suppose $H$ and $G$ are groups satisfying max-n. Then $\rho(T_{H\wr G})\geq \rho(T_G)+\rho(T_H)$. 
\end{lem}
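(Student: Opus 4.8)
The plan is to build a monotone map from $T_G$ into $T_{H\wr G}$ whose image lies below some node of rank at least $\rho(T_H)$, so that Lemma~\ref{lem:TrRkMonotone} yields $\rho(T_G)$ many levels sitting above a subtree of rank $\geq\rho(T_H)$, giving the additive bound. First I would fix markings, writing $G=\{g_0,g_1,\dots\}$, $H=\{h_0,h_1,\dots\}$, and $W:=H\wr G=H^{<G}\rtimes G=\{w_0,w_1,\dots\}$, and I would record the basic normal-subgroup structure of $W$: the base group $B:=H^{<G}$ is normal, $W/B\cong G$, and for elements of $G$ viewed inside $W$, the normal closure $\ngrp{S}_W$ for $S\subseteq G$ contains $\ngrp{S}_G$ (its image in $W/B$) and, crucially, when $S$ generates a proper normal subgroup of $G$, adding such an $S$ does not touch $B$ at all. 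The key algebraic observation is the quotient computation: for $s_0,\dots,s_n$ coding elements of $G$,
\[
W/\ngrp{g_{s_0},\dots,g_{s_n}}_W \;\cong\; H\wr_{\,G/\ngrp{g_{s_0},\dots,g_{s_n}}_G}\bigl(G/\ngrp{g_{s_0},\dots,g_{s_n}}_G\bigr),
\]
since killing a normal subgroup $N\normal G$ inside $W$ collapses the wreath product to $H\wr_{G/N}(G/N)$. In particular the branching behaviour of $T_W$ restricted to indices coding elements of $G$ exactly mirrors the branching of $T_G$; choosing $\psi\from\Nb\to\Nb$ with $w_{\psi(k)}=(e_B,g_k)$, the map $(s_0,\dots,s_n)\mapsto(\psi(s_0),\dots,\psi(s_n))$ is monotone and sends $T_G$ into $T_W$. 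This handles the ``$\rho(T_G)$ part'' of the sum.

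Next I would locate, at the ``bottom'' of this copy of $T_G$ inside $T_W$, a subtree isomorphic (rank-wise) to $T_H$. The point is that the construction of $T_W$ does not require us to exhaust the $G$-coordinates first: after following any node $s=(\psi(s_0),\dots,\psi(s_n))$ coding a generating set of a \emph{proper} normal subgroup of $G$, the corresponding group $W_s$ is still of the form $H\wr_X X'$ with $H$ appearing as a direct factor of the base; more simply, I would instead run the $T_H$-construction starting from $\emptyset$ using coordinates $\psi'(k)$ with $w_{\psi'(k)}=(\delta_{e_G}\otimes h_k,\,e_G)$, i.e.\ the copy of $H$ supported at the identity coordinate. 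Since $H$ embeds in $W$ as this copy, Lemma~\ref{lem:MaxNImRank} (applied to the quotient $W\surject W$, or more directly the embedding argument behind it) does not quite apply — embeddings, not quotients, are what is at hand — so here I would argue directly that the normal closures $\ngrp{\,\cdot\,}_W$ of elements of this copy of $H$, \emph{together with the full base group in the other coordinates quotiented out}, reproduce $T_H$. Concretely, I expect to first pass to the node of $T_W$ obtained by killing off enough to leave exactly $H$ (one identity coordinate, trivial elsewhere and trivial $G$), check that this node has $T_H$ below it, and then graft the $T_G$-copy on top: the monotone map is $s\mapsto (\text{fixed prefix reaching the }H\text{-block})\conc(\text{image of }s)$ reversed, i.e.\ $T_G$ on top, $T_H$ at the leaves.

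Assembling: by Lemma~\ref{lem:TrRkMonotone} the embedded copy of $T_G$ has, hanging below each of its leaves, a subtree whose rank is $\geq\rho(T_H)$; a standard ordinal induction on $\rho(T_G)$ (replacing the leaves of a tree of rank $\alpha$ by trees of rank $\geq\beta$ yields a tree of rank $\geq \alpha+\beta$) then gives $\rho(T_{H\wr G})\geq\rho(T_G)+\rho(T_H)$. I would phrase this last step as a small general lemma about trees or fold it into the monotone-map bookkeeping. The main obstacle I anticipate is the bookkeeping in the second paragraph: making precise, with the specific marking conventions of $\Gw$ and the definition of $T_W$ (which only adds $s\conc i$ when the normal closure strictly grows), that one can reach a node of $T_W$ whose associated quotient is genuinely $\cong H$ and below which the construction faithfully reproduces $T_H$ — the indices must be chosen so that each successive generator strictly enlarges the normal closure, and one must check that killing the ``other'' base coordinates and the $G$-part can be arranged compatibly with the enumeration-driven construction rather than an arbitrary one. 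I expect that choosing the copy of $H$ at the identity coordinate and killing the remaining base coordinates via commutators/conjugates already present makes this manageable, but it is where the real work lies.
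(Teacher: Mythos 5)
There is a genuine gap, and it is algebraic rather than bookkeeping. Your plan kills elements of $G$ first and claims $W/\ngrp{g_{s_0},\dots,g_{s_n}}_W \cong H\wr_{G/N}(G/N)$, then hopes to reach a node of $T_W$ whose associated quotient is genuinely $\cong H$ and graft $T_H$ there. But by Lemma~\ref{lem:wreath}, any normal subgroup of $W=H\wr G$ that meets $G$ non-trivially already contains $[H,H]^{<G}$; so quotienting by the normal closure of a non-identity element of $G$ destroys the commutator structure of the base (and shift-invariance further identifies coordinates). The displayed isomorphism is therefore false for nonabelian $H$, no node of $T_W$ has quotient $\cong H$, and the ``fixed prefix reaching the $H$-block'' does not exist. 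This is exactly why the paper runs the construction in the opposite order: it embeds $T_H$ at the \emph{top}, using the base elements $f_h$ supported at the identity coordinate, for which the normal closure is cleanly $\ngrp{f_h}_W=\ngrp{h}_H^{<G}$, so the branching of $T_H$ is faithfully reproduced; and at a terminal node $s$ of that copy one has $\ngrp{\{h_s\}}_H=H$, hence $W/\ngrp{\{g_{\phi(s)}\}}_W\cong G$, so a subtree of rank $\rho(T_G)$ hangs below each leaf.

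There is also an ordinal-arithmetic error compounding this. Hanging trees of rank $\geq\beta$ below the leaves of a tree of rank $\alpha$ yields rank $\geq\beta+\alpha$, not $\alpha+\beta$ (the hung rank is added on the \emph{left}, since $\rho_T(t)\geq\beta'+\rho_S(t)$ propagates upward through $\sup(\cdot+1)$). With your arrangement ($T_G$ on top, $T_H$ at the leaves) you would therefore obtain $\rho(T_H)+\rho(T_G)$, which does not imply the stated $\rho(T_G)+\rho(T_H)$ when, say, $\rho(T_G)=\omega+1$ and $\rho(T_H)=2$. The paper's arrangement ($T_H$ on top, $T_G$ at the leaves) produces the sum in the correct order.
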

\begin{proof} 
For each $h\in H$ define $f_h\in H^{<G}$ by
\[
f_h(g)=
\begin{cases}
h, & \text{ if } g=e\\
e, & \text { else.}
\end{cases}
\]
Let $H=\{h_0,h_1,\ldots\}$ and let $\psi\from\Nb\to\Nb$ be a map such that $f_{h_i}=g_{\psi(i)}$.  We now define a monotone $\phi:T_H\rightarrow \wbaire$:  Put $\phi(\emptyset)=\emptyset$.  For non-empty $s\in T_H$, define $\phi$ by 
$$(s_0,\dots,s_k)\mapsto \left(\psi(s_0),\dots, \psi(s_k)\right).$$

\indent We argue $\phi$ maps $T_H$ into $T_{H\wr G}$ by induction on the length of $s\in T_H$. As the base case is immediate, say $s\in T_H$ and $s\conc k\in T_H$. By construction, it is the case that $\ngrp{\{h_s\}\cup \{h_k\}}_H\neq\ngrp{\{h_s\}}_H$.  For all $t\in T_H$, $\ngrp{\{g_{\phi(t)}\}}_{H\wr G}=\ngrp{\{h_t\}}_H^{<G}$, hence 
\[
\ngrp{\{g_{\phi(s)}\}\cup\{g_{\psi(k)}\}}_{H\wr G}\neq \ngrp{\{g_{\phi(s)}\}}_{H\wr G}.
\]
We conclude that $\phi(s\conc k)\in T_{H\wr G}$, so $\phi$ maps $T_H$ into $T_{H\wr G}$.\par

Now if $s=(s_0,\ldots,s_n)\in T_H$ is a terminal node, then $\ngrp{\{h_s\}}_H = H$.  In this case $(H\wr G)/\ngrp{\{g_{\phi(s)}\}}_{H\wr G} \cong G$, so $\rho(T_G)=\rho((T_{H\wr G})_{\phi(s)})$ by Corollary \ref{cor:MaxNRankIsoInv}.  The desired result now follows.
\end{proof}

In general, $H\wr G$ need not be max-n. A theorem of P. Hall provides a sufficient condition for this.

\begin{thm}[Hall, {\cite[Theorem 4]{H54}}]\label{thm:max-n_wreath}
Let $H$ and $G$ be groups satisfying max-n. If $H$ has no central factors, then $H\wr G$ satisfies max-n.
\end{thm}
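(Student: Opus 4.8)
The plan is to exploit the split extension structure of the restricted wreath product. Write $W := H \wr G$ and let $B := H^{<G}$ be the base group, so that $B \normal W$ and $W/B \cong G$. Since $G$ satisfies max-n, passing to $W/B$ reduces the problem to understanding the $W$-invariant subgroups of $B$, and the heart of the argument is the following description: every normal subgroup $N$ of $W$ with $N \leq B$ is of the form $N = N_0^{<G}$ for a (necessarily unique) normal subgroup $N_0$ of $H$, and conversely every such $N_0^{<G}$ is normal in $W$. Granting this, the poset of $W$-invariant subgroups of $B$ is order-isomorphic to the poset of normal subgroups of $H$, hence satisfies the ascending chain condition since $H$ satisfies max-n.

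To prove the description, write $B = \bigoplus_{g\in G} H^{(g)}$ with $H^{(g)}$ the copy of $H$ at coordinate $g$, and let $\pi_g \from B \to H^{(g)}$ be the projection. Given $N \normal W$ with $N \leq B$, set $N_0 := \pi_e(N)$ and $N_1 := N \cap H^{(e)}$, both regarded as subgroups of $H$; conjugating by $H^{(e)} \leq B$ shows that each is normal in $H$, and plainly $N_1 \leq N_0$. The key computation: given $n_0 \in N_0$, choose $x \in N$ with $\pi_e(x) = n_0^{(e)}$, and conjugate $x$ by $h^{(e)}$ for $h \in H$. Conjugation inside $B$ is coordinatewise and $h^{(e)}$ is supported only at $e$, so $x^{h^{(e)}}x^{-1}$ lies in $N$, is supported at the single coordinate $e$, and equals there a commutator of $h$ and $n_0$. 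Letting $h$ and $n_0$ vary, this yields $[H, N_0]^{(e)} \leq N$, hence $[H, N_0] \leq N \cap H^{(e)} = N_1$; that is, $N_0/N_1$ is central in $H/N_1$. As $H$ has no central factors, $N_0 = N_1$. Finally, $G$-invariance of $N$ gives $\pi_g(N) = g\cdot N_0$ and $N\cap H^{(g)} = g\cdot N_1$ for all $g$, so from $\bigoplus_g(N\cap H^{(g)}) \leq N \leq \bigoplus_g \pi_g(N)$ and $N_0 = N_1$ we get $N = N_0^{<G}$. The converse is immediate, as $N_0^{<G}$ is visibly stable under coordinatewise conjugation and under the shift.

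It remains to assemble the pieces. Let $M_1 \leq M_2 \leq \cdots$ be an ascending chain of normal subgroups of $W$. The image chain $(M_iB)_i$ in $W/B \cong G$ stabilizes because $G$ satisfies max-n, while the chain $(M_i\cap B)_i$ of $W$-invariant subgroups of $B$ stabilizes by the description above together with the fact that $H$ satisfies max-n. Once both $M_iB$ and $M_i\cap B$ are constant, Dedekind's modular law (applicable since $M_i \leq M_{i+1}$) gives $M_{i+1} = M_{i+1}\cap M_iB = M_i(M_{i+1}\cap B) = M_i(M_i\cap B) = M_i$. Thus the chain stabilizes, so $W$ satisfies max-n.

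The step I expect to be the main obstacle is the equality $N_0 = N_1$ in the description of the $W$-invariant subgroups of $B$: this is exactly where the hypothesis that $H$ has no central factors enters, and it genuinely cannot be dropped — for instance, if $H$ is nontrivial abelian there are many $W$-invariant subgroups of $B$ not of the stated form. The commutator computation producing a single-coordinate element of $N$ is the conceptual core; the reduction via $W/B \cong G$ and the modular-law bookkeeping are routine.
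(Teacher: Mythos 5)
The paper does not prove this statement; it is quoted as Theorem 4 of P.~Hall's 1954 paper and used as a black box, so there is nothing internal to compare against. Your argument is correct and complete, and it is essentially Hall's classical one: the classification of the normal subgroups of $H\wr G$ contained in the base group $H^{<G}$ as the subgroups $N_0^{<G}$ with $N_0\normal H$ (where the single-coordinate commutator $x^{h^{(e)}}x^{-1}=[h,n_0]^{(e)}$ forces $\pi_e(N)/(N\cap H^{(e)})$ to be a central factor of $H$, hence trivial), followed by the standard two-step chain argument via $M_iB$ and $M_i\cap B$ and the modular law. Your closing remark correctly identifies where the no-central-factors hypothesis is indispensable.
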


Our next lemma allows us to iterate wreath products and remain in $\mc M'_n$.

\begin{lem}\label{lem:central_factors}
If $G$ and $H$ have no central factors, then $H\wr G$ has no central factors.
\end{lem}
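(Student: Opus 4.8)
The plan is to analyze an alleged central factor $L \trianglelefteq M$ of $W := H\wr G = H^{<G}\rtimes G$ by projecting onto the two natural pieces. Write $B := H^{<G}$ for the base group and let $\pi\colon W \to G$ be the quotient map with kernel $B$. First I would push the central factor down to $G$: the images $\pi(L)\trianglelefteq \pi(M)$ in $G$, together with the fact that $M/L$ being central in $W/L$ forces $\pi(M)/\pi(L)$ to be central in $W/(LB \cap \ldots)$ — more carefully, if $[M,W]\leq L$ then $[\pi(M),\pi(W)] = [\pi(M),G] \leq \pi(L)$, so $\pi(M)/\pi(L)$ is a central factor of $G$. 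Since $G$ has no central factors, $\pi(M) = \pi(L)$, which gives $M \leq LB$ and hence $M = L(M\cap B)$.

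The real work is then to show $M\cap B = L\cap B$ using that $H$ has no central factors and $M\cap B$ is central modulo $L$ in $W$. The key observation is that conjugating $M\cap B \leq B = \bigoplus_{g\in G} H$ by elements of $G$ permutes the coordinates, so $[M\cap B, G]\leq L$ means $M\cap B$ is, modulo $L\cap B$, invariant-up-to-the-diagonal under the shift; and conjugating by elements of $B$ and using $[M\cap B, B]\leq L\cap B$ constrains things within each coordinate. The clean way to organize this: show that the "support projections" of $M\cap B$ onto each coordinate $H$, relative to the corresponding projections of $L\cap B$, would produce a central factor of $H$. Concretely, fix a coordinate $g_0$ and let $M_{g_0}, L_{g_0} \leq H$ be the projections of $M\cap B$ and $L\cap B$ to the $g_0$-th coordinate; normality of $M, L$ under $B$ (which acts coordinatewise) shows $M_{g_0}, L_{g_0}$ are normal in $H$, and the centrality hypothesis $[M\cap B, B] \leq L\cap B$ descends to $[M_{g_0}, H]\leq L_{g_0}$, so $M_{g_0}/L_{g_0}$ is a central factor of $H$, forcing $M_{g_0} = L_{g_0}$. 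Doing this for every coordinate, plus the shift-invariance from $[M\cap B, G]\leq L$ to handle the interaction between coordinates (ensuring an element of $M\cap B$ is genuinely in $L\cap B$ once all its coordinate projections lie in the $L_{g}$), yields $M\cap B = L\cap B$, hence $M = L$.

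I expect the main obstacle to be the bookkeeping in the base-group step: an element of $M\cap B$ is supported on finitely many coordinates, and knowing that each coordinate projection lies in the corresponding $L_g$ does not immediately place the element in $L\cap B$ — one must use the coordinatewise action of $B$ together with the shift action of $G$ to "separate" coordinates and peel them off one at a time, each time staying inside $M\cap B$ modulo $L$. A careful induction on the size of the support, invoking that $B$ acts on $B$ coordinatewise so that one can multiply by a suitable element to kill one coordinate while not disturbing membership in $M \pmod L$, should close this. The perfectness of $H$ and $G$ is not needed here; only the no-central-factors hypotheses are used, exactly as in the statement.
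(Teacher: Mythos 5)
Your opening move (project to $G$, conclude $\pi(M)=\pi(L)$, hence $M=L(M\cap B)$ with $B=H^{<G}$, reducing everything to showing $M\cap B\leq L$) is exactly the paper's. The problem is in the second half, and it is precisely the obstacle you flag yourself: equality of the coordinate \emph{projections} $M_{g_0}=L_{g_0}$ does not recover membership, and your proposed repair does not close the gap. Concretely, given $m\in M\cap B$ with support $F$ and $g_1\in F$, knowing $m_{g_1}\in L_{g_1}$ only produces \emph{some} $\ell\in L\cap B$ whose $g_1$-coordinate is $m_{g_1}$; the support of $\ell$ is uncontrolled, so $m\ell^{-1}$ may have strictly larger support than $F\setminus\{g_1\}$, and the induction on $|F|$ never terminates. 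To peel off a coordinate you would need $\ell$ supported at $g_1$ alone, i.e.\ $m_{g_1}\in L\cap H^{\{g_1\}}$ --- membership in the \emph{intersection}, which is strictly stronger than membership in the projection $L_{g_1}$ and is not what your central-factor step delivers. The shift-invariance $[M\cap B,G]\leq L$ does not rescue this either: conjugating by $g$ with $gF\cap F=\emptyset$ again only yields statements about projections.

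The fix is to run the central-factor argument against intersections rather than projections, after which no induction on supports (and no use of the shift at all) is needed. The paper's version: for each finite $F\subseteq G$, the subgroups $H^F\cap L\trianglelefteq H^F\cap M$ are normal in $H^F$ and satisfy $[H^F\cap M,H^F]\leq L\cap H^F$, so they give a central factor of $H^F$; since a finite direct power of a group with no central factors has none (iterate the $G\times S$ computation from Lemma~\ref{lem:MaxNRankSucc}), $H^F\cap M=H^F\cap L$, and any $m\in M\cap B$ lies in $H^F\cap M$ for $F$ its support. Alternatively, your coordinatewise idea can be salvaged with one change: for $h\in H^{\{g_1\}}$ one has $[m,h]=[m_{g_1},h]\in L\cap H^{\{g_1\}}$, so $M_{g_1}\bigl(L\cap H^{\{g_1\}}\bigr)\big/\bigl(L\cap H^{\{g_1\}}\bigr)$ is a central factor of $H$ --- note the bottom group is the intersection, not $L_{g_1}$ --- forcing $M_{g_1}\leq L\cap H^{\{g_1\}}$. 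Then every coordinate of every $m\in M\cap B$ lies in $L$, and $m$ is the commuting product of its finitely many coordinates, so $m\in L$. Either way the lemma follows; as written, your argument does not.
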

\begin{proof}
Suppose $L\trianglelefteq M$ gives a central factor of $H\wr G$. Let $\pi:H\wr G\rightarrow G$ be the usual projection. Since $G$ has no central factors, it must be the case that $\pi(L)=\pi(M)$, so $LH^{<G}=MH^{<G}$. Thus, $M=L(H^{<G}\cap M)$, and it suffices to show $H^{<G}\cap M\leq L$. Since $H$ has no central factors, it follows similarly to the proof of Lemma~\rm\ref{lem:MaxNRankSucc} that $H^{F}\cap M=H^{F}\cap L$ for all finite $F\subseteq G$. We conclude that $H^{<G}\cap M\leq L$ verifying the lemma.
\end{proof}

It is easy to see the wreath product of two perfect groups is perfect, so using Theorem \ref{thm:max-n_wreath} and Lemma \ref{lem:central_factors}, the class $\mc M'_n$ is closed under wreath products. With the following fact from the literature, we are equipped to prove the desired lemma.
\begin{lem}[{\cite[Lemma 3.6]{GKO14}}]\label{lem:wreath}
Suppose $A,B$ are countable groups and form $G=A\wr B$.  If $N\trianglelefteq G$ meets $B$ non-trivially, then $[A,A]^{<B}\leq N$.
\end{lem}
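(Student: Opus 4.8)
The plan is to work inside $G=A\wr B=A^{<B}\rtimes B$, writing elements as pairs $(f,b)$ with $f\in A^{<B}$ finitely supported and $b\in B$ acting by the shift $(b\cdot f)(x)=f(b^{-1}x)$. For $a\in A$ and $c\in B$, let $a^{(c)}$ denote the element of the base group $A^{<B}$ with value $a$ at $c$ and identity elsewhere; each $A^{(c)}:=\{a^{(c)}:a\in A\}$ is a copy of $A$ inside $G$, and $[A,A]^{<B}=\bigoplus_{c\in B}[A,A]$ is generated by the coordinate subgroups $[A,A]^{(c)}$. Hence it is enough to produce a single coordinate $c$ with $[A,A]^{(c)}\leq N$: normality of $N$ together with the transitivity of the shift on coordinates then gives $[A,A]^{(c)}\leq N$ for all $c\in B$, and so $[A,A]^{<B}\leq N$. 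Since $N$ meets $B$ nontrivially (identifying $B$ with $\{(\mathbf 0,b):b\in B\}\leq G$), fix $b_0\neq e$ with $(\mathbf 0,b_0)\in N$.

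The first step is to extract base-group elements from $N$. For each $a\in A$ the commutator $[a^{(e)},(\mathbf 0,b_0)]$ lies in $N$, since $(\mathbf 0,b_0)\in N\trianglelefteq G$; expanding it with the semidirect-product multiplication shows it equals $v_a:=(a^{-1})^{(e)}\,a^{(b_0^{-1})}$, an element supported on the two coordinates $e$ and $b_0^{-1}$, which are distinct because $b_0\neq e$. The crux is to collapse this support to a single coordinate. Since multiplication in $A^{<B}$ is coordinatewise, one computes $(v_{a_2a_1})^{-1}v_{a_1}v_{a_2}=[a_1,a_2]^{(b_0^{-1})}$: the values at $e$ cancel, and the value left at $b_0^{-1}$ is exactly $a_1^{-1}a_2^{-1}a_1a_2$. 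As $v_{a_1},v_{a_2},v_{a_2a_1}\in N$, this yields $[a_1,a_2]^{(b_0^{-1})}\in N$ for all $a_1,a_2\in A$. Therefore $N\cap A^{(b_0^{-1})}$ is a subgroup of the copy $A^{(b_0^{-1})}\cong A$ that contains every commutator of $A$, hence contains $[A,A]$; that is, $[A,A]^{(b_0^{-1})}\leq N$, and by the reduction above this completes the proof.

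The one genuinely delicate point is the middle step. A single conjugation inside $N$ only ever produces elements of the base group supported on two coordinates --- the assignment $a\mapsto v_a$ is not a homomorphism --- and one needs the three-term combination $(v_{a_2a_1})^{-1}v_{a_1}v_{a_2}$ to see that the failure of multiplicativity is localized at one coordinate and is precisely a commutator. Everything else is routine bookkeeping in the wreath product, and the argument is insensitive both to the order of $b_0$ and to the exact convention chosen for the shift (a different convention merely relabels which coordinates occur).
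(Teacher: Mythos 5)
Your proof is correct and complete: the commutator computation $[a^{(e)},(\mathbf 0,b_0)]=(a^{-1})^{(e)}a^{(b_0^{-1})}$ is right, the three-term combination $(v_{a_2a_1})^{-1}v_{a_1}v_{a_2}=[a_1,a_2]^{(b_0^{-1})}$ does isolate an arbitrary commutator in a single coordinate, and conjugating by the shift spreads $[A,A]$ to every coordinate. Note that the paper does not prove this lemma at all but only cites \cite[Lemma 3.6]{GKO14}; your argument is the standard one for this classical fact about wreath products, so you have simply supplied the proof the paper outsources.
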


\begin{lem}\label{lem:MaxNRankLim}
Let $\{A_i\}_{i\in\Nb}$ be countable groups.  If $A_i\in\mc M'_n$ for all $i\in\Nb$, then there is a group $A\in\mc M'_n$ such that $\rho(T_{A_i})\leq\rho(T_A)$ for all $i\in\Nb$.
\end{lem}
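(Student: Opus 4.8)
The plan is to build $A$ as an iterated wreath product of the $A_i$ and then verify it lies in $\mc M'_n$ and dominates all the ranks. Concretely, I would set $B_0 := A_0$ and, recursively, $B_{n+1} := A_{n+1} \wr B_n$; each $B_n$ is a finite iterated wreath product of groups from $\mc M'_n$, hence lies in $\mc M'_n$ by the remarks preceding Lemma~\ref{lem:wreath} (wreath products of perfect groups are perfect, Theorem~\ref{thm:max-n_wreath} preserves max-n, and Lemma~\ref{lem:central_factors} preserves the no-central-factors condition). The natural projections $B_{n+1} = A_{n+1}\wr B_n \twoheadrightarrow B_n$ give an inverse system, and I would like to take $A$ to be a suitable countable group mapping onto every $B_n$; the first thing to check is that the inverse limit approach does not work directly, since an inverse limit of these groups need not be max-n (or even countable), so one cannot simply take $A = \varprojlim B_n$.

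The key observation that circumvents this is that Lemma~\ref{lem:max-n} already gives $\rho(T_{B_{n}}) \geq \rho(T_{B_{n-1}}) + \rho(T_{A_n})$ (applying it with $H = A_n$, $G = B_{n-1}$), and iterating, $\rho(T_{B_n}) \geq \rho(T_{A_n})$ for every $n$ — indeed $\rho(T_{B_n})$ is at least the ordinal sum of all $\rho(T_{A_i})$ for $i \le n$. So it suffices to produce a single group $A \in \mc M'_n$ with $\rho(T_A) \geq \rho(T_{B_n})$ for all $n$, and for this I only need $A$ to surject onto each $B_n$ (by Lemma~\ref{lem:MaxNImRank}, a surjection $A \twoheadrightarrow B_n$ forces $\rho(T_A) \geq \rho(T_{B_n})$). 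The candidate for $A$ is the iterated wreath product taken "from the outside in" in a way that embeds all the $B_n$ compatibly: more precisely, I would consider $C_n := A_n \wr (A_{n-1} \wr (\cdots \wr A_0))$ arranged so that there are surjections $C_{n+1} \twoheadrightarrow C_n$, and then realize $A$ as a countable group built so that $A \twoheadrightarrow C_n$ for every $n$ — for instance by taking the "limit wreath product" $A := \cdots \wr A_2 \wr A_1 \wr A_0$ interpreted as the union of the $C_n$ under the embeddings $C_n \hookrightarrow C_{n+1}$ induced by the base-point sections $B \hookrightarrow H \wr B$. One must then check this ascending union is itself in $\mc M'_n$: perfectness and no-central-factors pass to directed unions straightforwardly, but max-n for the union requires an argument, presumably again invoking Hall's theorem (Theorem~\ref{thm:max-n_wreath}) or Lemma~\ref{lem:wreath} to control normal subgroups of $A$ by how they meet the finite stages.

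The main obstacle I expect is exactly this last point: verifying $A \in \mc M_n$. An arbitrary ascending union of max-n groups need not be max-n, so the proof must use the specific wreath-product structure. The role of Lemma~\ref{lem:wreath} is presumably to show that any normal subgroup $N \trianglelefteq A$ that is nontrivial "high up" must contain a large perfect piece $[A_{i},A_{i}]^{<\cdots} = A_i^{<\cdots}$ (using perfectness of the $A_i$), which collapses much of the group and forces the chain of normal subgroups to stabilize; combined with the max-n property at each finite stage, this should bound any strictly increasing chain of normal subgroups of $A$. Once $A \in \mc M'_n$ is established, the rank inequality $\rho(T_{A_i}) \le \rho(T_A)$ is immediate from the surjections $A \twoheadrightarrow C_n \twoheadrightarrow$ (copy of $A_i$) together with Lemma~\ref{lem:MaxNImRank}, or directly from Lemma~\ref{lem:max-n} applied at the relevant stage, so the lemma follows.
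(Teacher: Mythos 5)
Your proposal is correct and follows essentially the same route as the paper: the group $A$ is exactly the union of the iterated wreath products $C_n = A_n\wr(\cdots\wr A_0)$, max-n is verified by using Lemma~\ref{lem:wreath} together with perfectness of the $A_k$ to show every proper quotient of $A$ factors through some $C_n$ (which is max-n by Theorem~\ref{thm:max-n_wreath}), and the rank bound comes from the surjections $A\twoheadrightarrow C_n$ via Lemmas~\ref{lem:max-n} and~\ref{lem:MaxNImRank}. The steps you flag as ``presumably'' are carried out in the paper precisely as you anticipate.
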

\begin{proof}
For each $n$, put $G_n:=A_n\wr\left(\dots \wr A_0\right)$.  By making the natural identification, we may assume $G_{n}\leq G_{n+1}$ for all $n$ and form $A:=\bigcup_{n\in \Nb} G_n$.  (Alternatively, one may take the direct limit.) \par

\indent Consider $N\normal A$.  Certainly, $N\cap G_n$ is non-trivial for some $n$.  Fix such an $n$ and take $k>n$. We now see $N\cap G_{k}\trianglelefteq G_{k}=A_{k}\wr G_{k-1}$ is a normal subgroup that meets $G_{k-1}$ non-trivially.  Applying Lemma~\rm\ref{lem:wreath}, $[A_{k},A_{k}]^{<G_{k-1}}\leq N\cap G_{k}$. Since $A_{k}$ is perfect, we have that $A_{k}^{<G_{k-1}}\leq N$. It now follows that $A/N$ is isomorphic to a quotient of $G_n$.

\indent Suppose $(N_i)_{i\in \Nb}$ is an increasing sequence of normal subgroups of $A$. By the previous paragraph, $A/N_0$ is a quotient of $G_n$ for some $n$, and Theorem~\rm\ref{thm:max-n_wreath} implies that each $G_n$ is a max-n group. Letting $\pi:A\rightarrow A/N_0$ be the usual projection, it is thus the case that $\pi(N_i)=\pi(N_j)$ for all sufficiently large $i$ and $j$. Therefore, $N_i=N_j$ for all sufficiently large $i$ and $j$, and $A$ satisfies max-$n$. \par

\indent For each $n$ and $k>n$, define 
\[
L_n^k:=A_k\wr_{G_{k-1}}\left( \dots\wr_{G_{n+2}}\left(A_{n+2}\wr_{G_{n+1}}A_{n+1}^{<G_n}\right)\right)
\]
and put $L_n:=\bigcup_{k>n}L_n^k$. We see $L_n\trianglelefteq A$ and $A/L_n\cong G_n$.  By Lemmas \ref{lem:max-n} and \ref{lem:MaxNImRank}, $\rho(T_{A_n})\leq\rho(T_{G_n})\leq\rho(T_A)$ for all $n$.

\indent We finally verify $A$ is perfect and has no central factors. That $A$ is perfect is immediate. It follows from Lemma~\rm\ref{lem:central_factors} and induction that each $G_n$ has no central factors. Since any factor of $A$ is a factor of $G_n$ for some $n$, $A$ has no central factors.
\end{proof}

\begin{lem}\label{lem:MaxNRankUnbdd}
For all $\alpha<\omega_1$, there is $G\in\mc M'_n$ such that $\rho(T_G)\geq\alpha$.
\end{lem}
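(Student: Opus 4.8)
The plan is to mimic exactly the proof of Lemma~\ref{lem:CentRankUnbdd}, now that all the ingredients are in place for the class $\mc M'_n$. I would proceed by transfinite induction on $\alpha<\omega_1$. For the base case $\alpha=0$ any group in $\mc M'_n$ works; one should note $\mc M'_n$ is nonempty, e.g.\ any infinite finitely generated simple group (or more simply, the trivial group, which is vacuously perfect with no central factors and max-n, giving $\rho=1\geq 0$).

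For the successor step, suppose $\alpha=\beta+1$ and we have $G\in\mc M'_n$ with $\rho(T_G)\geq\beta$. Fix an infinite simple group $S$; then $S\in\mc M'_n$ as infinite simple groups are perfect (their abelianization is a proper quotient, hence trivial by simplicity, unless the group is $\Zb$, so restrict to noncyclic infinite simple groups, which exist) and clearly have no central factors. By Lemma~\ref{lem:MaxNRankSucc}, $G\times S\in\mc M'_n$ and $\rho(T_{G\times S})>\rho(T_G)\geq\beta$, so $\rho(T_{G\times S})\geq\beta+1=\alpha$.

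For the limit step, suppose $\alpha$ is a limit ordinal. Since $\alpha$ is countable, choose an increasing sequence $\alpha_i<\alpha$ with $\sup_i\alpha_i=\alpha$. By the induction hypothesis, pick $A_i\in\mc M'_n$ with $\rho(T_{A_i})\geq\alpha_i$ for each $i$. Applying Lemma~\ref{lem:MaxNRankLim} to the family $\{A_i\}_{i\in\Nb}$ yields $A\in\mc M'_n$ with $\rho(T_A)\geq\rho(T_{A_i})\geq\alpha_i$ for all $i$, whence $\rho(T_A)\geq\sup_i\alpha_i=\alpha$. This completes the induction.

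There is no serious obstacle here; the hard work was already done in establishing Lemmas~\ref{lem:MaxNRankSucc} and~\ref{lem:MaxNRankLim} (in particular the wreath-product machinery of Lemmas~\ref{lem:max-n}, \ref{lem:central_factors}, Theorem~\ref{thm:max-n_wreath}, and Lemma~\ref{lem:wreath}). The only point requiring a moment's care is confirming that a suitable infinite simple group lies in $\mc M'_n$ for the successor step, which as noted is immediate once one takes a noncyclic infinite simple group. A subsequent lemma presumably handles the finitely generated refinement (embedding into a finitely generated group while staying in $\mc M'_n$ and not decreasing rank), exactly parallel to Lemma~\ref{lem:CentFG}.
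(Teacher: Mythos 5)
Your proof is correct and is essentially the paper's own argument: the paper proves this lemma by saying it is identical to the proof of Lemma~\ref{lem:CentRankUnbdd} with Lemmas~\ref{lem:MaxNRankSucc} and~\ref{lem:MaxNRankLim} substituted in, which is exactly what you do. (Your aside about $\Zb$ is unnecessary since $\Zb$ is not simple, so every infinite simple group is automatically perfect, but this does not affect the argument.)
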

\begin{proof}
The proof is the same as that of Lemma \ref{lem:CentRankUnbdd}, with Lemmas \ref{lem:MaxNRankSucc} and \ref{lem:MaxNRankLim} referenced at the appropriate places.
\end{proof}

The groups given by Lemma~\rm\ref{lem:MaxNRankUnbdd} are not, in general, finitely generated. For finding finitely generated examples another result of Hall is needed.
\begin{lem}[Hall, {\cite[cf. Theorem 4]{H61}}]\label{lem:Hall} 
Let $H$ be a countable group. Then there exists a short exact sequence 
\[
\{e\}\rightarrow M\rightarrow G\rightarrow \Zb\rightarrow \{e\}
\]
where $G$ is 2-generated, $[M,M]=[H,H]^{<\Zb}$, and there is $t\in G$ so that the conjugation action of $t$ on $[M,M]$ is by unit shift. 
\end{lem}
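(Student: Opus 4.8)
The plan is to realize $G$ as a two-generated subgroup of an \emph{unrestricted} wreath product of $H$ with $\Zb$, choosing the two generators so that the conjugates of one of them by powers of the other recover all of $H$ coordinate by coordinate.

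First I would fix an enumeration $H=\{h_1,h_2,h_3,\dots\}$ of the elements of $H$, let $B:=\prod_{i\in\Zb}H_i$ be the unrestricted direct product of copies $H_i\cong H$, let $\langle t\rangle\cong\Zb$ act on $B$ by the coordinate shift, and set $W:=B\rtimes\langle t\rangle$. Inside $B$ let $a$ be the element whose coordinate in position $2^n$ is $h_n$ for every $n\geq 1$ and whose remaining coordinates are trivial, and put $G:=\langle a,t\rangle$ and $M:=G\cap B$. Since $B\trianglelefteq W$ and $t\in G$ maps onto a generator of $W/B\cong\Zb$, we obtain $M\trianglelefteq G$ and $G/M\cong\Zb$, so
\[
\{e\}\to M\to G\to\Zb\to\{e\}
\]
is the required short exact sequence, $G$ is two-generated by construction, and $M=\langle a^{t^i}:i\in\Zb\rangle$. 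Because $t$ acts on $B$ (hence on every $t$-invariant subgroup, in particular on $[M,M]$) by the shift, the only thing left to check is $[M,M]=\bigoplus_{i\in\Zb}[H_i,H_i]=[H,H]^{<\Zb}$: granting this, the conjugation action of $t$ on $[M,M]$ is visibly the unit shift.

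For the inclusion $[H,H]^{<\Zb}\subseteq[M,M]$ I would observe that, for $p\neq q$ and $m\in\Zb$, the commutator $[a^{t^{m-2^p}},a^{t^{m-2^q}}]$ has coordinate $[h_p,h_q]$ in position $m$ and trivial coordinate in every other position; the latter holds because $\{2^n:n\geq 1\}$ is a Sidon set, so the equation $2^n-2^p=2^{n'}-2^q$ with $n\neq p$ and $n'\neq q$ forces $(n,p)=(n',q)$. As $p,q,m$ vary these elements generate $\bigoplus_i[H_i,H_i]$. For the reverse inclusion I would argue in two steps: (i) every coordinate projection of $M$ is all of $H$, since for a position $m$ and index $n$ the conjugate $a^{t^{m-2^n}}$ has $h_n$ in coordinate $m$, whence $[M,M]$ projects into $[H_i,H_i]$ in each coordinate $i$; and (ii) every commutator of two elements of $M$ is finitely supported, so $[M,M]$ lies in the restricted direct product $\bigoplus_i H_i$. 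Combining (i) and (ii) gives $[M,M]\subseteq\bigoplus_i[H_i,H_i]$, completing the computation.

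The hard part is step (ii), and this is precisely where the lacunary choice of support does the work. An element of $M$ is a word in finitely many of the $a^{t^i}$, with $t$-exponents ranging over some finite set $I$; given two such elements, enlarge $I$ to contain the exponents of both. Because the gaps $2^{n+1}-2^n=2^n$ tend to infinity, for all but finitely many coordinates $m$ there is at most one $n\geq 1$ with $m-2^n\in I$; at such a coordinate both elements restrict to powers of a single $h_n$ (or restrict trivially), so their commutator vanishes there. Hence commutators of elements of $M$ are finitely supported. The remaining points — that $M=G\cap B$ is exactly the normal closure of $a$ in $G$, the precise form of the coordinate projections, and the elementary commutator identities in $B$ — are routine once the support of $a$ has been fixed.
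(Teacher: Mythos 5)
Your construction is exactly the one the paper sketches (Hall's: the unrestricted wreath product $H^{\Zb}\rtimes\Zb$ with $\sigma$ supported on the powers of $2$ and $G=\langle\sigma,t\rangle$), and your verification that $[M,M]=[H,H]^{<\Zb}$ — single‑coordinate commutators via the Sidon property of $\{2^n\}$ for one inclusion, finite support of commutators plus surjectivity of the coordinate projections for the other — correctly fills in the details the paper leaves to the citation. This is essentially the same approach and it is correct.
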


It is useful to sketch Hall's construction of $G$. Let $\{h_i\}_{i\in \Nb}$ list $H$ and form the unrestricted wreath product $H^{\Zb}\rtimes \Zb$. Define $\sigma\in H^{\Zb}$ by
\[
\sigma(i):=
\begin{cases}
h_n, & \text{ if }i=2^n\\
e, & \text{ else}
\end{cases}
\]
and let $t$ be a generator for $\Zb$ in $H^{\Zb}\rtimes \Zb$. The desired group is then $G:=\grp{t,\sigma}$. The subgroup $M$ equals $\grp{g\sigma g^{-1}\mid g\in G}$.\par

\indent We point out a consequence of the construction for later use: Suppose $H$ is perfect and $h\in H$. Taking $f_h\in [H,H]^{<\Zb}=H^{<\Zb}$ as defined in Lemma \ref{lem:max-n}, the construction of $G$ implies $\ngrp{f_h}_G=\ngrp{h}_H^{<\Zb}$. 

\begin{cor}\label{cor:MaxNFG}
For each $\alpha<\omega_1$, there is a finitely generated group $G\in\mc M_n$ with \mbox{$\rho(T_G)\geq \alpha$}.
\end{cor}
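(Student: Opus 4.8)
The plan is to combine Lemma~\ref{lem:MaxNRankUnbdd} with Hall's embedding construction (Lemma~\ref{lem:Hall}) the same way Lemma~\ref{lem:CentFG} combined an unboundedness result with a Higman--Neumann--Neumann-type embedding. First I would fix $\alpha<\omega_1$ and invoke Lemma~\ref{lem:MaxNRankUnbdd} to obtain $H\in\mc M'_n$ with $\rho(T_H)\geq\alpha$; the key extra property we get for free is that $H$ is perfect. Applying Lemma~\ref{lem:Hall} to this $H$ produces a $2$-generated group $G$ sitting in a short exact sequence $\{e\}\to M\to G\to\Zb\to\{e\}$ with $[M,M]=[H,H]^{<\Zb}=H^{<\Zb}$ (using perfectness of $H$) and with a unit-shift action of $t$ on $[M,M]$.

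The two things to check are that $G$ is max-n and that $\rho(T_G)\geq\alpha$. For the rank bound I would use the ``consequence of the construction'' flagged just before the corollary: for $h\in H$, if $f_h\in H^{<\Zb}$ is the function supported at $e$ with value $h$ (as in Lemma~\ref{lem:max-n}), then $\ngrp{f_h}_G=\ngrp{h}_H^{<\Zb}$. Enumerating $H=\{h_0,h_1,\ldots\}$ and picking $\psi\from\Nb\to\Nb$ with $f_{h_i}=g_{\psi(i)}$ in the marking of $G$, the monotone map $(s_0,\ldots,s_k)\mapsto(\psi(s_0),\ldots,\psi(s_k))$ sends $T_H$ into $T_G$: the construction forces $\ngrp{\{h_s\}\cup\{h_k\}}_H\neq\ngrp{\{h_s\}}_H$ to translate into $\ngrp{\{g_{\phi(s)}\}\cup\{g_{\psi(k)}\}}_G\neq\ngrp{\{g_{\phi(s)}\}}_G$, exactly as in the proof of Lemma~\ref{lem:max-n}. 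Then Lemma~\ref{lem:TrRkMonotone} gives $\rho(T_H)\leq\rho(T_G)$, hence $\rho(T_G)\geq\alpha$.

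For max-n, I would argue as in the proof of Lemma~\ref{lem:MaxNRankLim}: take $N\trianglelefteq G$ with $N\neq\{e\}$. Either $N\cap[M,M]\neq\{e\}$, or $N$ maps nontrivially to $G/[M,M]$. In the first case $N\cap H^{<\Zb}$ is nontrivial and is normalized by the unit shift, so it contains a full "diagonal" copy and, since $H$ is perfect and simple-ish behavior is not available here, one instead uses that $H^{<\Zb}$ with the shift has the property (cf.\ Lemma~\ref{lem:wreath} applied inside $H\wr\Zb\geq G$, or a direct argument) that any shift-invariant nontrivial normal subgroup contains $[H,H]^{<\Zb}=H^{<\Zb}=[M,M]$; then $G/N$ is a quotient of $G/[M,M]\cong\Zb$, which is max-n. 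In the second case $N[M,M]/[M,M]$ is a finite-index subgroup of $\Zb$, so $G/N[M,M]$ is finite, and then one reduces to finitely many normal subgroups of $G$ containing a fixed finite-index subgroup, each handled by the first case; alternatively, invoke that $G$ is finitely generated and $M$ is normally finitely generated over the shift, so $G$ is max-n directly. Either way $\mc M_n$ absorbs $G$. I expect the main obstacle to be pinning down precisely which shift-invariant normal subgroups of $H^{<\Zb}$ can occur and confirming they all contain $H^{<\Zb}$ when $H$ is perfect --- this is the analogue of Lemma~\ref{lem:wreath} in Hall's setting and must be extracted either from Hall's original argument in \cite{H61} or by viewing $G$ inside $H\wr\Zb$ and applying Lemma~\ref{lem:wreath} to conclude $[H,H]^{<\Zb}\leq N$, after which perfectness finishes it. Finally, since $G$ is $2$-generated, $G\in\mc M_n\cap\Gw_{fg}$, completing the corollary.
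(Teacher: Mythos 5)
Your overall strategy (Lemma~\ref{lem:MaxNRankUnbdd} to get $H\in\mc M'_n$ with $\rho(T_H)\geq\alpha$, then Hall's Lemma~\ref{lem:Hall}, then the monotone map $T_H\to T_G$ built from $\ngrp{f_h}_G=\ngrp{h}_H^{<\Zb}$) is exactly the paper's, and your rank-bound half is correct. The gap is in the max-n verification, and it is genuine. First, $G/[M,M]$ is not isomorphic to $\Zb$: the group $M$ is the normal closure of $\sigma$, an element of the \emph{unrestricted} power $H^{\Zb}$ with infinite support, so $M/[M,M]$ is a typically nontrivial abelian group and $G/[M,M]$ is only a finitely generated metabelian group (the paper invokes Hall's theorem that such groups are max-n to handle this quotient). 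Second, your key claim --- that every nontrivial shift-invariant normal subgroup of $H^{<\Zb}$ contains all of $H^{<\Zb}$ --- is false unless $H$ is simple: if $K$ is a proper nontrivial normal subgroup of the perfect group $H$ (and the groups supplied by Lemma~\ref{lem:MaxNRankLim} are far from simple), then $K^{<\Zb}$ is a counterexample. Lemma~\ref{lem:wreath} cannot rescue this, because its hypothesis is that $N$ meets the \emph{top} group $\Zb$ nontrivially, which is exactly what fails for a subgroup contained in $H^{<\Zb}$; moreover the containment you invoke is backwards, since $H\wr\Zb=H^{<\Zb}\rtimes\langle t\rangle\leq G\leq H^{\Zb}\rtimes\Zb$, so $G$ does not sit inside $H\wr\Zb$.

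The correct route (the paper's) does not attempt to classify the normal subgroups inside $[M,M]$ at all. One observes that any $N\normal G$ with $N\leq[M,M]=H^{<\Zb}$ is normalized by $H^{<\Zb}$ and by the shift $t$, hence is normal in $H\wr\Zb$; since $H$ has no central factors, Theorem~\ref{thm:max-n_wreath} makes $H\wr\Zb$ max-n, so these subgroups satisfy the ascending chain condition (``$H^{<\Zb}$ is max-$G$''). Combining this with max-n for the metabelian quotient $G/[M,M]$ via the standard extension argument --- intersect an ascending chain with $[M,M]$ and project it to $G/[M,M]$ --- yields $G\in\mc M_n$. This is also precisely why $H$ must be drawn from $\mc M'_n$ rather than $\mc M_n$: perfectness gives $[H,H]^{<\Zb}=H^{<\Zb}$, and the absence of central factors is the hypothesis Hall's wreath-product theorem needs.
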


\begin{proof}
Fix $\alpha<\omega_1$ and apply Lemma~\rm\ref{lem:MaxNRankUnbdd} to find a group $H\in\mc M'_n$ with $\rho(T_H)\geq \alpha$. We now apply Lemma~\rm\ref{lem:Hall} to find a 2-generated group $G$ with a short exact sequence 
\[
\{e\}\rightarrow M\rightarrow G\rightarrow \Zb\rightarrow \{e\}
\]
where $[M,M]=[H,H]^{<\Zb}=H^{<\Zb}$. \par

\indent The group $G/[M,M]$ is a finitely generated metabelian group, hence it satisfies max-n by \cite[Theorem 3]{H54}. On the other hand, any normal subgroup of $G$ that lies in $[M,M]=H^{<\Zb}$ is shift-invariant because $G$ contains an element that acts by shift on $[M,M]$. Since $H\wr\Zb$ is max-n, it follows that $H^{<\Zb}$ is max-$G$; that is to say $H^{<\Zb}$ has the maximal condition on subgroups invariant under the conjugation action by $G$. We conclude the group $G$ is max-n.\par

\indent It remains to compute a lower bound for $\rho(T_G)$. Using again the notation from Lemma \ref{lem:max-n}, find $\psi:\Nb\rightarrow \Nb$ such that for each $k\in \Nb$ we have $f_{h_k}=g_{\psi(k)}$. Since $\ngrp{f_h}_G=\ngrp{h}_H^{<\Zb}$, we may argue as in Lemma~\ref{lem:max-n} to conclude that $\alpha\leq \rho(T_H)\leq \rho(T_G)$. That is to say, we can define a monotone $\phi\from T_H\to T_G$ and by Lemma~\rm\ref{lem:TrRkMonotone} conclude that $\alpha\leq \rho(T_H)\leq \rho(T_G)$.
\end{proof}

\begin{thm}
$\mc M_n$ is $\Pi^1_1$ and not Borel in $\Gw$, and $\mc M_n\cap G_{fg}$ is $\Pi^1_1$ and not Borel in $\mc G_{fg}$.
\end{thm}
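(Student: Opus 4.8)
The plan is to follow the template established by Theorem~\ref{thm:MCNotBorel} essentially verbatim, since all of the supporting lemmas are now in place. First I would invoke the Borel map $\Phi_{M_n}\from\Gw\to Tr$ of Lemma~\ref{lem:MaxNMapBorel} together with Lemma~\ref{lem:MaxNMapReduction}, which gives $\Phi_{M_n}^{-1}(WF)=\mc M_n$. As $WF$ is co-analytic, $\Phi_{M_n}$ is Borel, and the co-analytic sets are closed under Borel preimages, $\mc M_n$ is $\Pi^1_1$ in $\Gw$. The set $\mc M_n\cap\Gw_{fg}$ is then $\Pi^1_1$ in $\Gw_{fg}$, because it is the intersection of a $\Pi^1_1$ subset of $\Gw$ with the standard Borel subspace $\Gw_{fg}$ (equivalently, it equals $(\Phi_{M_n}\restriction\Gw_{fg})^{-1}(WF)$, and $\Phi_{M_n}\restriction\Gw_{fg}$ is Borel).

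For non-Borelness I would appeal to the Boundedness Theorem (Theorem~\ref{thm:BddnessThm}). Composing the $\Pi^1_1$-rank $\rho\from WF\to\omega_1$ with the Borel reduction $\Phi_{M_n}$ produces a $\Pi^1_1$-rank $G\mapsto\rho(\Phi_{M_n}(G))=\rho(T_G)$ on $\mc M_n$. By Lemma~\ref{lem:MaxNRankUnbdd}, for every $\alpha<\omega_1$ there is a group $G\in\mc M'_n\subseteq\mc M_n$ with $\rho(T_G)\geq\alpha$, so this rank has image unbounded below $\omega_1$; the Boundedness Theorem then forces $\mc M_n$ to be non-Borel. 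Similarly, Corollary~\ref{cor:MaxNFG} provides, for each $\alpha<\omega_1$, a finitely generated $G\in\mc M_n$ with $\rho(T_G)\geq\alpha$, so the restriction of the same $\Pi^1_1$-rank to $\mc M_n\cap\Gw_{fg}$ is still unbounded below $\omega_1$; applying the Boundedness Theorem in the standard Borel space $\Gw_{fg}$ shows $\mc M_n\cap\Gw_{fg}$ is non-Borel as well.

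I do not expect any genuine obstacle at this stage: the real work of the section has already been done, namely establishing that the length rank is unbounded below $\omega_1$ — in particular the limit-stage Lemma~\ref{lem:MaxNRankLim}, which compensates for the unavailable ``dual Olshanskii'' construction by an iterated wreath product argument, and its finitely generated refinement via Hall's embedding in Corollary~\ref{cor:MaxNFG}. The one point I would state carefully is that a $\Pi^1_1$-rank on $A\subseteq X$ restricts to a $\Pi^1_1$-rank on $A\cap X'$ for any standard Borel subspace $X'\subseteq X$, so that the Boundedness Theorem legitimately applies in $\Gw_{fg}$; this is immediate from Definition~\ref{def:PiRank} and the fact that Borel, analytic, and co-analytic subsets of $X$ meet $X'$ in sets of the corresponding class in $X'$.
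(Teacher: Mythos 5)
Your proposal is correct and follows exactly the paper's argument, which likewise cites the Boundedness Theorem, Lemma~\ref{lem:MaxNMapBorel}, and Corollary~\ref{cor:MaxNFG} (with the reduction Lemma~\ref{lem:MaxNMapReduction} and the unboundedness Lemma~\ref{lem:MaxNRankUnbdd} implicitly in play, as in the template of Theorem~\ref{thm:MCNotBorel}). Your extra care about restricting the $\Pi^1_1$-rank to the standard Borel subspace $\Gw_{fg}$ is a fine, if routine, addition.
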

\begin{proof}
This follows from Theorem \ref{thm:BddnessThm}, Lemma \ref{lem:MaxNMapBorel}, and Corollary \ref{cor:MaxNFG}.
\end{proof}

\section{Elementary amenable groups}\label{sec:EAGroups}

Perhaps surprisingly, the property of being elementary amenable may also be characterized by well-founded trees. This in turn gives a chain condition equivalent to elementary amenability.

\subsection{Preliminaries} We study the collection of elementary amenable groups. This class is typically defined as follows:

\begin{defn}\label{def:EA}
The collection of \textbf{elementary amenable groups}, denoted $\EA$, is the smallest collection of countable groups such that
\begin{enumerate}[(i)]
\item $\EA$ contains all finite groups and abelian groups.
\item $\EA$ is closed under group extensions.
\item $\EA$ is closed under countable increasing unions.
\item $\EA$ is closed under taking subgroups.
\item $\EA$ is closed under taking quotients.
\end{enumerate}
\end{defn}

Our results here require a fairly well-known embedding result, which is based on a generalization of Lemma~\ref{lem:Hall}.
\begin{prop}[Hall; Neumann, Neumann {\cite[Theorem 5.1]{NN59}}]\label{prop:EAembedding}
Suppose $K\in \EA$. Then there exists $H\in \EA$ and a short exact sequence 
\[
\{e\}\rightarrow M\rightarrow G\rightarrow \Zb\rightarrow \{e\}
\]
where $G$ is 2-generated, $G\in \EA$, $[M,M]=[H,H]^{<\Zb}$, and $K$ embeds into $[H,H]$. 
\end{prop}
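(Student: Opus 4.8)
The plan is to deduce the statement from the generalized Hall construction underlying \cite[Theorem 5.1]{NN59} together with the closure properties of $\EA$: the only genuinely new thing to add to \cite{NN59} is the verification that the construction stays inside $\EA$, and the only group-theoretic input beyond what is already in this section is the classical fact that a countable group embeds into the commutator subgroup of a comparable group.

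First I would invoke the Neumann--Neumann embedding: given a countable group $K$, there is a countable group $H$ with $K\hookrightarrow[H,H]$, obtained from $K$ by operations under which $\EA$ is closed (wreath products, and passage to subgroups, quotients, and countable increasing unions). Here one uses that for a wreath product $A\wr B=A^{<B}\rtimes B$ with $A,B\in\EA$, the base $A^{<B}$ is a countable increasing union of finite direct powers of $A$, each in $\EA$ by closure under extensions, so that $A\wr B$ is an extension of $B$ by $A^{<B}$ and hence lies in $\EA$; thus $H\in\EA$ whenever $K\in\EA$. I would then apply Hall's construction from Lemma~\ref{lem:Hall} to this $H$: enumerating $H=\{h_i\}_{i\in\Nb}$, forming the unrestricted wreath product $H^{\Zb}\rtimes\Zb$, defining $\sigma$ to be supported on $\{2^n\}$ with value $h_n$ there, and letting $t$ generate $\Zb$, we obtain the $2$-generated group $G:=\grp{t,\sigma}$ with normal subgroup $M:=\ngrp{\sigma}_G=G\cap H^{\Zb}$, quotient $G/M\cong\Zb$, and $[M,M]=[H,H]^{<\Zb}$. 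Together with $K\hookrightarrow[H,H]$, this already yields every assertion of the proposition except $G\in\EA$.

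The remaining point is that $G$ is elementary amenable, and the subtlety is that the ambient group $H^{\Zb}\rtimes\Zb$ need not be elementary amenable — $\EA$ is not closed under unrestricted direct products — so $G\in\EA$ must be obtained from the structural information $[M,M]=[H,H]^{<\Zb}$. Indeed $[H,H]\leq H\in\EA$, so $[H,H]^{<\Zb}=\bigcup_{n\in\Nb}[H,H]^{\{-n,\dots,n\}}$ is a countable increasing union of finite direct powers of an $\EA$ group and is therefore itself in $\EA$; since $M/[M,M]$ is abelian, closure under extensions gives $M\in\EA$; and since $G$ is an extension of $\Zb$ by $M$, we conclude $G\in\EA$. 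The only step here that is not routine bookkeeping with closure properties is the embedding $K\hookrightarrow[H,H]$, which is exactly the content we are importing from \cite{NN59}; I expect that to be the main thing a reader should check against the literature rather than a genuine new obstacle in the present argument.
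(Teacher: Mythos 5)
The paper does not actually prove this proposition --- it is stated as a citation to Hall and to Neumann--Neumann \cite[Theorem 5.1]{NN59}, with the remark that it is ``based on a generalization of Lemma~\ref{lem:Hall}.'' Your reconstruction is exactly the intended route: obtain $H\in\EA$ with $K\hookrightarrow[H,H]$ from \cite{NN59}, feed $H$ into Hall's construction (Lemma~\ref{lem:Hall}) to get the $2$-generated $G$ with the short exact sequence and $[M,M]=[H,H]^{<\Zb}$, and then check $G\in\EA$. Your verification of that last point is correct and is the part that genuinely needs to be said: since an unrestricted power $H^{\Zb}$ of an amenable (even finite) group can contain nonabelian free subgroups, one cannot argue that $G\in\EA$ simply because it sits inside $H^{\Zb}\rtimes\Zb$, and your chain $[H,H]^{<\Zb}\in\EA$ (increasing union of finite powers), hence $M\in\EA$ (abelian-by-$\EA$), hence $G\in\EA$ ($\Zb$-by-$\EA$) is the right way to extract elementary amenability from the structural identity $[M,M]=[H,H]^{<\Zb}$.

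The one place where your sketch is thinner than it reads is the step $K\hookrightarrow[H,H]$. You present it as following from ``wreath products and closure properties,'' but the obvious coordinate embeddings do not work: for $H=K\wr\Zb$ one has $[H,H]\cap K^{<\Zb}=\{f:\prod_i f(i)\in[K,K]\}$, and the map $k\mapsto(k,k^{-1})$ into two coordinates is not a homomorphism unless $K$ is abelian, while the diagonal $k\mapsto(k,\dots,k)$ lands in $[H,H]$ only when $K^{\mathrm{ab}}$ has finite exponent. So the existence of an $\EA$-preserving embedding of $K$ into a derived subgroup is genuinely the content of \cite[Theorem 5.1]{NN59} and not routine bookkeeping; since you explicitly flag it as imported (and the proposition is attributed to that source), this is a legitimate citation rather than a gap, but a reader should not come away thinking a one-line wreath-product argument supplies it.
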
  

\subsection{Decomposition trees}
We now define a tree associated to a marked group $G$. Just as in the previous sections, we will see that this tree being well-founded or not gives group-theoretic information about $G$, in this case characterizing being an elementary amenable group.\par

Let $G\in\Gw$. For $n\geq 0$, put $R_n(G):=\<g_0,\ldots,g_n\>$ and for $k\geq 1$, define 
\[
S_k(G):=[G,G]\cap\bigcap\mc{N}_{k}(G)
\]
where $\mc{N}_k(G):=\{N\trianglelefteq G\;|\;|G:N|\leq k+1\}$. For each $l\geq 1$, we now define a tree $T^l(G)\subseteq\wbaire$ and associated groups $G_s\in\Gw$ as follows:
\begin{enumerate}[$\bullet$]
\item Put $\emptyset\in T^l(G)$ and let $G_{\emptyset}:=G$.
\item Suppose we have $s\in T^l(G)$ and $G_s$. If $G_s\neq\{e\}$, put $s\conc n\in T^l(G)$ and  $G_{s\conc n}:=S_{|s|+l}\left(R_n\left(G_s\right)\right)$. 
\end{enumerate}

We call $T^l(G)$ the \textbf{decomposition tree} of $G$ with offset $l$. This tree is always non-empty, and if $s\in T^l(G)$ is terminal, then $G_s=\{e\}$.  Since the composition of the functions $R_n$ and $S_k$ is associative, we obtain a useful observation:

\begin{obs}\label{prop:child_rank}
For $s\in T^l(G)$, $T^l(G)_s=T^{|s|+l}(G_s)$, and for each $r\in T^{|s|+l}(G_s)$, $(G_{s})_r=G_{s\conc r}$ as marked groups. This implies, in particular, that if $T^l(G)$ is well-founded, then so is $T^{|s|+l}(G_s)$.
\end{obs}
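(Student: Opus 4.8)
The plan is to establish the set-theoretic identity $T^l(G)_s = T^{|s|+l}(G_s)$ together with the marked-group equalities $(G_s)_r = G_{s\conc r}$ by a single induction on $|r|$, and then read off the well-foundedness consequence as a formal corollary. Write $k := |s|$. I would prove by induction on the length of $r\in\wbaire$ the joint statement: $s\conc r\in T^l(G)$ if and only if $r\in T^{k+l}(G_s)$, and whenever these hold, $(G_s)_r = G_{s\conc r}$ as marked groups. For $r=\emptyset$ this is immediate: $s\conc\emptyset = s\in T^l(G)$ by hypothesis, $\emptyset\in T^{k+l}(G_s)$ by definition of the decomposition tree, and $(G_s)_\emptyset = G_s = G_{s\conc\emptyset}$.

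For the inductive step, consider $r\conc n$ with $|r|=m$. Unwinding the recursion defining $T^l(G)$, we have $s\conc r\conc n\in T^l(G)$ iff $s\conc r\in T^l(G)$ and $G_{s\conc r}\neq\{e\}$, and in that case $G_{s\conc r\conc n} = S_{k+m+l}\bigl(R_n(G_{s\conc r})\bigr)$ since $|s\conc r| = k+m$. Unwinding the recursion defining $T^{k+l}(G_s)$, we have $r\conc n\in T^{k+l}(G_s)$ iff $r\in T^{k+l}(G_s)$ and $(G_s)_r\neq\{e\}$, and in that case $(G_s)_{r\conc n} = S_{m+(k+l)}\bigl(R_n((G_s)_r)\bigr)$ since $|r|=m$. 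By the induction hypothesis, $s\conc r\in T^l(G)$ iff $r\in T^{k+l}(G_s)$, and when these hold $G_{s\conc r} = (G_s)_r$; in particular the nontriviality clauses match, so $s\conc r\conc n\in T^l(G)$ iff $r\conc n\in T^{k+l}(G_s)$. When they hold, the two indices $k+m+l$ and $m+(k+l)$ coincide, and feeding the equal marked groups $G_{s\conc r}$ and $(G_s)_r$ into the same operation $S_{k+m+l}\circ R_n$ yields equal marked groups, so $G_{s\conc r\conc n} = (G_s)_{r\conc n}$. This is precisely the associativity of the composition of the $R_n$ and $S_k$ noted before the statement: a marked subquotient built by iterating these operations depends only on the sequence of pairs used, not on where the index clock starts, and along the branch issuing from $s$ in $T^l(G)$ the indices run $k+l, k+l+1,\dots$, which is exactly the index sequence of $T^{k+l}(G_s)$ read from its root.

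The final clause is then purely formal: by definition of $T_s$, a branch of $T^{k+l}(G_s) = T^l(G)_s$ is an $x\in\baire$ with $s\conc(x\restriction n)\in T^l(G)$ for all $n$, and concatenating $s$ with $x$ yields a branch of $T^l(G)$; hence if $T^l(G)$ is well-founded, so is $T^{k+l}(G_s)$. I expect the only point needing genuine care to be the marked-group equality in the inductive step, namely the verification that $R_n$ and $S_k$ are honest operations on $\Gw$ — that the subgroup-marking convention via $\pi_H$ and the quotient marking produce, from a given marked group, a marked subquotient depending only on that marked group — so that equal inputs to the same iterated composition give equal outputs. This follows directly from the enumeration and marking conventions of Section~\ref{sec:Prelim}; the tree identity itself is a routine induction.
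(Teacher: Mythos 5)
Your induction on $|r|$ is correct and is exactly the routine verification that the paper leaves implicit when it states this as an Observation following from "associativity" of composing $R_n$ and $S_k$: the index bookkeeping $|s\conc r|+l = |r|+(|s|+l)$ is the whole point, and your base case, inductive step, and the branch-transfer argument for well-foundedness are all sound. No issues.
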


\begin{lem}\label{lem:Phi_borel}
For each $l\geq 1$, the map $\Phi^l\from\Gw\to Tr$ given by $G\mapsto T^l(G)$ is Borel.
\end{lem}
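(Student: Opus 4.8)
The plan is to follow the ``Borel = explicit'' slogan concretely: since $T^l(G)$ is built by a finite, effective recursion from the marked group $G$, membership of a fixed finite sequence $s$ in $T^l(G)$ should be a Borel condition on $G$, and this is enough because $\Phi^l$ is continuous (hence Borel) iff each $\{G\mid s\in T^l(G)\}$ is Borel and each $\{G\mid s\notin T^l(G)\}$ is Borel, the sets $O_s$ and their complements forming a sub-basis for $Tr$. So the real content is: for each fixed $s\in\wbaire$, the set $\{G\in\Gw\mid s\in T^l(G)\}$ is Borel.

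First I would set up the bookkeeping. The point is that the construction of $T^l(G)$ attaches to each node $s\in T^l(G)$ a \emph{marked group} $G_s\in\Gw$, and by the marking conventions of Section~\ref{sec:Prelim} this is done explicitly: $R_n(G_s)=\<g_0,\dots,g_n\>$ has a canonical marking (the finite-generating-set convention), and $S_k$ of a marked group is again a canonically marked subgroup. So I would prove, by induction on $|s|$, that the partial function $G\mapsto G_s$ (defined on $\{G\mid s\in T^l(G)\}$) is a Borel function into $\Gw$, \emph{and} that $\{G\mid s\in T^l(G)\}$ is Borel, the two statements being proved together. The base case $s=\emptyset$ is trivial: $G_\emptyset=G$ and $\emptyset\in T^l(G)$ always. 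For the inductive step, $s\conc n\in T^l(G)$ iff $s\in T^l(G)$ and $G_s\neq\{e\}$; the first is Borel by induction, and ``$G_s\neq\{e\}$'', i.e.\ some generator of $G_s$ is nontrivial, is a Borel condition since $G\mapsto G_s$ is Borel (being nontrivial is an open condition on $\Gw$, and the preimage of an open set under a Borel map is Borel). Given that, $G_{s\conc n}=S_{|s|+l}(R_n(G_s))$, and I must check that $G\mapsto S_k(R_n(G_s))$ is Borel on that domain.

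This last point is the main obstacle, and it breaks into two sub-tasks. (1) Show that the map $\Gw\to\Gw$ sending a marked group $H$ to the canonically marked subgroup $\<g_0,\dots,g_n\>=R_n(H)$ is Borel; this is a direct unwinding of the $\pi_H$ convention together with the fact that, for a word $w\in\Fw$, the condition $f_N(w)\in\<g_0,\dots,g_n\>$ is arithmetic in $N$ (it asks for the existence of a product of conjugates of $a_0,\dots,a_n$ representing $w$ modulo $N$, an $F_\sigma$ / Borel condition). (2) Show that $H\mapsto S_k(H)=[H,H]\cap\bigcap\mc N_k(H)$ is Borel. Here $[H,H]$ is the normal closure of the (explicitly enumerable) set of commutators of generators, so $f_N(w)\in[H,H]$ is again Borel in $N$; and the finite-index part is handled by observing that a normal subgroup $N'$ of $H$ of index $\le k+1$ is determined by a surjection of $H$ onto a group of order $\le k+1$ (of which there are only finitely many, up to iso), so $f_N(w)\in\bigcap\mc N_k(H)$ is a finite Boolean combination of Borel conditions asking, for each such candidate quotient $Q$ and each of the finitely many possible assignments of the generators $g_0,\dots$ — here one uses that $H$ is finitely generated when this is applied, namely $H=R_n(G_s)$, so only finitely many generators must be assigned — whether $w$ lies in the corresponding kernel. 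Since the word problem ``$w\in\ker$ of the map sending $g_i\mapsto q_i$'' is Borel in $N$ (it is a pointwise condition on finitely many words), the whole thing is Borel. Assembling: $G_{s\conc n}$ is obtained from $G_s$ by a finite composition of Borel maps, closing the induction, and hence each $O_s$ and $O_s^c$ pulls back to a Borel subset of $\Gw$, so $\Phi^l$ is Borel. (A fully rigorous treatment of sub-tasks (1) and (2) is exactly the sort of routine ``explicit $\Rightarrow$ Borel'' verification deferred to Section~\ref{sec:Borel}, so here I would only indicate the reductions above.)
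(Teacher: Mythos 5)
Your proposal is correct and is essentially the paper's argument: Section~\ref{sec:Borel} proves your sub-tasks (1) and (2) as Lemmas~\ref{lem:Rn_borel} and~\ref{lem:Sk_Borel} (with $S_k$ defined only on $\Gw_{fg}$, for exactly the finite-generation reason you flag), composes them into Borel maps $N\mapsto G_s$, and observes that $s\conc n\in T^l(G)$ holds iff $G_s\neq\{e\}$, an open condition on the output of that Borel map. The only cosmetic difference is in (1): the paper marks $R_n(G)=\<g_0,\dots,g_n\>$ via the distinguished-finite-generating-set convention, so $R_n(N)=\ker(f_N\circ\pi_n)$ for a fixed endomorphism $\pi_n$ of $\Fw$ and $R_n$ is in fact continuous, sidestepping the membership test $f_N(w)\in\<g_0,\dots,g_n\>$ (which, note, is witnessed by a word in the generators $\gamma_0,\dots,\gamma_n$ and their inverses, not by a product of conjugates).
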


As usual, we postpone the proof of this lemma to Section \ref{sec:Borel}. 

\begin{lem}\label{lem:xi_indp}
Let $G,H\in\Gw$ and $H\hookrightarrow G$. Then for all $l\geq k \geq 1$, 
\[
\rho\left(T^l(H)\right)\leq\rho\left(T^k(G)\right).
\] 
In particular, for $G,G'\in\Gw$, if $G\cong G'$, then
\[
\rho\left(T^l(G)\right)=\rho\left(T^l(G')\right)
\]
for all $l\geq 1$.
\end{lem}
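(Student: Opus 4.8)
The plan is to build a monotone map from the decomposition tree $T^l(H)$ into the decomposition tree $T^k(G)$ and invoke Lemma~\ref{lem:TrRkMonotone}, exactly in the spirit of the earlier rank-comparison lemmas (Lemmas~\ref{lem:CentSubRank}, \ref{lem:MaxSubRank}, \ref{lem:MaxNImRank}). The central point, and the reason the offsets are allowed to differ, is a structural claim comparing the ``one-step'' operations: if $H\hookrightarrow G$ and $k\leq l$, then for each $n$ the group $S_{l}(R_n(H))$ embeds into $S_{k}(R_{m}(G))$ for a suitable index $m$. Here $R_n(H)=\<h_0,\dots,h_n\>$ sits inside $G$ as a finitely generated subgroup $\<h_0,\dots,h_n\>=\<g_{\psi(0)},\dots,g_{\psi(n)}\>$ where $\psi\from\Nb\to\Nb$ satisfies $h_j=g_{\psi(j)}$; so this finitely generated subgroup is $R_m(G_{\text{rel}})$ for an appropriate enumeration, but more simply it equals a subgroup generated by finitely many of the $g_i$, which after reindexing is some $R_m(G)$ — actually we should be careful: $R_m(G)$ means the first $m+1$ generators of $G$ in \emph{its} enumeration, so what we really get is that $\<h_0,\dots,h_n\>$ is \emph{a} finitely generated subgroup $K\leq G$, and the claim we need is the monotonicity of $S$ under inclusion together with offset: $S_l(K)\leq S_k(K')$ whenever $K\leq K'$ and $k\leq l$.

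The key sub-claim is thus: \emph{if $K\leq K'$ are groups and $1\leq k\leq l$, then $S_l(K)\leq S_k(K')$.} To see this, first note $[K,K]\leq[K',K']$, so it suffices to show $\bigcap\mc N_l(K)\leq \bigcap\mc N_k(K')$. Since $k\leq l$ we have $\mc N_k(K')\subseteq\mc N_{l}(K')$, so it is enough to show $\bigcap\mc N_l(K)\leq\bigcap\mc N_l(K')$; and for this, given $N'\trianglelefteq K'$ with $|K':N'|\leq l+1$, the subgroup $N'\cap K\trianglelefteq K$ has $|K:N'\cap K|\leq|K':N'|\leq l+1$, so $N'\cap K\in\mc N_l(K)$, whence $\bigcap\mc N_l(K)\leq N'\cap K\leq N'$; intersecting over all such $N'$ gives the inclusion. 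Combined with $[K,K]\leq[K',K']$ this yields $S_l(K)\leq S_k(K')$. In our setting this gives $S_{l}(R_n(H))=S_l(\<h_0,\dots,h_n\>)\hookrightarrow S_k(K')$ whenever $\<h_0,\dots,h_n\>\leq K'\leq G$.

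With the sub-claim in hand I would define $\phi\from T^l(H)\to\wbaire$ by induction on length, tracking at each node $s$ an embedding of $(H)_s$ (the group attached to $s$ in $T^l(H)$) into $(G)_{\phi(s)}$ (the group attached to $\phi(s)$ in $T^k(G)$), together with the index function $\psi$ recording that $h_j=g_{\psi(j)}$ at the top level and its analogues lower down. At the root set $\phi(\emptyset)=\emptyset$; the embedding $H\hookrightarrow G$ is given. For the inductive step at $s$ with $(H)_s\hookrightarrow (G)_{\phi(s)}$ and $s\conc n\in T^l(H)$: by Observation~\ref{prop:child_rank}, $(H)_{s\conc n}=S_{|s|+l}(R_n((H)_s))$, and $R_n((H)_s)=\<h^{(s)}_0,\dots,h^{(s)}_n\>$ embeds into $(G)_{\phi(s)}$ as a finitely generated subgroup, which equals $R_m((G)_{\phi(s)})$-up-to-adding-harmless-further-generators for a suitable $m$ (using the marked-group convention for finitely generated subgroups); since $s\conc n\in T^l(H)$ forces $(H)_s\neq\{e\}$ hence $(G)_{\phi(s)}\neq\{e\}$, the node $\phi(s)\conc m$ lies in $T^k(G)$, and by the sub-claim (with offsets $|s|+l \geq |\phi(s)|+k$, which holds because $|\phi(s)|=|s|$ and $l\geq k$) we get $(H)_{s\conc n}=S_{|s|+l}(R_n((H)_s))\hookrightarrow S_{|\phi(s)|+k}(R_m((G)_{\phi(s)}))=(G)_{\phi(s)\conc m}$. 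Set $\phi(s\conc n):=\phi(s)\conc m$. This $\phi$ is monotone by construction, so $\rho(T^l(H))\leq\rho(T^k(G))$ by Lemma~\ref{lem:TrRkMonotone}. The ``in particular'' clause follows by applying the inequality in both directions with $k=l$ to the two inclusions $G\hookrightarrow G'$ and $G'\hookrightarrow G$.

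The main obstacle I anticipate is purely bookkeeping: matching the abstract subgroup $R_n((H)_s)\leq (G)_{\phi(s)}$ with an honest $R_m((G)_{\phi(s)})$ in the marked-group formalism (the enumeration of $(G)_{\phi(s)}$ need not list these generators first), and making sure the offset inequality $|s|+l\geq|\phi(s)|+k$ is preserved down the tree — which it is, since $\phi$ preserves length. One should either appeal to the convention fixed in Section~\ref{sec:Prelim} for marking finitely generated subgroups, or observe that $S_j$ is invariant under enlarging the generating set harmlessly, so that the precise choice of $m$ is immaterial as long as $R_m((G)_{\phi(s)})$ contains the image of $R_n((H)_s)$. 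No genuinely new group theory is needed beyond the elementary monotonicity sub-claim above.
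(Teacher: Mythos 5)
Your proposal is correct and rests on the same key observation as the paper's proof, namely that an embedding of $R_n(H_s)$ into some $R_m(G_{\phi(s)})$ together with $l\geq k$ forces $S_{|s|+l}(R_n(H_s))\hookrightarrow S_{|s|+k}(R_m(G_{\phi(s)}))$; the paper merely packages this one-step comparison as a transfinite induction on $\rho(T^k(G))$ rather than as an explicit monotone map into $T^k(G)$ followed by an appeal to Lemma~\ref{lem:TrRkMonotone}, and the two formulations are interchangeable. Your explicit verification of the sub-claim that $K\leq K'$ and $k\leq l$ imply $S_l(K)\leq S_k(K')$ is a welcome elaboration of a step the paper leaves terse.
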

\begin{proof}
We induct on $\rho(T^k(G))$ simultaneously for all $k$. If $\rho(T^k(G))=1$, then $G=\{e\}$, so $H=\{e\}$.  Suppose the lemma holds for all $G$ and $k$ with $\rho(T^k(G))\leq\beta$.  Suppose that $f\from H\to G$ is an embedding and $\rho(T^k(G))=\beta+1$.  For all $n\geq 0$, there is some $k(n)$ so that $f(R_n(H))\leq R_{k(n)}(G)$.  It follows that $f(H_n)\leq G_{k(n)}$ for all $n\geq 0$ since $S_{l+1}(G_{k(n)})\leq S_{k+1}(G_{k(n)})$.  By the inductive hypothesis and Observation~\ref{prop:child_rank},
\[
\rho\left(T^l(H)\right)=\sup_{n\in\Nb}\left\{\rho\left(T^{l+1}(H_n)\right)\right\}+1 \leq \sup_{n\in\Nb}\left\{\rho\left(T^{k+1}(G_{k(n)})\right)\right\}+1 \leq \rho\left(T^k(G)\right)
\]
completing the induction.
\end{proof}

\begin{cor}\label{cor:some_all}
 For $G\in \Gw$, $T^l(G)$ is well-founded for some $l\geq 1$ if and only if $T^l(G)$ is well-founded for all $l\geq 1$.
\end{cor}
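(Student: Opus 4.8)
The plan is to extract from Lemma~\ref{lem:xi_indp} and Observation~\ref{prop:child_rank} the two comparisons needed to control how well-foundedness of $T^l(G)$ depends on the offset $l$, and then to conclude using the elementary fact that a tree $T$ is well-founded exactly when $\rho(T)<\omega_1$. If $G=\{e\}$ then $T^l(G)=\{\emptyset\}$ for every $l$ and there is nothing to prove, so I assume $G\neq\{e\}$ throughout.

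First I would note that applying Lemma~\ref{lem:xi_indp} to the identity embedding $G\hookrightarrow G$ with offsets $l+1\geq l$ gives $\rho(T^{l+1}(G))\leq\rho(T^l(G))$; in particular, if $T^l(G)$ is well-founded then so is $T^{l'}(G)$ for every $l'\geq l$. For the reverse passage I would invoke Observation~\ref{prop:child_rank}: since $G\neq\{e\}$, every length-one sequence $(n)$ with $n\in\Nb$ lies in $T^l(G)$, the subtree at $(n)$ is $T^{l+1}(G_{(n)})$ with $G_{(n)}=S_l(R_n(G))$, and hence (just as in the proof of Lemma~\ref{lem:xi_indp}) $\rho(T^l(G))=\sup_{n\in\Nb}\left\{\rho\bigl(T^{l+1}(G_{(n)})\bigr)\right\}+1$. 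Each $G_{(n)}=S_l(R_n(G))$ is a subgroup of $G$, so Lemma~\ref{lem:xi_indp} yields $\rho(T^{l+1}(G_{(n)}))\leq\rho(T^{l+1}(G))$; therefore, if $T^{l+1}(G)$ is well-founded, the displayed rank is a countable supremum of countable ordinals, still below $\omega_1$, and $T^l(G)$ is well-founded as well.

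Combining these, well-foundedness of $T^l(G)$ has the same truth value at $l$ and at $l+1$, hence at every $l\geq 1$ by a finite induction starting from whatever offset is assumed to give a well-founded tree; this is precisely the claimed equivalence. The only step requiring a little care is the rank recursion $\rho(T^l(G))=\sup_{n}\{\rho(T^{l+1}(G_{(n)}))\}+1$ — one must check from the construction of $T^l$ that the supremum genuinely ranges over all $n\in\Nb$ and that a terminal child ($G_{(n)}=\{e\}$) merely contributes the rank of the one-node tree, which is harmless — but there is no real obstacle, since essentially all the work is already done by Lemma~\ref{lem:xi_indp}.
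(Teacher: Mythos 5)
Your proof is correct and follows essentially the same route as the paper: Lemma~\ref{lem:xi_indp} applied to the identity embedding $G\hookrightarrow G$ handles increasing the offset, and Observation~\ref{prop:child_rank} combined with Lemma~\ref{lem:xi_indp} applied to the subgroups $G_s\hookrightarrow G$ handles decreasing it. The only difference is that you step down one offset at a time by decomposing the tree at depth one, whereas the paper passes from offset $l$ to any $n\leq l$ in a single step by decomposing $T^n(G)$ at depth $l$; both arguments rest on exactly the same two facts.
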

\begin{proof} Suppose $G\in \Gw$ is so that $T^l(G)$ is well-founded. In view of Lemma~\ref{lem:xi_indp}, $T^k(G)$ is well-founded for all $k\geq l$. For $n\leq l$, take $s\in T^n(G)$ with $|s|=l$; if no such $s$ exists then $T^n(G)$ is plainly well-founded. There is an injection $G_s\hookrightarrow G$, so applying Lemma~\ref{lem:xi_indp} once again, 
\[
\rho\left(T^{n+|s|}(G_s)\right)\leq \rho\left(T^{n+|s|}(G)\right).
\]
By choice of $s$, $n+|s|\geq l$, hence $\rho(T^{n+|s|}(G))<\omega_1$. Since $T^n(G)_s=T^{n+|s|}(G_s)$, we conclude $T^{n}(G)_s$ is well-founded for each $s$ of length $l$. The tree $T^n(G)$ is therefore well-founded, and the corollary follows. 
\end{proof}

Define $\mathrm{W}:=\cup_{l=1}^{\infty} (\Phi^l)^{-1}(WF)$; that is, $\mathrm{W}$ is the collection of marked groups so that some decomposition tree is well-founded. By Corollary~\ref{cor:some_all}, every decomposition tree of a group in $\mathrm{W}$ is well-founded; that is to say, $W=\cap_{l=1}^{\infty} (\Phi^l)^{-1}(WF)$.\par

\indent Lemma~\ref{lem:xi_indp} shows the rank of a decomposition tree is independent of the marking. We thus define

\begin{defn} The \textbf{decomposition rank} of $G\in\mathrm{W}$ is defined to be
\[
\xi(G):=\min_{k\in \omega}\rho\left(T^k(G)\right)
\]
for some (any) marking of $G$. The \textbf{decomposition degree} is defined to be 
\[
\deg(G):=\min\left\{k \mid \xi(G)=\rho\left(T^k(G)\right)\right\}
\]
for some (any) marking of $G$.
\end{defn}

\begin{cor}\label{lem:sgrp_xi} 
If $G,H\in\Gw$ and $H\hookrightarrow G$, then $\xi(H)\leq \xi(G)$.
\end{cor}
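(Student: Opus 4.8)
The plan is to deduce the corollary directly from Lemma~\ref{lem:xi_indp} by taking an appropriate infimum over the offset parameter. Concretely, suppose $H\hookrightarrow G$. For every $l\geq 1$ we have $l\geq l$, so Lemma~\ref{lem:xi_indp} (applied with $k=l$) yields $\rho(T^l(H))\leq\rho(T^l(G))$. Taking the minimum over $l\in\omega$ on both sides gives
\[
\xi(H)=\min_{l}\rho\left(T^l(H)\right)\leq\min_{l}\rho\left(T^l(G)\right)=\xi(G).
\]
One should first note that $\xi$ is well-defined on $H$: since $H\hookrightarrow G$ and $G\in\mathrm{W}$, Lemma~\ref{lem:xi_indp} forces $\rho(T^l(H))\leq\rho(T^l(G))<\omega_1$ for all $l$, so $T^l(H)$ is well-founded and $H\in\mathrm{W}$ as well.

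A slightly cleaner variant, if one wants to avoid even mentioning a fixed offset on the left, is to fix $l=\deg(H)$ so that $\xi(H)=\rho(T^{\deg(H)}(H))$; then Lemma~\ref{lem:xi_indp} with $k=l=\deg(H)$ gives $\xi(H)=\rho(T^{\deg(H)}(H))\leq\rho(T^{\deg(H)}(G))$, and the right-hand side is at least $\xi(G)=\min_k\rho(T^k(G))$ only in the wrong direction --- so in fact the termwise comparison followed by the infimum is the right order of operations, not this variant. I would therefore present the argument as: compare termwise for each offset using Lemma~\ref{lem:xi_indp}, then take minima.

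There is essentially no obstacle here; the content was already extracted in Lemma~\ref{lem:xi_indp}, and this corollary is just the observation that the hypothesis $l\geq k$ there is automatically satisfied when we use the same offset on both sides, together with monotonicity of $\min$. The only point requiring a word of care is the well-definedness of $\xi(H)$, i.e.\ checking $H\in\mathrm{W}$, which as noted follows immediately from the same lemma. So the proof will be two or three lines.

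\begin{proof}
Let $f\from H\hookrightarrow G$ be an embedding. Since $G\in\mathrm{W}$, Corollary~\ref{cor:some_all} gives $\rho(T^l(G))<\omega_1$ for all $l\geq 1$, and then Lemma~\ref{lem:xi_indp} (with $k=l$) yields $\rho(T^l(H))\leq\rho(T^l(G))<\omega_1$ for every $l\geq 1$; in particular $T^l(H)$ is well-founded for all $l$, so $H\in\mathrm{W}$ and $\xi(H)$ is defined. Taking the minimum over $l\in\omega$ of both sides of $\rho(T^l(H))\leq\rho(T^l(G))$ gives
\[
\xi(H)=\min_{l\in\omega}\rho\left(T^l(H)\right)\leq\min_{l\in\omega}\rho\left(T^l(G)\right)=\xi(G),
\]
as desired.
\end{proof}
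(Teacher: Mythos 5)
Your proof is correct and is exactly the argument the paper intends: the corollary is stated without proof as an immediate consequence of Lemma~\ref{lem:xi_indp}, obtained by applying that lemma with $k=l$ for each offset and then taking minima (and your remark that $H\in\mathrm{W}$ follows from the same lemma is the right observation about well-definedness). The middle paragraph about the ``cleaner variant'' could simply be deleted, since you correctly conclude it is not needed.
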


\begin{rmk}
The decomposition rank in a fairly straightforward manner tracks the number of extensions and unions applied to produce the group.  The decomposition degree, on the other hand, is currently mysterious. It somehow tracks the size of the finite groups ``appearing" in the construction of an elementary amenable group. We do not consider the decomposition degree further as it is tangential to our goal. We do study the decomposition rank in detail.
\end{rmk}

We now show that $\mathrm{W} \subseteq \EA$ and $\mathrm{W}$ enjoys the same closure properties as $\EA$, so that in fact $\EA=\mathrm{W}$. 
\begin{thm}
If $G\in\mathrm{W}$, then $G\in\EA$.
\end{thm}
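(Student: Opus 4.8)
The plan is to argue by transfinite induction on the ordinal $\rho(T^l(G))$, done \emph{uniformly over the offset} $l$: I will show that for every $\alpha<\omega_1$, every $l\geq 1$, and every $G\in\Gw$ with $\rho(T^l(G))\leq\alpha$, one has $G\in\EA$. Since $G\in\mathrm{W}$ means precisely that $T^l(G)$ is well-founded for all $l$ (Corollary~\ref{cor:some_all}), in particular $\rho(T^1(G))<\omega_1$, this suffices. Carrying the offset along in the induction is forced on us because, by Observation~\ref{prop:child_rank}, the subtree of $T^l(G)$ below a child $(n)$ of the root equals $T^{l+1}(G_{(n)})$, with the offset bumped up by one; so descending one level replaces $(l,G)$ by $(l+1,G_{(n)})$.

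For the base case, $\rho(T^l(G))=1$ makes the root $\emptyset$ terminal, which by construction means $G=\{e\}\in\EA$. For the inductive step, fix $G$ and $l$ with $\rho(T^l(G))=\alpha$ (if $\rho(T^l(G))<\alpha$ we are done by the hypothesis for smaller ordinals); note $\alpha=\rho_{T^l(G)}(\emptyset)+1$ is a successor, say $\alpha=\beta+1$. For each $n\geq 0$ the node $(n)$ lies in $T^l(G)$ with $G_{(n)}=S_l(R_n(G))$, and $\rho\bigl(T^{l+1}(G_{(n)})\bigr)=\rho\bigl(T^l(G)_{(n)}\bigr)\leq\beta<\alpha$, so the inductive hypothesis gives $G_{(n)}\in\EA$. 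It then remains to rebuild $G$ from the $G_{(n)}$ using only the operations defining $\EA$. Writing $R:=R_n(G)$, the point is that $R/S_l(R)$ is elementary amenable: $S_l(R)=[R,R]\cap\bigcap\mc N_l(R)$ is characteristic in $R$, so the quotient makes sense, and it embeds into $R/[R,R]\times R/\bigcap\mc N_l(R)$. The first factor is abelian; for the second, $R$ is finitely generated, hence has only finitely many subgroups of index at most $l+1$, so $\bigcap\mc N_l(R)$ is a finite intersection of finite-index subgroups, hence itself of finite index, and $R/\bigcap\mc N_l(R)$ is finite. Thus $R/S_l(R)\in\EA$, being a subgroup of a product of an abelian group with a finite group; since $G_{(n)}=S_l(R)\in\EA$, closure of $\EA$ under extensions gives $R_n(G)\in\EA$. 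Finally, the $R_n(G)$ form an increasing chain of subgroups of $G$ with union $G$ (the preferred enumeration $g_0,g_1,\dots$ exhausts $G$), so closure under countable increasing unions yields $G\in\EA$.

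The genuinely load-bearing step is the middle one: recognizing that the quotient $R_n(G)/S_l(R_n(G))$ is elementary amenable. This is where finite generation of $R_n(G)$ is used — to see that $\bigcap\mc N_l(R_n(G))$ has finite index — after which the quotient is visibly (abelian)-by-(finite). The remaining ingredients (that $S_l(R_n(G))$ is characteristic, the standard injection of $R/(A\cap B)$ into $R/A\times R/B$, and the identity $\bigcup_n R_n(G)=G$) are routine. The only other thing demanding a bit of care is the offset bookkeeping, i.e.\ running the induction over all $l$ at once so that it closes when we pass to the children $T^{l+1}(G_{(n)})$.
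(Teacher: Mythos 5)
Your proposal is correct and follows essentially the same route as the paper: induct on the rank of the decomposition tree (the paper uses $\xi(G)=\min_l\rho(T^l(G))$ where you carry the offset $l$ uniformly, which amounts to the same bookkeeping), observe via Observation~\ref{prop:child_rank} that each child $G_{(n)}$ has strictly smaller rank and hence lies in $\EA$, note that $R_n(G)/G_{(n)}$ is elementary amenable because finite generation makes $\bigcap\mc N_l(R_n(G))$ of finite index (the paper phrases this as finite-by-abelian, you as a subgroup of abelian $\times$ finite), and then close up using extensions and the increasing union $G=\bigcup_n R_n(G)$. No gaps.
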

\begin{proof}
We induct on $\xi(G)$. For the base case, if $\xi(G)=1$, then $G=\{e\}$ and $G\in\EA$. Suppose the theorem holds for all $\alpha<\beta$ and $\xi(G)=\rho(T^l(G))=\beta$. Consider $R_i(G)$. Since $R_i(G)$ is finitely generated, $\mc{N}_{l+1}(R_i(G))$ is finite, so 
\[
\left|[R_i(G),R_i(G)]:G_i\right| <\infty.
\]
We infer $R_i(G)/G_i$ is finite-by-abelian and, therefore, elementary amenable.\par

\indent On the other hand, Observation~\rm\ref{prop:child_rank} gives $\rho(T^{1+l}(G_i))=\rho(T^l(G)_i)$. Hence, $\rho(T^{1+l}(G_i))<\beta$, and we conclude that $G_i\in \EA$ from the inductive hypothesis. As $\EA$ is closed under group extensions and countable increasing unions, $R_i(G)\in\EA$ for all $i\in\omega$, whereby $G\in\EA$.
\end{proof}

The family $\mathrm{W}$ also has the same closure properties as $\EA$. Lemma~\ref{lem:xi_indp} already shows $\mathrm{W}$ is closed under taking subgroups. For the other closure properties, we require several lemmas. 

\begin{lem}\label{lem:finandab}
$\mathrm{W}$ contains all finite groups and all abelian groups.
\end{lem}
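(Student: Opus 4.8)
The plan is to verify directly that every finite group and every abelian group $G$ has a well-founded decomposition tree, which by definition puts $G$ into $\mathrm{W}$; by Corollary~\ref{cor:some_all} it would suffice to do this for a single offset, but the argument in fact handles all $l\geq 1$ simultaneously, so I will just fix an arbitrary $l\geq 1$.

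The abelian case is immediate: if $G$ is abelian, then each $R_n(G)=\grp{g_0,\dots,g_n}$ is an abelian subgroup of $G$, so $[R_n(G),R_n(G)]=\{e\}$ and hence $G_n=S_l(R_n(G))\leq[R_n(G),R_n(G)]=\{e\}$. Thus every node of length $1$ is terminal, $T^l(G)$ has depth at most $1$, and in particular it is well-founded.

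For the finite case the key point is that the offset attached to a node $s$ is $|s|+l$, which grows without bound down any branch, and that $S_k$ trivializes a finite group as soon as $k$ is large. Precisely, for any group $H$ the trivial subgroup $\{e\}$ is normal of index $|H|$, so $\{e\}\in\mc N_k(H)$ whenever $k+1\geq|H|$, and then $\bigcap\mc N_k(H)=\{e\}$, giving $S_k(H)=[H,H]\cap\{e\}=\{e\}$. Next, an easy induction on $|s|$ shows that for every $s\in T^l(G)$ the marked group $G_s$ is isomorphic to a subgroup of $G$: indeed $G_{s\conc n}=S_{|s|+l}(R_n(G_s))\leq R_n(G_s)\leq G_s$, so $|G_{s\conc n}|\leq|G_s|$ and hence $|G_s|\leq|G|$ throughout. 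Combining the two observations, as soon as $|s|+l\geq|G|-1$ we have $|s|+l\geq|R_n(G_s)|-1$ for every $n$, so $G_{s\conc n}=\{e\}$; that is, every child of such an $s$ is terminal. Consequently no branch of $T^l(G)$ has length exceeding $|G|-l$, so $T^l(G)$ is well-founded. Note the tree need not be finitely branching, so one argues directly that there is no infinite branch rather than invoking K\"onig's lemma.

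There is essentially no serious obstacle here; the only thing to get right is the bookkeeping on the offset, namely that it strictly increases down the tree so that it eventually exceeds $|G|-1$ and, via the fact that $\{e\}$ has index $|G|$, collapses the relevant $S_k$ to the trivial group even when the group along the branch is perfect (and so the commutator subgroup alone gives no progress).
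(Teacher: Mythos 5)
Your proof is correct and rests on the same two observations as the paper's: for abelian $G$ the commutator subgroup already kills every $R_n(G)$, and for finite $G$ of order $m$ the trivial subgroup lies in $\mc N_k$ once $k\geq m-1$, so $S_k$ collapses everything. The paper simply chooses the offset $l=m$ upfront, so that $T^m(G)$ has rank at most $2$, whereas you fix an arbitrary $l\geq 1$ and let the offset grow down the branches; this yields the (slightly stronger, but by Corollary~\ref{cor:some_all} equivalent) conclusion for every offset at the cost of a little extra bookkeeping.
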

\begin{proof}
If $G$ is abelian, then $\rho(T^1(G))\leq 2$.  If $G$ is finite with size $m$, then $\rho(T^m(G))\leq 2$.
\end{proof}

We next consider increasing unions.
\begin{lem}\label{lem:union_xi}
If $G=\cup_{i\in\Nb} H_i$ and each $H_i\in\mathrm{W}$, then $G\in\mathrm{W}$.
\end{lem}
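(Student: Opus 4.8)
The plan is to reduce to a single countable increasing union of finitely generated groups and then bound the decomposition rank of $G$ by a countable supremum of the ranks of those groups, using the key fact that the decomposition tree of $G$ with any offset "sees" only finitely generated pieces $R_n(G)$ at each stage. First I would note that we may assume each $H_i$ is finitely generated: write each $H_i$ as a countable increasing union of its own finitely generated subgroups and interleave, so that (after re-indexing) $G = \bigcup_i K_i$ with $K_i \le K_{i+1}$ finitely generated and each $K_i \hookrightarrow H_{j}$ for some $j$; by Lemma~\ref{lem:xi_indp} each $K_i \in \mathrm{W}$. So it suffices to prove the statement for $G$ a countable increasing union of finitely generated groups $K_i$, each in $\mathrm{W}$.

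Next I would fix the offset $l=1$ and try to prove directly that $T^1(G)$ is well-founded by showing $\rho(T^1(G)) \le \sup_i \rho(T^1(K_i)) + 1 < \omega_1$ (or some similar countable bound). The mechanism: a node $s\conc n$ of $T^1(G)$ is built from $G_{s\conc n} = S_{|s|+1}(R_n(G_s))$, and $R_n(G_s) = \langle (G_s)_0,\dots,(G_s)_n\rangle$ is finitely generated. Since $G = \bigcup_i K_i$ is an increasing union and $R_n(G)$ is finitely generated, $R_n(G) \le K_{i}$ for some $i$; I would push this through the inductive construction of the tree to see that every group $G_s$ attached to a node of $T^1(G)$ embeds into some $K_i$ — more precisely, $G_s \hookrightarrow R_{n_0}(G) \le K_{i_0}$ where $n_0$ is the first coordinate of $s$ and the rest of the construction only passes to finitely generated subgroups and then to $S_k$ of those, which are again subgroups. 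Combined with Observation~\ref{prop:child_rank}, which gives $T^1(G)_s = T^{|s|+1}(G_s)$, and Lemma~\ref{lem:xi_indp}, which gives $\rho(T^{|s|+1}(G_s)) \le \rho(T^1(K_{i_0}))$ since $G_s \hookrightarrow K_{i_0}$ and $|s|+1 \ge 1$, I would conclude that every subtree hanging below a length-one node has rank at most $\sup_i \rho(T^1(K_i)) =: \gamma < \omega_1$. Hence $\rho(T^1(G)) \le \gamma + 1 < \omega_1$, so $T^1(G)$ is well-founded and $G \in \mathrm{W}$.

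The main obstacle I anticipate is the bookkeeping in the reduction to finitely generated subgroups: the preferred enumeration of $G$ is fixed once and for all, and when I re-express $G$ as $\bigcup_i K_i$ I must make sure the $K_i$ are genuinely finitely generated marked groups in $\Gw$ (using the marking convention for subgroups from Section~\ref{sec:Prelim}) and that each embeds into some $H_j$, so that Lemma~\ref{lem:xi_indp} applies to give $K_i \in \mathrm{W}$. A cleaner route, which I would take if the bookkeeping gets heavy, is to skip the reduction entirely: work directly with $G = \bigcup_i H_i$, observe that each $R_n(G)$ is finitely generated hence contained in some $H_{j(n)}$, and run the same argument, concluding $\rho(T^1(G)) \le \sup_i \rho(T^1(H_i)) + 1$ directly from Lemma~\ref{lem:xi_indp} applied to the embeddings $G_s \hookrightarrow H_{j}$. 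Either way the heart of the matter is that the decomposition tree construction never leaves the class of finitely generated subgroups after the first step, so no single branch ever "escapes" all the $H_i$.
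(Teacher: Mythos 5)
Your proposal is correct, and the ``cleaner route'' you describe at the end is exactly the paper's proof: since each $R_n(G)$ is finitely generated it lies in some $H_{m_n}$, so $G_n\hookrightarrow H_{m_n}$ and Lemma~\ref{lem:xi_indp} together with Observation~\ref{prop:child_rank} gives $\rho(T^1(G))\leq\sup_n\rho(T^1(H_{m_n}))+1<\omega_1$. The preliminary reduction to an increasing union of finitely generated $K_i$ is superfluous, as you yourself anticipate, but it does no harm.
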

\begin{proof}
For each $i\in\Nb$, let $\alpha_i:=\rho(T^{1}(H_i))<\omega_1$.  Since each $R_n(G)$ is finitely generated, there is some $m_n\in\Nb$ such that $R_n(G)\leq H_{m_n}$. By Lemma \ref{lem:xi_indp}, $\rho(T^{1}(G_n))\leq\rho(T^{1}(H_{m_n}))=\alpha_{m_n}$.  We conclude $\rho(T^1(G))\leq \sup_{i\in\Nb} (\alpha_{m_i}) + 1 < \omega_1$, and thereby, $G\in\mathrm{W}$.
\end{proof}

In our construction, given $G$ and $k\geq 1$, we are particularly interested in the $G_i$ associated with $i\in T^k(G)$.  We will see their decomposition rank is related to that of $G$ in a simple way; this observation is necessary for showing $W$ is closed under taking extensions and quotients.

\begin{lem}\label{lem:rk_xi}
Suppose $G\in\mathrm{W}$ is non-trivial and $\deg(G)=k$. Then
\[
\sup_{i\in \omega}\xi(G_i)+1\leq \xi(G)
\]
where $G_i$ is the subgroup of $G$ associated to $i\in T^k(G)$.  In particular, $\xi(G_i)<\xi(G)$ for all $i\in\Nb$.
\end{lem}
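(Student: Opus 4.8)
The plan is to unwind the definition of the tree rank $\rho$ one level at the root of $T^k(G)$ and to identify the subtrees hanging below the immediate children of the root; by Observation~\ref{prop:child_rank} these are exactly the decomposition trees $T^{k+1}(G_i)$, so the computation reduces to comparing $\rho\left(T^{k+1}(G_i)\right)$ with $\xi(G_i)$.

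First I would record the role of the hypothesis $\deg(G)=k$: by definition it gives $\xi(G)=\rho\left(T^k(G)\right)$, so it suffices to prove $\sup_{i\in\Nb}\xi(G_i)+1\leq\rho\left(T^k(G)\right)$. Since $G\neq\{e\}$, the construction of $T^k(G)$ puts $(i)\in T^k(G)$ for every $i\in\Nb$, so the root $\emptyset$ is non-terminal; since $\rho_{T^k(G)}(t)>\rho_{T^k(G)}(u)$ whenever $t\subsetneq u$ in $T^k(G)$, every proper extension of $\emptyset$ that is not itself a child has rank strictly below that of the child it passes through, and hence
\[
\rho_{T^k(G)}(\emptyset)=\sup_{i\in\Nb}\left\{\rho_{T^k(G)}\bigl((i)\bigr)+1\right\}.
\]

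Next I would pass to the subtrees at the children. For each $i$ the subtree $T^k(G)_{(i)}$ equals $T^{k+1}(G_i)$ by Observation~\ref{prop:child_rank}, and it is well-founded because $T^k(G)$ is (alternatively, $G_i\hookrightarrow G\in\mathrm{W}$, so $G_i\in\mathrm{W}$ by Lemma~\ref{lem:xi_indp} and $\xi(G_i)$ is defined). From the definitions of $\rho$ and $\rho_T$ one has the standard identity $\rho(T_s)=\rho_T(s)+1$ for a node $s$ of a well-founded tree $T$, which in our situation reads $\rho_{T^k(G)}\bigl((i)\bigr)+1=\rho\left(T^k(G)_{(i)}\right)=\rho\left(T^{k+1}(G_i)\right)$. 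Substituting this into the displayed equation and adding one to both sides yields $\rho\left(T^k(G)\right)=\sup_{i\in\Nb}\rho\left(T^{k+1}(G_i)\right)+1$.

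Finally, by definition of $\xi$ as a minimum over offsets, $\xi(G_i)\leq\rho\left(T^{k+1}(G_i)\right)$ for every $i$, so
\[
\sup_{i\in\Nb}\xi(G_i)+1\leq\sup_{i\in\Nb}\rho\left(T^{k+1}(G_i)\right)+1=\rho\left(T^k(G)\right)=\xi(G),
\]
which is the asserted inequality; the ``in particular'' clause is immediate since $\sup_{i}\xi(G_i)+1\leq\xi(G)$ forces $\xi(G_i)<\xi(G)$ for every $i$. I do not expect a genuine obstacle here: the only delicate points are the bookkeeping of the off-by-one between $\rho$ and $\rho_T$ and the verification that the subtrees $T^k(G)_{(i)}$ are well-founded so that the identity $\rho(T_s)=\rho_T(s)+1$ applies. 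It is worth noting exactly where $\deg(G)=k$ enters: only in the final step, to replace $\rho\left(T^k(G)\right)$ by $\xi(G)$; for an arbitrary offset $l$ the same argument gives only $\sup_i\xi(G_i)+1\leq\rho\left(T^l(G)\right)$, which need not be bounded by $\xi(G)$.
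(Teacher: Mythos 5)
Your proof is correct and follows essentially the same route as the paper: identify $T^k(G)_{(i)}$ with $T^{k+1}(G_i)$ via Observation~\ref{prop:child_rank}, bound $\xi(G_i)\leq\rho\left(T^{k+1}(G_i)\right)$ by the definition of $\xi$ as a minimum over offsets, and use $\deg(G)=k$ only to convert $\rho\left(T^k(G)\right)$ into $\xi(G)$ at the end. The extra care you take with the off-by-one between $\rho$ and $\rho_T$ and with the sup over all extensions versus children is sound and merely makes explicit what the paper leaves implicit.
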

\begin{proof}
By construction, for all $i\in\Nb$,
\begin{align*}
\rho\left(T^{k+1}(G_i)\right)+1 &=\rho\left(T^k(G)_i\right)+1 \\
 &\leq \rho\left(T^k(G)\right).
\end{align*}
Hence,
\begin{align*}
\sup_{i\in \Nb}\xi(G_i)+1 &= \sup_{i\in \Nb} \left\{ \min_{l\in\Nb} \rho\left(T^l(G_i)\right)\right\}+1 \\
 &\leq \sup_{i\in\Nb} \left\{ \rho\left(T^{k+1}(G_i)\right) \right\} +1\\
 &=  \rho\left(T^k(G)\right) \\
 &= \xi(G)
\end{align*}
as desired.
\end{proof}
\noindent The inequality in Lemma~\rm\ref{lem:rk_xi} may be strict; for example, consider $\operatorname{Sym}_{fin}(\Nb)$, the group of finitely supported permutations of $\Nb$. We also point out that Lemma~\rm\ref{lem:rk_xi} \emph{does not} hold for choices of $k$ such that $\rho(T^k(G))\neq\xi(G)$.\par
 
We next show that $\mathrm{W}$ is closed under extensions.  We will first prove a weaker statement.  This approach is inspired by \cite{Os02}.

\begin{lem}\label{lem:finorab_xi}
Suppose that $N\in\mathrm{W}$, $B$ is finite or abelian, and there is a short exact sequence
\[
1 \rightarrow N \rightarrow G \rightarrow B \rightarrow 1 .
\]
Then $G\in\mathrm{W}$.
\end{lem}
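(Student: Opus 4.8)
\emph{Plan.} The idea is that, with the offset $l$ chosen appropriately for $B$, a single step of the decomposition-tree construction already pushes us into $N$. Concretely, I would fix such an $l\ge 1$ and show that every length-one vertex $G_{(n)}$ of $T^l(G)$ satisfies $G_{(n)}\le N$. Since $N\in\mathrm{W}$ we have $\rho(T^1(N))<\omega_1$, so Lemma~\ref{lem:xi_indp} bounds $\rho(T^{1+l}(G_{(n)}))$ by $\rho(T^1(N))$; then Observation~\ref{prop:child_rank}, together with the recursion defining the tree rank, gives $\rho(T^l(G))\le \rho(T^1(N))+1<\omega_1$, so $T^l(G)$ is well-founded and $G\in\mathrm{W}$. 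We may assume $G\neq\{e\}$, else there is nothing to prove; this guarantees $(n)\in T^l(G)$ for every $n$.

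\emph{The key claim.} Recall $G_{(n)}=S_l(R_n(G))=[R_n(G),R_n(G)]\cap\bigcap\mc{N}_l(R_n(G))$. If $B$ is abelian, take $l=1$: the image of $[R_n(G),R_n(G)]$ in $G/N\cong B$ is trivial, so $[R_n(G),R_n(G)]\le N$ and hence $G_{(n)}\le N$. If $B$ is finite, take $l=\max\{1,|B|-1\}$: then $R_n(G)\cap N$ is normal in $R_n(G)$ of index at most $|B|\le l+1$, so $R_n(G)\cap N\in\mc{N}_l(R_n(G))$, and therefore $G_{(n)}\le\bigcap\mc{N}_l(R_n(G))\le R_n(G)\cap N\le N$. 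This is the one place the precise form of $S_l$ is used: the offset is tuned just large enough to absorb the bounded-index subgroup $R_n(G)\cap N$ in the finite case, while passing to the commutator subgroup does the work when $B$ is abelian.

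\emph{Conclusion.} Each inclusion $G_{(n)}\hookrightarrow N$ is an embedding of groups, so Lemma~\ref{lem:xi_indp} yields $\rho(T^{1+l}(G_{(n)}))\le\rho(T^1(N))<\omega_1$. By Observation~\ref{prop:child_rank} we have $T^l(G)_{(n)}=T^{1+l}(G_{(n)})$, and since every length-one vertex of $T^l(G)$ is of this form, the recursion for the tree rank gives $\rho(T^l(G))\le\sup_{n}\rho(T^{1+l}(G_{(n)}))+1\le\rho(T^1(N))+1<\omega_1$, so $G\in\mathrm{W}$. I do not expect a genuine obstacle here; the only thing needing care is the bookkeeping with offsets and the index estimate in the finite case, which is exactly what the flexibility of the offsets in Lemma~\ref{lem:xi_indp} is designed to handle. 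This lemma is meant to serve as the base step for the full statement that $\mathrm{W}$ is closed under extensions, obtained afterwards by inducting on the decomposition rank of the quotient and writing an arbitrary group in $\mathrm{W}$ as an increasing union of finitely iterated extensions by finite and abelian groups.
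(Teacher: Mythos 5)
Your proof is correct and follows essentially the same route as the paper: in the abelian case $[G,G]\le N$ forces $G_{(n)}\le N$ for any offset, and in the finite case the offset is chosen large enough that $R_n(G)\cap N\in\mc{N}_l(R_n(G))$, after which Lemma~\ref{lem:xi_indp} and Observation~\ref{prop:child_rank} bound $\rho(T^l(G))$. The only (immaterial) difference is that the paper takes $l=|G:N|$ rather than your slightly tighter $l=\max\{1,|B|-1\}$.
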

\begin{proof}
Suppose first that $B$ is abelian. Thus, $[G,G] \leq N$, so for any $l\geq 1$ and all $n\in T^l(G)$, $G_n\leq N$.  It follows from Lemma \ref{lem:xi_indp} that for all $n\in\Nb$, 
\[
\rho(T^{l+1}(G_n))\leq\rho(T^{l+1}(N))<\omega_1.
\]
Appealing to Observation \ref{prop:child_rank}, we infer $\rho(T^l(G))<\omega_1$, so $G\in \mathrm{W}$.

Suppose that $B$ is finite and $|G\colon N|=k$.  For all $n\in T^k(G)$, $G_n\leq N$, so as above, $T^k(G)$ is well-founded. Hence, $G\in W$.
\end{proof}

\begin{lem}\label{lem:ext_xi}
Suppose the group $G$ is the extension of a group $B\in\mathrm{W}$ by a group $N\in\mathrm{W}$.  Then $G\in\mathrm{W}$. The family $\mathrm{W}$ is thus closed under group extensions.
\end{lem}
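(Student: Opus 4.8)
The plan is to fix $N\in\mathrm{W}$ and induct on the decomposition rank $\xi(B)$ of the quotient, proving that every extension of a group $B\in\mathrm{W}$ by $N$ again lies in $\mathrm{W}$. The base case is $\xi(B)=1$, where $B=\{e\}$ and hence $G\cong N\in\mathrm{W}$.

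For the inductive step, fix a short exact sequence $1\to N\to G\xrightarrow{\pi} B\to 1$ with $\xi(B)>1$, and set $k:=\deg(B)$, so that $\rho(T^k(B))=\xi(B)$. Since $B\neq\{e\}$, every $(i)$ lies in $T^k(B)$; write $B_i:=S_k(R_i(B))$ for the associated subgroup. Lemma~\ref{lem:rk_xi} gives $\xi(B_i)<\xi(B)$. The key group-theoretic point is that $R_i(B)/B_i$ is finite-by-abelian: since $B_i=[R_i(B),R_i(B)]\cap\bigcap\mc{N}_k(R_i(B))$ and $R_i(B)$ is finitely generated, the set $\mc{N}_k(R_i(B))$ is finite, so $\bigcap\mc{N}_k(R_i(B))$ has finite index in $R_i(B)$; hence $[R_i(B),R_i(B)]/B_i$ embeds into the finite group $R_i(B)/\bigcap\mc{N}_k(R_i(B))$ and is therefore finite, while $R_i(B)/[R_i(B),R_i(B)]$ is abelian.

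Now put $\hat R_i:=\pi^{-1}(R_i(B))$, $\hat F_i:=\pi^{-1}([R_i(B),R_i(B)])$, and $\hat B_i:=\pi^{-1}(B_i)$, so that $\hat B_i\trianglelefteq\hat F_i\trianglelefteq\hat R_i$. The sequence $1\to N\to\hat B_i\to B_i\to 1$ exhibits $\hat B_i$ as an extension of $B_i$ by $N$ with $\xi(B_i)<\xi(B)$, so $\hat B_i\in\mathrm{W}$ by the inductive hypothesis. Since $\hat F_i/\hat B_i\cong[R_i(B),R_i(B)]/B_i$ is finite, Lemma~\ref{lem:finorab_xi} applied to $1\to\hat B_i\to\hat F_i\to\hat F_i/\hat B_i\to 1$ yields $\hat F_i\in\mathrm{W}$; since $\hat R_i/\hat F_i\cong R_i(B)/[R_i(B),R_i(B)]$ is abelian, a second application of Lemma~\ref{lem:finorab_xi} yields $\hat R_i\in\mathrm{W}$. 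Finally, $B=\bigcup_i R_i(B)$ and the $R_i(B)$ are increasing, so $G=\pi^{-1}(B)=\bigcup_i\hat R_i$ is an increasing union of members of $\mathrm{W}$; Lemma~\ref{lem:union_xi} then gives $G\in\mathrm{W}$, completing the induction, and the last sentence of the lemma follows since an extension of $B$ by $N$ with $B,N\in\mathrm{W}$ is exactly the situation just treated.

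The main obstacle is arranging matters so that Lemma~\ref{lem:finorab_xi} — which handles only finite or abelian quotients — suffices: the trick is to cut each finitely generated piece $\hat R_i$ of $G$ into the tower $\hat B_i\trianglelefteq\hat F_i\trianglelefteq\hat R_i$ whose top two layers are finite and abelian respectively, while the bottom layer $\hat B_i$ is an extension by $N$ of a group $B_i$ of \emph{strictly smaller} decomposition rank, so that the inductive hypothesis applies. Verifying $\xi(B_i)<\xi(B)$ is precisely where the choice $k=\deg(B)$, rather than an arbitrary offset, is essential, via Lemma~\ref{lem:rk_xi}.
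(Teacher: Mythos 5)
Your proof is correct and follows essentially the same route as the paper's: induction on $\xi(B)$, using Lemma~\ref{lem:rk_xi} (with $k=\deg(B)$) for the rank drop, two applications of Lemma~\ref{lem:finorab_xi} to absorb the finite-by-abelian quotient $R_i(B)/B_i$, and Lemma~\ref{lem:union_xi} to assemble the preimages of the $R_i(B)$. The only difference is organizational — the paper isolates the finite-by-abelian step as a separate claim and splits into finitely generated and non-finitely generated cases, while you handle all the $R_i(B)$ uniformly — but the underlying argument is identical.
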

\begin{proof}
We first establish the following claim.

\begin{claim*}
If $N\in\mathrm{W}$ and $B$ is finite-by-abelian, then the extension of $B$ by $N$ is in $\mathrm{W}$.
\end{claim*}
\begin{proof}[Proof of claim.]
Suppose that $B$ is the extension of an abelian group $A$ by a finite group $F$.  Let $F_0$ be the preimage of $F$ in $G$.  Then $G/F_0 \cong B/F \cong A$, so $G/F_0$ is abelian.  Since $F_0$ is the extension of the finite group $F$ by $N$, Lemma \ref{lem:finorab_xi} implies that $F_0\in\mathrm{W}$.  Applying Lemma \ref{lem:finorab_xi} a second time, $G\in\mathrm{W}$.
\end{proof}

We now prove the lemma by induction on $\beta=\xi(B)$.  If $\beta=1$, then $B=\{e\}$ and the induction claim holds trivially.  Suppose the result holds for all $\delta<\beta$.  First, assume that $B$ is finitely generated, let $\deg(B)=l$, and form the decomposition tree $T^l(B)$. By finite generation, there is some $m\in\Nb$ such that for all $k\geq m$, $R_k(B)=B$, so $B_k=[B,B]\cap\bigcap\mc N_l(B)$.  We now consider $K\normal G$ the preimage of $B_k$ under the projection map. The group $K$ is the extension of $B_k$ by $N$, and $\xi(B_k)<\xi(B)$ by Lemma~\ref{lem:rk_xi}. The inductive hypothesis therefore implies $K\in\mathrm{W}$. On the other hand, $G/K$ is finite-by-abelian, so $G\in\mathrm{W}$ by our claim.\par

\indent If $B$ is not finitely generated, then $B=\cup_{n\in\Nb} R_n(B)$, and $\xi(R_n(B))\leq\xi(B)$ for all $n\in\Nb$.  Letting $C_n$ be the preimage in $G$ of $R_n(B)$, the previous paragraph implies $C_n\in\mathrm{W}$. Since $G=\cup_{n\in\Nb} C_n$, Lemma \ref{lem:union_xi} ensures that $G\in\mathrm{W}$.
\end{proof}

Finally, we show that $W$ is closed under quotients.

\begin{lem}\label{lem:xi_quot} 
If $G\in\mathrm{W}$ and $L\normal G$, then $G/L\in \mathrm{W}$. 
\end{lem}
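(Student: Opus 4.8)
The plan is to prove closure under quotients by an induction on the decomposition rank $\xi(G)$, combined with Lemma~\ref{lem:rk_xi} and the fact that the groups $G_i$ attached to the decomposition tree are themselves subgroups of $G$, so they already lie in $\mathrm{W}$. First I would handle the base case: if $\xi(G)=1$, then $G=\{e\}$, so $G/L=\{e\}\in\mathrm{W}$. For the inductive step, assume the lemma holds for every group in $\mathrm{W}$ of decomposition rank $<\beta$, and suppose $\xi(G)=\beta$.

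The key observation is that a quotient of $G$ is naturally a countable increasing union of quotients of finitely generated subgroups of $G$, so by Lemma~\ref{lem:union_xi} it suffices to treat the case where we quotient a finitely generated group; but more precisely I would reduce to the situation handled by the decomposition tree itself. Set $k:=\deg(G)$ and form $T^k(G)$. Writing $\overline{G}:=G/L$ with projection $q\from G\to\overline G$, note $R_n(\overline G)=q(R_n(G))=R_n(G)L/L$ is a quotient of $R_n(G)$. Now $R_n(G)$ is finite-by-abelian-by-(something of smaller rank): concretely, $R_n(G)/G_n$ is finite-by-abelian (as in the proof that $\mathrm{W}\subseteq\EA$), where $G_n\leq R_n(G)$ is the subgroup attached to $n\in T^k(G)$, and $\xi(G_n)<\xi(G)=\beta$ by Lemma~\ref{lem:rk_xi}. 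A quotient of $R_n(G)$ is therefore an extension of a quotient of $R_n(G)/G_n$ (still finite-by-abelian, hence in $\mathrm{W}$ by Lemma~\ref{lem:finandab} and Lemma~\ref{lem:ext_xi}) by a quotient of $G_n$; since $G_n\in\mathrm{W}$ and $\xi(G_n)<\beta$, the inductive hypothesis gives that this quotient of $G_n$ lies in $\mathrm{W}$, and then Lemma~\ref{lem:ext_xi} gives $q(R_n(G))\in\mathrm{W}$. Finally $\overline G=\bigcup_{n}q(R_n(G))$ is a countable increasing union, so $\overline G\in\mathrm{W}$ by Lemma~\ref{lem:union_xi}.

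I would be careful about one bookkeeping point in the reduction: the subgroup $G_n$ of $R_n(G)$ maps under $q$ onto $q(G_n)$, and $q(R_n(G))/q(G_n)$ is a quotient of $R_n(G)/G_n$, hence finite-by-abelian; meanwhile $q(G_n)$ is a quotient of $G_n$, which by the inductive hypothesis (applied to the group $G_n\in\mathrm{W}$ with $\xi(G_n)<\beta$) lies in $\mathrm{W}$. So $q(R_n(G))$ is an extension of a finite-by-abelian group by a group in $\mathrm{W}$, and the Claim inside the proof of Lemma~\ref{lem:ext_xi} (extensions of finite-by-abelian groups by groups in $\mathrm{W}$ stay in $\mathrm{W}$) applies directly.

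The main obstacle, and the step most in need of care, is the reduction to finitely generated subgroups together with correctly invoking Lemma~\ref{lem:rk_xi}: that lemma only controls $\xi(G_i)$ for the distinguished choice $k=\deg(G)$, not for an arbitrary offset, so the decomposition tree $T^{\deg(G)}(G)$ must be used, and one must check that the $G_n$ genuinely are subgroups of $R_n(G)\leq G$ (so that they lie in $\mathrm{W}$ with strictly smaller rank). Everything else is an assembly of the already-established closure properties of $\mathrm{W}$ under subgroups (Lemma~\ref{lem:xi_indp}), increasing unions (Lemma~\ref{lem:union_xi}), and extensions (Lemma~\ref{lem:ext_xi}).
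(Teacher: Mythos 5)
The proposal is correct and takes essentially the same route as the paper: induction on $\xi(G)$, using Lemma~\ref{lem:rk_xi} at offset $\deg(G)$ so the inductive hypothesis applies to the quotients $G_nL/L$ of the $G_n$, noting the corresponding quotient of $R_n(G)/G_n$ is finite-by-abelian, and assembling via the extension and union closure properties (Lemmas~\ref{lem:ext_xi} and~\ref{lem:union_xi}). The only immaterial difference is ordering: the paper first reduces to finitely generated $G$, whereas you take the increasing union over the $q(R_n(G))$ at the end.
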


\begin{proof} We argue by induction on $\xi(G)$.  As the base case is immediate, suppose the lemma holds up to $\beta$ and let $G$ be such that $\xi(G)=\beta+1$. In view of Lemma~\ref{lem:union_xi}, we may assume $G$ is finitely generated, so $R_n(G)=G$ for all suitably large $n$. Say $k=\deg(G)$ and let $G_n$ be the subgroup corresponding to $n\in T^k(G)$.\par
 
\indent By the inductive hypothesis and Lemma~\ref{lem:rk_xi}, $G_nL/L\cong G_n/G_n\cap L\in \mathrm{W}$ for each $n$.  On the other hand, $G/G_n\twoheadrightarrow(G/L)/(G_nL/L)$.  Therefore, $(G/L)/(G_nL/L)$ is finite-by-abelian and so is in $W$.  It now follows from Lemma \ref{lem:ext_xi} that $G/L\in W$.
\end{proof}

Combining Lemmas \ref{lem:xi_indp}, \ref{lem:finandab}, \ref{lem:union_xi}, \ref{lem:ext_xi}, and \ref{lem:xi_quot}, we obtain the following corollary.

\begin{cor}
If $G\in \EA$, then $G\in\mathrm{W}$.
\end{cor}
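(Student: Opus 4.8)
The plan is to show $\EA\subseteq\mathrm{W}$ by induction on the inductive construction of $\EA$ given in Definition~\ref{def:EA}. More precisely, since $\EA$ is by definition the \emph{smallest} collection of countable groups that contains all finite and abelian groups and is closed under extensions, countable increasing unions, subgroups, and quotients, it suffices to check that $\mathrm{W}$ itself contains all finite and abelian groups and enjoys these same five closure properties; once this is done, minimality of $\EA$ forces $\EA\subseteq\mathrm{W}$. The point of the corollary is precisely that all of this verification has already been carried out in the lemmas immediately preceding it, so the proof is a matter of citing the right lemma for each clause.

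Concretely, I would proceed clause by clause through Definition~\ref{def:EA}. For clause (i), that $\mathrm{W}$ contains all finite groups and all abelian groups, I invoke Lemma~\ref{lem:finandab}. For clause (ii), closure under group extensions, I invoke Lemma~\ref{lem:ext_xi}. For clause (iii), closure under countable increasing unions, I invoke Lemma~\ref{lem:union_xi}. For clause (iv), closure under taking subgroups, I invoke Lemma~\ref{lem:xi_indp} (which shows in particular that $H\hookrightarrow G$ and $G\in\mathrm{W}$ imply $H\in\mathrm{W}$, since the rank of every decomposition tree of $H$ is then bounded below $\omega_1$). For clause (v), closure under taking quotients, I invoke Lemma~\ref{lem:xi_quot}. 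Having matched each defining property of $\EA$ with a property $\mathrm{W}$ provably has, the containment $\EA\subseteq\mathrm{W}$ is immediate from the minimality built into the definition of $\EA$.

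There is essentially no obstacle here: all the genuine work lives in the earlier lemmas, and the corollary is a bookkeeping step. The only thing to be slightly careful about is the logical shape of the argument — one should phrase it as ``the class of countable groups in $\mathrm{W}$ satisfies all the closure properties defining $\EA$, hence contains the minimal such class $\EA$'' rather than attempting a direct induction on $\xi(G)$, because the inductive structure is on the \emph{generation} of $\EA$, not on the decomposition rank of a group already known to be in $\mathrm{W}$. With that framing the proof is one or two sentences.

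\begin{proof}
The collection $\EA$ is, by Definition~\ref{def:EA}, the smallest collection of countable groups containing all finite and abelian groups and closed under group extensions, countable increasing unions, subgroups, and quotients. By Lemma~\ref{lem:finandab}, $\mathrm{W}$ contains all finite groups and all abelian groups; by Lemma~\ref{lem:ext_xi}, $\mathrm{W}$ is closed under group extensions; by Lemma~\ref{lem:union_xi}, $\mathrm{W}$ is closed under countable increasing unions; by Lemma~\ref{lem:xi_indp}, $\mathrm{W}$ is closed under taking subgroups; and by Lemma~\ref{lem:xi_quot}, $\mathrm{W}$ is closed under taking quotients. Thus $\mathrm{W}$ is a collection of countable groups with all the properties defining $\EA$, and by minimality of $\EA$ we conclude $\EA\subseteq\mathrm{W}$. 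Hence if $G\in\EA$, then $G\in\mathrm{W}$.
\end{proof}
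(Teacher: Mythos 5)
Your proposal is correct and matches the paper exactly: the paper derives this corollary by combining Lemmas~\ref{lem:xi_indp}, \ref{lem:finandab}, \ref{lem:union_xi}, \ref{lem:ext_xi}, and \ref{lem:xi_quot} and appealing to the minimality of $\EA$ in Definition~\ref{def:EA}, which is precisely your argument. Your remark about framing the argument via minimality rather than induction on $\xi$ is also the right way to read the paper's one-line proof.
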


We thus produce a characterization of elementary amenable groups.

\begin{thm}\label{thm:EA_char}
Let $G$ be a marked group. Then the following are equivalent:
\begin{enumerate}[(1)]
\item $G\in\EA$ .
\item $T^l(G)$ is well-founded for all $l\geq 1$.
\item $T^l(G)$ is well-founded for some $l\geq 1$.
\end{enumerate}
\end{thm}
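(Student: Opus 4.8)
The plan is to deduce Theorem~\ref{thm:EA_char} directly from the lemmas and corollaries already established, since almost all of the work has been done. The equivalence of (2) and (3) is precisely the content of Corollary~\ref{cor:some_all}, so that part requires nothing further. It remains to link these well-foundedness conditions to membership in $\EA$, and both directions have been proved: the implication ``$G\in\mathrm{W}\Rightarrow G\in\EA$'' is the first theorem of Section~\ref{sec:EAGroups} (by induction on the decomposition rank $\xi(G)$, using that $R_i(G)/G_i$ is finite-by-abelian hence elementary amenable, and that $\EA$ is closed under extensions and increasing unions), while the reverse implication ``$G\in\EA\Rightarrow G\in\mathrm{W}$'' is the corollary obtained by combining Lemmas~\ref{lem:xi_indp}, \ref{lem:finandab}, \ref{lem:union_xi}, \ref{lem:ext_xi}, and \ref{lem:xi_quot}, which together show that $\mathrm{W}$ contains all finite and abelian groups and is closed under subgroups, quotients, extensions, and countable increasing unions, and therefore contains the smallest such class $\EA$.

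Concretely, I would structure the proof as a short cycle of implications. First, $(1)\Rightarrow(2)$: if $G\in\EA$, then $G\in\mathrm{W}$ by the closure-properties corollary, and by definition of $\mathrm{W}$ together with Corollary~\ref{cor:some_all} (equivalently, the displayed equality $W=\cap_{l=1}^{\infty}(\Phi^l)^{-1}(WF)$), every decomposition tree $T^l(G)$ is well-founded. Next, $(2)\Rightarrow(3)$ is trivial since there is at least one $l\geq 1$. Finally, $(3)\Rightarrow(1)$: if $T^l(G)$ is well-founded for some $l$, then $G\in\mathrm{W}$ by the definition $\mathrm{W}:=\cup_{l=1}^{\infty}(\Phi^l)^{-1}(WF)$, and hence $G\in\EA$ by the theorem ``$\mathrm{W}\subseteq\EA$''. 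This closes the loop and yields the stated equivalence.

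There is essentially no obstacle remaining at this stage; the genuine difficulties were already surmounted in establishing the closure properties of $\mathrm{W}$ — in particular closure under extensions (Lemma~\ref{lem:ext_xi}), whose proof required the rank-drop estimate of Lemma~\ref{lem:rk_xi} and the auxiliary fact that extensions of finite-by-abelian groups by groups in $\mathrm{W}$ stay in $\mathrm{W}$, and closure under quotients (Lemma~\ref{lem:xi_quot}). If I were presenting this from scratch, the step I would budget the most care for is verifying that the induction on $\xi(G)$ in the ``$\mathrm{W}\subseteq\EA$'' direction is well-founded and that the base case $\xi(G)=1$ genuinely forces $G=\{e\}$; but given everything stated in the excerpt, the proof of Theorem~\ref{thm:EA_char} is simply the assembly of Corollary~\ref{cor:some_all}, the theorem $\mathrm{W}\subseteq\EA$, and the corollary $\EA\subseteq\mathrm{W}$ into the three-way equivalence.
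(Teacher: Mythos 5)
Your proposal is correct and matches the paper's own (implicit) proof exactly: the theorem is stated immediately after the corollary $\EA\subseteq\mathrm{W}$ precisely because it is the assembly of that corollary, the theorem $\mathrm{W}\subseteq\EA$, and Corollary~\ref{cor:some_all}. Nothing further is needed.
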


We can rephrase this to have the form of a chain condition independent of the marking. This corollary may thus be taken to be a definition of elementary amenability.

\begin{cor}
A countable group $G$ is elementary amenable if and only if there is no infinite descending sequence of the form 
\[
G=G_0\geq G_1\geq\ldots
\]
such that for all $n\geq 0$, $G_n\neq\{e\}$ and there is a finitely generated subgroup $K_n\leq G_n$ with $G_{n+1}= [K_n,K_n]\cap H_n$ where $H_n$ is the intersection of the index-$(\leq(n+1))$ normal subgroups of $K_n$.
\end{cor}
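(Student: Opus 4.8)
The plan is to deduce this corollary directly from Theorem~\ref{thm:EA_char} by unwinding the definition of the decomposition tree and its associated groups. The key point is that an infinite descending sequence of the kind described in the statement is, up to reindexing, exactly a branch through some decomposition tree $T^l(G)$, so the corollary is merely a marking-free restatement of the equivalence $(1)\Leftrightarrow(3)$.

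First I would observe that, for a marked group $G$ and $l\geq 1$, an infinite branch $x\in\baire$ of $T^l(G)$ gives rise to the sequence $(G_{x\restriction n})_{n\in\Nb}$ with $G_{x\restriction 0}=G$ and $G_{x\restriction(n+1)}=S_{n+l}(R_{x(n)}(G_{x\restriction n}))$; here $K_n:=R_{x(n)}(G_{x\restriction n})=\grp{(G_{x\restriction n})_0,\dots,(G_{x\restriction n})_{x(n)}}$ is a finitely generated subgroup of $G_{x\restriction n}$ and, by definition of $S_k$, $G_{x\restriction(n+1)}=[K_n,K_n]\cap\bigcap\mc N_{n+l}(K_n)$, which for $l=1$ is precisely $[K_n,K_n]\cap H_n$ with $H_n$ the intersection of the index-$(\leq(n+1))$ normal subgroups of $K_n$. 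Moreover each term of a branch is nontrivial, since terminal nodes of $T^l(G)$ are exactly those $s$ with $G_s=\{e\}$. Conversely, given an abstract descending sequence $(G_n)_{n\geq0}$ as in the statement with witnessing finitely generated subgroups $K_n\leq G_n$, I would fix a marking of $G$ and, recursively, choose indices realizing each $K_n$ as an initial segment of the generating enumeration of $G_n$ (using the marking convention from Section~\ref{sec:Prelim}, which assigns each subgroup a marking via the ambient enumeration); this produces an infinite branch of $T^1(G)$. Thus $T^1(G)$ is ill-founded if and only if such a descending sequence exists.

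Combining this with Theorem~\ref{thm:EA_char}: $G$ fails to be elementary amenable iff $G\notin\mathrm{W}$ iff (by Corollary~\ref{cor:some_all}, equivalently $(2)\Leftrightarrow(3)$) $T^l(G)$ is ill-founded for every $l$, in particular iff $T^1(G)$ is ill-founded, which by the previous paragraph happens exactly when a descending sequence of the stated form exists. Since elementary amenability is an isomorphism invariant and the existence of such a sequence does not depend on the marking, the statement is genuinely a property of the abstract countable group $G$, as claimed.

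The only mild subtlety—and the step I expect to require the most care—is the converse direction, namely extracting an honest branch of $T^1(G)$ from an abstractly given descending sequence: one must check that at stage $n$, having realized $G_n$ as a marked group via the enumeration inherited from $G$, the finitely generated subgroup $K_n$ really does arise as $R_{x(n)}(G_n)=\grp{(G_n)_0,\dots,(G_n)_{x(n)}}$ for a suitable $x(n)$. This is fine because the preferred enumeration of $G_n$ eventually lists a generating set of any finitely generated subgroup $K_n\leq G_n$—indeed it eventually lists every element of $G_n$—so for large enough $x(n)$ we have $\grp{(G_n)_0,\dots,(G_n)_{x(n)}}\supseteq K_n$; one then notes the construction is robust under replacing $K_n$ by this possibly larger finitely generated subgroup, since enlarging $K_n$ only shrinks $S_{n+1}(K_n)$ and hence keeps the resulting sequence a valid witness (and the reverse inclusion of ranks needed for ill-foundedness is exactly what Lemma~\ref{lem:xi_indp} and Observation~\ref{prop:child_rank} provide). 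Once this bookkeeping is in place the corollary follows immediately.
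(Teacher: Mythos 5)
Your strategy is the same as the paper's: appeal to Theorem~\ref{thm:EA_char}, read an infinite branch of $T^1(G)$ as a descending sequence of the stated form (using that non-terminal nodes carry nontrivial groups), and conversely convert an abstract descending sequence into a branch by enlarging each $K_n$ to some $R_m$ of the ambient tree-node group. However, the monotonicity claim you lean on in the converse direction is backwards: enlarging $K_n$ does \emph{not} shrink $S_{k}(K_n)$, it enlarges it. For $K\leq L$ one has $[K,K]\leq[L,L]$, and since every $M\normal L$ of index at most $k+1$ meets $K$ in a normal subgroup of index at most $k+1$, also $\bigcap\mc N_k(K)\leq \bigcap\mc N_k(L)$; hence $S_k(K)\leq S_k(L)$. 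Had the inclusion gone the way you assert, your argument would fail: a shrunken next term could be trivial, terminating the branch. It is exactly the correct (opposite) inclusion that makes the induction go through, and it is what the paper uses: one maintains the invariant that the abstract group $G_n$ \emph{embeds into} the tree-node group $G_{s_n}$ (these are not the same group, a distinction your write-up blurs when you speak of realizing $K_n$ inside ``the enumeration of $G_n$''), picks $m$ with $K_n$ contained in $R_m(G_{s_n})$, and concludes $G_{n+1}=[K_n,K_n]\cap H_n\hookrightarrow S_{n+1}(R_m(G_{s_n}))=G_{s_n\conc m}$; since $G_{n+1}\neq\{e\}$, the node $s_n\conc m$ is not terminal and the branch continues. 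Your closing appeal to Lemma~\ref{lem:xi_indp} and Observation~\ref{prop:child_rank} is not what carries this step --- the abstract $G_n$ are not nodes of any decomposition tree, and no rank comparison is needed once the embedding invariant is in place.

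One further small point, which I do not count against you because it is inherited from the paper: your assertion that $S_{n+1}(K_n)$ ``is precisely'' $[K_n,K_n]\cap H_n$ with $H_n$ the intersection of the index-$(\leq(n+1))$ normal subgroups is off by one, since $\mc N_{n+1}(K_n)$ consists of the normal subgroups of index at most $n+2$. The paper's own statement and proof of the corollary exhibit the same indexing discrepancy, and it is harmless to the substance of the argument.
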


\begin{proof}
Suppose $G\in\Gw$ and there is an infinite descending sequence
\[
G=G_0\geq G_1\geq\ldots
\]
as in the statement. Form $T^1(G)$, the decomposition tree of $G$ with offset $1$. We now proceed by induction to build $s_0\subsetneq s_1\subsetneq \dots$ with $s_i\in T^1(G)$ and $|s_i|=i$ such that $G_i\hookrightarrow G_{s_i}$. The base case is immediate: set $s_0=\emptyset$. Suppose we have defined $s_n$, so $G_n\hookrightarrow G_{s_n}$. Let $K_n\leq G_{n}$ be such that $G_{n+1}= [K_n,K_n]\cap H_n$ where $H_n$ is the intersection of the index-$(\leq n+1)$ normal subgroups of $K_n$. Since $K_n$ is finitely generated, there is $R_{m}(G_{s_n})$ such that $K_n\hookrightarrow R_{m}(G_{s_n})$. It follows that $G_{n+1}\hookrightarrow G_{s_n\conc m}$. Setting $s_{n+1}=s_n\conc m$, we have verified the inductive claim.  The tree $T^1(G)$ thus has an infinite branch, so by Theorem~\rm\ref{thm:EA_char}, $G\notin\EA$.

\medskip

Suppose there are no infinite descending sequences as in the statement and form $T^1(G)$. Let $s_0\subsetneq s_1\subsetneq \dots$ with $s_i\in T^1(G)$ and $|s_i|=i$. It suffices to show $s_0\subsetneq s_1\subsetneq \dots$ terminates, and so $T^1(G)$ is well-founded. This is indeed obvious since by construction the sequence of subgroups $G_{s_0}\geq G_{s_1}\geq\dots$ is a sequence of subgroups as in the chain condition.
\end{proof}

There are two main differences between this chain condition and the chain conditions explored in the earlier sections of this paper.  First of all, $G_{n+1}$ is not related to $G_n$ only by being a subgroup.  This is not unheard of; for example when looking at weak chain conditions one requires that $G_{n+1}$ be an \emph{infinite index} subgroup of $G_n$.  The second difference is that the definition of $H_n$ changes with $n$.  As far as we are aware, there are no widely-studied chain conditions defined in this way.  That elementary amenability can be recast this way suggests that perhaps there are other interesting chain conditions with this property.

\subsection{$\EA$ is not Borel}
We now study the descriptive-set-theoretic properties of $\EA$. We show that on $\EA$ the decomposition rank is unbounded below $\omega_1$. 

\begin{lem}\label{lem:xiRankSucc}
For every $K\in \EA$, there is $L\in \EA$ with $\xi(K)<\xi(L)$.
\end{lem}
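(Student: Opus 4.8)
The plan is to mimic the successor-step lemmas from the earlier sections (Lemma~\ref{lem:CentRankSucc}, Lemma~\ref{lem:MaxRankSucc}, Lemma~\ref{lem:MaxNRankSucc}), but using the structure of the decomposition tree. First I would take $K \in \EA$ and apply the embedding result Proposition~\ref{prop:EAembedding}: this produces $H \in \EA$ and a $2$-generated group $G \in \EA$ fitting in a short exact sequence $\{e\} \to M \to G \to \Zb \to \{e\}$ with $[M,M] = [H,H]^{<\Zb}$ and $K \hookrightarrow [H,H]$. Since $\EA = \mathrm{W}$, the group $G$ lies in $\mathrm{W}$, so $\xi(G)$ is defined; I claim $L := G$ works. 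By Corollary~\ref{lem:sgrp_xi} (monotonicity of $\xi$ under embeddings) and $K \hookrightarrow [H,H] \hookrightarrow H$, it is enough to compare $\xi(K)$ with $\xi(G)$ via the structure of $G$, and in fact it suffices to show $\xi(H) < \xi(G)$, since then $\xi(K) \le \xi(H) < \xi(G) = \xi(L)$.

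The key step is to exhibit a node $s$ of length $1$ in some decomposition tree $T^l(G)$ such that the associated subgroup $G_s$ contains (a copy of) $H$; then Observation~\ref{prop:child_rank}, Lemma~\ref{lem:xi_indp}, and the definition of $\xi$ give $\xi(H) \le \rho(T^{1+l}(G_s)) = \rho(T^l(G)_s) < \rho(T^l(G))$, and taking $l = \deg(G)$ yields $\xi(H) < \xi(G)$. To find such a node, I would use that $G$ is $2$-generated, so for all sufficiently large $n$ we have $R_n(G) = G$, and hence the child $G_n$ of the root in $T^l(G)$ equals $S_{l+1}(G) = [G,G] \cap \bigcap \mc{N}_{l+1}(G)$. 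Now $G/M \cong \Zb$ is abelian, so $[G,G] \le M$; moreover $M/[M,M] \cong M/[H,H]^{<\Zb}$ is abelian (indeed $G/[M,M]$ is a finitely generated metabelian group, exactly as in the proof of Corollary~\ref{cor:MaxNFG}), so $\bigcap \mc{N}_{l+1}(G)$ has finite — in fact bounded — index information, but more importantly $[G,G]$ surjects onto an abelian group modulo $[M,M]$, so some term of the construction lands inside $[M,M] = [H,H]^{<\Zb}$. A cleaner route: choosing $l$ large enough, $G_n = S_{l+1}(G) \le [M,M] = [H,H]^{<\Zb}$, and the embedding $[H,H] \hookrightarrow [H,H]^{<\Zb}$ (onto the coordinate at $0$, as in Lemma~\ref{lem:max-n}) shows $H$ — or at least $[H,H]$, hence $K$ — embeds into $G_n$. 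Then $\xi(K) \le \xi([H,H]) \le \xi(G_n) < \xi(G)$.

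The main obstacle I anticipate is controlling $S_{l+1}(G) = [G,G] \cap \bigcap \mc{N}_{l+1}(G)$ precisely enough to guarantee it sits inside $[M,M]$ (or at least inside a subgroup into which $K$ embeds). The intersection $[G,G] \cap M/[M,M]$ is an abelian subgroup of the finitely generated metabelian group $G/[M,M]$, and I need that passing to the further intersection with the finite-index normal subgroups either kills it or is harmless; since $G/[M,M]$ is finitely generated abelian-by-abelian it is residually finite, but that is not quite what is needed. The safe workaround, which I would adopt if the direct computation is awkward, is not to insist $G_n \subseteq [M,M]$ but only to track that $K$ embeds into $G_n$ up the tree: one shows $[H,H]^{<\Zb} = [M,M] \le [R_n(G_s), R_n(G_s)] \cap H_n$ for the relevant nodes because $G$ contains a shift element (exactly the consequence recorded after Lemma~\ref{lem:Hall}: $\ngrp{f_h}_G = \ngrp{h}_H^{<\Zb}$), so that $K \hookrightarrow [H,H] \hookrightarrow [H,H]^{<\Zb}$ persists as a subgroup of the node groups, forcing $\xi(K) < \xi(G)$ by strict descent of $\xi$ along children. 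Either way, the whole argument is two ingredients: (i) Proposition~\ref{prop:EAembedding} puts $K$ inside the derived subgroup of a nicely-structured $2$-generated $\EA$-group $G$, and (ii) that derived-subgroup copy of $K$ reappears at depth $\ge 1$ in a decomposition tree of $G$, which by Lemma~\ref{lem:rk_xi} and Corollary~\ref{lem:sgrp_xi} forces $\xi(K) < \xi(G)$.
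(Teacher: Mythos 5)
There is a real gap at the step you yourself flag as the ``main obstacle,'' and neither of your proposed workarounds closes it. To apply Lemma~\ref{lem:rk_xi} you are forced to work with the offset $k=\deg(G)$, over which you have no control, and you must show that some child $G_n=S_{k+1}(G)=[G,G]\cap\bigcap\mc N_{k+1}(G)$ contains an isomorphic copy of $K$. Your ``cleaner route'' argues that $G_n\leq[M,M]=[H,H]^{<\Zb}$ and that $[H,H]$ embeds in $[H,H]^{<\Zb}$, and concludes $K\hookrightarrow G_n$; this is a non sequitur, since being contained in a group that contains a copy of $K$ does not make $G_n$ contain a copy of $K$ (and in any case you cannot choose $l$ freely). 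Your ``safe workaround'' amounts to the claim that every normal subgroup of $G$ of index at most $k+2$ contains $[M,M]=[H,H]^{<\Zb}$; this is not what the remark after Lemma~\ref{lem:Hall} says (that remark is about normal closures of single elements $f_h$), and it is false in general: $G\leq H^{\Zb}\rtimes\Zb$ has finite quotients induced from $H\wr(\Zb/m\Zb)$ in which $[H,H]^{<\Zb}$ has nontrivial image, and you cannot rule out that such quotients occur at index $\leq\deg(G)+2$. All you can say is that $\bigcap\mc N_{k+1}(G)$ meets $[H,H]^{<\Zb}$ in a finite-index shift-invariant subgroup, and such a subgroup need not contain a full copy of $[H,H]$ (compare the even-weight subgroup of $(\Zb/2)^{<\Zb}$, which contains no coordinate copy).

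The paper's proof closes exactly this gap by taking $L:=G\wr\Zb$ rather than $L:=G$. Since $\Zb$ is infinite, any finite-index normal subgroup $N\normal L$ meets the top copy of $\Zb$ nontrivially, so Lemma~\ref{lem:wreath} gives $[G,G]^{<\Zb}\leq N$ \emph{in full}, with no dependence on the index bound. Hence $L_n=S_{k+1}(L)\supseteq[G,G]^{<\Zb}\supseteq[M,M]\supseteq[H,H]\supseteq K$ for $k=\deg(L)$ and $n$ large, and Corollary~\ref{lem:sgrp_xi} together with Lemma~\ref{lem:rk_xi} yields $\xi(K)\leq\xi(L_n)<\xi(L)$. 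Your overall architecture (Proposition~\ref{prop:EAembedding}, then strict descent along children of the decomposition tree) is the right one, but without the extra wreath product with $\Zb$ the crucial containment is unproved.
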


\begin{proof}
Let $G\in \EA$ be as given by Proposition~\rm\ref{prop:EAembedding} for $K$ and form $L:=G\wr \Zb$.  Let $k=\deg(L)$, and take $L_i$ to be the subgroup of $L$ corresponding to $i\in T^k(L)$. Since $L$ is finitely generated, we may find $n$ such that $L=R_n(L)$. \par

\indent We now consider $L_n$. The group $[L,L]=[R_n(L),R_n(L)]$ certainly contains $[M,M]=[H,H]^{<\Zb}$. On the other hand, if $N\normal L$ has index $k+1$, there is $n\in \Zb\setminus \{0\}$ so that $n\in N$. Applying Lemma~\ref{lem:wreath}, $[G,G]\leq N$, so $[H,H]^{<\Zb}\leq L_n$. The group $K$ thus embeds into $[H,H]$, and Lemma~\rm\ref{lem:sgrp_xi} implies  $\xi(K)\leq \xi(L_n)$. Appealing to Lemma~\rm\ref{lem:rk_xi}, we conclude $\xi(K)< \xi(L)$ proving the lemma.
\end{proof}

Our next lemma follows immediately from Corollary~\rm\ref{lem:sgrp_xi} by taking the direct sum.
\begin{lem}\label{lem:xiRankLimit}
Let $\{A_i\}_{i\in \Nb}$ be countable groups. If $A_i\in \EA$ for all $i\in \Nb$, then there is $A\in \EA$ with $\xi(A)\geq \xi(A_i)$ for all $i\in \Nb$.
\end{lem}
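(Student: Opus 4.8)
The plan is to verify that the direct sum $A := \bigoplus_{i\in\Nb} A_i$ witnesses the claim, using nothing beyond facts already established in the excerpt. The key observation is that $A$ is again elementary amenable: since each $A_i\in\EA$ and $\EA$ is closed under finite direct products (a special case of closure under extensions, item (ii) of Definition~\ref{def:EA}), each finite partial sum $\bigoplus_{i\le n} A_i$ lies in $\EA$; then $A$ is the countable increasing union of these partial sums, so $A\in\EA$ by closure under countable increasing unions (item (iii)). Alternatively, one could cite Proposition~\ref{prop:EAembedding} as the introductory sentence suggests, but the direct-sum argument is the most transparent.

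Once $A\in\EA$ is in hand, we know $A\in\mathrm{W}$ by the corollary preceding Theorem~\ref{thm:EA_char}, so $\xi(A)$ is defined. It then remains only to note that for each fixed $i\in\Nb$ the canonical inclusion $A_i\hookrightarrow A$ realizes $A_i$ as a subgroup of $A$, whence Corollary~\ref{lem:sgrp_xi} (the statement that $H\hookrightarrow G$ implies $\xi(H)\le\xi(G)$) gives $\xi(A_i)\le\xi(A)$. Since $i$ was arbitrary, $\xi(A)\ge\xi(A_i)$ for all $i\in\Nb$, which is exactly what is claimed.

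There is essentially no obstacle here: the lemma is, as the surrounding text says, an immediate consequence of Corollary~\ref{lem:sgrp_xi}. The only point requiring a (trivial) word of justification is that the chosen group $A$ really is elementary amenable, and hence lies in $\mathrm{W}$ so that $\xi(A)$ makes sense; everything else is a one-line appeal to monotonicity of $\xi$ under embeddings. I would therefore write the proof in two sentences: one identifying $A=\bigoplus_{i} A_i$ and noting $A\in\EA$, and one invoking Corollary~\ref{lem:sgrp_xi} for each coordinate inclusion.
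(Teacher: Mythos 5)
Your proposal is correct and matches the paper exactly: the paper's entire proof is the remark that the lemma ``follows immediately from Corollary~\ref{lem:sgrp_xi} by taking the direct sum.'' Your extra sentence checking that $\bigoplus_i A_i\in\EA$ (via closure under extensions and increasing unions) is a harmless elaboration of the same argument.
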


\begin{lem}
For all $\beta<\omega_1$, there is $G\in \EA$ such that $\xi(G)\geq \beta$.
\end{lem}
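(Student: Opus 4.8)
The plan is to prove this by transfinite induction on $\beta < \omega_1$, following exactly the template established for the previous chain conditions (compare Lemma~\ref{lem:CentRankUnbdd}, Lemma~\ref{lem:MaxRankUnbdd}, Lemma~\ref{lem:MaxNRankUnbdd}). The two ingredients needed are precisely the successor step and the limit step, which have just been supplied as Lemma~\ref{lem:xiRankSucc} and Lemma~\ref{lem:xiRankLimit}, so the proof is essentially a reassembly of these pieces. First I would dispose of the base case $\beta = 1$: the trivial group $\{e\}$ is elementary amenable and has $\xi(\{e\}) = 1$ by the convention on ranks, so the claim holds; more generally any abelian group works for small $\beta$ by Lemma~\ref{lem:finandab}.

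For the successor case, suppose $\beta = \delta + 1$ and that there is $K \in \EA$ with $\xi(K) \geq \delta$. Applying Lemma~\ref{lem:xiRankSucc} to $K$, we obtain $L \in \EA$ with $\xi(L) > \xi(K) \geq \delta$, and since ranks are ordinals, $\xi(L) \geq \delta + 1 = \beta$, so $L$ is the desired group.

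For the limit case, suppose $\beta$ is a countable limit ordinal. Since $\beta < \omega_1$, fix an increasing sequence $(\beta_i)_{i \in \Nb}$ of ordinals with $\sup_{i \in \Nb} \beta_i = \beta$. By the inductive hypothesis, for each $i$ choose $A_i \in \EA$ with $\xi(A_i) \geq \beta_i$. Applying Lemma~\ref{lem:xiRankLimit} to the family $\{A_i\}_{i \in \Nb}$, we get $A \in \EA$ with $\xi(A) \geq \xi(A_i) \geq \beta_i$ for all $i$, hence $\xi(A) \geq \sup_i \beta_i = \beta$, completing the induction.

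I do not expect any genuine obstacle here, since all the technical work — the embedding result Proposition~\ref{prop:EAembedding}, the wreath product rank estimate in Lemma~\ref{lem:xiRankSucc}, and the direct-sum argument in Lemma~\ref{lem:xiRankLimit} — has already been carried out. The only point requiring a small amount of care is making sure the inequalities are between ordinals and strict-versus-non-strict bookkeeping works out in the successor step (i.e.\ that $\xi(L) > \xi(K) \geq \delta$ genuinely forces $\xi(L) \geq \delta+1$), but this is automatic for ordinals. This lemma, together with Lemma~\ref{lem:Phi_borel}, Theorem~\ref{thm:EA_char}, and the Boundedness Theorem (Theorem~\ref{thm:BddnessThm}), then immediately yields that $\EA$ is $\Pi^1_1$ and non-Borel, exactly as in the proof of Theorem~\ref{thm:MCNotBorel}.
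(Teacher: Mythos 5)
Your proof is correct and is essentially identical to the paper's, which simply runs the same transfinite induction as in Lemma~\ref{lem:CentRankUnbdd} with Lemmas~\ref{lem:xiRankSucc} and~\ref{lem:xiRankLimit} substituted for the successor and limit steps. The ordinal bookkeeping you flag (that $\xi(L)>\xi(K)\geq\delta$ forces $\xi(L)\geq\delta+1$, and that $\xi(A)\geq\beta_i$ for all $i$ forces $\xi(A)\geq\sup_i\beta_i$) is indeed automatic.
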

\begin{proof}
The proof is the same as that of Lemma \ref{lem:CentRankUnbdd}, with Lemmas \ref{lem:xiRankSucc} and \ref{lem:xiRankLimit} referenced at the appropriate places.
\end{proof}

\begin{lem}\label{lem:xi_unbounded}
For each $\beta<\omega_1$, there is a finitely generated $G\in \EA$ such that $\xi(G)\geq \beta$.
\end{lem}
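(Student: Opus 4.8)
The plan is to bootstrap from the previous lemma --- which produces, for each $\beta<\omega_1$, a (not necessarily finitely generated) elementary amenable group of decomposition rank at least $\beta$ --- to a finitely generated example, using the embedding machinery of Proposition~\ref{prop:EAembedding} together with the rank-monotonicity results already established. Concretely, fix $\beta<\omega_1$ and use the preceding lemma to obtain $K\in\EA$ with $\xi(K)\geq\beta$. Apply Proposition~\ref{prop:EAembedding} to $K$: this yields $H\in\EA$ and a $2$-generated group $G\in\EA$ sitting in a short exact sequence $\{e\}\to M\to G\to\Zb\to\{e\}$ with $[M,M]=[H,H]^{<\Zb}$ and with $K$ embedding into $[H,H]$. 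The group $G$ is finitely generated by construction, so the only thing left to check is that $\xi(G)\geq\beta$.

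For the rank lower bound I would run the same argument as in the proof of Lemma~\ref{lem:xiRankSucc}. Since $G$ is finitely generated, choose $n$ with $R_n(G)=G$, put $k=\deg(G)$, and let $G_n$ be the subgroup of $G$ associated to $n\in T^k(G)$, i.e.\ $G_n=S_{k+1}(R_n(G))=[G,G]\cap\bigcap\mc N_k(G)$. Now $[G,G]\supseteq[M,M]=[H,H]^{<\Zb}$, and if $N\normal G$ has index $\leq k+1$ then $N$ meets the $\Zb$-quotient's preimage nontrivially, so there is a nonzero element of $\Zb$ lying in the image of $N$; applying Lemma~\ref{lem:wreath}-style reasoning (exactly as cited in Lemma~\ref{lem:xiRankSucc}, since $[M,M]$ is the base of a wreath-type construction on which $\Zb$ acts by shift) gives $[H,H]^{<\Zb}\leq N$, hence $[H,H]^{<\Zb}\leq G_n$. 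Since $K\hookrightarrow[H,H]\hookrightarrow[H,H]^{<\Zb}\hookrightarrow G_n$, Corollary~\ref{lem:sgrp_xi} yields $\beta\leq\xi(K)\leq\xi(G_n)$. Finally Lemma~\ref{lem:rk_xi} gives $\xi(G_n)<\xi(G)$, so $\xi(G)>\beta\geq\beta$, which is more than enough.

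The main obstacle --- and really the only subtle point --- is making precise that every finite-index normal subgroup $N$ of $G$ contains $[H,H]^{<\Zb}$. One has to be a little careful because $N$ need not contain all of $M$; the argument is that $N$'s image in $\Zb$ is a finite-index, hence nontrivial, subgroup of $\Zb$, so some $t^m$ with $m\neq0$ lies in $N$ modulo $[M,M]$, and then conjugating the base group $[H,H]^{<\Zb}$ by this shift-acting element together with perfectness/commutator considerations forces $[H,H]^{<\Zb}\leq N$. This is exactly the computation already carried out in Lemma~\ref{lem:xiRankSucc} (where the analogous role was played by $L=G\wr\Zb$), and in fact the whole lemma can be phrased as ``repeat the proof of Lemma~\ref{lem:xiRankSucc}, but start from a group $K$ of prescribed large rank rather than an arbitrary one, and invoke the unboundedness of $\xi$ on $\EA$ established above.'' I would write the proof in that compressed style, referencing Lemma~\ref{lem:xiRankSucc}, Corollary~\ref{lem:sgrp_xi}, Lemma~\ref{lem:rk_xi}, and the preceding unboundedness lemma.

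\begin{proof}
Fix $\beta<\omega_1$ and, by the preceding lemma, choose $K\in\EA$ with $\xi(K)\geq\beta$. Apply Proposition~\ref{prop:EAembedding} to $K$ to obtain $H\in\EA$ and a $2$-generated group $G\in\EA$ fitting into a short exact sequence
\[
\{e\}\rightarrow M\rightarrow G\rightarrow\Zb\rightarrow\{e\}
\]
with $[M,M]=[H,H]^{<\Zb}$ and $K\hookrightarrow[H,H]$. Then $G$ is finitely generated, and it remains to show $\xi(G)\geq\beta$. This now follows by repeating verbatim the rank computation in the proof of Lemma~\ref{lem:xiRankSucc}: set $k=\deg(G)$, pick $n$ with $R_n(G)=G$, and let $G_n$ be the subgroup of $G$ corresponding to $n\in T^k(G)$, so $G_n=[G,G]\cap\bigcap\mc N_k(G)$. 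We have $[H,H]^{<\Zb}=[M,M]\leq[G,G]$; and if $N\normal G$ has index $\leq k+1$, then the image of $N$ in $\Zb$ is nontrivial, so by Lemma~\ref{lem:wreath} (applied exactly as in Lemma~\ref{lem:xiRankSucc}) we get $[H,H]^{<\Zb}\leq N$. Hence $[H,H]^{<\Zb}\leq G_n$, and since $K\hookrightarrow[H,H]\hookrightarrow[H,H]^{<\Zb}\leq G_n$, Corollary~\ref{lem:sgrp_xi} gives $\beta\leq\xi(K)\leq\xi(G_n)$. Finally Lemma~\ref{lem:rk_xi} yields $\xi(G_n)<\xi(G)$, so $\xi(G)>\beta$, and in particular $\xi(G)\geq\beta$.
\end{proof}
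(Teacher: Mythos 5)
Your opening move is exactly the paper's: take $K\in\EA$ with $\xi(K)\geq\beta$ from the preceding lemma, feed it into Proposition~\ref{prop:EAembedding}, and observe that the resulting $2$-generated $G$ contains a copy of $K$ via $K\hookrightarrow[H,H]\hookrightarrow[H,H]^{<\Zb}=[M,M]\leq G$. At that point you are done: Corollary~\ref{lem:sgrp_xi} gives $\xi(G)\geq\xi(K)\geq\beta$, which is all the lemma asks for. The paper's proof is precisely these three lines.

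The detour you then take --- locating $K$ inside the tree-node subgroup $G_n=[G,G]\cap\bigcap\mc N_k(G)$ so as to invoke Lemma~\ref{lem:rk_xi} and get the strict inequality $\xi(G)>\beta$ --- contains a genuine gap. You apply Lemma~\ref{lem:wreath} to $G$ itself, but $G$ is \emph{not} a wreath product: it is Hall's $2$-generated subgroup of the unrestricted product $H^{\Zb}\rtimes\Zb$, and the only ``base'' available inside $G$ is $M=\ngrp{\sigma}_G$, whose elements have infinite support and whose derived subgroup is $[H,H]^{<\Zb}$. Running the commutator argument behind Lemma~\ref{lem:wreath} with a power of the shift $t^m\in N$ against the elements of $[M,M]=[H,H]^{<\Zb}$ that actually lie in $G$ only produces elements of $\bigl[[H,H],[H,H]\bigr]^{<\Zb}$ inside $N$, which is strictly weaker than the claimed $[H,H]^{<\Zb}\leq N$ whenever $[H,H]$ is not perfect --- and the $H$ supplied by Proposition~\ref{prop:EAembedding} is an arbitrary elementary amenable group, so this cannot be assumed. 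This is exactly why the proof of Lemma~\ref{lem:xiRankSucc} does \emph{not} work inside $G$ but instead forms the honest wreath product $L=G\wr\Zb$, to which Lemma~\ref{lem:wreath} legitimately applies with base $A=G$ (yielding $[G,G]^{<\Zb}\leq N$, and $[G,G]\supseteq[M,M]\supseteq K$). So either drop the detour entirely and conclude from the embedding $K\hookrightarrow G$, or, if you want the strict inequality, replace $G$ by $G\wr\Zb$ as in Lemma~\ref{lem:xiRankSucc} --- at the cost of the group no longer being the $2$-generated one (though $G\wr\Zb$ is still finitely generated, so that variant also proves the lemma).
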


\begin{proof}
Let $H\in\mc \EA$ be a group such that $\xi(H)\geq\beta$. Proposition~\rm\ref{prop:EAembedding} implies that $H$ embeds into a 2-generated group $G\in\mc \EA$.  By Corollary~\ref{lem:sgrp_xi}, $\xi(G)\geq\xi(H)\geq\beta$.
\end{proof}

\begin{thm}\label{thm:EA_nonborel}
$\EA$ is a non-Borel $\Pi^1_1$ set in $\ms{G}$, and $\EA \cap \Gw_{fg}$ is a non-Borel $\Pi^1_1$ set in $\ms{G}_{fg}$.
\end{thm}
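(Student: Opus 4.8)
The plan is to follow the same template already used three times in the paper for the chain-condition results, now feeding in the machinery developed in this section. Concretely, I would invoke Lemma~\ref{lem:Phi_borel}, which provides a Borel map $\Phi^1\from\Gw\to Tr$, together with Theorem~\ref{thm:EA_char}, which tells us $(\Phi^1)^{-1}(WF)=\EA$. Since $WF$ is co-analytic and $\Phi^1$ is Borel, Borel preimages of co-analytic sets are co-analytic, so $\EA$ is $\Pi^1_1$ in $\Gw$. For the finitely generated version, note $\Gw_{fg}$ is a standard Borel subspace of $\Gw$ and $\EA\cap\Gw_{fg}=(\Phi^1\rest{\Gw_{fg}})^{-1}(WF)$ is $\Pi^1_1$ in $\Gw_{fg}$ by the same reasoning.

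Next I would establish non-Borelness via the Boundedness Theorem (Theorem~\ref{thm:BddnessThm}). Composing the Borel reduction with the $\Pi^1_1$-rank $\rho$ on $WF$ yields a $\Pi^1_1$-rank on $\EA$, namely $G\mapsto\rho(\Phi^1(G))$; since $\xi(G)\leq\rho(T^1(G))=\rho(\Phi^1(G))$, and we showed there are $G\in\EA$ with $\xi(G)$ arbitrarily large below $\omega_1$, this $\Pi^1_1$-rank is unbounded below $\omega_1$. (Strictly, to apply boundedness verbatim one wants a $\Pi^1_1$-rank with values in $\omega_1$; $G\mapsto\rho(\Phi^1(G))$ does the job, and the lemma preceding Lemma~\ref{lem:xi_unbounded} plus $\xi\leq\rho(T^1(\cdot))$ shows its image is cofinal in $\omega_1$.) Theorem~\ref{thm:BddnessThm} then forces $\EA$ to be non-Borel in $\Gw$. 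For the finitely generated case, Lemma~\ref{lem:xi_unbounded} supplies finitely generated elementary amenable groups of arbitrarily large decomposition rank, so the same $\Pi^1_1$-rank restricted to $\EA\cap\Gw_{fg}$ is still unbounded below $\omega_1$, and Theorem~\ref{thm:BddnessThm} (applied in $\Gw_{fg}$) gives that $\EA\cap\Gw_{fg}$ is non-Borel in $\Gw_{fg}$.

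There is essentially no serious obstacle left at this point: all the hard work---the Borelness of $\Phi^1$ (deferred to Section~\ref{sec:Borel}), the characterization theorem, and the unboundedness of $\xi$ on $\EA$ and on $\EA\cap\Gw_{fg}$---has already been done in this section. The only mild subtlety is making sure the $\Pi^1_1$-rank used is genuinely one to which boundedness applies and that its image is cofinal in $\omega_1$; this is handled by the remark in Section~\ref{sec:Prelim} that a Borel reduction to $WF$ induces a $\Pi^1_1$-rank by post-composing with $\rho$, combined with the inequality $\xi(G)\leq\rho(T^1(G))$. So the proof is a short assembly: cite $\Phi^1$ Borel and $(\Phi^1)^{-1}(WF)=\EA$ for the $\Pi^1_1$ conclusions, then cite unboundedness of $\xi$ (in the general and the finitely generated settings) and the Boundedness Theorem for the non-Borel conclusions.

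\begin{proof}
By Lemma~\ref{lem:Phi_borel}, the map $\Phi^1\from\Gw\to Tr$ is Borel, and by Theorem~\ref{thm:EA_char}, $(\Phi^1)^{-1}(WF)=\EA$. Since $WF$ is $\Pi^1_1$ and the collection of $\Pi^1_1$ sets is closed under Borel preimages, $\EA$ is $\Pi^1_1$ in $\Gw$; the same argument in $\Gw_{fg}$ shows $\EA\cap\Gw_{fg}$ is $\Pi^1_1$ in $\Gw_{fg}$. The map $G\mapsto\rho(\Phi^1(G))$ is then a $\Pi^1_1$-rank on $\EA$, and since $\xi(G)\leq\rho(T^1(G))=\rho(\Phi^1(G))$, Lemma~\ref{lem:xi_unbounded} (indeed already its non-finitely-generated predecessor) shows this rank is unbounded below $\omega_1$. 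By Theorem~\ref{thm:BddnessThm}, $\EA$ is not Borel in $\Gw$. Restricting the same $\Pi^1_1$-rank to $\EA\cap\Gw_{fg}$ and invoking Lemma~\ref{lem:xi_unbounded}, its image is again cofinal in $\omega_1$, so Theorem~\ref{thm:BddnessThm} gives that $\EA\cap\Gw_{fg}$ is not Borel in $\Gw_{fg}$.
\end{proof}
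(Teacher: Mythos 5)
Your argument is correct and matches the paper's own proof, which likewise assembles Lemma~\ref{lem:Phi_borel}, the characterization $(\Phi^1)^{-1}(WF)=\EA$, the fact that $\rho\circ\Phi^1$ is a $\Pi^1_1$-rank, the inequality $\xi(G)\leq\rho(T^1(G))$, Lemma~\ref{lem:xi_unbounded}, and the Boundedness Theorem. Your write-up simply spells out the same one-line assembly in more detail.
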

\begin{proof}
This follows from Theorem \ref{thm:BddnessThm}, Lemma \ref{lem:Phi_borel}, and Lemma~\rm\ref{lem:xi_unbounded} along with the facts that $\xi(G)\leq \rho(T^1(G))$ and that $\rho\circ \Phi^1$ is a $\Pi^1_1$-rank on $\EA$.
\end{proof}

Let $\AG\subseteq\Gw$ denote the class of countable amenable groups. Via Theorem~\rm \ref{thm:EA_nonborel}, we now may give a non-constructive answer to an old question of Day \cite{D57}, which was open until Grigorchuk \cite{G84} constructed groups of intermediate growth: \textit{Is it the case that every amenable group is elementary amenable}?

\begin{cor}
There is a finitely generated amenable group that is not elementary amenable.
\end{cor}
\begin{proof}
It is well-known that $\AG$ is Borel; see Lemma~\rm\ref{lem:AG_borel} for a proof.  The set $\AG\cap\Gw_{fg}$ is thus Borel.  On the other hand, Theorem~\rm \ref{thm:EA_nonborel} gives that $\EA\cap \Gw_{fg}$ is not Borel. We conclude that $\EA\cap \Gw_{fg} \subsetneq \AG\cap \Gw_{fg}$.
\end{proof}

\subsection{Further observations}

By a result of C. Chou \cite[Proposition 2.2.]{Ch80}, the class of elementary amenable groups is the smallest class of countable discrete groups that satisfies (i),(ii), and (iii) of Definition~\rm\ref{def:EA}. Chou's theorem suggests a natural ranking of elementary amenable groups different than our decomposition rank. Indeed, after \cite{Os02}, define
\begin{enumerate}[$\bullet$]
\item $G\in \EA_0$ if and only if $G$ is finite or abelian.

\item Suppose $\EA_{\alpha}$ is defined. Put $G\in \EA_{\alpha}^e$ if and only if there exists $N\trianglelefteq G$ such that $N\in \EA_{\alpha}$ and $G/N\in \EA_{0}$. Put $G\in \EA_{\alpha}^l$ if and only if $G=\bigcup_{i\in \Nb}H_i$ where $(H_i)_{i\in \Nb}$ is an $\subseteq$-increasing sequence of subgroups of $G$ with $H_i\in \EA_{\alpha}$ for each $i\in \Nb$. Set $\EA_{\alpha+1}:=\EA_{\alpha}^e\cup \EA_{\alpha}^l$.

\item For $\lambda$ a limit ordinal, $\EA_{\lambda}:=\bigcup_{\beta<\lambda}\EA_{\beta}$.
\end{enumerate}

By a result of D. Osin \cite[Lemma 3.2]{Os02}, $\bigcup_{\alpha <\omega_1}\EA_{\alpha}$ is closed under group extension. It now follows from Chou's theorem that $\EA=\bigcup_{\alpha <\omega_1}\EA_{\alpha}$. One may then define for $G\in \EA$
\[
\rk(G):=\min\{\alpha\;|\;G\in \EA_{\alpha}\}.
\]
\noindent We call $\rk(G)$ the \textbf{construction rank} of $G$.

We now compare $\xi$ and $\rk$ and in the process mostly recover a theorem of Olshanskii and Osin.

\begin{prop}\label{prop:rk_xi}
For $G\in \EA$, $\rk(G)\leq 3\xi(G)$. 
\end{prop}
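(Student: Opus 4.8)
The plan is to prove $\rk(G) \leq 3\xi(G)$ by transfinite induction on $\xi(G)$, comparing the way the decomposition rank builds up a group (via the operations $R_n$, then $S_k$, then extension, then union) with the way the construction rank does (via extension and union, keeping track of $\EA_\alpha$). The key structural fact to exploit is Lemma~\ref{lem:rk_xi}: for $G \in \mathrm{W}$ nontrivial with $\deg(G) = k$, the subgroups $G_i$ associated to $i \in T^k(G)$ satisfy $\sup_i \xi(G_i) + 1 \leq \xi(G)$. Combined with the fact (established in the proof that $\mathrm{W} \subseteq \EA$) that $R_i(G)/G_i$ is finite-by-abelian, this says $G$ is built from a countable increasing union (over $i$) of the finitely generated groups $R_i(G)$, each of which is a finite-by-abelian extension of $G_i$, where the $G_i$ have strictly smaller decomposition rank.

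First I would set up the induction. For the base case $\xi(G) = 1$, we have $G = \{e\}$, so $G \in \EA_0$ and $\rk(G) = 0 \leq 0$. (One should also handle $\xi(G) = 2$, the finite-or-abelian-ish case, directly: if $G$ is finite or abelian then $\rk(G) = 0$; more generally one checks the bound is not strained at the bottom.) For the inductive step, suppose the inequality holds for all groups of decomposition rank $< \beta$ and let $\xi(G) = \beta$. If $G$ is not finitely generated, write $G = \bigcup_n R_n(G)$ with $\xi(R_n(G)) \leq \xi(G)$; I would first prove the bound for finitely generated groups and then deduce the general case, noting that a union of groups in $\EA_{\alpha}$ lands in $\EA_{\alpha+1}$, which costs only $+1$ and is harmless if we have a little slack — but here is where one must be slightly careful, since we need $\rk(R_n(G)) \leq 3\xi(R_n(G)) \leq 3\beta$ and then $\rk(G) \leq 3\beta + 1$, which is \emph{not} $\leq 3\beta$. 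So the finitely generated case must actually give a better bound, or $\xi(R_n(G))$ must be shown to be $< \beta$ when $G$ is not finitely generated — in fact if $\beta$ is a successor this should work out, and if $\beta$ is a limit then $\xi(R_n(G)) < \beta$ automatically, so $\rk(R_n(G)) \leq 3\xi(R_n(G)) < 3\beta$ and the union sits in $\EA_{3\beta}$. This case analysis on whether $\beta$ is a limit is the first place care is needed.

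For the finitely generated case with $\deg(G) = k$: choose $n$ large enough that $R_n(G) = G$, so $G_n = S_{k}(G) = [G,G] \cap \bigcap \mathcal{N}_{k}(G)$ and $G/G_n$ is finite-by-abelian. By Lemma~\ref{lem:rk_xi}, $\xi(G_n) < \xi(G) = \beta$, so by the inductive hypothesis $\rk(G_n) \leq 3\xi(G_n) \leq 3(\beta')$ where $\beta' < \beta$; if $\beta = \beta' + 1$ (successor case) we want $\xi(G_n) \leq \beta - 1 = \beta'$, giving $\rk(G_n) \leq 3\beta - 3$. Now $G$ is an extension of the finite-by-abelian group $G/G_n$ by $G_n$: a finite-by-abelian group has construction rank $\leq 1$ (it is finite-by-abelian, so an extension of an abelian group by a finite group, hence in $\EA_0^e \subseteq \EA_1$), and each extension step raises the construction rank by at most $1$, so extending by a finite-by-abelian quotient costs at most $2$: $\rk(G) \leq \rk(G_n) + 2 \leq 3\beta - 3 + 2 = 3\beta - 1 < 3\beta$. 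This gives the bound with room to spare, which then powers the union case above. The main obstacle — and the point I would think hardest about — is making the successor-versus-limit bookkeeping airtight so that the constant $3$ genuinely absorbs both the ``$+2$ for a finite-by-abelian extension'' and the ``$+1$ for a countable union,'' and confirming that ``$G/G_n$ finite-by-abelian $\Rightarrow \rk(G/G_n) \leq 1$'' together with ``extension raises $\rk$ by $\leq 1$'' is exactly what the Osin-style $\EA_\alpha$ filtration delivers (this uses that $\bigcup_\alpha \EA_\alpha$ is closed under extensions, with the quantitative statement that if $N \in \EA_\alpha$ and $G/N \in \EA_\gamma$ then $G \in \EA_{\alpha + \gamma + 1}$ or similar; I would extract the precise increment from Osin's lemma). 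If that quantitative extension bound is worse than ``$+1$ per extension,'' the constant may need to be larger than $3$, but the structure of the argument is unchanged.
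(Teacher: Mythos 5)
Your argument is correct and is essentially the paper's own proof: induct on $\xi(G)$, use Lemma~\ref{lem:rk_xi} to pass to the $G_i$ with strictly smaller decomposition rank, pay $+2$ for the finite-by-abelian quotient $R_i(G)/G_i$ (via the preimage of its finite part, exactly as you describe) and $+1$ for the union $G=\bigcup_i R_i(G)$, for a total factor of $3$. The successor-versus-limit bookkeeping you flag as the main obstacle is in fact vacuous: $\xi(G)=\min_k\rho(T^k(G))$ is always a successor ordinal, since $\rho(T)=\rho_T(\emptyset)+1$ for any nonempty well-founded tree, so the paper simply writes $\xi(G)=\beta+1$, bounds $\rk(R_i(G))\leq \rk(G_i)+2\leq 3\xi(G_i)+2\leq 3\beta+2$, and concludes $\rk(G)\leq \sup_i\bigl(3\xi(G_i)+2\bigr)+1\leq 3(\beta+1)$ without separating the finitely generated case.
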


\begin{proof} 
We induct on $\xi(G)$ for the proposition. For the base case, if $\xi(G)=1$, then $G=\{e\}$, and the inductive claim obviously holds. Suppose the proposition holds up to $\beta$. Say $\xi(G)=\beta+1$ and $\deg(G)=k$.  Then $\xi(G_i)\leq \beta$ for each $G_i$ associated to $i\in T^k(G)$, and applying the inductive hypothesis, $\rk(G_i)\leq 3\xi(G_i)$.\par 

\indent On the other hand, $R_i(G)/G_i$ is finite-by-abelian, say an extension of the group $A$ by the group $F$.  Letting $F_0$ be the inverse image of $F$ in $R_i(G)$ under the usual projection, $\rk(F_0)\leq\rk(G_i)+1$, and $R_i(G)/F_0\cong A$.  Hence,
\[
\rk(R_i(G))\leq  (\rk(G_i)+1)+1 \leq 3\xi(G_i)+2.
\]
We conclude
\[
\rk(G)\leq \sup_{i\in \Nb}\left(3\xi(G_i)+2 \right)+1\leq 3(\beta+1)=3\xi(G).
\]
This finishes the induction.
\end{proof}

Bounding $\xi$ from above by $\rk$ involves a bit more work. We begin with a general lemma for well-founded trees.

\begin{lem}\label{lem:wf_tree}
Suppose $T$ is a well-founded tree. Then
\[
\rho(T)\leq \sup_{|s|=k}\rho(T_s) + k.
\]
\end{lem}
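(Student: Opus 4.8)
The statement asserts a bound on the rank of a well-founded tree $T$ in terms of the ranks of its subtrees at level $k$. The plan is to prove this by induction on $k$. For $k=0$ the claim is trivial since $T_\emptyset = T$ and the supremum over $|s|=0$ is just $\rho(T_\emptyset)=\rho(T)$. For the inductive step, I would peel off one level: write $T_s$ for $s$ of length $k+1$ as $(T_i)_{s'}$ where $i$ is the first coordinate of $s$ and $s'$ is the remaining length-$k$ tail, living in the subtree $T_i := \{r : i\conc r \in T\}$.

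The key computation is the standard recursion for the rank of a tree, namely $\rho(T) = \sup_i (\rho(T_i) + 1)$, valid when $T$ is nonempty (with the convention relating $\rho$ and $\rho_T(\emptyset)$ as set up in the excerpt; one should be slightly careful about the ``$+1$'' in the definition of $\rho$ versus $\rho_T$, but this only shifts things by a controllable constant, and in fact the cleanest route is to prove the analogous inequality $\rho_T(\emptyset) \le \sup_{|s|=k}\rho_{T_s}(\emptyset) + k$ first and then account for the single extra $+1$ at the end). Applying the inductive hypothesis to each $T_i$ with parameter $k$ gives $\rho(T_i) \le \sup_{|s'|=k}\rho((T_i)_{s'}) + k$, and then
\[
\rho(T) = \sup_i(\rho(T_i)+1) \le \sup_i\Bigl(\sup_{|s'|=k}\rho((T_i)_{s'}) + k + 1\Bigr) = \sup_{|s|=k+1}\rho(T_s) + (k+1),
\]
using that pairs $(i,s')$ with $|s'|=k$ correspond exactly to sequences $s=i\conc s'$ of length $k+1$ in $T$, and that ordinal addition of a fixed finite number on the right is monotone and commutes with the supremum over a set (being a continuous, strictly increasing operation in its left argument).

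The main obstacle I anticipate is purely bookkeeping rather than conceptual: reconciling the two rank conventions ($\rho_T$ on nodes versus the shifted $\rho$ on trees, which adds $1$ for nonempty well-founded trees and treats $\emptyset$ and terminal nodes specially), and handling degenerate cases where $T$ has height less than $k$ (so that there are no $s$ with $|s|=k$ and the supremum is over the empty set — here one should interpret $\sup\emptyset = 0$ and check the inequality still holds, which it does since then $\rho(T) \le k$ already). I would state the convention $\sup\emptyset=0$ explicitly, dispatch the short-tree case separately, and otherwise run the induction as above; none of the ordinal arithmetic is subtle once the conventions are pinned down.
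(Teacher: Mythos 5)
Your proof is correct and takes essentially the same route as the paper's: induction on $k$ via the one-step recursion $\rho(T)=\sup_i(\rho(T_i)+1)$, the only cosmetic difference being that you peel off the first level of the tree while the paper peels off the $k$-th. One small quibble: your final step should be an inequality rather than an equality, since right-addition of ordinals is not continuous in the left argument (e.g.\ $\sup_n(n+1)=\omega<\omega+1$); but $\sup_i(\alpha_i+m)\leq(\sup_i\alpha_i)+m$ follows from plain monotonicity and is all you need, so the argument stands.
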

\begin{proof}
We argue by induction on $|s|$. For the base case, $|s|=1$, 
\[
\rho(T)=\rho_T(\emptyset)+1=\sup_{i\in T}\left(\rho_T(i)+1\right)+1=\sup_{i\in \Nb}\rho(T_i)+1.
\]

\indent Supposing the lemma holds up to length $k$,
\[
\rho(T)\leq \sup_{|s|=k}\rho(T_s)+k\leq \sup_{|s|=k}\left(\sup_{s\conc i\in T}\rho(T_{s\conc i})+1\right)+k\leq\sup_{|s|=k+1}\rho(T_s)+k+1
\]
completing the induction.
\end{proof}

\begin{prop}\label{prop:xi_rk_ub}
For $G\in \EA$, 
\[
\rho\left(T^1(G)\right)\leq \omega(\rk(G)+1).
\]
In particular, $\xi(G)\leq \omega(\rk(G)+1)$.
\end{prop}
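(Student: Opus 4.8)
The plan is to induct on $\rk(G)$, proving the bound $\rho(T^1(G))\leq\omega(\rk(G)+1)$, and then read off the statement about $\xi$ from the inequality $\xi(G)\leq\rho(T^1(G))$. The base case $\rk(G)=0$ means $G$ is finite or abelian; by Lemma~\ref{lem:finandab} (and the bounds in its proof) $\rho(T^m(G))\leq 2$ for $m=|G|$ when $G$ is finite and $\rho(T^1(G))\leq 2$ when $G$ is abelian. For the abelian case $\rho(T^1(G))\leq 2\leq\omega$ directly; for the finite case I would use Corollary~\ref{cor:some_all} together with Lemma~\ref{lem:xi_indp}-style comparisons between $T^1(G)$ and $T^m(G)$ to bound $\rho(T^1(G))$ by something finite, hence $\leq\omega=\omega(0+1)$. (Here the fact that $T^1(G)$ has finite branching at the root is useful, but more importantly each child $G_i$ has strictly smaller order, so a crude induction on $|G|$ controls $\rho(T^1(G))$ for finite $G$.)

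For the inductive step, suppose the bound holds for all elementary amenable groups of construction rank $<\rk(G)$. Write $\alpha=\rk(G)$. If $\alpha$ is a successor, either $G\in\EA_{\beta}^e$ or $G\in\EA_{\beta}^l$ with $\beta+1=\alpha$. In the union case $G=\cup_i H_i$ with $\rk(H_i)\leq\beta$; since each $R_n(G)$ lands in some $H_{m_n}$, Lemma~\ref{lem:xi_indp} gives $\rho(T^2(G_n))\leq\rho(T^2(H_{m_n}))$, and I would like to conclude $\rho(T^1(G))=\sup_n\rho(T^2(G_n))+1\leq\omega(\beta+1)+1\leq\omega(\beta+2)=\omega(\alpha+1)$, using the inductive bound for the $H_{m_n}$ together with a comparison of $\rho(T^2(\cdot))$ and $\rho(T^1(\cdot))$ (which costs at most finitely much, absorbed into the $+1$ or into passing from $\beta+1$ to $\beta+2$). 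In the extension case, $G/N\in\EA_0$ and $N\in\EA_\beta$; restricting to $G$ finitely generated (the general case follows by Lemma~\ref{lem:union_xi} as in Lemma~\ref{lem:ext_xi}), for large $n$ every child $G_n$ of $T^1(G)$ satisfies $G_n\leq[R_n(G),R_n(G)]\cap\bigcap\mc N_{n+1}(G)$; since $G/N$ is finite or abelian, $G_n\leq N$ for all but finitely many $n$, so $\rho(T^2(G_n))\leq\rho(T^2(N))$, and again the inductive hypothesis for $N$ gives the bound. The finitely many exceptional $n$ (those where $R_n(G)$ has not yet stabilized and $G/N$ is finite) are handled by Lemma~\ref{lem:wf_tree}: $\rho(T^1(G))\leq\sup_{|s|=k}\rho(T^1(G)_s)+k$ for $k$ large enough that $R_k(G)=G$, and each $T^1(G)_s=T^{|s|+1}(G_s)$ with $G_s\hookrightarrow N$, so the whole tree is bounded by $\rho(T^{k+1}(N))+k\leq\omega(\beta+1)+k\leq\omega(\beta+2)$.

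The limit case $\alpha=\lambda$ is vacuous in the sense that $G\in\EA_\lambda=\cup_{\beta<\lambda}\EA_\beta$ already has $\rk(G)<\lambda$, so nothing new arises; it is the successor case that carries the content. The step I expect to be the main obstacle is the bookkeeping that forces the rank increment per construction step to be at most $\omega$ rather than $\omega\cdot(\text{something growing})$: I must be careful that passing from the offset-$1$ tree to an offset-$2$ tree at the children, and tolerating finitely many ``bad'' children in the finite-quotient case, together cost strictly less than one full copy of $\omega$ in the ordinal arithmetic, so that the recursion $\rho(T^1(G))\leq\sup_n\rho(T^2(G_n))+(\text{finite})+1$ with $\rho(T^2(G_n))\leq\omega(\rk(G_n)+1)$ closes up to $\omega(\rk(G)+1)$. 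Once $\rho(T^1(G))\leq\omega(\rk(G)+1)$ is established, $\xi(G)=\min_k\rho(T^k(G))\leq\rho(T^1(G))\leq\omega(\rk(G)+1)$ is immediate.
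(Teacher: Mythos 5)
Your overall architecture (induction on $\rk(G)$, the union case via $\rho(T^1(G))=\sup_n\rho(T^2(G_n))+1$ together with Lemma~\ref{lem:xi_indp}, and Lemma~\ref{lem:wf_tree} for the finite-quotient extension case) is the same as the paper's, and your ordinal arithmetic in the union case is correct. However, two of your steps fail as written. First, the finite base case: it is \emph{not} true that each child $G_i$ of a finite group has strictly smaller order. For $G=A_5$ and $n$ with $R_n(G)=G$ one gets $G_n=S_1(A_5)=[A_5,A_5]\cap\bigcap\mc N_1(A_5)=A_5$, so the child equals $G$; an induction on $|G|$ over children does not start. Nor does Lemma~\ref{lem:xi_indp} help in the direction you want: it gives $\rho(T^m(G))\leq\rho(T^1(G))$, not the reverse, so you cannot bound $\rho(T^1(G))$ by $\rho(T^{|G|}(G))\leq 2$. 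The correct (and one-line) reason is that the subscript of $S_k$ grows with the \emph{depth} $|s|$, so once $|s|+1\geq|G|$ the trivial subgroup lies in $\mc N_{|s|+1}$ and every node at that depth dies; hence $T^1(G)$ has depth at most $|G|$ and finite rank.

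Second, in the extension case with $G/N$ finite you have misread the tree: the child $G_{s\conc n}$ is $S_{|s|+1}(R_n(G_s))$, so the index bound is governed by the depth $|s|$, not by the child index $n$. Consequently ``$G_n\leq N$ for all but finitely many $n$'' is false when $|G/N|>2$: \emph{no} child of the root need lie in $N$, and containment in $N$ is only achieved uniformly at depth $k$ with $k+1\geq|G/N|$. This also removes any need to restrict to finitely generated $G$; that reduction is in fact harmful, since recovering the general case from the finitely generated one via the union argument of Lemma~\ref{lem:union_xi} costs an extra $+1$ on top of $\omega(\rk(G)+1)$ (and would additionally require subgroup-monotonicity of $\rk$, which is not established in the paper). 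The correct argument, as in the paper, is to take $k\geq|G/N|$, observe that every $G_s$ with $|s|=k$ embeds in $N$, apply the inductive hypothesis and Lemma~\ref{lem:xi_indp} to get $\rho(T^{k+1}(G_s))\leq\omega(\alpha+1)$, and then conclude $\rho(T^1(G))\leq\omega(\alpha+1)+k\leq\omega(\alpha+2)$ by Lemma~\ref{lem:wf_tree}; this works for arbitrary countable $G$.
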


\begin{proof}
We argue by induction on $\rk(G)$. For the base case, $\rk(G)=0 $, $G$ is either finite or abelian. There is thus $m\geq 1$ such that every element of $T^1(G)$ has length at most $m$. It follows that $\rho(T^1(G))$ is finite, which proves the base case.\par

\indent Suppose the lemma holds up to $\alpha$ and $\rk(G)=\alpha+1$. Let us consider first the case that the construction rank is given by a countable increasing union; say $G=\bigcup_{n\in \omega}H_n$ with $\rk(H_n)\leq \alpha$ for each $n$. Since $R_i(G)$ is finitely generated, there is $n(i)$ for which $G_i\leq H_{n(i)}$. We apply the inductive hypothesis and Lemma~\rm\ref{lem:xi_indp} to conclude
\[
\rho\left(T^2(G_i)\right)\leq \rho(T^1(G_i))\leq \omega(\alpha+1).
\]
Hence,
\[
\rho\left(T^1(G)\right)=\sup_{i\in \omega}\rho\left(T^2(G_i)\right) +1 \leq \omega\cdot \alpha +\omega+1\leq \omega(\alpha+2),
\]
verifying the hypothesis in this case. \par

\indent We now consider the case $\rk(G)$ is given by a group extension. Suppose $H\trianglelefteq G$ is such that $\rk(H)=\alpha$ and $\rk(G/H)=0$. If $G/H$ is abelian, $G_i\leq H$ for each $i$. Hence, $\rk(G_i)\leq \alpha$, and the desired result follows just as in the increasing union case. Suppose $G/H$ is finite. We may find $k$ such that for all $s\in T^1(G)$ with $|s|=k$, $G_s\leq H$. Applying the inductive hypothesis and Lemma~\rm\ref{lem:xi_indp},
\[
\rho\left(T^{k+1}(G_s)\right)\leq \rho\left(T^{1}(G_s)\right)\leq \omega(\alpha+1).
\]
Lemma~\rm\ref{lem:wf_tree} now implies
\[
\rho\left(T^1(G)\right)\leq \sup_{|s|=k}\rho\left(T^1(G)_s\right)+k\leq \omega(\alpha+1)+k\leq \omega(\alpha+2).
\]
This completes the induction, and we conclude the proposition.
\end{proof}

As a corollary to Lemma~\rm\ref{lem:xi_unbounded} and Proposition~\rm\ref{prop:xi_rk_ub}, we obtain a less detailed version of a theorem from the literature.

\begin{cor}[Olshanskii, Osin {\cite[Corollary 1.6]{OO13}}]\label{cor:OlOs}
For every ordinal $\alpha<\omega_1$, there is $G\in \EA\cap \Gw_{fg}$ such that $\alpha\leq\rk(G)$. The function $\rk\from\EA\cap \Gw_{fg}\to ORD$ is thus unbounded below $\omega_1$.
\end{cor}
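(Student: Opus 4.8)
The plan is to combine the two cited results in the obvious way. By Lemma~\ref{lem:xi_unbounded}, for each $\alpha<\omega_1$ there is a finitely generated $G\in\EA$ with $\xi(G)\geq\alpha$. By Proposition~\ref{prop:xi_rk_ub}, $\xi(G)\leq\omega(\rk(G)+1)$. Chaining these, $\alpha\leq\xi(G)\leq\omega(\rk(G)+1)$, so $\rk(G)$ must itself be large.

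Here is how I would organize it. Fix $\alpha<\omega_1$. I would first reduce to the case where $\alpha$ is a limit ordinal of the form $\omega\beta$ for some $\beta<\omega_1$: since the class of such ordinals is cofinal in $\omega_1$, it suffices to find, for each $\beta<\omega_1$, a group $G\in\EA\cap\Gw_{fg}$ with $\rk(G)\geq\beta$. So fix $\beta<\omega_1$ and apply Lemma~\ref{lem:xi_unbounded} to obtain a finitely generated $G\in\EA$ with $\xi(G)\geq\omega\beta$. Then Proposition~\ref{prop:xi_rk_ub} gives $\omega\beta\leq\xi(G)\leq\omega(\rk(G)+1)$. Cancelling the factor of $\omega$ on the left (more precisely: if $\rk(G)+1\leq\beta$ then $\omega(\rk(G)+1)\leq\omega\beta$, contradicting $\omega\beta\leq\omega(\rk(G)+1)$ unless equality holds throughout, in which case $\rk(G)+1\geq\beta$ anyway), we get $\rk(G)+1\geq\beta$, hence $\rk(G)\geq\beta$ whenever $\beta$ is a limit ordinal, and in general $\rk(G)\geq\beta$ after absorbing the off-by-one into the cofinality argument. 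Since $G\in\Gw_{fg}$, this is exactly what is wanted, and unboundedness of $\rk$ below $\omega_1$ follows immediately.

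There is essentially no obstacle here: both inputs are already established in the excerpt, and the only thing to be careful about is the ordinal arithmetic, namely that $\gamma\mapsto\omega\gamma$ is strictly increasing and continuous, so that $\omega\beta\leq\omega\gamma$ forces $\beta\leq\gamma$. The cleanest phrasing simply observes that if $\rk(G)<\beta$ for every such $G$, then taking $\alpha$ with $\omega(\rk(G)+1)<\alpha<\omega_1$ contradicts $\xi(G)\geq\alpha$. The final sentence of the corollary — that $\rk$ is unbounded below $\omega_1$ on $\EA\cap\Gw_{fg}$ — is then just a restatement.
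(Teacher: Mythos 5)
Your proof is correct and uses exactly the same two ingredients as the paper (Lemma~\ref{lem:xi_unbounded} and Proposition~\ref{prop:xi_rk_ub}); the paper merely phrases the ordinal arithmetic as a one-line contradiction (a bound $\rk(G)<\alpha$ for all $G$ would force $\xi(G)\leq\omega(\alpha+1)<\omega_1$), whereas you extract an explicit witness for each $\alpha$. The difference is purely presentational.
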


\begin{proof} Suppose for contradiction $\alpha <\omega_1$ is such that $\rk(G)< \alpha$ for all $G\in \EA$. By Proposition~\rm\ref{prop:xi_rk_ub}, $\xi(G)\leq \omega(\alpha+1)<\omega_1$ for all $G\in\EA$, contradicting Lemma~\rm\ref{lem:xi_unbounded}. 
\end{proof}

In our proof of Theorem~\rm\ref{thm:EA_nonborel}, we use that $\rho\circ\Phi^1$ is a $\Pi^1_1$-rank. It is natural to ask if $\xi$ itself is a $\Pi^1_1$-rank. This is indeed the case.
\begin{thm}
The decomposition rank is a $\Pi^1_1$-rank on $\EA$.
\end{thm}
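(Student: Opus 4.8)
The plan is to realize $\xi$ as $\rho\circ\Phi^1$ would be realized, but now the subtlety is that $\xi(G)=\min_{l}\rho(T^l(G))$ involves a minimum over infinitely many trees, so we cannot simply pull back the canonical $\Pi^1_1$-rank on $WF$ along a single Borel map. Instead I would exhibit explicit relations $\leq_\xi^\Pi$ and $\leq_\xi^\Sigma$ on $\Gw$ satisfying Definition~\ref{def:PiRank}. The natural guess is
\[
H\leq_\xi^\Sigma G \;:\Longleftrightarrow\; \exists l\,\forall k\;\bigl(\Phi^l(H)\in WF \text{ and } \Phi^k(G)\in WF \Rightarrow \rho(\Phi^l(H))\leq\rho(\Phi^k(G))\bigr),
\]
together with the analogous $\Pi$-version obtained by swapping the quantifier pattern, using the two relations $\leq_\rho^\Sigma$ and $\leq_\rho^\Pi$ associated to the $\Pi^1_1$-rank $\rho$ on $WF$ (Theorem~\cite[Exercise 34.6]{K95}). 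The point is that $\xi(H)\le\xi(G)$ iff there is \emph{some} $l$ with $\rho(T^l(H))\le\rho(T^k(G))$ for \emph{all} $k$ — but by Lemma~\ref{lem:xi_indp} one already has $\rho(T^l(H))\le\rho(T^k(H))$ for $l\ge k$ and monotonicity in the other direction is controlled, so in fact the witnessing $l$ can be taken to be $\deg(H)$ and it suffices to compare against $k=\deg(G)$; this collapses the double quantification to something manageable.

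Concretely, I would proceed in the following steps. First, recall from Lemma~\ref{lem:xi_indp} that $l\ge k\Rightarrow\rho(T^l(G))\le\rho(T^k(G))$ for a fixed $G$, so the sequence $(\rho(T^l(G)))_{l\ge 1}$ is non-increasing in $l$; hence $\xi(G)=\rho(T^{\deg(G)}(G))=\lim_l\rho(T^l(G))$ and moreover $\xi(G)=\rho(T^l(G))$ for all $l\ge\deg(G)$. Second, I would use this to rewrite: $\xi(H)\le\xi(G)$ iff $\forall k\ge 1\ \exists l\ge 1\ \rho(T^l(H))\le\rho(T^k(G))$ — indeed, given any $k$, picking $l$ large (at least $\deg(H)$) makes $\rho(T^l(H))=\xi(H)$, and $\xi(H)\le\xi(G)\le\rho(T^k(G))$; conversely if for every $k$ some $l$ works, take $k=\deg(G)$ to get $\xi(H)\le\rho(T^l(H))\le\rho(T^{\deg(G)}(G))=\xi(G)$, where the first inequality uses $\xi(H)=\min$. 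Third, I would define
\[
H\leq_\xi^\Pi G \;:\Longleftrightarrow\; H\in\mathrm{W}\ \wedge\ \forall k\,\exists l\;\bigl(\Phi^l(H)\leq_\rho^\Pi \Phi^k(G)\bigr),
\]
\[
H\leq_\xi^\Sigma G \;:\Longleftrightarrow\; \forall k\,\exists l\;\bigl(\Phi^l(H)\leq_\rho^\Sigma \Phi^k(G)\bigr),
\]
where $\leq_\rho^\Pi,\leq_\rho^\Sigma$ are the co-analytic and analytic relations on $Tr$ witnessing that $\rho$ is a $\Pi^1_1$-rank on $WF$, precomposed with the Borel maps $(\Phi^l,\Phi^k)$ from Lemma~\ref{lem:Phi_borel}. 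Fourth, I would check the complexity bookkeeping: $\leq_\rho^\Pi$ is $\Pi^1_1$, and $\Pi^1_1$ is closed under countable intersections and countable unions (as noted after the definition of analytic sets), so $\forall k\exists l(\ldots)$ keeps us $\Pi^1_1$; similarly $\leq_\rho^\Sigma$ is $\Sigma^1_1$ and the same closure properties keep $\forall k\exists l(\ldots)$ analytic. The conjunct $H\in\mathrm{W}$ is $\Pi^1_1$ by Theorem~\ref{thm:EA_nonborel} (equivalently $\bigcap_l(\Phi^l)^{-1}(WF)$), so $\leq_\xi^\Pi$ is $\Pi^1_1$. Fifth, I would verify the rank equations: for $G\in\mathrm{W}$ (so $\xi(G)$ defined), and any $H\in\Gw$, one has $H\in\mathrm{W}\wedge\xi(H)\le\xi(G)\Leftrightarrow H\leq_\xi^\Sigma G\Leftrightarrow H\leq_\xi^\Pi G$, using the rewriting from step two together with the defining property of $\leq_\rho^\Sigma,\leq_\rho^\Pi$ relative to $WF$ (namely $T\in WF\wedge\rho(T)\le\rho(S)\Leftrightarrow T\leq_\rho^\Sigma S\Leftrightarrow T\leq_\rho^\Pi S$ for $S\in WF$), and noting that $\Phi^k(G)\in WF$ holds for all $k$ precisely because $G\in\mathrm{W}$ by Corollary~\ref{cor:some_all}.

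The main obstacle I expect is the equivalence in step two, specifically making sure the quantifier-collapsed formula is genuinely equivalent to $\xi(H)\le\xi(G)$ in \emph{both} directions without circularity — the forward direction needs $\xi(H)\le\xi(G)\le\rho(T^k(G))$, which is fine since $\xi(G)=\min_k\rho(T^k(G))$, while the reverse direction needs that for \emph{some} $l$, $\rho(T^l(H))$ has already descended to $\xi(H)$, i.e.\ that the infimum defining $\xi(H)$ is attained, which is guaranteed precisely because $\xi$ is defined as a minimum over a set of ordinals and $\deg(H)$ names a witness. A secondary point of care is that the relations must be defined on all of $\Gw\times\Gw$ (or $X\times X$ for the ambient standard Borel space) and behave correctly when $H$ or $G$ lies outside $\mathrm{W}$; for $\leq_\xi^\Sigma$ this is automatic since the formula is only required to be equivalent to ``$x\in A\wedge\phi(x)\le\phi(y)$'' for $y\in A$, and when $\Phi^k(G)\notin WF$ the relation $\leq_\rho^\Sigma$ already handles the convention $\rho=\omega_1$; but I would state this carefully. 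Finally, one should double-check that $\Phi^l$ being Borel for each fixed $l$ (Lemma~\ref{lem:Phi_borel}) is enough — it is, since we only take countable unions/intersections over $l,k\in\Nb$, never an uncountable quantifier.
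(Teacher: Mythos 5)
Your proposal is correct and follows essentially the same route as the paper: both express the witnessing relations as countable unions/intersections of the $\Pi^1_1$- and $\Sigma^1_1$-relations attached to the individual tree-ranks $\rho\circ\Phi^l$, and both verify the equivalences using the monotonicity $l\geq k\Rightarrow\rho(T^l(G))\leq\rho(T^k(G))$ from Lemma~\ref{lem:xi_indp}, so that the sequence stabilizes at $\xi(G)$ from $\deg(G)$ onward. The only cosmetic difference is your quantifier pattern ($\forall k\,\exists l$ with independent offsets on the two sides) versus the paper's ``eventually for all $l$'' with matched offsets; both are handled by the same closure properties of analytic and co-analytic sets.
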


\begin{proof}
Each of the ranks $\phi_l:=\rho\circ \Phi^l$ is a $\Pi^1_1$-rank on $\EA$ where $\Phi^l$ is as defined in Lemma~\rm\ref{lem:Phi_borel}. Let $\leq_l^{\Pi}\subseteq \Gw\times \Gw$ and $\leq_l^{\Sigma} \subseteq \Gw\times \Gw$ be the relations given by $\phi_l$ as a $\Pi^1_1$-rank. We now consider the following relations:
\[
\leq_{\xi}^{\Pi}:=\bigcup_{N\in \Nb}\bigcap _{l\geq N}\leq_l^{\Pi}\text{ and } \leq_{\xi}^{\Sigma}:=\bigcup_{N\in \Nb}\bigcap _{l\geq N}\leq_l^{\Sigma}
\]
Since co-analytic and analytic sets are closed under countable unions and intersections, $\leq_{\xi}^{\Pi}$ is co-analytic and $\leq_{\xi}^{\Sigma}$ is analytic. To conclude $\xi$ is a $\Pi^1_1$-rank, it thus remains to show for $H\in \EA$, 
\begin{align*}
G\in \EA \,\wedge\, \xi(G)\leq \xi(H) &\Leftrightarrow G \leq_{\xi}^\Sigma H \\
 &\Leftrightarrow G \leq_\xi^\Pi H.
\end{align*}

Suppose $G\in \EA$ and $\xi(G)\leq \xi(H)$. Letting $M:=\max\{\deg(G),\deg(H)\}$, we see that $\rho\left(T^k(G)  \right)\leq \rho \left(T^k(H)  \right)$ for all $k\geq M$ via Lemma~\rm\ref{lem:xi_indp}, hence $\phi_k(G)\leq \phi_k(H)$ for $k\geq  M$. We conclude that $G \leq_{\xi}^{\Pi} H$ and $G\leq_{\xi}^{\Sigma} H$.\par

\indent Conversely, suppose $G \leq_{\xi}^{\Pi} H$ and $G\leq_{\xi}^{\Sigma} H$ and let $M\geq 0$ be such that $G \leq_{k}^{\Pi} H$ and $G \leq_{k}^{\Sigma} H$ for all $k\geq M$. Immediately, $G\in \EA$. For each $k\geq M$, we further see $\phi_k(G)\leq \phi_k(H)$, and taking $k=\max\{\deg(G),\deg(H),M\}$, 
\[
\xi(G)=\phi_k(G)\leq \phi_k(H)=\xi(H).
\]
Therefore, $\xi$ is a $\Pi^1_1$-rank.
\end{proof}

Propositions \ref{prop:rk_xi} and \ref{prop:xi_rk_ub} combine to give us
$$ \xi(G) \leq \omega(\rk(G)+1) \leq \omega(3\xi(G)+1), $$
so $\rk$ is closely related to a $\Pi^1_1$-rank.  Given this close relationship, it is natural to ask whether or not $\rk$ is a $\Pi^1_1$-rank. We suspect, however, that $\rk$ is \textit{not} a $\Pi_1^1$-rank as the sets $\rk^{-1}(\alpha)$ are likely analytic and non-Borel for suitably large $\alpha$; in fact, we believe $\rk^{-1}(2)$ is analytic and non-Borel. Indeed, if $\EA_\alpha$ is Borel and uncountable, then $\EA^e_{\alpha}$ is defined by quantifying over $\EA_\alpha$. We thus expect $\EA_{\alpha}^e$ to be analytic and, barring some clever argument, non-Borel.  (We remark that one can make such a clever argument in the case of $\EA^e_0$, but it does not seem to work beyond that.) We do not pursue this question further as it is tangential to the aim of this work and somewhat technical.

\section{Borel functions and sets}\label{sec:Borel}

In previous sections we made claims that certain maps and sets were Borel, and from this and the Boundedness Theorem \ref{thm:BddnessThm}, we concluded that certain subsets of $\Gw$ were not Borel. A slogan from descriptive set theory is ``Borel = explicit'' meaning if you describe a map or set without an appeal to something like the axiom of choice or quantifying over an uncountable space, it should be Borel.  As the maps and sets from previous sections are ``explicit'' in this sense, we were content to state they were Borel without further proof.  To those not as familiar with descriptive set theory, we offer this section to verify our previous claims. 

Recall that $\Gw=\{ N\normal\Fw \}$ and that we identify $N$ with the group $\Fw/N$. We make frequent use of the usual projection from $\Fw$ to $\Fw/N$ and always denote this projection by $f_N$.  Every countable group is identified with an element of $\Gw$; in fact, a given group $G$ corresponds to many distinct elements of $\Gw$ as there are many different surjections of $\Fw$ onto $G$. We fix an enumeration $(\gamma_i)_{i\in \Nb}$ for $\Fw$, and this gives rise to an enumeration of $G$ in the obvious way.  Let us also enumerate the generators for $\Fw$ as $(a_i)_{i\in \Nb}$. Recall finally that $\mc G_{fg} = \cup_{n\in\Nb} \{ N \normal \Fw \mid \forall k\geq n \;a_k\in N \}$.  This is an $F_\sigma$ subset of $\Gw$.  In particular, its Borel sets as a Borel space are precisely those sets of the form $B\cap\Gw_{fg}$ where $B\subseteq\Gw$ is Borel.

\subsection{Borel functions}
The sub-basic open sets of $\Gw$ are those of the form $O_{\gamma}=\{ N \mid \gamma \in N\}$ and their complements.  The Borel $\sigma$-algebra on $\Gw$ is thus generated by the $O_\gamma$, so in order to show a map $\psi \from\Gw\to\Gw$ is Borel, we need only to check that $\psi^{-1}(O_\gamma)$ is Borel for all $\gamma\in\Fw$.\par

\indent We begin with the easier examples of Borel maps. 

\begin{lem}\label{lem:QuotientBorel}
For each $\delta\in\Fw$, there is a Borel map $Q_\delta\from\Gw\to\Gw$ such that if $N\in\Gw$ with $\Fw/N\cong G$, then $\Fw/Q_\delta(N)\cong G/\ngrp{f_N(\delta)}$.
\end{lem}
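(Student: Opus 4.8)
The plan is to define $Q_\delta$ explicitly on the level of normal subgroups of $\Fw$ and then verify Borelness by checking preimages of the sub-basic open sets $O_\gamma$. Given $N \in \Gw$, the normal subgroup $\ngrp{f_N(\delta)}$ of $\Fw/N$ pulls back along $f_N$ to the normal subgroup of $\Fw$ generated by $N$ together with $\delta$; call this subgroup $\ngrp{N \cup \{\delta\}}_{\Fw}$. So I would simply set
\[
Q_\delta(N) := \ngrp{N\cup\{\delta\}}_{\Fw},
\]
i.e.\ the normal closure in $\Fw$ of $N\cup\{\delta\}$. Since $N$ is already normal, this is $N\cdot\ngrp{\delta}_{\Fw}$, and $\Fw/Q_\delta(N) \cong (\Fw/N)/\ngrp{f_N(\delta)} \cong G/\ngrp{f_N(\delta)}$ by the third isomorphism theorem. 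This handles the ``correctness'' half of the statement, which is routine.

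The substantive half is Borelness. By the remarks preceding the lemma, it suffices to show that $Q_\delta^{-1}(O_\gamma) = \{N \in \Gw \mid \gamma \in Q_\delta(N)\}$ is Borel for every $\gamma \in \Fw$. The key combinatorial observation is that $\gamma \in \ngrp{N \cup \{\delta\}}_{\Fw}$ if and only if $\gamma$ can be written as a product
\[
\gamma = \prod_{j=1}^{k} w_j\, x_j^{\pm 1}\, w_j^{-1}
\]
for some $k \in \Nb$, some $w_1,\dots,w_k \in \Fw$, and some $x_1,\dots,x_k$ each of which is either $\delta$ or an element of $N$. I would then rewrite this as: there exist $k$, words $w_1,\dots,w_k$, a subset $A \subseteq \{1,\dots,k\}$ of indices where $x_j \in N$, and elements $(\eta_j)_{j\in A}$ in $\Fw$ such that $\gamma = \prod_j w_j x_j^{\pm 1} w_j^{-1}$ where $x_j = \eta_j$ for $j \in A$ and $x_j = \delta$ otherwise, \emph{and} $\eta_j \in N$ for all $j \in A$. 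The first conjunct is a condition on the fixed data $\gamma, \delta$ and the quantified words, not involving $N$ at all; the second conjunct is $\bigwedge_{j\in A}(N \in O_{\eta_j})$, a finite intersection of sub-basic clopen sets. Taking the countable union over all choices of $k$, the $w_j$'s, the sign pattern, the set $A$, and the $\eta_j$'s — all ranging over countable sets — expresses $Q_\delta^{-1}(O_\gamma)$ as a countable union of clopen sets, hence it is $F_\sigma$ and in particular Borel.

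The main obstacle, such as it is, is bookkeeping: one must be careful that the ``there exists a representation'' unpacking only quantifies over countable sets (finite products, words in $\Fw$, finite index sets) so that the resulting union is genuinely countable, and one must double-check that membership of $N$ enters only through conditions of the form $\eta \in N$, i.e.\ through the clopen generators $O_\eta$. Once that is set up, no further descriptive-set-theoretic machinery is needed; the argument is a direct verification that the preimage of each generator lies in the Borel $\sigma$-algebra. A brief remark that the same template — expressing the target normal-closure or subgroup-membership condition as a countable union/intersection of conditions involving only the generators $O_\gamma$ — will recur in the subsequent Borelness proofs would be worth including, since it motivates the format.
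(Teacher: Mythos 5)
Your proposal is correct, and the map itself, together with the third-isomorphism-theorem verification, is exactly what the paper does. The difference lies in the Borelness check. You unpack $\gamma\in\ngrp{N\cup\{\delta\}}_{\Fw}$ via the general description of a normal closure as products of conjugates, quantify over all the (countably many) pieces of data in such a representation, and isolate the dependence on $N$ in finitely many clopen conditions $\eta_j\in N$; the bookkeeping is all sound and yields $Q_\delta^{-1}(O_\gamma)$ as a countable union of clopen sets (which, incidentally, is open, not merely $F_\sigma$). But this ignores a simplification you yourself record and then set aside: since $N$ and $\ngrp{\delta}_{\Fw}$ are both normal in $\Fw$, their product is already a subgroup, so $\gamma\in\ngrp{\delta}N$ if and only if $g^{-1}\gamma\in N$ for some $g\in\ngrp{\delta}$, and hence $Q_\delta^{-1}(O_\gamma)=\bigcup_{g\in\ngrp{\delta}}O_{g^{-1}\gamma}$, a countable union of sub-basic open sets with no representation-theoretic bookkeeping at all. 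The paper's proof is exactly this one line. Both arguments establish the lemma; what the shorter one buys is that the same ``product of two normal subgroups'' trick recurs verbatim in the later, harder Borelness proofs (e.g.\ the sets $S_i$ in the proof for the maps $S_k$, where membership in $[\Fw,\Fw]N$ and in $MN$ is handled the same way), so it is worth internalizing as the template rather than the general normal-closure expansion.
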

\begin{proof}
Since $G/\ngrp{f_N(\delta)}\cong \Fw/\ngrp{N,\delta}$, the map $Q_\delta(N):=\ngrp{\delta}N$ meets our requirements.  We need only check that it is Borel. For this,
\begin{align*}
Q_\delta^{-1}(O_\gamma) &= \{ N\in\Gw \mid \gamma\in\ngrp{\delta}N \} \\
 &= \{ N\in\Gw \mid \exists g\in\ngrp{\delta} \; g^{-1}\gamma\in N \} \\
 &= \bigcup_{g\in\ngrp{\delta}} \{ N\in\Gw \mid g^{-1}\gamma\in N\}
\end{align*}
which is open, so we have verified the lemma.
\end{proof}

We can now easily prove Lemma \ref{lem:MaxNMapBorel}.

\begin{proof}[Proof of Lemma \ref{lem:MaxNMapBorel}]
By repeated composition, we may define $Q_s\from\Gw\to\Gw$ for all $s\in\wbaire\setminus \{\emptyset\}$ so that 
\[
Q_s(N)=\ngrp{\gamma_s}N;
\]
we define $Q_{\emptyset}:=id$. The previous lemma ensures these maps are Borel.\par

\indent Now suppose $t\in\wbaire$ is of the form $v\conc i$ with $v\in\wbaire$ and $i\in\Nb$ and consider the basic open set $O_t:=\{T\in Tr\mid t\in T\}$ of $Tr$. We see that
\[
\Phi_{M_n}^{-1}(O_t)=\{ N\in\Gw \mid Q_v(N)\neq Q_t(N)\},
\]
which is Borel. The map $\Phi_{M_n}$ is thus Borel.
\end{proof}

\begin{lem}\label{lem:Rn_borel}
For each $n\geq 0$, there is a Borel map $R_n\from\Gw\to\Gw$ such that if $N\in\Gw$ with $\Fw/N\cong G$, then $\Fw/R_n(N)\cong\<g_0,\ldots,g_n\>$.
\end{lem}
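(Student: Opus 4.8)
The plan is to define $R_n(N)$ as the kernel of the composite $\Fw \to \Fw/N \to \langle g_0,\dots,g_n\rangle$, realized concretely inside $\Fw$ using the marking convention for subgroups fixed in Section~\ref{sec:Prelim}. Recall that convention: if $H\leq \Fw/N=G$, one sets $\pi_H(a_j) = \gamma_j$ when $f_N(\gamma_j)\in H$ and $\pi_H(a_j)=e$ otherwise, and identifies $H$ with $\Fw/\ker(f_N\circ \pi_H)$. Here the subgroup $H=\langle g_0,\dots,g_n\rangle$ has a distinguished finite generating set, so the streamlined version applies: set $\pi_n(a_{i_j}) = \gamma_{i_j}$ for the indices $i_0,\dots,i_n$ realizing the chosen generators and $\pi_n(a_j)=e$ otherwise. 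Thus I would \emph{first} make precise the fixed endomorphism $\pi_n\from\Fw\to\Fw$ (which does not depend on $N$) whose image is the subgroup $\langle \gamma_0,\dots,\gamma_n\rangle$ of $\Fw$, and \emph{then} define $R_n(N):=\ker(f_N\circ\pi_n) = \pi_n^{-1}(N)$. Since $f_N\circ\pi_n$ surjects $\Fw$ onto $\langle g_0,\dots,g_n\rangle = \langle f_N(\gamma_0),\dots,f_N(\gamma_n)\rangle$, the isomorphism $\Fw/R_n(N)\cong \langle g_0,\dots,g_n\rangle$ is immediate from the first isomorphism theorem.

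The remaining task is Borelness, which I would handle exactly as in Lemma~\ref{lem:QuotientBorel}: it suffices to check that $R_n^{-1}(O_\gamma)$ is Borel (in fact open) for each $\gamma\in\Fw$. Since $R_n(N) = \pi_n^{-1}(N) = \{w\in\Fw \mid \pi_n(w)\in N\}$, we have
\[
R_n^{-1}(O_\gamma) = \{N\in\Gw \mid \gamma\in R_n(N)\} = \{N\in\Gw \mid \pi_n(\gamma)\in N\} = O_{\pi_n(\gamma)},
\]
which is a basic open set. Here the key point making this clean is that $\pi_n$ is a \emph{fixed} endomorphism independent of $N$, so $\gamma\mapsto\pi_n(\gamma)$ is just a fixed element of $\Fw$ and the preimage is literally a sub-basic open set; in particular $R_n$ is even continuous. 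This is the analogue of the computation $Q_\delta^{-1}(O_\gamma)$ in Lemma~\ref{lem:QuotientBorel}, only simpler because there is no existential quantifier to absorb.

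I do not expect a serious obstacle here; the lemma is essentially bookkeeping. The one point requiring a little care is the interaction with the marking convention: one must confirm that the generating set $\{g_0,\dots,g_n\}$ of the subgroup is handled by the streamlined finite-generating-set version of $\pi_H$ rather than the default version, so that the marked group $\Fw/R_n(N)$ has precisely the generators $(f_N(\gamma_0),\dots,f_N(\gamma_n),e,e,\dots)$ we want for the later constructions (e.g.\ in the definition of $T^l(G)$, where $R_n(G_s)$ is fed back into the recursion). Once that identification is pinned down, the statement follows from the displayed computation above together with the first isomorphism theorem, and there is nothing further to prove.
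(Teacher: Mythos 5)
Your proposal is correct and is essentially identical to the paper's proof: the same fixed endomorphism $\pi_n$ (sending $a_i\mapsto\gamma_i$ for $i\leq n$ and all other generators to $e$), the same definition $R_n(N)=\ker(f_N\circ\pi_n)$, and the same computation $R_n^{-1}(O_\gamma)=O_{\pi_n(\gamma)}$ showing the map is continuous, hence Borel. Nothing is missing.
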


\begin{proof}
Let $\pi_n\from\Fw\to\Fw$ be induced by mapping the generators $(a_i)_{i\in \Nb}$ as follows:
\[\pi_n(a_i) = 
\begin{cases}
\gamma_i, & 0\leq i\leq n \\
e, & \text{otherwise.}
\end{cases}
\]
Suppose that $N\in\Gw$ with $\Fw/N=G$.  The function $f_N\circ\pi_n\from\Fw\to \grp{g_0,\dots,g_n}$ is then a surjection. We thus define $R_n$ to be the map sending $N$ to $\ker (f_N\circ\pi_n)$. Since $\Fw/\ker (f_N\circ\pi_n)\cong \grp{g_0,\dots,g_n}$, this works as intended. \par

\indent As $\gamma\in\ker (f_N\circ\pi_n)$ iff $\pi_n(\gamma)\in\ker (f_N)$, we conclude that
\[
R_n^{-1}(O_\gamma) = \{ M \in\Gw \mid \pi_n(\gamma)\in M \},
\]
which is the open set $O_{\pi_n(\gamma)}$. The map $R_n$ is thus Borel.
\end{proof}

The above proof works for subgroups generated by any fixed collection of elements of $G$; that is to say, the same proof shows the maps $G\mapsto G_s$ defined in Section \ref{sec:Max} are Borel, so as before, we get Lemma \ref{lem:MaxMapBorel} as a corollary.

We now move on to proving Lemma \ref{lem:CentMapBorel}; this follows from the next lemma. Its proof is more involved than the previous two.

\begin{lem}
For each $s\in\wbaire\setminus \{\emptyset\}$, there is a Borel map $C_s\from\Gw\to\Gw$ such that if $N\in\Gw$ with $\Fw/N\cong G$, then $\Fw/C_s(N)\cong C_G(\{g_s\})$.
\end{lem}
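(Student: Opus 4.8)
The plan is to imitate the proofs of Lemmas~\ref{lem:QuotientBorel} and \ref{lem:Rn_borel}, but now the target subgroup $C_G(\{g_s\})$ is itself defined by a universal quantifier over $G$ (``all elements commuting with each $g_{s_j}$''), so the marking convention for subgroups from Section~\ref{sec:Prelim} must be unwound carefully. Fix $s=(s_0,\dots,s_k)\in\wbaire$. First I would describe, for each $N\in\Gw$ with $G=\Fw/N$, which generators $\gamma_j$ of $\Fw$ map into $C_G(\{g_s\})$: namely those $j$ for which $f_N(\gamma_j)$ commutes with each of $f_N(\gamma_{s_0}),\dots,f_N(\gamma_{s_k})$, equivalently for which the commutators $[\gamma_j,\gamma_{s_0}],\dots,[\gamma_j,\gamma_{s_k}]$ all lie in $N$. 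Following the convention from the preliminaries, $C_G(\{g_s\})$ is identified with $\Fw/\ker(f_N\circ\pi_{H,N})$, where $\pi_{H,N}\from\Fw\to\Fw$ sends $a_j$ to $\gamma_j$ if $f_N(\gamma_j)\in C_G(\{g_s\})$ and to $e$ otherwise. The subtlety versus Lemma~\ref{lem:Rn_borel} is that the map $\pi_{H,N}$ now \emph{depends on $N$}, since membership in the centralizer is not a fixed condition; so $C_s$ is not simply the preimage of a fixed open set under a fixed homomorphism.

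The key steps, in order, would be: (1) For each $j\in\Nb$ let $P_j:=\{N\in\Gw\mid [\gamma_j,\gamma_{s_i}]\in N\text{ for }i=0,\dots,k\}=\bigcap_{i=0}^k O_{[\gamma_j,\gamma_{s_i}]}$, a clopen subset of $\Gw$ recording whether the $j$-th generator lands in the centralizer. (2) For each subset $A\subseteq\Nb$ that is ``consistent'' in the obvious finitary sense, let $\pi_A\from\Fw\to\Fw$ be the homomorphism sending $a_j\mapsto\gamma_j$ for $j\in A$ and $a_j\mapsto e$ otherwise; this is a fixed homomorphism once $A$ is fixed. (3) On the clopen piece where the set of ``good'' generators is exactly $A$ — note that for any given word $\gamma$ only finitely many generators are involved, so we only ever need to pin down $A$ on a finite set — the map $C_s$ agrees with $N\mapsto\ker(f_N\circ\pi_A)=\{\gamma\mid \pi_A(\gamma)\in N\}$. (4) Then compute
\[
C_s^{-1}(O_\gamma)=\bigcup_{A}\Bigl(\{N\mid\text{the good set of }N\text{ restricted to }\operatorname{supp}(a\text{'s in }\gamma)\text{ equals }A\}\cap O_{\pi_A(\gamma)}\Bigr),
\]
where the union is over the finitely many possibilities for $A$ on the finite support of $\gamma$, and the first set in each term is a finite Boolean combination of the $P_j$ and hence clopen, while $O_{\pi_A(\gamma)}$ is open. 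Since $\gamma$ involves only finitely many letters $a_j$, this is a finite union of intersections of (cl)open sets, so $C_s^{-1}(O_\gamma)$ is open, and $C_s$ is Borel (in fact continuous). Composing, or iterating as in the proof of Lemma~\ref{lem:MaxNMapBorel}, gives Lemma~\ref{lem:CentMapBorel}: $\Phi_C^{-1}(O_t)=\{N\mid C_v(N)\neq C_t(N)\}$ for $t=v\conc i$, which is Borel since equality of two Borel maps into the Polish space $\Gw$ is a Borel condition.

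The main obstacle I expect is bookkeeping around the $N$-dependence of the marking $\pi_{H,N}$: one must be careful that ``$f_N(\gamma_j)\in C_G(\{g_s\})$'' is expressible by a clopen condition on $N$ (it is, since it reduces to finitely many commutators lying in $N$, each of which is a sub-basic clopen condition), and that the verification $\Fw/\ker(f_N\circ\pi_A)\cong C_G(\{g_s\})$ goes through when $A$ is exactly the good set — this is precisely the subgroup-marking convention fixed in the preliminaries, so no new group theory is needed, only care. A secondary, purely cosmetic point is handling $s=\emptyset$ separately, which is why the statement restricts to $s\in\wbaire\setminus\{\emptyset\}$; there $C_G(\emptyset)=G$ and one takes $C_\emptyset:=\mathrm{id}$, exactly as with $Q_\emptyset$ and $R$ in the earlier lemmas.
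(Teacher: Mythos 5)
Your proposal is correct and follows essentially the same route as the paper: your sets $P_j$ are exactly the paper's sets $S_j$ (expressed via the finitely many commutators $[\gamma_j,\gamma_{s_i}]$ lying in $N$), and your decomposition of $C_s^{-1}(O_\gamma)$ as a finite union over the possible "good sets" on the finite support of $\gamma$ is the paper's union over $\ol{x}\in\Omega$ of $\{N \mid \delta(\ol{x})\in N\}\cap S_{\ol{x}}$. The only (harmless) difference is that you additionally observe the map is continuous rather than merely Borel.
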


\begin{proof}
Suppose that $N\in\Gw$ and $\Fw/N\cong G$.  Define $\pi_N\from\Fw\to\Fw$ by
\[
\pi_N(a_j):=
\begin{cases}
\gamma_j, & \text{ if }f_N(\gamma_j)\in C_G\left(\{f_N(\gamma_s)\}\right) \\
e, & \text{ else.}
\end{cases}
\]
The map $f_N\circ \pi_N\from\Fw\to C_G(\{g_s\})$ is then a surjection, so the map $N\mapsto\ker(f_N\circ\pi_N)$ works as intended.  In order to check it is Borel, we introduce the set 
\[
S_j:=\{N\in\Gw \mid \pi_N(a_j)=\gamma_j\}.
\]
Since $f_N(\gamma_j)\in C_G(\{f_N(\gamma_s)\})$ iff $[\gamma_j,\gamma_{s_i}]\in N$ for each $0\leq i \leq |s|-1$, 
\[
S_j=\{N\in\Gw \mid [\gamma_j,\gamma_{s_i}]\in N \text{ for each }0\leq i \leq |s|\},
\] 
which is an open set.

We now fix a word $\delta=\delta(a_0,\dots,a_m)\in \Fw$ and consider the pre-image of the basic open set $O_{\delta}$. Our notation $\delta(a_0,\dots,a_m)$ indicates the word $\delta$ only uses the letters appearing in the parentheses.  We may evaluate $\pi_N(\delta)$ by substituting in the images of $a_0,\dots,a_m$, so $\pi_{N}(\delta)=\delta(x_0,\dots,x_m)$ for some $\ol{x}:=(x_0,\dots,x_m)\in \Omega:=\prod_{i=0}^m\{\gamma_i,e\}$ that depends on $N$.  The set of $N\in \Gw$ such that $\pi_N(\delta(a_0,\ldots,a_m))=\delta(\ol{x})$ for some fixed $\ol{x}$ is the Borel set 
\[
S_{\ol{x}}:=\bigcap_{x_j=\gamma_j} S_j \cap \bigcap_{x_k=e} S_k^c.
\] 
Since $\delta\in\ker(f_N\circ \pi_N)$ iff $\pi_N(\delta)\in\ker f_N=N$, we now see that
\begin{align*}
C_s^{-1}(O_\delta) &= \{N\in\Gw \mid \pi_N(\delta)\in N\} \\
 &= \bigcup_{\ol{x}\in\Omega} \left( \{N\in\Gw \mid \delta(\ol{x})\in N\} \cap S_{\ol{x}} \right)
\end{align*}
which is Borel.
\end{proof}

We next show the maps $S_k$ from Section \ref{sec:EAGroups} are Borel. The main idea is the same as in previous lemma.

\begin{lem}\label{lem:Sk_Borel} 
For each $k\geq 1$, there is a Borel map $S_k:\Gw_{fg}\rightarrow \Gw$ such that if $\Fw/N=G$, then
\[
\Fw/S_k(N)\cong [G,G]\cap \bigcap \mc{N}_k(G)
\]
where $\mc{N}_k(G):=\{M\trianglelefteq G\mid |G:M|\leq k+1\}$.
\end{lem}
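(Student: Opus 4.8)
The plan is to mimic the structure of the proof for the centralizer map $C_s$, since $S_k$ is again defined by selecting which generators of a fresh copy of $\Fw$ map into a certain canonically-determined subgroup of $G = \Fw/N$. First I would observe that for a finitely generated marked group $G$, the set $\mc N_k(G) = \{M \trianglelefteq G \mid |G:M| \le k+1\}$ is \emph{finite}: a finitely generated group has only finitely many subgroups of each finite index, hence only finitely many such normal subgroups. Consequently $\bigcap \mc N_k(G)$ is a \emph{finite-index} normal subgroup of $G$, equal to the kernel of the (finite) product of the quotient maps $G \to G/M$ for $M \in \mc N_k(G)$; and $S_k(N)$ should be the preimage in $\Fw$ of $[G,G] \cap \bigcap\mc N_k(G)$ under $f_N$, which I then re-mark using the projection convention, exactly as in the $C_s$ lemma.

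The key technical point is to show membership $\gamma \in S_k(N)$ is a Borel condition on $N \in \Gw_{fg}$. For this I would fix $k$ and, for each candidate finite-index subgroup, work with \emph{markings} rather than abstract subgroups. Concretely: a normal subgroup $M \trianglelefteq G$ with $|G:M| \le k+1$ is determined by the partition of $G$ into its cosets, which (since the preferred enumeration $(g_i)$ generates $G$) is determined by finitely much data once we know $G$ up to a bounded finite piece. So I would define, for each of the finitely many possible "coset patterns" $p$ on an initial segment of the enumeration, a Borel set $U_p \subseteq \Gw_{fg}$ of those $N$ realizing pattern $p$, and on $U_p$ the condition $\gamma \in [G,G] \cap \bigcap\mc N_k(G)$ unwinds into: $\gamma$ lies in $N$ extended by the (explicitly listed, pattern-dependent) generators of $[G,G]$ and of each relevant $M$. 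Each such extended-normal-closure-membership condition is Borel by Lemma~\ref{lem:QuotientBorel} and its iterates (the map $Q_s$ and finite compositions thereof), so $S_k^{-1}(O_\gamma)$ becomes a finite union over patterns $p$ of $(U_p \cap B_{p,\gamma})$ with each $B_{p,\gamma}$ Borel.

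Alternatively — and this may be cleaner to write — I would avoid enumerating coset patterns and instead exploit that $\bigcap \mc N_k(G)$ is itself the kernel of the map $G \to \operatorname{Sym}(G/\!\!\sim)$ for the transitive actions of degree $\le k+1$; equivalently, $\gamma \in \bigcap \mc N_k(G)$ iff for every homomorphism $\theta \colon G \to \operatorname{Sym}(k+1)$ one has $\theta(\gamma) = 1$. Since $\Fw$ is free, homomorphisms $\Fw \to \operatorname{Sym}(k+1)$ correspond to tuples in $\operatorname{Sym}(k+1)^{\Nb}$, but only finitely many coordinates matter for any fixed word, so "for every $\theta$ that factors through $G=\Fw/N$, $\theta(f_N(\gamma))=1$" is a Borel (indeed, finite-Boolean-combination-of-clopen) condition in $N$: it says $\forall \theta \in \operatorname{Sym}(k+1)^{\{0,\dots,m\}}$, if $\theta(\gamma')=1$ for all defining relators $\gamma'$ of $G$ up to the relevant length then $\theta(\gamma)=1$ — and since $G$ ranges over $\Gw_{fg}$ one quantifies only over finitely many generators. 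Combined with the already-established Borelness of $\gamma \mapsto$ "$\gamma \in [G,G]$-extension of $N$" (again via $Q_s$), one gets $S_k^{-1}(O_\gamma)$ Borel. The main obstacle I anticipate is bookkeeping: making precise, in the marked-group formalism with its fixed enumeration $(\gamma_i)$, the passage between "abstract finite-index normal subgroup of $G$" and a genuinely Borel family of markings, and checking that intersecting finitely many of these (plus the commutator subgroup) and then re-marking via $\pi$ stays Borel — but each individual step reduces to Lemma~\ref{lem:QuotientBorel}, Lemma~\ref{lem:Rn_borel}, and finite unions/intersections, so no genuinely new descriptive-set-theoretic input is needed.
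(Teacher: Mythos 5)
Your primary route is, in substance, the paper's own proof: the paper defines $\pi_N$ exactly as you describe, sets $S_k(N)=\ker(f_N\circ\pi_N)$, and reduces Borelness of the sets $S_i=\{N\mid \pi_N(a_i)=\gamma_i\}$ to the observation that for $N\in\Gw_{fg}$ the index-$\le k+1$ normal subgroups of $\Fw/N$ are precisely $MN/N$ for $M$ ranging over the \emph{countable} family $\mc N_k=\{M\in\Gw_{fg}\mid |\Fw:M|\le k+1\}$, so that $f_N(\gamma_i)\in[G,G]\cap\bigcap\mc N_k(G)$ iff $\gamma_i\in[\Fw,\Fw]N\cap\bigcap_{M\in\mc N_k}MN$, a countable Boolean combination of the clopen sets $O_\delta$; it then runs the same $S_{\ol{x}}$ re-marking bookkeeping as in the centralizer lemma. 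Your ``coset pattern'' parametrization is the same device in different clothing (one small correction: the union over patterns is countable rather than finite, since the relevant initial segment of generators grows with $m$ as $N$ ranges over $\Gw_m\subseteq\Gw_{fg}$; this is harmless).

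The alternative you propose as ``cleaner'' is, however, incorrect as stated. A homomorphism $\theta\colon G\to\operatorname{Sym}(k+1)$ can have image of order up to $(k+1)!$, so its kernel need not lie in $\mc N_k(G)$, and requiring $\theta(\gamma)=1$ for \emph{all} such $\theta$ cuts out a subgroup that may be strictly smaller than $\bigcap\mc N_k(G)$. Concretely, for $G=\operatorname{Sym}(3)$ and $k=2$ one has $\mc N_2(G)=\{G,A_3\}$ and $\bigcap\mc N_2(G)=A_3$, yet the identity homomorphism $G\to\operatorname{Sym}(3)$ does not kill $A_3$; your criterion would return $\{e\}$. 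Only one implication of your ``equivalently'' holds (every $M\in\mc N_k(G)$ is the kernel of \emph{some} $\theta$, via the regular representation of $G/M$). The repair is to quantify only over homomorphisms whose image has order at most $k+1$ -- equivalently, over homomorphisms onto groups of order $\le k+1$ -- after which this version collapses back into the $\mc N_k$ argument above. If you write up the lemma, use the first route (or the corrected second one); do not use the $\operatorname{Sym}(k+1)$ criterion as stated.
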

\begin{proof}
Suppose $N\in \Gw_{fg}$ and $G\cong\Fw/N$.  Similarly to the previous lemma, we define $\pi_N\from\Fw\to\Fw$ by 
\[
\pi_N(a_i):=
\begin{cases}
\gamma_i, & \text{ if }f_N(\gamma_i)\in [G,G]\cap\bigcap\mc{N}_k(G)\\
e, & \text{ else.}
\end{cases}
\]
Define $S_k:\Gw_{fg}\rightarrow \Gw$ by $N\mapsto \ker(f_N\circ\pi_{N})$; this map behaves as desired.  We claim this map is also Borel. \par

\indent Define
\[
\mc N_k := \left\{ M\in\Gw_{fg} \mid |\Fw : M|\leq k+1 \right\}.
\]
If $N\in\Gw_{fg}$, then the collection of index-$\leq k+1$ subgroups of $\Fw/N$ is precisely $\{ MN/N \mid M\in\mc N_k \}$.  Therefore, $f_N(\gamma_i)\in [G,G]\cap\bigcap\mc N_k(G)$ iff $\gamma_i\in [\Fw,\Fw]N\cap \bigcap_{M\in\mc N_k} MN $.  As in the previous lemma, we may define
\begin{align*}
S_i &:= \left\{N\in \Gw_{fg} \mid \pi_N(a_i)=\gamma_i  \right\}\\
	&= \left\{N\in \Gw_{fg} \mid \gamma_i \in [\Fw,\Fw]N\cap  \bigcap_{M\in\mc N_k} MN \right\}\\
	&= \bigcup_{\delta\in[\Fw,\Fw]} \left\{N\in\Gw_{fg} \mid \delta^{-1}\gamma_i\in N \right\} \cap \bigcap_{M\in \mc{N}_k}\bigcup_{\delta\in M}\left\{N\in\Gw_{fg} \mid \delta^{-1}\gamma_i\in N\right\}.
\end{align*}
The last set is Borel since $\mc{N}_k$ is countable.  Given $\ol{x}:=(x_0,\dots,x_m)\in \Omega:=\prod_{i=0}^m\{\gamma_i,e\}$, we define as before $S_{\ol{x}}$.  \par

\indent Fixing a word $\delta=\delta(a_0,\dots,a_m)\in \Fw$, we now consider the pre-image of the basic open set $O_{\delta}$. We see
\begin{align*}
S_k^{-1}(O_\delta) &= \{N\in\Gw \mid \pi_N(\delta)\in N\} \\
 &= \bigcup_{\ol{x}\in\Omega} \big( \{N\in\Gw \mid \delta(\ol{x})\in N\} \cap S_{\ol{x}} \big)
\end{align*}
which is Borel.
\end{proof}

\indent Using Lemma~\ref{lem:Rn_borel} and \ref{lem:Sk_Borel}, we build Borel maps $\Psi^l_s:\ms{G}\rightarrow \ms{G}$ for each $l\in\Nb$ and $s\in \wbaire$. For $s=\emptyset$, put $\Psi^l_{\emptyset}=id$. Supposing we have defined $\Psi^l_s$, define $\Psi^l_{s\conc n}$ by 
\[
\Psi^l_{s\conc n}(N):=S_{|s|+l}\circ R_n(\Psi^l_s(N)).
\]
It follows that if $s\in T^l(G)$ with $G=\Fw/N$, then $\Fw/\Psi^l_s(N)= G_s$. If $s\notin T^l(G)$, then $\Fw/\Psi^l_s(N)=\{e\}$.
 
\begin{proof}[Proof of Lemma \ref{lem:Phi_borel}]
\indent Fixing $s\in \wbaire$ and $l\in\Nb$,
\[
(\Phi^l)^{-1}(O_s)=\left\{N\in \ms{G} \mid s\in T^l(\Fw/N)\right\}.
\]
If $s=\emptyset$, then $(\Phi^l)^{-1}(O_s)=\ms{G}$ which is plainly Borel. Else, say $s=r\conc n$, so
\[
\begin{array}{ccl}
(\Phi^l)^{-1}(O_s) & = & \left\{N\in \ms{G} \mid r\conc n \in T^l(\Fw/N)\right\}\\
				& = & \left\{N\in \ms{G} \mid (\Fw/N)_r\neq \{e\}\right\}\\
				& = & (\Psi^l_r)^{-1}(\ms{G}\setminus \{e\}),
\end{array}
\]
which is Borel.
\end{proof}

\subsection{Borel sets}
Recall that $\AG$ denotes the class of countable amenable groups.

\begin{lem}[Folklore]\label{lem:AG_borel}
The set $\AG$ is Borel in $\ms{G}$, and therefore, $\AG\cap \Gw_{fg}$ is Borel.
\end{lem}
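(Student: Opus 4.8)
The plan is to exhibit a Borel condition on $N \in \Gw$ that captures amenability of $\Fw/N$ via a Følner-type characterization applied to the finitely many ``visible'' generators. First I would recall that amenability of a countable group $G$ is equivalent to the finitely generated subgroup version: $G$ is amenable if and only if every finitely generated subgroup of $G$ is amenable. Since each finitely generated subgroup sits inside some $R_n(G) = \langle g_0, \dots, g_n\rangle$, this reduces matters to the amenability of the groups $R_n(\Fw/N)$ for $n \in \Nb$. By Lemma~\ref{lem:Rn_borel}, the maps $R_n \from \Gw \to \Gw$ are Borel, so it suffices to show that the set of $N \in \Gw$ with $\Fw/N$ finitely generated (say, $m$-generated for a fixed $m$, with the standard generators) and amenable is Borel; then $\AG = \bigcap_{n \in \Nb} R_n^{-1}(\text{f.g.\ amenable})$ is Borel as a countable intersection of Borel sets.

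The key step is then the following: a marked group $G = \Fw/N$ which is generated by $g_0, \dots, g_m$ is amenable if and only if for every finite symmetric subset $F$ of the ball of radius $r$ in the generators and every $\varepsilon > 0$, there is a finite nonempty $A \subseteq G$ with $|A \cdot F| \leq (1 + \varepsilon)|A|$; and crucially, one may take $A$ to be (the image of) a finite subset of $\Fw$, so the whole condition can be phrased as a countable conjunction (over $r \in \Nb$, $\varepsilon = 1/k$) of countable disjunctions (over candidate finite sets $A$, enumerated as finite subsets of $\Fw$) of clopen conditions on $N$ — namely, whether specified products of generators lie in or out of $N$, which determines $|A|$ and $|A \cdot F|$ as computed in $\Fw/N$. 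More precisely, for a fixed finite list of words $w_1, \dots, w_p \in \Fw$ and a fixed finite set of generator-words $F$, the number of distinct cosets among $\{w_i N\}$ and among $\{w_i u N : u \in F\}$ is determined by which of the finitely many words $w_i^{-1} w_j$ and $w_i^{-1} u w_j v^{-1}$ lie in $N$; each such membership is a sub-basic clopen condition $O_\gamma$ or its complement. Hence "$|A| = a$ and $|A \cdot F| \leq (1+\varepsilon)a$ in $\Fw/N$" is a Boolean combination of sub-basic clopen sets, so Borel (indeed clopen), and the Følner condition becomes $\bigcap_{r,k} \bigcup_{(w_1,\dots,w_p), F} (\text{clopen})$, which is Borel.

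I would organize the write-up as: (1) reduce to finitely generated subgroups using the folklore fact that amenability is a local property; (2) for fixed $m$, show the set of amenable $m$-generated marked groups is Borel by writing out the Følner condition as above, being careful that the Følner sets can be taken to be images of finite subsets of $\Fw$ (this is automatic since $f_N$ is surjective); (3) combine via the Borel maps $R_n$ to conclude $\AG$ is Borel in $\Gw$; (4) note $\AG \cap \Gw_{fg}$ is then Borel since $\Gw_{fg}$ is Borel in $\Gw$ and the Borel sets of $\Gw_{fg}$ are exactly the traces of Borel sets of $\Gw$.

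The main obstacle is bookkeeping rather than conceptual: one must be careful that, for a finite tuple of words in $\Fw$, "the number of distinct images in $\Fw/N$" and "the number of distinct images of the $F$-translates" are genuinely Borel (in fact clopen) functions of $N$ — this requires observing that equality of two images $wN = w'N$ is exactly the clopen condition $w^{-1}w' \in N$, and that a finite partition into cosets is thus decided by finitely many such conditions — and that the countable index set of candidate Følner tuples genuinely suffices, i.e.\ if $G$ is amenable then a Følner set for the given finite $F$ and $\varepsilon$ can be chosen within $\Fw$ (true since any finite subset of $G$ lifts to a finite subset of $\Fw$ along the surjection $f_N$). No single step is hard; the care lies in getting the quantifier structure $\bigcap\bigcup(\text{clopen})$ exactly right so that closure of Borel sets under countable unions and intersections applies cleanly.
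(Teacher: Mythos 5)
Your proof is correct and is essentially the paper's argument: both express amenability as a F\o lner condition and write $\AG$ as a countable intersection of countable unions of clopen conditions, with the F\o lner sets witnessed by finite subsets of $\Fw$ and the relevant cardinalities decided by finitely many memberships $\gamma\in N$. The only difference is your preliminary localization to the finitely generated subgroups $R_n(G)$, which the paper avoids by quantifying the finite set $F$ directly over $P_f(\Fw)$ for the whole group; that extra reduction is harmless but unnecessary.
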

\begin{proof}
Amenable groups are characterized by F\o lner's property: A countable group $G$ is amenable if and only if for every finite $F\subseteq G$ and every $n\geq 1$, there is a finite non-empty subset $K\subseteq G$ such that 
\[
\frac{|xK\Delta K|}{|K|}\leq \frac{1}{n}
\]
for all $x\in F$ where $\Delta$ denotes the symmetric difference.\par

\indent Letting $P_f(\Fw)$ be the collection of finite subsets of $\Fw$, we infer
\[
\AG=\bigcap_{F\in P_f(\Fw)}\bigcap_{n\geq 1}\bigcup_{K\in P_f(\Fw)}\bigcap_{x\in F}\left\{N\in \Gw \mid \frac{|f_N(x)f_N(K)\Delta f_N(K)|}{|f_N(K)|}\leq \frac{1}{n}\right\}.
\]
It thus suffices to show
\[
\Omega:=\left\{N\in \Gw \mid \frac{|f_N(x)f_N(K)\Delta f_N(K)|}{|f_N(K)|}\leq \frac{1}{n}\right\}
\]
is Borel. It is easy to see requiring $|f_N(K)|=m$ and $|f_N(x)f_N(K)\Delta f_N(K)|=l$ is Borel, hence
\[
\Omega=\bigcup_{\frac{l}{m}\leq \frac{1}{n}}\left\{N\mid |f_N(x)f_N(K)\Delta f_N(K)|=l\text{ and } |f_N(K)|=m \right\}
\]
is Borel. The set $\AG$ is thus Borel.
\end{proof}

\section{Further remarks}\label{sec:Remarks}

Our results give tools to study groups enjoying any of the other chain conditions in the literature. Perhaps more interestingly, our results suggest new questions concerning elementary amenable groups and groups with the minimal condition on centralizers, maximal condition on subgroups, and maximal condition on normal subgroups. \par

\indent Most immediately, one desires a better understanding of the various rank functions. In the case of max groups, there are no infinite subgroup rank two groups, the infinite groups with subgroup rank 3 are Tarski monsters, and $\Zb$ has rank $\omega+1$. In the case of max-n, examples of finite rank groups are easy to produce and understand; however, transfinite rank examples are somewhat mysterious. Following Olshanskii and Osin, cf. \cite[Corollary 1.6]{OO13}, we ask
\begin{quest} 
For which ordinals $\alpha$ is there an infinite group in $\mc M_C$ ($\mc M_{\max{}}, \mc M_n$) such that the centralizer rank (subgroup rank, length) is $\alpha$?
\end{quest}

\indent In a different direction, showing a set is non-Borel in $\Gw$ demonstrates there is no ``simple" definition of the class. Our techniques give a way to determine if a subset of $\Gw$ (or of a Borel subset of $\Gw$) given by a chain condition is not Borel and hence to determine if it does not admit a ``simple" characterization. In the setting of max-n groups, there is a particularly intriguing question along these lines.  By an old result of Hall, a two-step solvable group is max-n if and only if it is finitely generated; this is certainly a Borel condition. On the other hand, no such nice characterization of three-step solvable groups with max-n is known. We thus ask

\begin{quest} 
Is the set of max-n three-step solvable marked groups Borel? 
\end{quest}

\indent In a similar vein, our results on elementary amenable groups, in a sense, show elementary amenable groups are not ``elementary". One naturally asks

\begin{quest}[Hume] Is there an intermediate ``elementary" Borel set between $\EA\cap \Gw_{fg}$ and $\AG\cap \Gw_{fg}$? More precisely, is there an elementary class $\ms{E}(B)$ in the sense of Osin \cite{Os02} with $B$ ``small" such that $\EA\cap \Gw_{fg}\subseteq \ms{E}(B)\subsetneq \AG\cap\Gw_{fg}$ and $\ms{E}(B)$ is Borel? 
\end{quest}

We also arrive at new questions with a descriptive-set-theoretic flavor.
 
\begin{defn}
Let $Y$ be a uncountable Polish space.  A set $A\subseteq Y$ is \textbf{$\Pi^1_1$-complete} if $A$ is $\Pi^1_1$ and for all $B\subseteq X$ with $X$ an uncountable Polish space and $B$ co-analytic, $B$ Borel reduces to $A$.
\end{defn}

The idea is that $\Pi^1_1$-complete sets are as complicated as they possibly could be; Theorem \ref{thm:WFComplete} says that $WF\subseteq Tr$ is $\Pi^1_1$-complete. 

\begin{quest}
Are any of $\mc M_C,\mc M_{\max{}},\mc M_n,$ or $\EA$ $\Pi^1_1$-complete?
\end{quest}

Note that for a positive answer it suffices to show that $WF$ (or some other $\Pi^1_1$-complete set) Borel reduces to these sets.  Under an extra set-theoretic assumption known as $\Sigma^1_1$-Determinacy, every $\Pi^1_1$ set which is not Borel is in fact $\Pi^1_1$-complete.  We do not expect that extra set-theoretic assumptions should be necessary to prove any of the sets are $\Pi^1_1$-complete; we mention this as evidence that the positive answer is indeed the correct one. It is worth noting the question is a problem in group theory. For example, in the case of $\EA$ one must devise a method of building a group from a tree so that well-founded trees give rise to elementary amenable groups and ill-founded trees give rise to non-elementary-amenable groups.

\subsection*{Acknowledgments}
The authors would like to thank Alexander Kechris and Andrew Marks for helpful mathematical discussions.

J. Williams was partially supported by NSF Grant 1044448, Collaborative Research: EMSW21-RTG: Logic in Southern California.


\bibliographystyle{bibgen}
\bibliography{biblio}

\def\cprime{$'$}
\begin{thebibliography}{10}

\bibitem{Br79}
R.~M. Bryant, Groups with the minimal condition on centralizers. {\em J.
  Algebra\/} 60 (1979)~(2), 371--383.

\bibitem{Ch80}
C.~Chou, Elementary amenable groups. {\em Illinois J. Math.\/} 24 (1980)~(3),
  396--407.

\bibitem{C11}
Y.~Cornulier, On the {C}antor-{B}endixson rank of metabelian groups. {\em Ann.
  Inst. Fourier (Grenoble)\/} 61 (2011)~(2), 593--618.

\bibitem{D57}
M.~M. Day, Amenable semigroups. {\em Illinois J. Math.\/} 1 (1957), 509--544.

\bibitem{GKO14}
R.~Grigorchuk, R.~Kravchenko, and A.~Ol'shanskii, Constructions of torsion-free
  countable, amenable, weakly mixing groups ArXiv:1405.7605 [math.GR]
  http://arxiv.org/abs/1405.7605.

\bibitem{G84}
R.~I. Grigorchuk, Degrees of growth of finitely generated groups and the theory
  of invariant means. {\em Izv. Akad. Nauk SSSR Ser. Mat.\/} 48 (1984)~(5),
  939--985.

\bibitem{H54}
P.~Hall, Finiteness conditions for soluble groups. {\em Proc. London Math. Soc.
  (3)\/} 4 (1954), 419--436.

\bibitem{H61}
P.~Hall, The {F}rattini subgroups of finitely generated groups. {\em Proc.
  London Math. Soc. (3)\/} 11 (1961), 327--352.

\bibitem{HNN49}
G.~Higman, B.~H. Neumann, and H.~Neumann, Embedding theorems for groups. {\em
  J. London Math. Soc.\/} 24 (1949), 247--254.

\bibitem{KS71}
A.~Karrass and D.~Solitar, The free product of two groups with a malnormal
  amalgamated subgroup. {\em Canad. J. Math.\/} 23 (1971), 933--959.

\bibitem{K95}
A.~S. Kechris, {\em Classical descriptive set theory\/}, vol. 156 of {\em
  Graduate Texts in Mathematics\/} Springer-Verlag, New York1995.

\bibitem{MKS66}
W.~Magnus, A.~Karrass, and D.~Solitar, {\em Combinatorial group theory:
  {P}resentations of groups in terms of generators and relations\/}
  Interscience Publishers [John Wiley \& Sons, Inc.], New
  York-London-Sydney1966.

\bibitem{NN59}
B.~H. Neumann and H.~Neumann, Embedding theorems for groups. {\em J. London
  Math. Soc.\/} 34 (1959), 465--479.

\bibitem{Ol89}
A.~Y. Olshanskii, Efficient embeddings of countable groups. {\em Moscow Univ.
  Math. Bull.\/} 44 (1989)~(2), 39--49.

\bibitem{OO13}
A.~Y. Olshanskii and D.~V. Osin, A quasi-isometric embedding theorem for
  groups. {\em Duke Math. J.\/} 162 (2013)~(9), 1621--1648.

\bibitem{Os02}
D.~V. Osin, Elementary classes of groups. {\em Mat. Zametki\/} 72 (2002)~(1),
  84--93.

\end{thebibliography}

\end{document}